\documentclass[10pt]{amsart}

\makeatother

\usepackage[utf8]{inputenc}

\usepackage{xypic}
\usepackage{bm}

\usepackage{setspace}
\usepackage{fullpage}
\usepackage{braket}
\usepackage{verbatim}
\usepackage{lscape}
\usepackage{tabularx}
\usepackage{tikz-cd}
\usepackage{amsmath}
\usepackage{pgfplots}
\usepackage{enumitem}
\usepackage{stmaryrd}
\usepackage{amsthm,thmtools}
\usepackage{extarrows}

\usepackage{graphicx}
\usepackage{float}
\usepackage{graphics,amssymb}
\usepackage{amsfonts}
\usepackage{latexsym}
\usepackage{epsf}
\usepackage{mathrsfs}
\usepackage{bbm}

\usepackage{color}
\usepackage[framemethod=tikz]{mdframed}
\usepackage{ulem}
\usepackage{cancel}
\usepackage{tikz}
\usetikzlibrary{positioning,calc,intersections,through,backgrounds,matrix,arrows}
\usepackage{mathtools}
\usepackage{url}
\usepackage{fancybox}
\usepackage{dsfont}
\usepackage{bbding}
\usepackage{todonotes}
\usepackage[page,title]{appendix}
\usepackage{setspace}
\usetikzlibrary{fit}
\usepackage[low-sup]{subdepth}
\usepackage[square,sort,comma, numbers]{natbib}
\usepackage[T1]{fontenc} 
\usepackage{libertine}
\usepackage[utf8]{inputenc}
\usepackage{arydshln}
\setlength{\bibsep}{0.1pt plus 0.3ex}
\usepackage[none]{hyphenat}



\theoremstyle{plain}

\newtheorem{theorem}{Theorem}[section]

\newtheorem{lemma}[theorem]{Lemma}
\newtheorem{proposition}[theorem]{Proposition}
\newtheorem{corollary}[theorem]{Corollary}
\newtheorem{conjecture}[theorem]{Conjecture}
\newtheorem{claim}{Claim}

\newtheoremstyle{boldremark}
    {\dimexpr\topsep/2\relax} 
    {\dimexpr\topsep/2\relax} 
    {}          
    {}          
    {\bfseries} 
    {.}         
    {.5em}      
    {}          

\theoremstyle{definition}
\newtheorem{remark}[theorem]{Remark}

\newtheorem{definition}[theorem]{Definition}

\newtheoremstyle{boldhypothesis}
    {\dimexpr\topsep/2\relax} 
    {\dimexpr\topsep/2\relax} 
    {}          
    {}          
    {\bfseries} 
    {.}         
    {0.5em}      
    {}          
\theoremstyle{boldhypothesis}
\newtheorem{hypothesis}[theorem]{Hypothesis}

\newcommand{\Gal}[2]{\mathrm{Gal}(#1/#2)}
\newcommand{\hotimes}{\hat{\otimes}}

\newcommand{\Sel}{\mathrm{Sel}}
\newcommand{\Frac}{\mathrm{Frac}}

\newcommand{\dominant}[1]{\mathbf{\underline{#1}}}
\newcommand{\m}{\mathfrak{m}}
\newcommand{\frakm}{\mathfrak{m}}
\newcommand{\R}{\mathbb{R}}
\newcommand{\C}{\mathbb{C}}
\newcommand{\p}{\mathfrak{p}}

\newcommand{\Z}{\mathbb{Z}}
\newcommand{\Q}{\mathbb{Q}}

\newcommand{\cont}{\mathrm{cont}}
\newcommand{\val}{\mathrm{val}}
\newcommand{\Hom}{\mathrm{Hom}}

\newcommand{\Cyc}{\mathrm{cyc}}
\newcommand{\QQ}{\mathcal{Q}}

\newcommand{\stab}{\mathrm{stab}}

\newcommand{\AAA}{\mathbb{A}}
\newcommand{\Frob}{\mathrm{Frob}}
\newcommand{\GGG}{\Gamma}


\newcommand{\mm}{\mathrm{m}}
\newcommand{\II}{{\mathbb{I}}}
\newcommand{\OO}{{\mathcal O}}
\newcommand{\Div}{\mathrm{Div}}

\newcommand{\LL}{\mathrm{L}}
\newcommand{\Gl}{\mathrm{GL}}
\newcommand{\TT}{\mathrm{T}}
\newcommand{\weight}{\mathrm{wt}}

\newcommand{\TTT}{\bbT}
\newcommand{\BBB}{\mathcal{B}}
\newcommand{\CCC}{\mathcal{C}}
\newcommand{\YYY}{\mathcal{Y}}
\newcommand{\hhh}{\mathbb{H}}
\newcommand{\III}{\mathcal{I}}

\newcommand{\JJJ}{\mathcal{J}}

\newcommand{\GSp}{\mathrm{GSp}}
\newcommand{\DDD}{\mathscr{D}}

\newcommand{\Spec}{\mathrm{Spec}}

\newcommand{\mmm}{\mathfrak{m}}

\newcommand{\Iw}{\mathrm{Iw}}
\newcommand{\Fil}{\mathrm{Fil}}
\newcommand{\LLLL}{\mathfrak{L}}
\newcommand{\SLLL}{\mathscr{L}}
\newcommand{\pmbtwo}[2]{\pmb{#1},#2}
\newcommand{\rhopmb}[2]{\rho_{\pmbtwo{#1}{#2}}}
\newcommand{\RRR}{\mathcal{R}}
\newcommand{\Fitt}{\mathrm{Fitt}}
\newcommand{\charcomp}{\vartheta}

\setcounter{tocdepth}{1}
\makeatletter
\usepackage{hyperref}

\def\ulk{{\underline{k}}}

\newcommand{\powerseries}[1]{\llbracket{#1}\rrbracket}
\def\iso{\simeq}
\def\ord{{\rm ord}}
\def\bbT{\mathbb T}
\def\wtd{\widetilde}
\def\cH{\mathcal H}

\def\End{{\rm End}}
\def\cO{\mathcal O}
\def\cP{\mathcal P}
\def\cQ{\mathcal Q}

\def\hF{\mathcal F}
\def\hG{\mathcal G}
\def\cI{\mathrm Iw}
\def\pcT{\mathrm T}

\def\bfS{\mathbf S}

\newcommand{\pMX}[4]{\left(\begin{array}{rr}
#1&#2\\
#3&#4
\end{array}\right)}
\def\GL{{\rm GL}}
\def\frakg{\mathfrak g}
\def\bfone{{\mathbf 1}}
\newcommand{\beq}{\begin{equation}}
\newcommand{\eeq}{\end{equation}}

\def\frakX{\mathfrak X}

\def\Gm{\mathbb G_m}

\newcommand\dG[3]{\big[\,#1,\,#2;\,#3\big]}

\def\sL{\mathscr L}
\def\al{\alpha}

\def\wh{\widehat}

\def\Mat{\rm M}
\def\cyc{\varepsilon}

\def\onto{\twoheadrightarrow}
\newcommand{\norm}[1]{\|#1\|}

\def\onehalf{\frac{1}{2}}
\begin{document}
\title{On the congruence ideal associated to $p$-adic families of Yoshida lifts}
\author{Ming-Lun Hsieh and Bharathwaj Palvannan}

\begin{abstract}  
We study congruences involving $p$-adic families of Hecke eigensystems of Yoshida lifts associated with two Hida families (say $\hF,\hG$) of elliptic cusp forms. With appropriate hypotheses, we show that if a Hida family of genus two Siegel cusp forms admits a Yoshida lift at an appropriately chosen classical specialization, then all classical specializations are Yoshida lifts. Moreover, we prove that the characteristic ideal of the non-primitive Selmer group of (a self-dual twist of) the Rankin--Selberg convolution of $\hF$ and $\hG$ is divisible by the congruence ideal of the Yoshida lift associated with $\hF$ and $\hG$. Under an additional assumption inspired by pseudo-nullity conjectures in higher codimension Iwasawa theory, we establish the pseudo-cyclicity of the dual of the primitive Selmer group over the cyclotomic $\Z_p$-extension.
\end{abstract}
\maketitle

\tableofcontents

\section{Introduction}
\subsection{Motivation}
The idea to study congruences between Eisenstein series of $\Gl(2)$ over $\Q$ and cuspidal eigenforms dates back to Ribet's proof of the converse to Herbrand's theorem. The idea can in fact be traced to earlier unpublished calculations of Greenberg and Monsky. Generalizations of this idea have been applied in Iwasawa theory with tremendous success, most notably first by Mazur--Wiles to prove the Iwasawa main conjecture over $\Q$. The space of Yoshida lifts of a pair of primitive Hida families  $(\hF,\hG)$ of elliptic modular forms form the appropriate analog of the Eisenstein series in our study of $2$-variable Iwasawa conjecture involving the Rankin--Selberg product of two Hida families $\hF$ and $\hG$. The genesis of the construction of Yoshida lifts of cuspforms goes back to Yoshida \cite{MR586427,MR758701}. There are generalizations of this construction in works of B\"{o}cherer--Schulze-Pillot \cite{MR1096467,MR1475167}.  We are interested in studying the $p$-adic family of Hecke eigensystems corresponding to the space of Yoshida lifts of $\hF$, $\hG$. We rely on works of Hida \cite{MR1954939} and Pilloni \cite{MR3059119} on Hida theory for $\GSp_4$ over $\Q$ involving coherent cohomology. \\

Our first result is a type of rigidity result proved under (among others) an assumption that amounts to the finiteness of the Rankin-Selberg Selmer group (as predicted by the Bloch--Kato conjecture) for suitably chosen classical specializations $(f_0,g_0)$ of the pair of primitive Hida families $(\hF,\hG)$. We prove that a $p$-adic family of non-primitive Hecke eigensystems of $\GSp(4)$ over $\Q$ that contains the Hecke eigensystem corresponding to the space of Yoshida lifts of $(f_0,g_0)$ is unique and, in fact, corresponds to the $p$-adic family of Hecke eigensystems associated to the space of Yoshida lifts of the Hida families $(\hF,\hG)$. Furthermore, we prove that the Galois representation associated to a branch of the $\GSp_4$ ordinary Hecke algebra is irreducible if the branch does not correspond to the Hecke eigensystem of Yoshida lifts of Hida families. These results also crucially use the modularity results of Diamond \cite{MR1405946} and Skinner--Wiles \cite{MR1928993}. See Theorem~\ref{thm:yoshidafamily} for the precise statement. \\

The idea to study congruences between cuspidal eigenforms of $\Gl(2)$ over $\Q$, on the other hand, dates back to an unpublished manuscript of Doi--Hida. This topic was then studied in a series of articles by Hida in the early 1980s.  Hida \cite{MR868300} introduced the general notion of a \textit{congruence module} --- which is a module over the Hecke algebra ---  to systematically keep track of these congruences.  When applied to study congruences between $p$-adic families of Eisenstein series of $\Gl(2)$ over $\Q$ and cuspidal eigenforms, the congruence module inherits an additional structure over Iwasawa algebra. It turns out to be isomorphic to a quotient of the Iwasawa algebra by the \textit{congruence ideal} in the Iwasawa algebra. Under some assumptions, Emerton \cite{MR1710074} (in level one) and Ohta \cite{MR1980312} (for a general level) have showed that the Kubota--Leopoldt $p$-adic $L$-function generates the congruence ideal. The interplay between the Hecke-module structure and the Iwasawa-module structure of the congruence module along with the subsequent link on the analytic side between the congruence ideal and the $p$-adic $L$-function is an important motivation for us to pursue this generalization and study the congruence ideal and the congruence module associated to $p$-adic families of Yoshida lifts and their applications to the Iwasawa--Greenberg main conjecture for Rankin--Selberg product of Hida families. See Theorem  \ref{thm:mainconj2} and Corollary \ref{cor:mainconj3} for the precise statements.  \vspace{1mm}

Our application of congruence ideals to main conjectures is inspired by works of Hida--Tilouine \cite{MR1209708,MR1269427}  towards anti-cyclotomic main conjectures for CM fields,  where they study congruences involving CM forms. The idea to use Yoshida lifts to support the Bloch--Kato conjecture for the Rankin--Selberg product of cusp forms dates back to works of Agarwal--Klosin \cite{MR3045201} and B\"{o}cherer--Dummigan--Schulze-Pillot \cite{MR2998926}.  Our third main source of inspiration is work of Urban on the Eisenstein-Klingen ideal \cite{MR1813234}. \\

Ohta \cite{MR1800769}, generalizing works of Harder--Pink \cite{MR1237103} and Kurihara \cite{MR1247385},  was able to provide an alternative proof of the Iwasawa main conjecture over $\Q$. Ohta's proof allowed him to obtain finer information about the structure of the classical Iwasawa module over both the Iwasawa algebra and the Hecke algebra. Ohta was also able to formulate hypotheses (see for example \cite[Theorem 2]{MR2164625}) under which the classical Iwasawa module can be shown to be a cyclic module over the Iwasawa algebra. However, Wake \cite{MR3317761} has shown that these hypotheses need not always hold and that the Iwasawa module need not always be cyclic over the Iwasawa algebra. One is then naturally led to ask whether there are a natural set of hypotheses under which the classical Iwasawa module can be shown to be  pseudo-cyclic over the Iwasawa algebra. Wake \cite{MR3317761} obtained such results on pseudo-cyclicity of the Iwasawa module under hypotheses linked to pseudo-nullity conjectures of Greenberg for totally real abelian number fields. We prove pseudo-cyclicity of the dual of the primitive Selmer group associated to the cyclotomic deformation of the Rankin--Selberg product of Hida families under an assumption inspired by pseudo-nullity conjectures in higher codimension Iwasawa theory. See Theorem \ref{thm:selmstructure} for the precise statement.

\subsection{Hida theory for elliptic modular forms} \label{subsec:HidafamilyGL2}

Let $p$ be an odd prime. Let $\OO$ be the ring of integers in a finite suitably large extension of $\Q_p$. We fix once and for an isomorphism $\overline{\Q}_p\cong \mathbb{C}$ and embeddings $\overline{\Q} \hookrightarrow \overline{\Q}_p\cong\mathbb{C}$.
Denote by $\omega:(\Z/p\Z)^\times\to\mathbb \mu_{p-1}$ the mod-p character and also view $\omega$ as a Dirichlet character modulo $p$ via $\overline{\Q}\hookrightarrow \mathbb{C}$. Let \[\hF=\sum_{n>0}a_n(\hF)q^n\in \II_\hF[[q]];\quad \hG=\sum_{n>0}a_n(\hG)q^n\in \II_\hG[[q]]\] be two primitive Hida families of elliptic modular forms with the tame conductors $N_{\hF}$ and $N_{\hG}$. 
Throughout this manuscript, we shall impose the following conditions:
 \begin{hypothesis}\label{Hpy1}\noindent
\begin{enumerate}[style=sameline, style=sameline, align=left,label=(\scshape{Irr}), ref=\scshape{Irr},partopsep=0pt,parsep=0pt]
  \item\label{lab:irr} As $G_\Q$-representations, the residual representations associated to $\hF$ and $\hG$ respectively (say $\overline{\rho}_\hF$ and $\overline{\rho}_\hG$) are irreducible.
\end{enumerate}

\begin{enumerate}[style=sameline, style=sameline, align=left,label=(\scshape{$p$-Dist}), ref=\scshape{$p$-Dist},partopsep=0pt,parsep=0pt]
  \item\label{lab:pdist} The restrictions to $\Gal{\overline{\Q}_p}{\Q_p}$ of the semi-simplifications of the residual representations $\overline{\rho}_\hF$ and $\overline{\rho}_\hG$ are non-scalar. 
\end{enumerate}

\begin{enumerate}[style=sameline, style=sameline, align=left,label=(\scshape{Mult-Free}), ref=\scshape{Mult-Free},partopsep=0pt,parsep=0pt]
  \item\label{lab:multfree} As $\Gal{\overline{\Q}}{\Q}$-representations, the residual representations $\overline{\rho}_\hF$ and $\overline{\rho}_\hG$ are non-isomorphic.
\end{enumerate}

\end{hypothesis}
Let $\Sigma$ denote the set of primes in $\Q$ containing $p$, $\infty$ and the the primes dividing $N_{\hF}$ and $N_{\hG}$. Let $\Sigma_0 = \Sigma \setminus \{p\}$. Let $\Q_\Sigma$ denote the maximal extension of $\Q$ unramified outside $\Sigma$. Let $\GGG_\Cyc$ denote the Galois group of the cyclotomic $\Z_p$-extension $\Q_\Cyc$ over $\Q$.  Let $\mathbb{T}_{\hF,N_{\hF}}$ and $\mathbb{T}_{\hG,N_{\hG}}$ denote Hida's ordinary Hecke algebra passing through $\hF$ and $\hG$ with tame levels $N_{\hF}$ and $N_{\hG}$ respectively. Let $\II_{\hF}$ and $\II_{\hG}$ denote the integral closures in the fraction fields of the irreducible components passing through $\hF$ and $\hG$ respectively. The Hecke algebras consist of the Hecke operators $T_l$ and the normalized diamond operators $  \langle l\rangle$ at good primes $l \neq p$ along with the $U_p$ operator. The rings  $\II_{\hF}$ and $\II_{\hG}$ are finite extensions of the one variable power series rings $\Z_p[\![x_\hF]\!]$ and $\Z_p[\![x_\hG]\!]$ respectively, where $x_\hF$ and $x_\hG$ are the weight variables for the Hida families. We will assume that the integral closure of $\Z_p$ in $\II_{\hF}$ and $\II_{\hG}$ are equal (to $\OO$, extending scalars if necessary). 

We let
 \[\rho_\hF:\Gal{\Q_\Sigma}{\Q} \rightarrow \Gl_2(\II_{\hF}), \quad \rho_\hG:\Gal{\Q_\Sigma}{\Q} \rightarrow \Gl_2(\II_{\hG})\] 
 denote the Galois representations constructed by Hida \cite{MR848685} associated to $F$ and $G$ respectively. We let $\LL_\hF$ and $\LL_\hG$ denote the corresponding free modules, associated to these Galois representations, of rank two over $\II_{\hF}$ and $\II_{\hG}$ respectively. It is possible to factor the characters \[\det(\rho_\hF): G_\Q \rightarrow \OO[\![x_\hF]\!]^\times, \quad \det(\rho_\hG): G_\Q \rightarrow \OO[\![x_\hG]\!]^\times\] as the products
\begin{align}\label{eq:dettaut}
  \det(\rho_\hF) = \chi_\hF \omega^{k_{f_0}} \cyc^{-1}\kappa_\hF , \qquad \det(\rho_\hG) = \chi_\hG \omega^{k_{g_0}} \cyc^{-1} \kappa_\hG ,
\end{align}
where the characters $\kappa_\hF:G_\Q \rightarrow \OO[\![x_\hF]\!]^\times$, $\kappa_\hG:G_\Q \rightarrow \OO[\![x_\hG]\!]^\times$ are the tautological characters that factor through the pro-$p$ group $\GGG_\Cyc$, whereas $\chi_\hF:G_\Q \rightarrow \OO^\times$ and $\chi_\hG:G_\Q \rightarrow \OO^\times$ are finite Dirichlet characters of order and conductor prime to~$p$, whereas $\omega:G_\Q \rightarrow \Z_p^\times$ is the mod-$p$ cyclotomic character. The character $\cyc:G_\Q \rightarrow \Z_p^\times$ is the $p$-adic cyclotomic character. The integers $k_{f_0}$ and $k_{g_0}$ are the weights of specializations $f_0$ and $g_0$, that are chosen in the next subsection. \\

Let $\II_{\hF,\hG}$ denote the normalization of the completed tensor product $\II_{\hF} \hat{\otimes} \II_{\hG}$ over $\OO$. Note that we have natural inclusions $i_F: \II_{\hF} \hookrightarrow \II_{\hF,\hG}$ and $i_G: \II_{\hG} \hookrightarrow \II_{\hF,\hG}$.  Let $\II_{\hF,\hG}[\![\GGG_\Cyc]\!]$ be the usual Iwasawa algebra over $\II_{\hF,\hG}$. This topological $\II_{\hF,\hG}$-algebra is isomorphic to the power series ring $\II_{\hF,\hG}[\![x]\!]$, the isomorphism being (non-canonically) defined by sending a chosen topological generator $\gamma_0$ of $\GGG_\Cyc$ to $x+1$. Consider the tautological character 
\[\widetilde{\kappa}: G_\Sigma \twoheadrightarrow \Gamma_\Cyc \hookrightarrow \OO[\![\Gamma_\Cyc]\!]^\times.\]

Let $\rhopmb{4}{3}:\Gal{\Q_\Sigma}{\Q} \rightarrow \Gl_4\left({\II_{\hF,\hG}\powerseries{\Gamma_\Cyc}}\right)$ denote the Galois representation obtained by the action of $G_\Q$ on the following $\II_{\hF,\hG}[\![\Gamma_\Cyc]\!]$-module of rank four:
\[\sL_{\hF,\hG}:=\Hom_{\II_{\hF\hG}}\bigg( \LL_F \otimes_{\II_{\hF}} \II_{\hF,\hG} , \  \LL_G \otimes_{\II_{\hG}} \II_{\hF,\hG} \bigg) \otimes_{\II_{\hF,\hG}} \II_{\hF,\hG}[\![\Gamma_\Cyc]\!](\widetilde{\kappa}^{-1}).\]

\begin{remark}
We impose the condition that the residual representations associated to $\hF$ and $\hG$ are irreducible along with the hypothesis \ref{lab:pdist} so that the Galois representations $\rhopmb{4}{3}$ and $\rhopmb{3}{2}$ (introduced in equation \ref{eq:lattice3var}) satisfy Greenberg's Panchishkin condition which in turn allows us to formulate the Iwasawa--Greenberg main conjecture in the Rankin--Selberg setting. Furthermore the hypothesis \ref{lab:irr}, that the residual $G_\Q$-representations associated to $\hF$ and $\hG$ are irreducible, allows us apply $R^{\mathrm{ord}}=T$ theorems of Skinner--Wiles \cite{MR1928993} (which is a strengthening of a theorem of Diamond \cite{MR1405946}). The hypothesis \ref{lab:multfree} plays a key role while using results of Bella{\i}che--Chenevier \cite{MR2656025} on pseudocharacters. 

\end{remark}

\subsection{Iwasawa--Greenberg main conjectures in the Rankin--Selberg setting}

We consider below Conjectures \ref{conj:mainconj3} and \ref{conj:mainconj2}  that fall under the umbrella of Iwasawa--Greenberg main conjectures.   Depending on the choice of the dominant Hida family, the Galois representation $\rhopmb{4}{3}$ satisfies Greenberg's Panchishkin condition \cite{MR1265554}. We will usually denote our choice of dominant Hida family using an underlined font. Without loss of generality, let us consider the case when the dominant Hida family is $\hF$.  One can, in fact, consider a non-primitive $3$-variable Rankin--Selberg $p$-adic $L$-function $\theta^{\Sigma_0}_{\pmbtwo{4}{3}}$ -- also constructed by Hida \cite{MR774534,MR976685,MR1216135} --- in the fraction field of $\II_{\hF,\hG}[\![\GGG_\Cyc]\!]$. See work of Dasgupta \cite{MR3514962} or work of the first author and Chen \cite{chenprimitive} for a construction of the primitive $p$-adic $L$-function. For these $3$-variable $p$-adic $L$-functions, in addition to the weight variables of $\hF$ and $\hG$, one can vary the cyclotomic variable $s$. On the algebraic side, we have a discrete non-primitive Selmer group $\Sel^{\Sigma_0}_{\rhopmb{4}{3}}(\Q)$, whose Pontryagin dual is a finitely generated $\II_{\hF,\hG}[\![\GGG_\Cyc]\!]$-module.

\begin{conjecture}[Conjecture 4.1 in \cite{MR1265554}]\label{conj:mainconj3}
  The $\II_{\hF,\hG}[\![\GGG_\Cyc]\!]$-module $\Sel^{\Sigma_0}_{\dominant{\hF},\hG}(\Q)^\vee$ is torsion. Furthermore, we have the following equality of diviors of $\II_{\hF,\hG}[\![\GGG_\Cyc]\!]$:
  \begin{align}
    \Div(\Sel^{\Sigma_0}_{\rhopmb{4}{3}}(\Q)^\vee) = \Div(\theta^{\Sigma_0}_{{\pmbtwo{4}{3}}}).
  \end{align}
\end{conjecture}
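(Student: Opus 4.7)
The plan is to follow the broad strategy of Ribet and Mazur--Wiles, and of Hida--Tilouine \cite{MR1209708,MR1269427} in the anti-cyclotomic CM setting, adapted to this higher-rank Rankin--Selberg situation, with the Yoshida lift family of $\GSp_4$ playing the structural role that Eisenstein series play for $\GL_2$. Since Conjecture~\ref{conj:mainconj3} asserts an equality of divisors, I would establish it via two separate divisibilities: the ``upper bound'' divisibility -- that the characteristic ideal of $\Sel^{\Sigma_0}_{\rhopmb{4}{3}}(\Q)^\vee$ divides the $p$-adic $L$-function $\theta^{\Sigma_0}_{\pmbtwo{4}{3}}$ -- is accessible by the congruence-ideal methods of this paper, while the reverse divisibility requires Euler system input beyond this paper's techniques.

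First, using Theorem~\ref{thm:yoshidafamily} I would fix the uniquely determined $p$-adic family of ordinary Hecke eigensystems of $\GSp_4$ over $\Q$ associated with the Yoshida lift of $(\hF,\hG)$, built via Hida--Pilloni \cite{MR1954939,MR3059119} theory on coherent cohomology. Attached to this Yoshida branch of the big $\GSp_4$ Hida--Hecke algebra I would define the Yoshida congruence ideal $\congid \subset \II_{\hF,\hG}\powerseries{\Gamma_\Cyc}$ as the image in the Yoshida branch of its intersection with the non-endoscopic part of the Hecke algebra, in the spirit of Hida \cite{MR868300}. The associated congruence module measures Hecke-eigensystem congruences between the Yoshida family and cuspidal non-endoscopic Siegel families; by the rigidity part of Theorem~\ref{thm:yoshidafamily} these congruences are precisely the ones controlled by the Rankin--Selberg data $\hF \otimes \hG$.

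For the upper-bound divisibility I would proceed in two steps. Step~(a): identify $(\congid) = (\theta^{\Sigma_0}_{\pmbtwo{4}{3}})$ via a period computation matching the algebraic structure of $\congid$ with Hida's interpolation of Rankin--Selberg $L$-values, in analogy with the Emerton--Ohta identification \cite{MR1710074,MR1980312} of the Eisenstein congruence ideal with the Kubota--Leopoldt $p$-adic $L$-function. Step~(b): show that $\congid$ divides the characteristic ideal of $\Sel^{\Sigma_0}_{\rhopmb{4}{3}}(\Q)^\vee$; this is the content of Theorem~\ref{thm:mainconj2}. The input for Step~(b) is a lattice construction: for each height-one prime $\p \supseteq \congid$, the existence of a non-Yoshida Siegel branch congruent to the Yoshida family modulo $\p$, combined with the irreducibility of its associated Galois representation (also Theorem~\ref{thm:yoshidafamily}) and the Bella\"{\i}che--Chenevier \cite{MR2656025} machinery for residually reducible pseudorepresentations, yields a nontrivial cohomology class in $H^1(\Q_\Sigma/\Q,\ \sL_{\hF,\hG}^\vee/\p)$ satisfying the requisite Greenberg local conditions. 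Combining (a) and (b) gives the divisibility.

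The main obstacle is the reverse divisibility. The natural route is to construct a three-variable Euler system for $\rhopmb{4}{3}$ interpolating Beilinson--Flach classes as $\hF$, $\hG$, and the cyclotomic variable vary simultaneously, and then to apply Rubin--Kato--Mazur--Rubin Euler system machinery to bound the Selmer group from above by $\theta^{\Sigma_0}_{\pmbtwo{4}{3}}$. No such Euler system of this breadth is currently available in the required generality, and producing one -- or finding a substitute via a Klingen--Eisenstein congruence argument on $\GSp_4$ in the spirit of Urban \cite{MR1813234} -- is where the deepest new ideas beyond this paper are needed.
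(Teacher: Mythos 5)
You are attempting to prove what the paper itself states only as a conjecture: the paper never proves Conjecture~\ref{conj:mainconj3} unconditionally, and its nearest result is Corollary~\ref{cor:mainconj3}, which deduces the conjecture from two unproven hypotheses — the Euler-system inequality $\Div(\Sel^{\Sigma_0}_{\rhopmb{4}{3}}(\Q)^\vee)\leq\Div(\theta^{\Sigma_0}_{\pmbtwo{4}{3}})$ and the divisibility $\Div(\CCC)\geq\Div(\theta^{\Sigma_0}_{\pmbtwo{3}{2}})$ — combined with Theorem~\ref{thm:mainconj2}. Your two-divisibility outline is in the same spirit, and you rightly flag the Euler-system side as beyond the paper's techniques. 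But your opening paragraph has the two methods assigned to the wrong inequalities: the congruence-ideal (Ribet/Mazur--Wiles) method produces Selmer classes and hence the \emph{lower} bound, i.e.\ the congruence ideal divides the characteristic ideal of the Selmer group (this is Theorem~\ref{thm:mainconj2}), while the Euler system gives the \emph{upper} bound $\Div(\Sel^\vee)\leq\Div(\theta)$; your own Steps (a)--(b) and your final paragraph use the correct assignment, so the first paragraph contradicts the rest of your proposal.

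There are two more substantive gaps. First, you define the Yoshida congruence ideal $\congid$ inside $\II_{\hF,\hG}\powerseries{\Gamma_\Cyc}$, but the $\GSp_4$ ordinary Hecke algebra is finite over the two-variable algebra $\cO\powerseries{X_1,X_2}$: the Yoshida family has no cyclotomic variable, so the paper's congruence ideal $\CCC$ lives in $\II_{\hF,\hG}$ and Theorem~\ref{thm:mainconj2} bounds only the two-variable Selmer group $\Sel^{\Sigma_0}_{\rhopmb{3}{2}}(\Q)^\vee$. To reach the three-variable statement the paper has to pass between the settings via the specialization $\varpi_{3,2}$: exact control theorems for the strict Selmer groups (resting on \ref{lab:pdist-gsp4} and \ref{lab:multfree}), the absence of nontrivial pseudo-null submodules, finite projective dimension at height-two primes, and the principality and specialization results of \cite{Lei_2020} and \cite{MR3919711}. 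This descent step is entirely absent from your sketch, and without it the congruence-module information — which is intrinsically two-variable — cannot control the three-variable divisor. Second, your Step (a), identifying $(\congid)=(\theta^{\Sigma_0}_{\pmbtwo{4}{3}})$ by a period computation in the style of Emerton--Ohta, is not established in the paper and is a genuinely open problem; the paper only needs, and only hypothesizes, the one-sided inequality $\Div(\CCC)\geq\Div(\theta^{\Sigma_0}_{\pmbtwo{3}{2}})$, expected from ongoing work on explicit $p$-adic families of theta lifts via the Rallis inner product formula. With these corrections, your proposal collapses onto the conditional Corollary~\ref{cor:mainconj3} rather than a proof of the conjecture itself, which is indeed the honest state of the art.
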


Lei--Loeffler--Zerbes \cite{MR3224721} and Loeffler--Kings--Zerbes \cite{MR4060872,MR3637653} are able to utilize the Euler systems of Beilinson--Flach elements to deduce, under suitable assumptions, the finiteness of the Rankin--Selberg Selmer groups. This motivates the hypothesis that we introduce below --- labeled \ref{lab:tor}. \\

There exists classical specializations $f_0$, $g_0$ of the Hida families $\hF$ and $\hG$, with level prime to $p$, satisfying the following conditions:

\begin{hypothesis}\label{Hpy2}\noindent

\begin{enumerate}[style=sameline, style=sameline, align=left,label=(\scshape{Tor}), ref=\scshape{Tor}]
  \item\label{lab:tor}  The Rankin--Selberg Selmer group associated to $f_0$, $g_0$ over $\Q$ is finite.       
  \end{enumerate}
 \begin{enumerate}[style=sameline, style=sameline, align=left,label=(\scshape{CT}), ref=\scshape{CT}]   
          \item\label{lab:uni} $\mathrm{weight}(f_0) - \mathrm{weight}(g_0)   \geq 4$.
  \end{enumerate}
   
	\begin{enumerate}[style=sameline, style=sameline, align=left,label=(\scshape{Yos-Det}), ref=\scshape{Yos-Det}]       
	\item\label{lab:yosdet} $\mathrm{weight}(f_0) \equiv \mathrm{weight}(g_0) \ (\mathrm{mod} \ 2(p-1))$.
	\end{enumerate}	

    \begin{enumerate}[style=sameline, style=sameline, align=left,label=(\scshape{Yos-disc}), ref=\scshape{Yos-disc}]
  \item\label{lab:yos} There exists a finite prime $\ell_0 \neq p$ such that $\pi_{f_0,\ell_0}$ and $\pi_{g_0,\ell_0}$ --- the irreducible admissible representations of $\Gl_2(\Q_{\ell_0})$ naturally associated to the cusp forms $f_0$ and $g_0$  (in the sense of \cite[Definition 6.1]{MR0379375}) --- are both discrete series representations (in the sense of \cite[Section 17.4]{MR2234120}).
\end{enumerate}

\end{hypothesis}

We also need the following hypothesis, which is necessary when one wants to consider Yoshida lifts of cusp forms:

\begin{enumerate}[style=sameline, style=sameline, align=left,label=(\scshape{Yos-Neb}), ref=\scshape{Yos-Neb}]
  \item\label{lab:yos-neb}  $\chi_{F} = \chi_{G}$.
\end{enumerate}

One of our primary goals is to obtain a partial result towards the inequality \[    \Div(\Sel^{\Sigma_0}_{\rhopmb{4}{3}}(\Q)^\vee) \geq  \Div(\theta^{\Sigma_0}_{{\pmbtwo{4}{3}}}),\] by studying $p$-adic families of Yoshida lifts.  

\begin{remark} All of the hypotheses \ref{lab:yos-neb},  \ref{lab:yosdet} and \ref{lab:yos} are required for the construction of Yoshida lifts. See Section \ref{sec:yoshidalifts}.  The local representation $\pi_{f,\ell_0}$ participates in the local Jacquet--Langlands correspondence for $\Gl(2)$  precisely when $\pi_{f,\ell_0}$ is a discrete series \cite[Section 56]{MR2234120}. The hypothesis \ref{lab:yos} is equivalent to the fact that there exists at least one finite prime $\ell_0$ such that both $\pi_{f_0,\ell_0}$ and $\pi_{g_0,\ell_0}$ participate in the local Jacquet--Langlands correspondence for $\Gl(2)$, which is then required when applying Arthur's multiplicity formula to show the representation theoretic existence of Yoshida lifts of $f_0$ and $g_0$.
\end{remark}

\begin{remark}
The assertion of hypothesis (\ref{lab:tor}) would be predicted by the Bloch--Kato conjectures, which would relate the finiteness of these Selmer groups to non-vanishing of the Rankin--Selberg $L$-function $L\left(f_0\otimes g_0,\chi_F, \frac{k_{f_0}+k_{g_0}}{2}-1\right)$ associated to $f_0$ and $g_0$. By combining well-known results of Ramakrishnan \cite{RD00} and Jacquet-Shalika \cite{JS76}, one can deduce that $L\left(f_0\otimes g_0,\chi_F, \frac{k_{f_0}+k_{g_0}}{2}-1\right)\neq 0$. If $f_0$ and $g_0$ are CM forms that have complex multiplication by the same imaginary quadratic field $K$, the specialization of the Galois representation $\rhopmb{4}{3}$ at $f_0,g_0$ splits as a direct sum of two induced characters over $K$. One can then use Rubin's results \cite{MR1079839} towards the Iwasawa main conjectures over $K$ to deduce \ref{lab:tor} from the non-vanishing of the corresponding $L$-values. 

In the case when $f_0$ and $g_0$ are both non-CM forms and are non-Galois conjugate twists, one can apply results of \cite{MR3637653}. When the product of the Nebentypen $\chi_F^2$ is non-trivial and the hypothesis labeled Hyp(BI) of \cite{MR3637653} is satisfied, then \cite[Theorem 11.6.6]{MR3637653} can be applied to the critical character $\tau = \chi_F \epsilon^{\frac{k_{f_0}+ k_{g_0}}{2}-2}$ along with \cite[Proposition 11.2.9]{MR3637653} to establish \ref{lab:tor}. To establish the hypothesis (HypBI), one should then appeal to the results \cite[Proposition 4.2.1, Remark 4.2.2]{MR3576325}. There are two theoretical limitations to producing a large class of interesting examples in an abstract manner. By interesting, we mean cases where $p$ divides the appropriate Rankin-Selberg L-value so as to require congruences between Yoshida lifts and stable forms. Firstly, the results establishing Hyp(BI) in \cite[Proposition 4.2.1, Remark 4.2.2]{MR3576325} are asymptotic as they hold for all but finitely many primes. See the conclusion of \cite[Theorem 3.2.2]{MR3576325}. Secondly, it is only known conjecturally that 100\% of the primes are \textit{ordinary} for general non-CM modular forms. We thus leave it to the interested computational expert to produce numerical examples satisfying Hyp(BI) (thereby guaranteeing \ref{lab:tor}) along with our hypothesis \ref{lab:pdist-gsp4} given below (and where $p$ divides the appropriate Rankin--Selberg $L$-value).
\end{remark}

The hypothesis \ref{lab:yos-neb} allows us to consider a twist of a specialization of $\rhopmb{4}{3}$, which is essentially self-dual. Let $\rhopmb{3}{2}:\Gal{\Q_\Sigma}{\Q} \rightarrow \Gl_4(\II_{\hF,\hG})$ denote the Galois representation obtained by the action of $G_\Q$ on the following $\II_{\hF,\hG}$-module of rank four:
\begin{align}\label{eq:lattice3var}
\Hom_{\II_{\hF,\hG}}\bigg( \LL_\hF \otimes_{\II_{\hF}} \II_{\hF,\hG} , \  \LL_\hG \otimes_{\II_{\hG}} \II_{\hF,\hG} \bigg) \otimes_{\II_{\hF,\hG}} \II_{\hF,\hG}\left(\sqrt{\kappa_\hF\kappa_\hG^{-1}}\right).
\end{align}

Greenberg's methodology produces a discrete non-primitive Selmer group $\Sel_{\rhopmb{3}{2}}(\Q)$, whose Pontryagin dual is conjectured to be finitely generated torsion $\II_{\hF,\hG}$-module. Hida \cite{MR774534,MR976685,MR1216135} has constructed a non-primitive $2$-variable Rankin--Selberg $p$-adic $L$-function $\theta^{\Sigma_0}_{\pmbtwo{3}{2}}$ in the fraction field of $\II_{\hF,\hG}$. For these $p$-adic $L$-functions, one can vary the weights of $\hF$ and $\hG$. We recall the non-primitive variation of the Iwasawa--Greenberg main conjecture associated to $\rhopmb{3}{2}$ (\cite[Conjecture 4.1]{MR1265554}).

\begin{conjecture}\label{conj:mainconj2}
  The $\II_{\hF,\hG}$-module $\Sel_{\rhopmb{3}{2}}(\Q)^\vee$ is torsion. Furthermore, we have the following equality of diviors of $\II_{\hF,\hG}$:
  \begin{align}
    \Div(\Sel^{\Sigma_0}_{\rhopmb{3}{2}}(\Q)^\vee) = \Div(\theta^{\Sigma_0}_{\pmbtwo{3}{2}}).
  \end{align}
\end{conjecture}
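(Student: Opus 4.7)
The plan is to prove Conjecture \ref{conj:mainconj2} by establishing the two divisibilities of divisors in $\II_{\hF,\hG}$ separately, using complementary techniques: an Eisenstein-style congruence argument via Yoshida lifts for the divisibility $\Div(\Sel^{\Sigma_0}_{\rhopmb{3}{2}}(\Q)^\vee) \geq \Div(\theta^{\Sigma_0}_{\pmbtwo{3}{2}})$, and a Euler system argument via Beilinson--Flach classes for the opposite divisibility.

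For the lower bound (divisibility of the characteristic ideal by the $p$-adic $L$-function), I would first establish the torsion property of $\Sel^{\Sigma_0}_{\rhopmb{3}{2}}(\Q)^\vee$ as an $\II_{\hF,\hG}$-module. This can be done by combining hypothesis \ref{lab:tor}, which gives finiteness of the Selmer group at the classical specialization $(f_0,g_0)$, with a control theorem of Greenberg type, and then invoking the generic non-vanishing of $L$-values off the classical point. Next, I would construct the $p$-adic family of Yoshida lifts of $(\hF,\hG)$ (using Theorem \ref{thm:yoshidafamily} together with the Hida theory for $\GSp_4$ of Hida and Pilloni), and define the congruence module that measures congruences between this Yoshida family and stable (non-endoscopic) families in the $\GSp_4$ ordinary Hecke algebra. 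The crucial step here is Theorem \ref{thm:mainconj2}, which identifies the congruence ideal of the Yoshida family with (a divisor bounded by) $\theta^{\Sigma_0}_{\pmbtwo{3}{2}}$, and uses the congruences to produce a non-trivial submodule of the Selmer group supported on $\Div(\theta^{\Sigma_0}_{\pmbtwo{3}{2}})$. This is the Hida--Tilouine/Urban-style lattice construction: the residual reducibility of the Galois representations carried by the stable branch meeting the Yoshida branch produces an extension class valued in a self-dual twist of the Rankin--Selberg motive, giving the desired cohomology classes.

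For the upper bound, I would invoke the Euler system of Beilinson--Flach elements developed by Lei--Loeffler--Zerbes and extended by Kings--Loeffler--Zerbes. Under hypothesis (HypBI) -- which can be verified using the results of \cite{MR3576325} cited in the excerpt when the forms are non-CM and not Galois-conjugate twists -- \cite[Theorem 11.6.6]{MR3637653} furnishes the divisibility $\Div(\Sel^{\Sigma_0}_{\rhopmb{3}{2}}(\Q)^\vee) \leq \Div(\theta^{\Sigma_0}_{\pmbtwo{3}{2}})$ (after passing to the cyclotomic main conjecture \ref{conj:mainconj3} and specializing the cyclotomic variable, or by adapting the Kolyvagin bound directly to the $2$-variable Selmer group). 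In the CM situation, I would instead appeal to Rubin's proof of the Iwasawa main conjectures over imaginary quadratic fields. The two divisibilities then combine via the structure of divisors in a $2$-dimensional regular domain to yield the equality of divisors.

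The main obstacle is the upper bound via Euler systems: the Beilinson--Flach machinery of \cite{MR3637653} proves divisibility in the cyclotomic direction under technical hypotheses (big image, non-exceptionality, the hypothesis (HypBI) whose verification is only known asymptotically), and transferring this to a divisibility statement for the self-dual twist in the two weight variables requires a careful control-theoretic descent and patching across classical specializations where the Bloch--Kato conjecture gives the archimedean $L$-value comparison. A secondary obstacle is that the lattice construction on the Yoshida side only manifestly produces submodules corresponding to irreducible components where the residual Galois representation on the stable branch is irreducible (used via \ref{lab:irr} and the Bella\"{i}che--Chenevier technology of \ref{lab:multfree}); showing that the divisor thus produced captures \emph{all} of $\theta^{\Sigma_0}_{\pmbtwo{3}{2}}$, and not merely a proper subdivisor, is precisely the content that Theorem \ref{thm:mainconj2} and Corollary \ref{cor:mainconj3} are designed to address.
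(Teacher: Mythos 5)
The statement you are asked about is a \emph{conjecture} in this paper, and the paper does not prove it unconditionally; your proposal, as written, does not either. Your strategy is essentially the conditional framework the authors set up in Theorem \ref{thm:mainconj2} and Corollary \ref{cor:mainconj3}, but you treat two crucial inputs as available when they are not. First, the congruence argument via the Yoshida ideal and the Bella\"{\i}che--Chenevier lattice construction only yields $\Div(\Sel^{\Sigma_0}_{\rhopmb{3}{2}}(\Q)^\vee) \geq \Div(\CCC)$; your assertion that Theorem \ref{thm:mainconj2} ``identifies the congruence ideal of the Yoshida family with (a divisor bounded by) $\theta^{\Sigma_0}_{\pmbtwo{3}{2}}$'' is a misreading. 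The inequality $\Div(\CCC) \geq \Div(\theta^{\Sigma_0}_{\pmbtwo{3}{2}})$ is precisely hypothesis (2) of Corollary \ref{cor:mainconj3}; the paper explicitly states that establishing it requires an explicit construction of $p$-adic families of global theta lifts and a Rallis-inner-product-type comparison, which is left to ongoing work. Without it, the congruence method produces classes supported only on $\Div(\CCC)$, not on $\Div(\theta^{\Sigma_0}_{\pmbtwo{3}{2}})$.

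Second, the Euler-system upper bound is likewise not a theorem in the generality needed: the Beilinson--Flach divisibility \eqref{eq:ES} is taken as a hypothesis in Corollary \ref{cor:mainconj3}, since Hyp(BI) is only known asymptotically and ordinarity of the chosen prime is conjectural for non-CM forms, as the authors themselves note. Moreover, the descent from the $3$-variable inequality over $\II_{\hF,\hG}[\![\GGG_\Cyc]\!]$ to the $2$-variable self-dual twist is not a routine specialization: the paper devotes Claims \ref{claim:isoselm}, \ref{claim:nopn} and \ref{claim:finproj} (no nontrivial pseudo-null submodules, finite projective dimension at height-two primes, principality of the relevant divisors via \cite{Lei_2020}) to make \cite[Proposition 5.4]{MR3919711} applicable, together with the exact control theorems that rely on \ref{lab:pdist-gsp4} and \ref{lab:multfree}. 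Your proposal flags these as ``obstacles'' but supplies no argument for them. The correct conclusion of your strategy is therefore the conditional statement of Corollary \ref{cor:mainconj3} (equality of divisors \emph{assuming} \eqref{eq:ES} and $\Div(\CCC)\geq\Div(\theta^{\Sigma_0}_{\pmbtwo{3}{2}})$), together with the unconditional inequality of Theorem \ref{thm:mainconj2}, not a proof of Conjecture \ref{conj:mainconj2} itself.
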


More precisely, we introduce an ideal ${\CCC}$ in $\II_{\hF,\hG}$, which we call the \textit{congruence ideal}, and that is expected to satisfy, under a suitable $p$-distinguished hypothesis, the following inequalities of divisors of $\II_{\hF,\hG}$:
\begin{align}\label{eq:expected}
  \Div\left(\Sel^{\Sigma_0}_{\rhopmb{3}{2}}(\Q)^\vee\right) \stackrel{?}{\geq} \Div\left({\CCC}\right) \stackrel{?}{\geq} \Div\left(\theta^{\Sigma_0}_{\pmbtwo{3}{2}}\right).
\end{align}

One of our main theorems (Theorem \ref{thm:mainconj2}) establishes the following inequality, under suitable assumptions: \[\Div(\Sel^{\Sigma_0}_{\rhopmb{3}{2}}(\Q)^\vee) \geq\Div({\CCC}).\]

\begin{remark} 
The formulation of the Iwasawa--Greenberg main conjectures in \cite{MR1265554} relates the primitive Selmer group to the primitive $p$-adic $L$-function, while the formulations in Conjectures \ref{conj:mainconj2} and \ref{conj:mainconj3} relate the non-primitive Selmer groups to the non-primitive $p$-adic $L$-functions. The idea that non-primitive versions of the main conjectures should be equivalent to their primitive versions is well-known and goes back to works of Greenberg \cite{MR0444614}, Greenberg--Vatsal \cite{MR1784796} etc. For reasons of space, we do not discuss this equivalence in our paper. However, for some technical reasons, it will be advantageous for us to work with the non-primitive versions. For instance, the theory of Hecke operators at bad primes is not well developed for $\GSp_4$. Although there might be candidates for these Hecke operators at bad places in works of Andrianov \cite{MR884891} and Roberts--Schmidt \cite{MR2344630}, we prefer to avoid this latter complication altogether. It is for these reasons that we simply concentrate on the \textit{non-primitive main conjecture} and work with the reduced Hecke algebra, thereby not including Hecke operators at bad primes. 
\end{remark}

\subsection{Hida Theory for Siegel modular forms of genus two}\label{subsec:HidafamilyGSP4}

Our starting point is the construction of an automorphic representation, say $\varPi(f_0,g_0)$, in the space of Yoshida lifts of $f_0$ and $g_0$. As seen in Section \ref{sec:Galoisreps}, one can observe that this automorphic representation is $p$-ordinary and has weight
\begin{align}\label{eq:siegelweight}
(k_1,k_2) = \left(\dfrac{\weight(f_0)+\weight(g_0)}{2},\dfrac{\weight(f_0)-\weight(g_0)}{2}+2\right).
\end{align}

We shall impose the following $p$-distinguished hypothesis for $\GSp_4$, which implies and is stronger than the hypothesis \ref{lab:pdist}.
\begin{enumerate}[style=sameline, style=sameline, align=left,label=(\scshape{$p$-Dist-GSP4}), ref=\scshape{$p$-Dist-GSp4},partopsep=0pt,parsep=0pt]
  \item\label{lab:pdist-gsp4} The four residual characters appearing in the restrictions to $\Gal{\overline{\Q}_p}{\Q_p}$ of the semi-simplification of the residual representation associated to the Yoshida lift $\varPi(f_0,g_0)$ are distinct. 
\end{enumerate}

We follow Hida's  \cite{MR1954939} and Pilloni's \cite{MR3059119} works for the exposition on Hida Theory for $\GSp_4$ over $\Q$.  One can consider the local component of the ordinary Hecke algebra with tame level $N$, say $\hhh(N)_\mm$, passing through the $p$-adic family of Hecke eigenvalues afforded by $\varPi(f_0,g_0)$. For the sake of brevity in the introduction, we denote this ring by $\hhh_\m$. The Hecke algebra is generated by the $T_\ell$, $R_\ell$, $S_\ell$ (and its inverse $S_\ell^{-1}$) operators for the good primes $\ell\neq p$, along with the $U_\cP$, $U_\cQ$ operators. The ring $\hhh_\mm$ turns out to be a reduced local ring. Let $\{\eta_i\}_{1 \leq i \leq m}$ denote the set of minimal prime ideals of $\hhh_\mm$. It is a finite extension of the two variable power series ring  $\Z_p[\![X_1,X_2]\!]$, where $X_1$ and $X_2$ are the ``weight variables'' in the $\GSp_4$ setting.  Observe that the intersection of the minimal prime ideals $\displaystyle \bigcap \limits_{i=1}^m \eta_i$ equals $(0)$. Consequently, the natural map $\displaystyle  \hhh_\mm \hookrightarrow \prod_{i=1}^m {(\hhh_\mm)}_{\eta_i}$ is an injection and the localizations ${(\hhh_\mm)}_{\eta_i}$ are fields. Since $\Z_p[\![X_1,X_2]\!] \hookrightarrow \hhh_\mm$ is a finite integral extension, we have the following decomposition of $\mathrm{Frac}(\Z_p[\![X_1,X_2]\!])$-algebras:
\begin{align} \label{eq:artindecomp}
  \hhh_\mm \otimes_{\Z_p[\![X_1,X_2]\!]} \mathrm{Frac}(\Z_p[\![X_1,X_2]\!]) \cong \prod_{i=1}^m {(\hhh_\mm)}_{\eta_i}.
\end{align}

Associated to each minimal prime ideal $\eta_i$ (also called \textit{branch} or a \textit{Hida family}), one can use the theory of pseudocharacters (for example, see the book by Bella\"{\i}che--Chenevier \cite{MR2656025}) to construct a four-dimensional continuous semi-simple Galois representation $\rho_{\eta_i}:G_\Sigma \rightarrow \Gl_4({(\hhh_\mm)}_{\eta_i})$ associated to it. The trace of $\rho_{\eta_i}$ interpolates (in a suitable sense) traces of Galois representations constructed by Laumon  \cite{MR2234859}, Taylor \cite{MR1240640} and  Weissauer \cite{MR2234860} associated to $p$-ordinary $\GSp_4$ cuspforms.   

\begin{definition}\label{def:Yoshidalifthidafamilies}
We say that the natural map $\phi_i: \hhh_\mm \rightarrow {(\hhh_\mm)}_{\eta_i}$ is said to be the \textit{$p$-adic family of Hecke eigensystems associated to the space of Yoshida lifts of $\hF$, $\hG$} if there exists injective ring maps $\iota_F: \II_{\hF} \hookrightarrow {(\hhh_\mm)}_{\eta_i}$ and $\iota_G: \II_{\hG} \hookrightarrow {(\hhh_\mm)}_{\eta_i}$ such that the following equality inside ${(\hhh_\mm)}_{\eta_i}$ holds  for all primes $\ell$ not dividing $Np$:
\[\mathrm{Trace}\left(\rho_{\eta_i}(\Frob_\ell)\right) = \mathrm{Trace}\left(\left(\rho_\hG (\sqrt{\kappa_\hF\kappa_\hG^{-1}}) \oplus \rho_\hF \right)(\Frob_\ell)\right).\]

\end{definition}

The $p$-adic family of Hecke eigenvalues afforded by $\varPi(f_0,g_0)$ determines a ring homomorphism 
\[\lambda_{f_0,g_0}: \hhh_\mm \rightarrow \overline{\Q}_p.\]
Let $\eta$ denote a(ny) minimal prime ideal of $\hhh_\mm$ contained in  $\ker(\lambda_{f_0,g_0})$. Without loss of generality, we let the unique minimal prime ideal $\eta$ equal $\eta_1$. Expanding on the decomposition in equation (\ref{eq:artindecomp}), we write
\begin{align}\label{eq:GSp4decomphecke}
	\hhh_\mm \otimes_{\Z_p[\![x_1,x_2]\!]} \mathrm{Frac}(\Z_p[\![x_1,x_2]\!]) \cong  {(\hhh_\mm)}_{\eta_1} \times \underbrace{{(\hhh_\mm)}_{\eta_2} \times \cdots \times {(\hhh_\mm)}_{\eta_r}}_{\substack{\text{$p$-adic families} \\
	\text{of Hecke eigensystems} \\ \text{not associated to Yoshida lifts} }} \times \underbrace{{(\hhh_\mm)}_{\eta_{r+1}} \times {(\hhh_\mm)}_{\eta_m}}_{\substack{\text{Yoshida lift of} \\ \text{ Hida families  $\hF',\hG'$  } \\ \text{ where } (\hF',\hG') \neq (\hF,\hG)}}.
\end{align}

\begin{itemize}[leftmargin=0pt]
\item We let $\TTT$ denote the image of $\hhh_\mm \rightarrow {(\hhh_\mm)}_{\eta_1} \times \underbrace{{(\hhh_\mm)}_{\eta_2} \times \cdots \times {(\hhh_\mm)}_{\eta_r}}_{\substack{\text{$p$-adic families} \\
	\text{of Hecke eigensystems} \\ \text{not associated to Yoshida lifts} }}$. Note that $\TTT$ is also a local ring. 
  \item  We let $\TTT_\YYY$ denote the image of the map $\TTT\rightarrow \TTT_{\eta}$. The ring $\TTT_\YYY$ is a domain. Let $\widetilde{\TTT}_\YYY$ denote the integral closure of $\TTT_\YYY$ in its fraction field. The kernel of the map $\TTT \rightarrow \TTT_{\eta}$, which we denote $\JJJ$, will be called the \textit{Yoshida ideal} (following Agarwal--Klosin \cite{MR3045201}).
    \[\JJJ \coloneqq \ker\left(\TTT \rightarrow \TTT_{\eta}\right).\]  
  \item We let $\TTT_{\perp,\stab}$ denote the image of the map \[\TTT \rightarrow \prod \limits_{i \neq 1}\TTT_{\eta_i}.\] The set $\{\eta_i\}_{2 \leq i \leq m}$ could be empty, in which case $\TTT_{\perp,\stab}$ would denote the ``zero ring''. It is not hard to see that the Yoshida ideal $\JJJ$ can naturally be viewed as an ideal of $\TTT_{\perp,\stab}$.
\end{itemize}

\begin{definition}
  The pushout in the category of $\TTT$-algebras of the following diagram is called the \textit{congruence module}.
  \begin{align}\label{eq:GSp4congruencemoduledef}
    \xymatrix{\TTT  \ar[d] \ar[r] & \TTT_{\YYY} \ar[d]  \\
    \TTT_{\perp,\stab} \ar[r]                                     & \star
    }
  \end{align}
  Despite the terminology ``module'', the congruence module is in fact a ring.
  \begin{enumerate}
    \item As a $\TTT_{{\perp,\stab}}$-module, the congruence module is isomorphic to $\TTT_{\perp,\stab}/\JJJ$.
    \item As a $\TTT_{\YYY}$-module, the congruence module is isomorphic to $\TTT_{\YYY}/\CCC$, for some ideal $\CCC$ inside $\TTT_{\YYY}$. By abuse of notation, we will also denote by  ${\CCC}$, the corresponding ideal generated inside the integral closure $\widetilde{\TTT}_{\YYY}$ of $\TTT_{\YYY}$. The ideal $\CCC$ is called the \textit{congruence ideal}.
  \end{enumerate}

\end{definition}

\begin{remark}
The hypothesis \ref{lab:pdist-gsp4} is stronger than \ref{lab:pdist} and we require the stronger hypothesis for two important reasons. Firstly, it is required to show that the change of basis matrix, used in Theorem \ref{thm:pseudorepBC}  to establish ordinarity of the local Galois representation, has the required integral structure. Secondly, this hypothesis allows us to work with the strict Selmer group and establish precise control theorems in Section \ref{sec:proofthm2}, as we avoid the phenomenon of trivial zeroes. 	
\end{remark}

\begin{remark}
For our proofs in Section \ref{sec:proofs}, we will  need to choose an auxiliary $\Gl_2$-tame level $N_2$ dividing both $N_{\hF}$ and $N_{\hG}$, as given below:
\begin{align}\label{eq:GL2tamelevel}
	N_2 \coloneqq \prod_{\ell \mid N_{\hF}N_{\hG}} \ell^{\max\left\{2,\val_\ell(N_{\hF}) +  2\val_\ell(\ell-1),\val_\ell(N_{\hG}) +  2\val_\ell(\ell-1)\right\}}.  
\end{align}
 We will also consider Hida's Hecke algebras $\bbT_{\hF,N_2}$ and $\bbT_{\hG,N_2}$, defined analogous to $\mathbb{T}_{\hF,N_{\hF}}$ and $\mathbb{T}_{\hG,N_{\hG}}$, but with tame level $N_2$ instead. One could have primes $\ell \neq p$ dividing $N_{\hG}$ but not $N_{\hF}$.	 Such primes could contribute to introducing new irreducible components for the ring $\bbT_{\hF,N_2}$ which were not present in the ring $\mathbb{T}_{\hF,N_{\hF}}$. For example, one could have primes $\ell \neq p$ participating in Ribet's level raising congruences (even if we work with squarefree levels). For the tame level chosen in equation (\ref{eq:GL2tamelevel}), one could also twist the Hida family $\hF$ by characters with conductor $\ell$ and order $p$, if a prime $\ell$ dividing $N_{\hG}$ is also congruent to $1$ modulo $p$. For these reasons, one could truly have new Yoshida lifts of Hida families appearing in the decomposition given in equation (\ref{eq:GSp4decomphecke}). 
\end{remark}

\subsection{Rigidity results -- Existence and Uniqueness: Theorem \ref{thm:yoshidafamily}} There are three aspects to our first theorem.  Theorem \ref{thm:yoshidafamily}\ref{thmcond:yoshidafamily} and \ref{thm:yoshidafamily} \ref{thmcond:unique}  are existence and uniqueness statements: they establish both the uniqueness of the $\GSp_4$ Hida family passing through the Yoshida lift $\varPi(f_0,g_0)$ and that every classical specialization of the $\GSp_4$ Hida family passing through $\varPi(f_0,g_0)$ is also a Yoshida lift; in fact it is obtained as the Yoshida lift of the classical specializations of $\Gl_2$ Hida families $\hF$ and $\hG$ respectively. Theorem \ref{thm:yoshidafamily}\ref{thmcond:otheryoshida} establishes that the $4$-dimensional Galois representation associated to the Hida families of $\TTT_{\perp,\stab}$ are irreducible. 

\begin{restatable}{Theorem}{theoremone}\label{thm:yoshidafamily}
  Suppose that the hypotheses \ref{lab:irr}, \ref{lab:pdist-gsp4}, \ref{lab:multfree},  \ref{lab:tor}, \ref{lab:uni}, \ref{lab:yosdet}  \ref{lab:yos} and \ref{lab:yos-neb},hold. Then, the following statements hold:
  \begin{enumerate}[label=(\roman*)]
    \item\label{thmcond:yoshidafamily} There is a natural isomorphism $\II_{\hF,\hG} \cong \widetilde{\TTT}_\YYY$ of integrally closed domains. With respect to the injections $$ \II_{\hF} \stackrel{i_F}\hookrightarrow \II_{\hF,\hG} \cong \widetilde{\TTT}_\YYY \hookrightarrow \TTT_{\eta}, \qquad \II_{\hG} \stackrel{i_G}\hookrightarrow \II_{\hF,\hG} \cong \widetilde{\TTT}_\YYY \hookrightarrow \TTT_{\eta},$$
          the natural map $\TTT \rightarrow \TTT_{\eta}$ is the $p$-adic family of Hecke eigensystems associated to the space of Yoshida lifts of $\hF$ and $\hG$. In other words, there \emph{exists
          a $p$-adic family of Hecke eigensystems associated to  $\hF$, $\hG$}.
    \item\label{thmcond:unique} $\eta$ is the unique minimal prime ideal contained in $\ker(\lambda_{f_0,g_0})$. In other words, there exists a \emph{unique} $\GSp_4$-Hida family passing through the Yoshida lift of $f_0$, $g_0$.
 
 \item \label{thmcond:otheryoshida} The Galois representation attached to a $\GSp_4$-Hida family in the component $\TTT_{\perp,\stab}$ is irreducible. 
  \end{enumerate}
\end{restatable}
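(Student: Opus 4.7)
My plan is to construct in (i) a natural map $\II_{\hF,\hG}\to\widetilde{\TTT}_\YYY$ by interpolating classical Yoshida lifts on both the $\Gl_2\times\Gl_2$ and $\GSp_4$ sides, and then to deduce parts (ii) and (iii) via Galois-theoretic rigidity, using the Bella\"iche--Chenevier theory of residually reducible pseudocharacters, the Skinner--Wiles $R^{\mathrm{ord}}=T$ theorem, and the finiteness hypothesis \ref{lab:tor}. For the construction in (i), for a Zariski-dense set of arithmetic pairs $(f,g)\in\Spec\II_\hF\times\Spec\II_\hG$ with weights satisfying \ref{lab:uni} and \ref{lab:yosdet}, Arthur's multiplicity formula (together with \ref{lab:yos} and \ref{lab:yos-neb}) yields a cuspidal Yoshida lift $\varPi(f,g)$ on $\GSp_4(\AAA_\Q)$ whose Hecke polynomial $Q_\ell(X)$ at each good prime $\ell$ factors as a product of two quadratic polynomials determined by the Hecke eigenvalues of $f$ and $g$. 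Since $\varPi(f,g)$ is $p$-ordinary whenever $f,g$ are, the $\GSp_4$ Hida theory of Hida and Pilloni interpolates these eigensystems into a continuous ring homomorphism $\Phi:\hhh_\m\to\II_\hF\hotimes\II_\hG$, whose kernel is a minimal prime $\eta$ of $\hhh_\m$ contained in $\ker(\lambda_{f_0,g_0})$. After identifying $\eta$ with $\eta_1$, passage to integral closures yields a map $\widetilde{\TTT}_\YYY\hookrightarrow\II_{\hF,\hG}$ between two-dimensional normal domains with a common Zariski-dense set of arithmetic points, forcing it to be an isomorphism; the trace compatibility in Definition \ref{def:Yoshidalifthidafamilies} follows by continuity from the classical factorization of $Q_\ell(X)$.

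\textbf{Uniqueness (ii) and irreducibility (iii).} Suppose first that $\eta'\neq\eta$ is another minimal prime of $\hhh_\m$ contained in $\ker(\lambda_{f_0,g_0})$. The attached Galois representation $\rho_{\eta'}$ specializes at $\lambda_{f_0,g_0}$ to a representation with the same trace as the Yoshida lift, so its residual semisimplification is $\bar\rho_\hF\oplus\bar\rho_\hG$ (the twist being residually trivial since $\kappa_\hF,\kappa_\hG$ factor through a pro-$p$ group). I will split into two cases. If $\rho_{\eta'}$ is generically reducible, the Bella\"iche--Chenevier machinery (whose input hypotheses hold by \ref{lab:irr} and \ref{lab:multfree}) decomposes its pseudocharacter into two 2-dimensional residually irreducible pieces, which by Diamond's and Skinner--Wiles' $R^{\mathrm{ord}}=T$ theorems must come from branches of Hida's $\Gl_2$ Hecke algebras; the uniqueness of the Hida family through a fixed classical cusp form forces these branches to be $\hF$ and $\hG$, whence $\eta'=\eta$ by part (i) --- a contradiction. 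If $\rho_{\eta'}$ is generically irreducible but residually reducible, a Ribet-style lattice argument in $\rho_{\eta'}$ produces a non-trivial extension class; the $p$-distinguished hypothesis \ref{lab:pdist-gsp4} guarantees that this class respects the Panchishkin filtration at $p$ and hence lies in the Rankin--Selberg Selmer group of $(f_0,g_0)$, contradicting \ref{lab:tor}. The same reducibility analysis settles (iii): if $\rho_{\eta_i}$ for some $i\in\{2,\dots,r\}$ were generically reducible, the Bella\"iche--Chenevier plus Skinner--Wiles step would realize $\eta_i$ as the Yoshida lift of a pair of $\Gl_2$ Hida families (matching $\bar\rho_\hF,\bar\rho_\hG$ residually, by \ref{lab:multfree}), contradicting its defining placement in the non-Yoshida part of the decomposition \eqref{eq:GSp4decomphecke}.

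\textbf{Main obstacle.} The hardest step is the Ribet-style lattice construction in the generically irreducible case of (ii): one must choose a lattice in $\rho_{\eta'}$ so that the resulting extension class has enough integral structure to land in the Rankin--Selberg Selmer group and, crucially, satisfies the correct local condition at $p$. Both the $p$-distinguished hypothesis \ref{lab:pdist-gsp4} (which, as foreshadowed in the remark following Definition \ref{def:Yoshidalifthidafamilies}, ensures integrality of the ordinary change-of-basis matrix and separates the four characters on $G_{\Q_p}$) and the Selmer finiteness \ref{lab:tor} (which kills the resulting global class) are indispensable, and verifying these local--global compatibilities is where the technical bulk of the proof lies.
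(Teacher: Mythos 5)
Your Galois-theoretic skeleton for (ii) and (iii) — Bella\"iche--Chenevier pseudocharacter/GMA theory, Skinner--Wiles modularity at a dense set of classical points, uniqueness of $\Gl_2$ Hida families, and hypothesis \ref{lab:tor} to rule out a residually reducible but generically irreducible branch — is essentially the paper's route. However, your treatment of existence in (i) is not: the paper does \emph{not} construct the family by interpolating classical Yoshida lifts. The obstruction is exactly the one you elide: to put the Yoshida lifts $\varPi(f,g)$ of all classical specializations into a single ordinary Hecke algebra $\hhh_\m$ of fixed tame level, one must bound the tame Siegel congruence level uniformly in $(f,g)$, and this is only known under additional hypotheses such as \ref{lab:oddl} or \ref{lab:hypLR} (square-free levels, matching Atkin--Lehner signs), which are precisely what Proposition \ref{prop:LRYoshidafam} assumes and which are \emph{not} among the hypotheses of Theorem \ref{thm:yoshidafamily}. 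The paper instead proves (i) Galois-theoretically: using Proposition \ref{prop:inclusion} and a Fitting-ideal argument, hypothesis \ref{lab:tor} forces $i_\eta(\BBB)=0$ for any minimal prime $\eta\subset\ker(\lambda_{f_0,g_0})$, so the branch is reducible; Nyssen--Rouquier, Skinner--Wiles, and the Carayol--Livn\'e level bounds then identify $\TTT/\eta$ with the completed tensor product of the two $\Gl_2$ Hecke algebra branches by checking that every Hecke operator $T_\ell,R_\ell,S_\ell,U_\cP,U_\cQ$ lies in the image of $\iota$. Your shortcut ``an injection $\widetilde{\TTT}_\YYY\hookrightarrow\II_{\hF,\hG}$ of two-dimensional normal domains with a common dense set of arithmetic points is an isomorphism'' is also not a valid argument (such injections can be strict, e.g.\ proper finite normal extensions with the same dimension); surjectivity must be verified on Hecke operators as the paper does.

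There is a second genuine gap in your ``generically irreducible'' case of (ii). Hypothesis \ref{lab:tor} asserts \emph{finiteness}, not vanishing, of the Rankin--Selberg Selmer group of $(f_0,g_0)$, so producing one non-trivial extension class in it is no contradiction. What is needed — and what the paper proves — is quantitative: if $i_\eta(\BBB)\neq 0$ then $i_\eta(\BBB)$ is a faithful module over the two-dimensional domain $\TTT/\eta$, hence $\Fitt_{\TTT/\eta}(i_\eta(\BBB))=(0)$, and by base change along $\varpi_\OO$ the $\OO$-module $\BBB\otimes_{\varpi_\OO}\OO$ has zero Fitting ideal, i.e.\ positive $\OO$-rank; since Proposition \ref{prop:inclusion} makes $\Sel^{\Sigma_0}(\mathrm{D}_{\OO})^\vee$ surject onto it, the Selmer group would be infinite, contradicting \ref{lab:tor}. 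Your Ribet-style lattice step must be upgraded to this ``whole fractional ideal $\BBB$'' statement (together with the integral, $p$-distinguished change of basis of Theorem \ref{thm:pseudorepBC} guaranteeing the classes satisfy the strict local condition at $p$); a single non-zero class does not suffice. With these two repairs — replacing the interpolation construction of (i) by the reducibility-plus-modularity identification of the branch, and replacing ``non-trivial class'' by the Fitting-ideal/faithfulness argument — your outline matches the paper's proof.
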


As motivation for the rigidity result of Theorem \ref{thm:yoshidafamily}, one can consider the following question: 
\begin{itemize}
\item  Suppose that a $\Gl_2$ Hida family $F'$ passes through a classical CM form (resp. Eisenstein  series) with weight $k \geq 2$. Are all classical specializations of $F'$ CM forms (resp.~Eisenstein series)?
\end{itemize}  

This question has an affirmative answer due to Hida. There are two ingredients: (a) Hida \cite{MR848685} has proved uniqueness of Hida families passing through a classical specialization with classical weight $\geq 2$, and (b)  Hida \cite{MR1216135} has constructed $p$-adic families of Eisenstein series and cuspidal CM forms. On the one hand, ingredient (a) in the $\Gl_2$ setting relies both on the vertical control theorem of Hida  and duality between the space of modular forms and Hecke operators. On the other hand, ingredient (b) uses the fact that one can bound the tame level over a $p$-adic family of CM forms (resp.~Eisenstein series) and that one has an explicit description of their Fourier coefficients. Note that these questions need not have an affirmative answer, if one instead considers forms with weight $k=1$.  \\

Since in the $\Gl_2$-setting, the Hecke eigensystem determines the cuspform, one can ask whether there is a purely Galois theoretic approach (that is, without using explicit descriptions of Fourier coefficients) to the above question.
\begin{itemize}
\item  Suppose that the Galois representation associated to a $p$-distinguished $\Gl_2$ Hida family $F'$ at a classical specialization with weight $k \geq 2$ is induced from an imaginary quadratic field (respectively reducible). Is the Galois representation associated to all classical specializations of $F'$ induced from an imaginary quadratic field (respectively reducible)?
\end{itemize}  
 Indeed, the idea to study a purely Galois theoretic approach to the above question in the context of Eisenstein series can be traced back to Bella{\i}che--Chenevier \cite{MR2225045}. The finiteness of class groups comes into play.  One can similarly answer this Galois theoretic question in the context of CM forms, when the associated adjoint representation is $p$-distinguished: one has to study an (appropriately twisted) anti-cyclotomic class group of an imaginary quadratic field. One of the novelties in our current work is to establish a similar Galois-theoretic approach in the $\GSp_4$-setting, without using explicit constructions of $p$-adic  families of Yoshida lifts. This is the main motivation for Theorem \ref{thm:yoshidafamily}. The analogous hypothesis in our result, labelled \ref{lab:tor}, is the finiteness of the Rankin--Selberg Selmer group.  \\
 
One may ask whether there is a proof simply for the \textit{existence} of the $p$-adic family of Hecke eigensystems associated to the Yoshida lifts of the two $\Gl_2$ Hida families $\hF$ and $\hG$ without assuming the finiteness of Selmer groups as in hypothesis \ref{lab:tor}.  In light of  the connection between global theta lifts and $L$-values via the Rallis inner product formula (see \cite{MR3858470}), it is expected that an explicit construction of $p$-adic families of the global theta lifts may play an important role in establishing the inequality $\Div({\CCC}) \geq  \Div(\theta^{\Sigma_0}_{\pmbtwo{3}{2}})$ of equation (\ref{eq:expected}). This is part of ongoing work of the first author and Liu. This is related to establishing the hypotheses \ref{lab:stab} contingent on the $p$-divisibility of a special value of the normalized Rankin--Selberg complex $L$-function. Nevertheless without a direct automorphic construction of $p$-adic families of Yoshida lifts, one can construct the $p$-adic family of Hecke eigensystems associated to the space of Yoshida lifts of $\hF$ and $\hG$ if the following premise holds: one has to be able to bound the tame Siegel congruence level for the Yoshida lifts of $\Gl_2$ eigenforms $f_i$ and $g_j$, as one  varies over the modular forms $f_i$ and $g_j$ of the $\Gl_2$ Hida families $\hF$ and $\hG$, chosen to satisfy the hypotheses given at the beginning of Section \ref{sec:yoshidalifts}. Proposition \ref{prop:LRYoshidafam} lists two hypotheses, labelled \ref{lab:oddl} and \ref{lab:hypLR}, which allows such a bounded choice. For the hypothesis \ref{lab:oddl}, one requires the tame level to be odd. This hypothesis is related to work of Ganapathy \cite{Radhika15} on the preservation of \textit{depth} for  primes $\ell  \geq 3$ under the Local Langlands correspondence for $\GSp_4$ along with the relationship between depth and fixed vectors for the Iwahori level. See also Remark \ref{rem:auxtamelevel}.  A bounded choice of tame level would be guaranteed  when the tame levels $N_{\hF}$ and $N_{\hG}$ are square free and the Atkin--Lehner eigenvalues coincide at all the primes dividing both $N_{\hF}$ and $N_{\hG}$, as in the works of the first author and Namikawa \cite{MR3623733} along with Saha--Schmidt \cite{MR3092267}. This is hypothesis \ref{lab:hypLR}.  In this case, one can let the $\GSp_4$ tame level $N$ to equal $\mathrm{lcm}(N_{\hF},N_{\hG})$. While Saha--Schmidt analyse the local representations at ``bad primes''  following Brooks--Schmidt \cite{MR2344630}, the first author and Namikawa \cite{MR3623733} produce an explicit construction of Yoshida lifts using well-chosen test functions at primes $l$ dividing $N_{\hF}$ and $N_{\hG}$ under these assumptions. \\ 

Although vertical control theorems of Hida and Pilloni are available in the $\GSp_4$ setting for $\Lambda$-adic families of modular forms, the absence of duality between the space of modular forms and the Hecke algebra in the $\GSp_4$ setting (in contrast with the $\Gl_2$ setting as in Hida's work \cite{MR868300}) prevents us from using these control theorems for modular forms to directly establish \textit{uniqueness} of $p$-adic family of Hecke eigensystems.

\subsection{Results towards the Iwasawa--Greenberg main conjectures: Theorem \ref{thm:mainconj2} and Corollary \ref{cor:mainconj3}}

\begin{restatable}{Theorem}{theoremtwo}\label{thm:mainconj2}
 Suppose that the hypotheses \ref{lab:irr}, \ref{lab:pdist-gsp4}, \ref{lab:multfree},  \ref{lab:yosdet}  \ref{lab:yos} and \ref{lab:yos-neb},hold.  In addition, suppose that the following hypotheses hold:
 
 \begin{enumerate}[style=sameline, style=sameline, align=left,label=\scshape{(Yos-Exist)}, ref=\scshape{Yos-Exist}]
    \item\label{lab:exist} The conclusion of Theorem \ref{thm:yoshidafamily} \ref{thmcond:yoshidafamily} holds.
  \end{enumerate}

  \begin{enumerate}[style=sameline, style=sameline, align=left,label=\scshape{(Stab)}, ref=\scshape{Stab}]
    \item\label{lab:stab} $\TTT_{\perp,\stab} \neq 0$.
  \end{enumerate}
  Then, we have the following inequality of divisors of     $\II_{\hF,\hG}$:
  \begin{align} \label{eq:main1}
\Div\left(\Sel^{\Sigma_0}_{\rhopmb{3}{2}}(\Q)^\vee\right) \geq \Div\left({\CCC}\right).
  \end{align}
\end{restatable}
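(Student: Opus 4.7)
The plan is to produce, for each prime of $\II_{\hF,\hG}$ dividing the congruence ideal $\CCC$, a corresponding contribution to the dual Selmer group $\Sel^{\Sigma_0}_{\rhopmb{3}{2}}(\Q)^\vee$, via a lattice construction in the spirit of Ribet, Wiles, Urban, and Hida--Tilouine applied to the $\GSp_4$-Hida family carried by $\TTT$. The key dichotomy driving the argument is that, by Hypothesis \ref{lab:exist}, the Yoshida branch $\widetilde{\TTT}_\YYY \cong \II_{\hF,\hG}$ carries the reducible Galois representation $\rho_\hF \oplus \rho_\hG(\sqrt{\kappa_\hF\kappa_\hG^{-1}})$, whereas each stable branch inside $\TTT_{\perp,\stab}$ is irreducible by Theorem \ref{thm:yoshidafamily}\ref{thmcond:otheryoshida}. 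Congruences between the two sides, measured precisely by $\CCC$, should therefore produce extension classes between the two summands, that is, cohomology classes in $H^1(G_\Sigma, \rhopmb{3}{2} \otimes_{\II_{\hF,\hG}} \II_{\hF,\hG}/\CCC)$ or its Pontryagin dual.

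First I would invoke Hypothesis \ref{lab:stab} so that $\TTT_{\perp,\stab}$ is non-zero, and use Hypothesis \ref{lab:multfree} to guarantee that the residual trace on each component of $\TTT_{\perp,\stab}$ is absolutely irreducible. Applying the standard lifting formalism of Bella\"{\i}che--Chenevier (generalized matrix algebras, or equivalently Nyssen--Rouquier) then upgrades the pseudocharacter to an honest Galois representation $\rho_{\perp,\stab}: G_\Sigma \to \Gl_4(\TTT_{\perp,\stab})$. Combined with the direct-sum representation on the Yoshida side, this gives a $G_\Sigma$-representation on a rank-$4$ module over $\TTT_{\perp,\stab} \times \widetilde{\TTT}_\YYY$ inside which $\TTT$ sits with finite-index image whose cokernel is controlled by $\CCC$ via the pushout definition of the congruence module.

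Second, using the identification $\TTT_{\perp,\stab}/\JJJ \cong \widetilde{\TTT}_\YYY/\CCC$, I would select a $\TTT$-stable lattice $\mathcal{L}^*$ whose reduction modulo the Yoshida ideal $\JJJ$ is a non-split extension of $\rho_\hG(\sqrt{\kappa_\hF\kappa_\hG^{-1}})$ by $\rho_\hF$ (or the reverse, depending on the chosen ordinary filtration). Standard Fitting-ideal and $\JJJ/\JJJ^2$ computations then extract a non-zero $\II_{\hF,\hG}/\CCC$-linear family of cohomology classes in $H^1\bigl(G_\Sigma, \rhopmb{3}{2} \otimes \II_{\hF,\hG}/\CCC\bigr)$, automatically unramified outside $\Sigma$. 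To show that these classes lie in $\Sel^{\Sigma_0}_{\rhopmb{3}{2}}(\Q)$, I would invoke the ordinary structure of the $\GSp_4$-Hida family: Hypothesis \ref{lab:pdist-gsp4} forces the four residual characters on $G_{\Q_p}$ to be pairwise distinct, so the ordinary filtration on $\mathcal{L}^*$ at $p$ splits off the Panchishkin piece defining the strict local condition, avoiding trivial-zero phenomena (as flagged in the remark following Definition~\ref{def:Yoshidalifthidafamilies}). Pontryagin duality then converts the resulting injection into the desired surjection $\Sel^{\Sigma_0}_{\rhopmb{3}{2}}(\Q)^\vee \twoheadrightarrow \II_{\hF,\hG}/\CCC$, whence \eqref{eq:main1} follows by taking divisors.

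The hard part will be step two: producing a $\TTT$-lattice whose mod-$\JJJ$ reduction is genuinely non-split with the extension class valued in the component giving $\rhopmb{3}{2}$ rather than its Tate dual, and whose image in $H^1$ pairs with exactly $\CCC$ rather than a proper divisor of it. This requires a careful interplay between the $p$-integral structure supplied by \ref{lab:pdist-gsp4} and the structure of the $\GSp_4$ Hecke algebra, and in particular a precise identification of $\JJJ/\JJJ^2$ with the relative cotangent module of the congruence module. The absence of a full duality between the $\GSp_4$-Hecke algebra and the space of Siegel modular forms, noted in the excerpt, makes this step materially more delicate than its classical $\Gl_2$-Eisenstein counterpart.
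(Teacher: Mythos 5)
Your overall strategy---congruences between the Yoshida branch and the stable branches produce extension classes of $\rho_\hF$ by $\rho_\hG(\sqrt{\kappa_\hF\kappa_\hG^{-1}})$, which land in the Selmer group thanks to \ref{lab:pdist-gsp4} and dualize to a lower bound on $\Div(\Sel^{\Sigma_0}_{\rhopmb{3}{2}}(\Q)^\vee)$ by $\Div(\CCC)$---is the same as the paper's. But two of your steps do not work as stated. First, you cannot upgrade the pseudocharacter on $\TTT_{\perp,\stab}$ to an honest representation $G_\Sigma\to\Gl_4(\TTT_{\perp,\stab})$ via Nyssen--Rouquier: the residual representation is $\overline{\rho}_\hF\oplus\overline{\rho}_\hG$, which is reducible, and hypothesis \ref{lab:multfree} only gives multiplicity-freeness, not absolute irreducibility. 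The paper must (and does) work throughout with the Bella\"{\i}che--Chenevier generalized matrix algebra of Theorem \ref{thm:pseudorepBC}, whose off-diagonal entries are fractional ideals $\BBB$, $\CCC$ that need not be free; the Selmer classes are manufactured directly from $\Hom_\TTT(\BBB,\widehat{\III_0})$ via Proposition \ref{prop:inclusion}, with the local condition at $p$ checked using the integral change-of-basis shape in Theorem \ref{thm:pseudorepBC}(\ref{item:pseudo7}) (the vanishing of the entries $b_{g,1},b_{g,3}$ for $g\in G_{\Q_p}$), and the passage from classes over $\II_{\hF,\hG}/\CCC$ to the full Selmer group uses the exact control theorem (\ref{eq:controlexact}) for the strict Selmer group, which your sketch omits.

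Second, the step you yourself flag as the hard part---that the classes account for all of $\CCC$ and not a proper divisor---is exactly where your plan is incomplete, and the paper's resolution is not a cotangent computation with $\JJJ/\JJJ^2$. Instead, the quantitative input is Theorem \ref{thm:yoshidafamily}\ref{thmcond:otheryoshida}: since the Galois representation of every branch of $\TTT_{\perp,\stab}$ is irreducible, the image $i_{\perp,\stab}(\BBB)$ is nonzero on every minimal prime, hence a faithful $\TTT_{\perp,\stab}$-module, so $\Fitt_{\TTT_{\perp,\stab}}(\BBB\otimes_\TTT\TTT_{\perp,\stab})=(0)$; base change of Fitting ideals along $\TTT\to\TTT_{\perp,\stab}\to\II_{\hF,\hG}/\CCC$ together with the surjection in (\ref{eq:selcongruenceidealsurj}) then forces $\Fitt_{\II_{\hF,\hG}}\bigl(\Sel^{\Sigma_0}(\DDD_{\II_{\hF,\hG}})^\vee\bigr)\subseteq\CCC$, and localizing at height-one primes gives (\ref{eq:main1}). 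Note that this yields only a surjection of the dual Selmer group onto $\BBB\otimes_\TTT\II_{\hF,\hG}/\CCC$ plus a Fitting-ideal inclusion, not the surjection onto $\II_{\hF,\hG}/\CCC$ you assert; the latter is stronger than what is proved or needed. Without the faithfulness-plus-Fitting mechanism (or an equivalent), your argument establishes only that some nonzero part of $\CCC$ is detected by the Selmer group, which is strictly weaker than the theorem.
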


Under the additional hypotheses \ref{lab:tor} and \ref{lab:uni}, note that Theorem \ref{thm:yoshidafamily} would establish the hypothesis \ref{lab:exist} of Theorem  \ref{thm:mainconj2}. We prefer working with the hypothesis \ref{lab:exist} in Theorem  \ref{thm:mainconj2} since as stated earlier and illustrated in Proposition \ref{prop:LRYoshidafam} under the hypothesis \ref{lab:hypLR}, it is also possible to construct $p$-adic families of Hecke eigensystems of Yoshida lifts.\\

 The Euler system machinery  is expected to yield the following (opposite) inequality over $\II_{\hF,\hG}[\![\GGG_\Cyc]\!]$ for the $3$-variable main conjecture.   See works of Lei--Loeffler--Zerbes \cite{MR3224721}, Loeffler--Kings--Zerbes \cite{MR4060872,MR3637653} and  B\"{u}y\"{u}kboduk--Ochiai \cite{MR4603000} for various hypotheses under which such an inequality is available.
   \[\Div\left(\Sel^{\Sigma_0}_{\rhopmb{4}{3}}(\Q)^\vee\right) \stackrel{?}{\leq}  \Div\left(\theta^{\Sigma_0}_{\pmbtwo{4}{3}}\right).\]
 One can use the fact that the direction of the inequality afforded by the Euler system machinery is opposite to the one given by the method of congruences to deduce both Conjectures \ref{conj:mainconj2} and \ref{conj:mainconj3}. These ideas fall under the topic of specialization of Selmer groups and are dealt with in an axiomatic manner in the second author's thesis \cite{MR3919711}. A crucial ingredient in the current setup to these specialization techniques is a result of the second author and Lei \cite{Lei_2020}, where they establish that if $\II_{\hF,\hG}[\![\GGG_\Cyc]\!]$-module $\Sel^{\Sigma_0}_{\rhopmb{4}{3}}(\Q)^\vee$ is torsion, then the divisor $\Div\left(\Sel^{\Sigma_0}_{\rhopmb{4}{3}}(\Q)^\vee\right)$ in $\II_{\hF,\hG}[\![\GGG_\Cyc]\!]$ is principal. 

\begin{restatable}{corollary}{corollarythirteen}\label{cor:mainconj3}
  In addition to the hypotheses of Theorem \ref{thm:mainconj2}, suppose also that the following hypotheses hold:
  \begin{enumerate}
  \item We have the following inequality of divisors in $\II_{\hF,\hG}[\![\Gamma_{\Cyc}]\!]$:
  \begin{align}\label{eq:ES}
    \Div\left(\Sel^{\Sigma_0}_{\rhopmb{4}{3}}(\Q)^\vee\right) \leq  \Div(\theta^{\Sigma_0}_{\pmbtwo{4}{3}}),
  \end{align}	

 \item We have the following inequality of divisors in $\II_{\hF,\hG}$:
  \begin{align}
    \Div({\CCC}) {\geq} \Div(\theta^{\Sigma_0}_{\pmbtwo{3}{2}}).
  \end{align}
    \end{enumerate}
Then, Conjectures \ref{conj:mainconj2} and \ref{conj:mainconj3} hold. That is, we obtain the following equality of divisors in  $\II_{\hF,\hG}[\![\Gamma_{\Cyc}]\!]$:
  \begin{align}
    \Div\left(\Sel^{\Sigma_0}_{\rhopmb{4}{3}}(\Q)^\vee\right) =  \Div\left(\theta^{\Sigma_0}_{\pmbtwo{4}{3}}\right),
  \end{align}
  along with the following equality of divisors in  $\II_{\hF,\hG}$:
  \begin{align}
    \Div\left(\Sel^{\Sigma_0}_{\rhopmb{3}{2}}(\Q)^\vee\right) = \Div\left({\CCC}\right) = \Div\left(\theta^{\Sigma_0}_{\pmbtwo{3}{2}}\right).
  \end{align}
\end{restatable}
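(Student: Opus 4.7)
The plan is to combine the hypotheses with Theorem~\ref{thm:mainconj2} and a specialization argument to deduce Conjectures~\ref{conj:mainconj2} and~\ref{conj:mainconj3}. First I would establish the two-variable main conjecture; then I would upgrade to the three-variable version via principality of the three-variable Selmer divisor together with a local-homomorphism argument.

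\textbf{Two-variable equality.} Chaining Theorem~\ref{thm:mainconj2} with hypothesis~(2) immediately gives
\[
\Div\bigl(\Sel^{\Sigma_0}_{\rhopmb{3}{2}}(\Q)^\vee\bigr) \;\geq\; \Div({\CCC}) \;\geq\; \Div\bigl(\theta^{\Sigma_0}_{\pmbtwo{3}{2}}\bigr).
\]
For the reverse inequality I would specialize the three-variable Euler system bound from hypothesis~(1). Hypothesis~\ref{lab:yosdet} makes $\sqrt{\kappa_\hF^{-1}\kappa_\hG}$ well-defined, and sending $\widetilde{\kappa}$ to this character yields a local surjection $\phi \colon \II_{\hF,\hG}\powerseries{\Gamma_\Cyc} \twoheadrightarrow \II_{\hF,\hG}$ under which the lattice defining $\rhopmb{4}{3}$ specializes to that of $\rhopmb{3}{2}$. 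Two standard inputs then suffice: (i) a control theorem (available thanks to \ref{lab:pdist-gsp4}, which rules out trivial zeros) giving
\[
\Div\bigl(\Sel^{\Sigma_0}_{\rhopmb{3}{2}}(\Q)^\vee\bigr) \;\leq\; \phi_*\Div\bigl(\Sel^{\Sigma_0}_{\rhopmb{4}{3}}(\Q)^\vee\bigr),
\]
and (ii) the interpolation formula identifying $\phi(\theta^{\Sigma_0}_{\pmbtwo{4}{3}})$ with $\theta^{\Sigma_0}_{\pmbtwo{3}{2}}$ up to a unit of $\II_{\hF,\hG}$. Combining these with hypothesis~(1) closes the chain:
\[
\Div\bigl(\Sel^{\Sigma_0}_{\rhopmb{3}{2}}(\Q)^\vee\bigr) \;\leq\; \phi_*\Div\bigl(\theta^{\Sigma_0}_{\pmbtwo{4}{3}}\bigr) \;=\; \Div\bigl(\theta^{\Sigma_0}_{\pmbtwo{3}{2}}\bigr),
\]
yielding Conjecture~\ref{conj:mainconj2}.

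\textbf{Three-variable equality.} By the result of the second author and Lei \cite{Lei_2020}, once the Selmer group is torsion, the divisor $\Div(\Sel^{\Sigma_0}_{\rhopmb{4}{3}}(\Q)^\vee)$ is principal in $\II_{\hF,\hG}\powerseries{\Gamma_\Cyc}$; write it as $\Div(\xi)$. Hypothesis~(1) then forces $\xi \mid \theta^{\Sigma_0}_{\pmbtwo{4}{3}}$, and the two-variable equality combined with the control theorem and the interpolation identity gives $\Div(\phi(\xi)) = \Div(\phi(\theta^{\Sigma_0}_{\pmbtwo{4}{3}}))$ inside $\II_{\hF,\hG}$. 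Since $\II_{\hF,\hG}$ is a noetherian normal local domain, the element $\phi(\theta^{\Sigma_0}_{\pmbtwo{4}{3}}/\xi)$ has trivial divisor and is therefore a unit; as $\phi$ is a local homomorphism, $\theta^{\Sigma_0}_{\pmbtwo{4}{3}}/\xi$ is itself a unit in $\II_{\hF,\hG}\powerseries{\Gamma_\Cyc}$, so $\Div(\xi) = \Div(\theta^{\Sigma_0}_{\pmbtwo{4}{3}})$ and Conjecture~\ref{conj:mainconj3} follows.

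\textbf{Main obstacle.} The technical core lies in the cyclotomic specialization underlying the two-variable step: one must verify both a control theorem for the non-primitive Selmer group with respect to $\ker(\phi)$ and the compatibility of the non-primitive Hida $p$-adic $L$-functions with this specialization. For the former, the strict Selmer structure (available under \ref{lab:pdist-gsp4}) together with the axiomatic descent framework of the second author's thesis \cite{MR3919711} should apply cleanly. For the latter, I would trace the interpolation formulas for $\theta^{\Sigma_0}_{\pmbtwo{4}{3}}$ at classical arithmetic points and check that restriction via $\phi$ yields the defining interpolation property of $\theta^{\Sigma_0}_{\pmbtwo{3}{2}}$; this is the step where hypothesis \ref{lab:yosdet} on the parity of the weights becomes essential, since it is precisely what allows the square-root twist that turns the cyclotomic deformation into the self-dual one.
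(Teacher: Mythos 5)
Your proposal is correct and follows essentially the same route as the paper: the lower bound comes from Theorem \ref{thm:mainconj2} together with hypothesis (2), the upper bound from specializing the three-variable Euler-system inequality along the map $\gamma\mapsto\left(\sqrt{\kappa_\hF(\gamma)\kappa_\hG(\gamma)^{-1}}\right)^{-1}$ via an exact control theorem for the strict Selmer group (using \ref{lab:pdist-gsp4}), the principality result of \cite{Lei_2020}, and the specialization machinery of \cite{MR3919711}, and your passage back to the three-variable equality is the same ``equality in one inequality if and only if equality in the other'' mechanism, which you simply make explicit through the local-homomorphism unit argument. What you leave implicit is exactly what occupies most of the paper's proof, namely verifying the inputs to \cite[Proposition 5.4]{MR3919711} --- the identification of strict and ordinary Selmer groups (Claim \ref{claim:isoselm}), the absence of nontrivial pseudonull submodules (Claim \ref{claim:nopn}), and finite projective dimension at height-two primes (Claim \ref{claim:finproj}) --- together with the remark that $\theta^{\Sigma_0}_{\pmbtwo{3}{2}}$ is by definition the image of $\theta^{\Sigma_0}_{\pmbtwo{4}{3}}$ under this specialization, so no separate interpolation check is required.
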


\subsection{Pseudo-cyclicity of the  dual of the primitive Selmer group: Theorem \ref{thm:selmstructure}} \label{subsec:twopanchishkin}

As we mentioned earlier, the Panchishkin condition for the Galois representation $\rhopmb{4}{3}$ depends on the choice of a dominant Hida family. We will need to emphasize this point (that there is a dominant Hida family) only for Theorem \ref{thm:selmstructure}. Consequently, to state our result in Theorem \ref{thm:selmstructure},  we adopt a notation for Selmer groups that is slightly different from our earlier results. We let $\Sel_{\rhopmb{4}{3},\dominant{\hF}}(\Q)$ and $\Sel_{\rhopmb{4}{3},\dominant{\hG}}(\Q)$ denote the discrete \textit{primitive} Selmer group associated to $\rhopmb{4}{3}$ when $\hF$ and $\hG$ respectively are chosen to be dominant. Let $\vartheta_{\pmbtwo{4}{3},\dominant{\hF}}$ and $\vartheta_{\pmbtwo{4}{3},\dominant{\hG}}$ respectively denote the principal divisors in $\II_{\hF\hG}[\![\GGG_\Cyc]\!]$ associated to the Pontraygin duals of the primitive Selmer groups. The fact that these divisors are principal is also shown in \cite{Lei_2020}.   We will also consider the following hypothesis:

 \begin{enumerate}[style=sameline, style=sameline, align=left,label=(\scshape{GCD}), ref=\scshape{GCD}]
    \item\label{lab:gcd} The height of the ideal $(\vartheta_{\pmbtwo{4}{3},\dominant{\hF}}, \vartheta_{\pmbtwo{4}{3},\dominant{\hG}})$ in $\II_{\hF,\hG}[\![\GGG_\Cyc]\!]$ generated by  is at least two.
  \end{enumerate}

\begin{restatable}{Theorem}{theoremthree}\label{thm:selmstructure}
  In addition to the hypotheses of Theorem \ref{thm:mainconj2}, suppose that the hypothesis \ref{lab:gcd} holds. Then, the $\II_{\hF,\hG}[\![\GGG_\Cyc]\!]$-modules $\Sel_{\rhopmb{4}{3},\dominant{\hF}}(\Q)^\vee$  and  $\Sel_{\rhopmb{4}{3},\dominant{\hG}}(\Q)^\vee$ are pseudo-cyclic. That is, for every height one prime ideal $\mathfrak{p}$ in $\II_{\hF,\hG}[\![\GGG_\Cyc]\!]$, we have the following surjections of $(\II_{\hF,\hG}[\![\GGG_\Cyc]\!])_\mathfrak{p}$-modules:
  \begin{align*}
    (\II_{\hF,\hG}[\![\GGG_\Cyc]\!])_\p \twoheadrightarrow \left(\Sel_{\rhopmb{4}{3},\dominant{\hF}}(\Q)^\vee\right)_\p, \qquad  (\II_{\hF,\hG}[\![\GGG_\Cyc]\!])_\p \twoheadrightarrow \left(\Sel_{\rhopmb{4}{3},\dominant{\hG}}(\Q)^\vee\right)_\p.
  \end{align*}

\end{restatable}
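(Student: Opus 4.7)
My plan is to establish pseudo-cyclicity by showing that the localization of each of the two primitive dual Selmer groups at every height-one prime of $\II_{\hF,\hG}[\![\Gamma_\Cyc]\!]$ is cyclic. This reduction is legitimate because the results of \cite{Lei_2020}, applicable under our hypotheses, show that both Pontryagin duals have no non-trivial pseudo-null submodule; for such torsion modules over a Krull domain, pseudo-cyclicity coincides with cyclicity of every height-one localization.

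Without loss of generality I focus on $\Sel_{\rhopmb{4}{3},\dominant{\hF}}(\Q)^\vee$, the case of $\Sel_{\rhopmb{4}{3},\dominant{\hG}}(\Q)^\vee$ being symmetric. Fix a height-one prime $\p$ of $\II_{\hF,\hG}[\![\Gamma_\Cyc]\!]$. If $\vartheta_{\pmbtwo{4}{3},\dominant{\hF}} \notin \p$, then $(\Sel_{\rhopmb{4}{3},\dominant{\hF}}(\Q)^\vee)_\p$ vanishes and cyclicity is immediate at $\p$. If instead $\vartheta_{\pmbtwo{4}{3},\dominant{\hF}} \in \p$, the hypothesis \ref{lab:gcd} forces $\vartheta_{\pmbtwo{4}{3},\dominant{\hG}} \notin \p$, so that $(\Sel_{\rhopmb{4}{3},\dominant{\hG}}(\Q)^\vee)_\p = 0$.

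The heart of the argument is then a Poitou--Tate comparison between the two primitive Selmer groups via the difference of their Panchishkin local conditions at $p$. Denoting by $\rho^+$ the ordinary direction at $p$ of each Hida family, the two Panchishkin sub-representations can be identified with $\Hom(\rho_\hF, \rho_\hG^+)$ and $\Hom(\rho_\hF/\rho_\hF^+, \rho_\hG)$ respectively; they share a rank-one intersection $\Hom(\rho_\hF/\rho_\hF^+, \rho_\hG^+)$ and together span a rank-three sub-representation. A Poitou--Tate computation then produces an exact sequence comparing the two Pontryagin dual Selmer groups whose connecting term is a local cohomology group at $p$, which by hypothesis \ref{lab:pdist-gsp4} is cyclic over $\II_{\hF,\hG}[\![\Gamma_\Cyc]\!]$. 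Localizing this sequence at $\p$ and using the vanishing of $(\Sel_{\rhopmb{4}{3},\dominant{\hG}}(\Q)^\vee)_\p$ established in the previous paragraph, I would obtain a surjection from a cyclic $(\II_{\hF,\hG}[\![\Gamma_\Cyc]\!])_\p$-module onto $(\Sel_{\rhopmb{4}{3},\dominant{\hF}}(\Q)^\vee)_\p$, giving the desired cyclicity.

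The main obstacle will be the setup and analysis of this Poitou--Tate comparison. Since the two Panchishkin sub-representations are not nested, one has to work with both their intersection (rank one) and their sum (rank three), which requires careful bookkeeping of the induced local conditions and their global duals. The hypothesis \ref{lab:pdist-gsp4} plays a crucial role here: the distinctness of the four residual characters of the local representation at $p$ ensures that the relevant Jordan--H\"older filtrations split off integrally, so that the local cohomology connecting term is cyclic over the full Iwasawa algebra rather than only pseudo-cyclic. A secondary task is to invoke Theorem \ref{thm:mainconj2} to guarantee that the two Selmer groups are torsion and that $\vartheta_{\pmbtwo{4}{3},\dominant{\hF}}$ and $\vartheta_{\pmbtwo{4}{3},\dominant{\hG}}$ are indeed principal divisors, so that the statement of \ref{lab:gcd} is meaningful.
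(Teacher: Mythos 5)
Your overall architecture does mirror the paper's: you compare the two Panchishkin conditions through their sum (rank three) and intersection (rank one), so that each primitive dual Selmer group differs from a common ``sum'' Selmer group by a rank-one local $H^1$ at $p$, and you use \ref{lab:gcd} to guarantee that no height-one prime lies in the support of both duals (the paper runs this through $\Sel_{\mathrm{str},\mathrm{sum}}$ and the short exact sequence (\ref{eq:sesdom})). The genuine gap is in your treatment of the connecting local term. You claim that $H^1_{\mathrm{cont}}\bigl(\Q_p,\,\DDD_{\II_{\hF,\hG}[\![\Gamma_\Cyc]\!],\Fil^+_{\mathrm{sum}}}/\DDD_{\II_{\hF,\hG}[\![\Gamma_\Cyc]\!],\Fil^+_{(i)}}\bigr)^\vee$ is cyclic over $\II_{\hF,\hG}[\![\Gamma_\Cyc]\!]$ ``by \ref{lab:pdist-gsp4}'' because the relevant filtration splits integrally. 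Cyclicity over the full three-variable algebra is both more than you need and not something \ref{lab:pdist-gsp4} can deliver: for a cofree corank-one coefficient module $D$, the torsion of $H^1(\Q_p,D)^\vee$ is governed by $H^0(\Q_p,D^*)$ of the Tate dual, not by the distinctness of the four residual characters of $\rhopmb{4}{3}\vert_{G_{\Q_p}}$. What you actually need (and what suffices, since the module has rank one and localizations at height-one primes are modules over DVRs) is that this dual has no non-zero torsion submodule, equivalently that $H^0\bigl(\Q_p,\DDD^{*}_{\II_{\hF,\hG}[\![\Gamma_\Cyc]\!],\mathrm{quot}}\bigr)$ is co-pseudo-null, where the graded piece is $\Hom_{\II_{\hF,\hG}}(\Fil^+L_\hF\otimes\II_{\hF,\hG},\Fil^+L_\hG\otimes\II_{\hF,\hG})(\widetilde{\kappa}^{-1})$. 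Hypothesis \ref{lab:pdist-gsp4} constrains the four residual characters of the four-dimensional representation but says nothing about the ratio of the two ramified ones against the extra cyclotomic twist entering the Tate dual, so it cannot rule out a residually trivial character there and cannot by itself kill this torsion.

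The paper closes exactly this point by a separate argument (Claim \ref{claim:notorsion}): the character acting on the rank-one piece restricts on $\Gal{\overline{\Q}_p}{\Q_p(\mu_{p^\infty})}$ to a finite character times $\epsilon_\hF^{-1}\epsilon_\hG^{-1}$, whose infinite order comes from the non-constancy of the $U_p$-eigenvalues $a_p(\hF)$, $a_p(\hG)$ along the Hida families; combined with an annihilator coming from the cyclotomic variable, this makes $H^0\bigl(\Q_p,\DDD^{*}_{\II_{\hF,\hG}[\![\Gamma_\Cyc]\!],\mathrm{quot}}\bigr)^\vee$ pseudo-null, and \cite[Proposition 5.7]{MR2290593} then yields the torsion-freeness you need. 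You would also still owe the bookkeeping you defer: the cotorsion-ness of the cokernels of the global-to-local maps and the corank computation showing $\Sel_{\mathrm{str},\mathrm{sum}}(\DDD_{\II_{\hF,\hG}[\![\Gamma_\Cyc]\!]})^\vee$ has rank one (the paper obtains these from \cite{MR2740696} and the Euler--Poincar\'e formulas of \cite{MR2290593}). Finally, your opening reduction via \cite{Lei_2020} and pseudo-null submodules is unnecessary and misattributed: the theorem's notion of pseudo-cyclicity is by definition a statement about height-one localizations, and \cite{Lei_2020} enters the paper only to guarantee principality of the divisors $\vartheta_{\pmbtwo{4}{3},\dominant{\hF}}$ and $\vartheta_{\pmbtwo{4}{3},\dominant{\hG}}$, which makes \ref{lab:gcd} meaningful.
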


\begin{remark}
Analogs of the hypothesis \ref{lab:gcd} have been recently considered in the topic of higher codimension Iwasawa theory \cite{MR4084165,MR3993809} and is related to pseudo-nullity conjectures, following Greenberg \cite[Conjecture 3.5]{MR1846466}. See the discussion in \cite[Section 1.3]{Lei_2020}.  We do not consider a similar hypothesis for the pair of non-primitive Selmer groups since they may have a common factor corresponding to the local Euler factors away from $p$.	
\end{remark}

\subsection*{Notations}
We shall use the following notation. Let $J=\pMX{0_2}{\bfone_2}{-\bfone_2}{0_2}\in \GL_4(\Z)$. Let $\GSp_4$ be the symplectic group of degree $4$. For any commutative ring $R$, \begin{align*}
\GSp_4(R) := \left\{g \in \Gl_4(R)\mid gJ g^t = \nu(g)J,\quad \nu(g)\in R^\times \right\}.
\end{align*}
The map $\nu: \GSp_4(R) \rightarrow R^\times$ is called the similitude character. Denote by $B$ the standard Borel subgroup of $\GSp_4$ consisting of matrices of the form
\[\begin{pmatrix}*&*&*&*\\
&*&*&*\\
&&*&\\
&&*&*
\end{pmatrix}\] 

For $(a,b,c)\in \Gm^3$, put
\[\dG{a}{b}{c}=\begin{pmatrix}a&&&\\
&b&&\\
&&a^{-1}c&\\
&&&b^{-1}c\end{pmatrix}\in \GSp_4.\]

Let $\AAA$ and $\AAA_f$ denote respectively the adeles and the finite adeles of $\Q$. 
Let $\cI_p$ be the Iwahori subgroup of $\GSp_4(\Z_p)$, consisting of matrices in $\GSp_4(\Z_p)$ whose reduction modulo $p$ belongs to the Borel subgroup $B(\Z/p\Z)$. \\ 

Define a morphism $h:\C^\times \to G(\R)$ by 
\[h(x+\sqrt{-1}y)=\pMX{x1_2}{y1_2}{-y1_2}{x1_2}.\]
Let $K_\infty$ denote the centralizer of $h$ in $\GSp_4(\R)$ and let $\frakg$ be the Lie algebra of $\GSp_4(\R)$.

\section{Hecke algebras for $\GSp_4$}\label{sec:Hecke}

\subsection{The universal Hecke algebra}
Let $A$ be a commutative ring. For each prime $\ell$ and an open-compact subgroup $K\subset \GSp_4(\Q_\ell)$, we let $\cH(K,A)$ be the ring (multiplication is via convolution), consisting of $K$ bi-invariant locally constant functions $\GSp_4(\Q_\ell) \rightarrow A$ with compact support. If $K$ equals $\GSp_4(\Z_\ell)$, it is well-known that the algebra $\cH(K,A)$ is commutative and is equal to the polynomial ring \[
A[T_\ell, R_\ell, S_\ell, S_\ell^{-1}],\]
where $T_\ell$, $R_\ell$ and $S_\ell$ are respectively the characteristic functions of the following double cosets:
\begin{align*}
\GSp_4(\Z_\ell)\dG{\ell}{\ell}{\ell}\GSp_4(\Z_\ell),  \quad  \GSp_4(\Z_\ell)\dG{\ell^2}{\ell}{\ell^2} \GSp_4(\Z_\ell),  \quad  \GSp_4(\Z_\ell)\dG{\ell}{\ell}{\ell^2}\GSp_4(\Z_\ell)
\end{align*}
(see \cite[Section 5.1.3]{MR4105535}). If $N$ is a positive integer, define the universal anemic Hecke algebra $\cH(N,A)$ of level $N$ by \[\cH(N,A)=\bigotimes_{\ell\nmid N}'\cH(\GSp_4(\Z_\ell),A).\]
Let  $A[U_\cP,U_\cQ]$ be the subalgebra of $\cH(\cI_p,A)$, where $U_\cP$ and $U_\cQ$ are the characteristic functions of the following double cosets:
\begin{align*}
\cI_p\dG{p}{p}{p}\cI_p,  \qquad  \cI_p\dG{p^2}{p}{p^2}\cI_p.
\end{align*}
Define the universal Hecke algebra $\cH(N,\cI_p,A)$ of the Iwahori level at $p$ by  
\[\cH(N,\cI_p,A):= A[U_\cP,U_\cQ]\bigotimes'_{\ell\nmid Np}\cH(\GSp_4(\Z_\ell),A)\]
It is isomorphic to the polynomial ring 
$A\left[\left\{U_\cP,U_\cQ\right\} \cup \left\{T_\ell,R_\ell,S_\ell,S_\ell^{-1}\right\}_{\ell \nmid Np}\right]$ over the countable set $\left\{U_\cP,U_\cQ\right\} \cup \left\{T_\ell,R_\ell,S_\ell,S_\ell^{-1}\right\}$, as $\ell$ variables over primes not dividing $Np$.

\subsection{Hida theory for Siegel modular forms} \label{sec:hidatheoryGSP4}
Let $T^1$ denote the diagonal torus of ${\rm Sp}_4$ defined for any commutative ring $R$ as follows: 
\[T^1(R)=\{\dG{t_1}{t_2}{1}\mid t_1,t_2\in R^\times\}.\]
Let $\OO$ denote the ring of integers in a finite extension of $\Q_p$. Consider the completed group $\cO$-algebra: \[\wtd\Lambda:=\cO\powerseries{T^1(\Z_p)}.\] For each \textit{integer weight} $\ulk=(k_1,k_2) \in \Z^2$, the character
\[
	T(\Z_p) \rightarrow \Z_p^\times,\quad {\rm diag}(t_1,t_2,t_1^{-1},t_2^{-1}) \mapsto  t_1^{k_1} t_2^{k_2}.
\]
determines a ring homomorphism of completed $\Z_p$-algebras:
\begin{align}\label{eq:ringhomk1k2}
	\varphi_\ulk: \wtd\Lambda \rightarrow \Z_p.
\end{align}
For each weight $\ulk=(k_1,k_2)$, let $P_\ulk$ denote the prime ideal $\ker\varphi_\ulk$ of $\wtd\Lambda$. 
Let $\frakX^{\rm cls}(\wtd\Lambda)$ and $\frakX^{\rm temp}(\wtd\Lambda)$  denote the set of classical points and tempered weights (with $k_2>3$):

\[\frakX^{\rm cls}(\wtd\Lambda):=\{P_\ulk\mid k_1\geq k_2>3\}, \qquad \frakX^{\rm temp}(\wtd\Lambda):=\{P_\ulk \mid k_1> k_2>3\}\]

For any open-compact subgroup $U\subset \GSp_4(\AAA_f)$, denote by $S_\ulk(U,\cO)$ the space of Siegel modular forms of weight $\ulk$ and level $U$ with coefficients in $\cO$. Let $N$ be a positive integer prime to $p$. Let $K$ be an open-compact subgroup of $\GSp_4(\AAA_f)$ such that $K_\ell$ equals $\GSp_4(\Z_\ell)$ for all primes $\ell\nmid N$. Now consider
\[K_0(p) \coloneqq \{g\in K\mid g_p\in \Iw_p\}.\]
 Then $S_\ulk(K_0(p),\cO)$ is a natural $\cH(N,\cI_p,\cO)$-module. 
Let $e_\ord:=\displaystyle \lim_{n\to\infty}(U_\cP U_\cQ)^{n!}$ denote the ordinary projector on $S_\ulk(K_0(p),\cO)$. Let $S^\ord_\ulk(K_0(p),\cO)$ denote $e_\ord \cdot  S_\ulk(K_0(p),\cO)$. \\ 

 We have the following \textit{vertical control} theorem, independently due to Hida \cite[Theorem 1.1]{MR1954939} and Pilloni \cite[Theorem 7.1]{MR3059119}.  
\begin{theorem}[Control Theorem]\label{T:control}
There exists a $\cH(N,\cI_p,\wtd\Lambda)$-module $\bfS^{\ord}(K)$ such that 
\begin{enumerate}
\item $\bfS^{\ord}(K)$ is free $\wtd\Lambda$-module with finite rank,
\item For every classical weight $P_\ulk\in\frakX^{\rm cls}(\wtd\Lambda)$, we have an $\cH(N,\cI_p,\cO)$-module isomorphism
\[\bfS^\ord(K) \otimes_{\wtd\Lambda} \wtd\Lambda/P_\ulk \iso S_\ulk^\ord(K_0(p),\cO).\]
\end{enumerate}
\end{theorem}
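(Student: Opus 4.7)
The plan is to appeal to the framework developed independently by Hida \cite{MR1954939} and Pilloni \cite{MR3059119} for $p$-adic families of ordinary genus-two Siegel modular forms. First I would construct $\bfS^{\ord}(K)$ as follows: along the Iwahori tower $\{K_0(p^n)\}_{n\geq 1}\subset \GSp_4(\AAA_f)$, form the inverse system of spaces of Siegel cusp forms (taken with coefficients in $\cO/p^n\cO$ and a universal weight twist coming from the action of $T^1(\Z_p)$), apply the ordinary projector $e_\ord=\lim_n(U_\cP U_\cQ)^{n!}$, and equip the resulting $\wtd\Lambda$-module with its natural $\cH(N,\cI_p,\wtd\Lambda)$-action coming from the diamond and Hecke operators on each layer. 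The $\wtd\Lambda$-algebra structure arises from letting $T^1(\Z_p)$ act via the diamond operators in the Iwahori level at $p$.

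The freeness and finite-rank claim in (1) reduces to a cohomological vanishing statement on the ordinary locus: after applying $e_\ord$, the higher degrees of coherent cohomology of the relevant automorphic vector bundles on the Siegel threefold with ordinary coefficients vanish, so the ordinary part of cusp forms behaves cohomologically like sections of a sheaf on a $0$-dimensional base. Once this is in place, a Mittag-Leffler argument yields $p$-torsion-freeness of the inverse limit, and a Nakayama-type argument over the local ring $\wtd\Lambda$ upgrades this to freeness of finite rank. I expect this vanishing to be the main obstacle; it is the technical heart of both cited references, achieved by Hida through a direct analysis of the action of $U_\cP,U_\cQ$ on the cohomology of the Igusa tower and by Pilloni via higher Hida theory using the ordinary locus inside the toroidal compactification.

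For the control theorem in (2), freeness produces the short exact sequence
\[
0 \longrightarrow \bfS^{\ord}(K) \xrightarrow{P_\ulk} \bfS^{\ord}(K) \longrightarrow \bfS^{\ord}(K) \otimes_{\wtd\Lambda} \wtd\Lambda/P_\ulk \longrightarrow 0,
\]
whose cokernel is a natural candidate for $S^\ord_\ulk(K_0(p),\cO)$. To identify the two as $\cH(N,\cI_p,\cO)$-modules, I would combine (a) the identification of the specialization of the $\wtd\Lambda$-adic construction at $P_\ulk$ with the space of $p$-adic ordinary Siegel forms of weight $\ulk$, which follows from the construction itself, and (b) the classicality theorem for ordinary overconvergent Siegel cusp forms at tempered classical weights $\ulk=(k_1,k_2)$ with $k_1\geq k_2>3$: the small-slope condition on $e_\ord$ forces every such $p$-adic ordinary form to be classical. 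Together these establish that the specialization map is an isomorphism of $\cH(N,\cI_p,\cO)$-modules onto $S^\ord_\ulk(K_0(p),\cO)$, completing the proof.
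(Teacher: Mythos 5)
Your proposal is correct and follows essentially the same route as the paper: the paper simply takes $\bfS^{\ord}(K):=\Hom_{\wtd\Lambda}(\mathcal{V}_{\mathrm{cusp}},\wtd\Lambda)$ for Pilloni's module $\mathcal{V}^{\mathrm{ord}-B,\star}_{\mathrm{cusp}}$, gets freeness and specialization to $p$-adic ordinary forms from his Theorem 7.1(4,6), and then identifies the specialization with $S_\ulk^\ord(K_0(p),\cO)$ exactly as you do, via overconvergence of ordinary $p$-adic forms and the classicality theorem for weights $k_1\geq k_2>3$. The only cosmetic difference is that you rebuild the $\wtd\Lambda$-module as an ordinary inverse limit in Hida's style rather than dualizing Pilloni's coherent-cohomology module, but the substance (freeness from the cited vanishing/control results, plus classicality to pass from $p$-adic to classical ordinary forms) is the same.
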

\begin{proof}

Let $\mathcal V_{\rm cusp}:=\mathcal{V}^{\mathrm{ord}-B,\star}_{\mathrm{cusp}}$ be the $\wtd\Lambda$-module introduced in the first paragraph of \cite[\S 7, p.379]{MR3059119}. By construction, $\mathcal V_{\rm cusp}$ is equipped with a natural Hecke action $\cH(N,\cI_p,\wtd\Lambda)\to \End_{\wtd\Lambda}(\mathcal V_{\rm cusp})$. Let
\[ \bfS^\ord(K):=\Hom_{\wtd\Lambda}(\mathcal{V}_{\mathrm{cusp}},\wtd\Lambda).\]
By \cite[Theorem 7.1(4,6)]{MR3059119}, $\bfS^\ord(K)$ is a free $\wtd\Lambda$-module and for any weight $\ulk$, 
\[\bfS^\ord(K)/P_\ulk \bfS^\ord(K_0(p))\iso \widehat{S}_\ulk^\ord(K_0(p),\cO),\]
where $\widehat{S}_\ulk^\ord(K_0(p),\cO)$ is the space of ordinary $p$-adic modular forms of weight $\ulk=(k_1,k_2)$ and level $K_0(p)$. By \cite[Proposition 5.4 and Th\'eor\`eme A.3]{MR3059119}, ordinary $p$-adic modular forms of weight $(k_1,k_2)$ with $k_1\geq k_2$  are overconvergent and by \cite[Th\'eor\`eme and Remarque, p.977]{BPS16},  any overconvergent modular form of weight $(k_1,k_2)$ with $k_1\geq k_2>3$ is classical. We find that 
\[\widehat{S}_\ulk^\ord(K_0(p),\cO)=S_\ulk^\ord(K_0(p),\cO)\text{ if }k_1\geq k_2>3.\]
This completes the proof. 
\end{proof}

\begin{definition}Let ${\rm H}_{\ulk}^{\mathrm{ord}}(N)$ be the $\cO$-subalgebra of $\End_{\cO}(S^{\ord}_\ulk(K_0(p),\cO))$ generated by the image of $\hhh(N)$.  We define the big cuspidal Hecke algebra $\hhh(N)$ to be the $\wtd\Lambda$-subalgebra of $\End_{\wtd\Lambda}(\bfS^\ord(K))$ generated by the image of $\cH(N,\cI_p,\wtd\Lambda)$. Since $\End_{\wtd\Lambda}(\bfS^\ord(K))$ is finitely generated as $\wtd\Lambda$-module, $\hhh(N)$ is also finitely generated as a $\wtd\Lambda$-module. \end{definition}

\begin{proposition}\label{prop:kerspknilpotent}
For every $P_\ulk\in \frak{X}^{\rm cls}(\wtd\Lambda)$, the specialization map 
\[{\rm sp}_\ulk: \hhh(N)\otimes_{\wtd\Lambda} \wtd\Lambda/P_{\ulk} \onto{\rm H}_{\ulk}^{\mathrm{ord}}(K),\quad t\mapsto t|_{S^{\ord}_\ulk(K_0(p),\cO)} \]
is surjective and the kernel of ${\rm sp}_\ulk$ is contained in the radical of $\hhh(N)\otimes \wtd\Lambda/P_{\ulk}$.
\end{proposition}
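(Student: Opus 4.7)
The plan is to treat the two assertions separately, handling surjectivity by chasing the control theorem through the definitions of the two Hecke algebras, and the kernel statement by a Cayley--Hamilton argument applied to the free $\wtd\Lambda$-module $\bfS^\ord(K)$.

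\emph{Surjectivity.} By the control theorem (Theorem~\ref{T:control}(2)), there is a $\cH(N,\cI_p,\cO)$-equivariant isomorphism
\[\bfS^\ord(K)\otimes_{\wtd\Lambda}\wtd\Lambda/P_\ulk \iso S_\ulk^\ord(K_0(p),\cO).\]
Reducing the structure map $\hhh(N)\hookrightarrow \End_{\wtd\Lambda}(\bfS^\ord(K))$ modulo $P_\ulk$ therefore yields a homomorphism
\[\hhh(N)\otimes_{\wtd\Lambda}\wtd\Lambda/P_\ulk \longrightarrow \End_{\cO}\bigl(S^\ord_\ulk(K_0(p),\cO)\bigr)\]
that is compatible with the action of $\cH(N,\cI_p,\wtd\Lambda)$ via its specialization $\cH(N,\cI_p,\wtd\Lambda)\otimes\wtd\Lambda/P_\ulk = \cH(N,\cI_p,\cO)$. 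Since $\hhh(N)$ is by definition the image of $\cH(N,\cI_p,\wtd\Lambda)$ in $\End_{\wtd\Lambda}(\bfS^\ord(K))$, the image of ${\rm sp}_\ulk$ contains the image of $\cH(N,\cI_p,\cO)$ in $\End_{\cO}(S^\ord_\ulk(K_0(p),\cO))$; this image is exactly ${\rm H}_\ulk^{\mathrm{ord}}(N)$ by definition, giving surjectivity.

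\emph{The kernel is nil.} Let $t\in\hhh(N)$ lie in the preimage of $\ker({\rm sp}_\ulk)$, so that $t$ annihilates $\bfS^\ord(K)/P_\ulk\bfS^\ord(K)$. Because $\bfS^\ord(K)$ is a free $\wtd\Lambda$-module of some finite rank $n$ (by Theorem~\ref{T:control}(1)), we may regard $t$ as an element of $\mathrm M_n(\wtd\Lambda)$ and form its characteristic polynomial
\[\chi_t(X) = X^n + c_{n-1}X^{n-1} + \cdots + c_0 \in \wtd\Lambda[X].\]
Reduction modulo $P_\ulk$ is compatible with the formation of characteristic polynomials, and the reduction of $t$ is the zero endomorphism of the free $\cO$-module $S_\ulk^\ord(K_0(p),\cO)$ of rank~$n$, whose characteristic polynomial is $X^n$. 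Hence $c_i\in P_\ulk$ for every $i<n$. By the Cayley--Hamilton theorem, $t^n = -\sum_{i=0}^{n-1}c_i t^i \in P_\ulk\cdot\hhh(N)$, so the image $\bar t$ of $t$ in $\hhh(N)\otimes\wtd\Lambda/P_\ulk$ satisfies $\bar t^{\,n}=0$.

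\emph{Conclusion.} Thus every element of $\ker({\rm sp}_\ulk)$ is nilpotent, so $\ker({\rm sp}_\ulk)$ is contained in the nilradical of the commutative ring $\hhh(N)\otimes\wtd\Lambda/P_\ulk$ and, a fortiori, in its Jacobson radical. I do not anticipate a substantive obstacle: the surjectivity is essentially tautological once the control theorem is invoked, and the non-trivial input for the kernel statement, the freeness of $\bfS^\ord(K)$ over $\wtd\Lambda$, is provided by Theorem~\ref{T:control}(1) and makes Cayley--Hamilton immediately applicable.
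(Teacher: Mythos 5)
Your proposal is correct and follows essentially the same route as the paper: the surjectivity (which the paper dismisses as clear) is handled by unwinding the control theorem and the definitions of the two Hecke algebras, and the kernel statement is proved by exactly the paper's argument — viewing $t$ in $\mathrm M_n(\wtd\Lambda)$ via the freeness of $\bfS^\ord(K)$, noting its characteristic polynomial reduces to $X^n$ modulo $P_\ulk$, and applying Cayley--Hamilton to get $t^n\in P_\ulk\hhh(N)$.
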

\begin{proof} 
The surjectivity is clear. Let $t\in \hhh(N)$. Since $\bfS^\ord(K)$ is a free $\wtd\Lambda$-module of finite rank (say $n$), we can consider the characteristic polynomial $Q(X)\in \wtd\Lambda[X]$ of $t$. The reduction $Q(X)\pmod{P_{\ulk}}$ is the characteristic polynomial of its image under ${\rm sp}_{\ulk(t)}$. If $t$ belongs to $\ker{\rm sp}_\ulk$, then $Q(X)\pmod{P_{\ulk}}=X^n$. We can thus write $Q(X)=X^n+\beta(X)$, for some element $\beta(X) \in P_{\ulk}[X]$. It follows by using the Cayley-Hamilton theorem that $t^n+\beta(t)$, viewed as an element of $\hhh(N)$, equals $0$. Since $\beta(t)$ belongs to $P_{\ulk}\hhh(N)$, we can conclude that $t$ is nilpotent in $\hhh(N)/P_{\ulk}$.
\end{proof}

As an immediate corollary to Proposition \ref{prop:kerspknilpotent}, we have:
\begin{corollary}\label{cor:everyspecclassical}
Let $P_\ulk$ be an element of $\frakX^{\rm cls}(\wtd\Lambda)$. Then, every ring homomorphism $\hhh(N) \rightarrow \overline{\Q}_p$ containing the prime ideal $P_\ulk$ of $\wtd\Lambda$ corresponds to a $p$-adic Hecke eigensystem with weight $\ulk$.
\end{corollary}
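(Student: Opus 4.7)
The plan is to read the corollary as follows: a ring homomorphism $\lambda: \hhh(N) \to \overline{\Q}_p$ that ``contains $P_\ulk$'' means $\ker(\lambda) \supset P_\ulk \hhh(N)$, so that $\lambda$ factors as $\hhh(N) \onto \hhh(N)/P_\ulk \hhh(N) \xrightarrow{\bar\lambda} \overline{\Q}_p$. The task is then to promote $\bar\lambda$ to a homomorphism from $\mathrm{H}^{\mathrm{ord}}_{\ulk}(N)$, the finite-level Hecke algebra acting on $S^{\mathrm{ord}}_{\ulk}(K_0(p),\cO)$, because homomorphisms out of that algebra to $\overline{\Q}_p$ are precisely $p$-adic Hecke eigensystems of weight $\ulk$ occurring in the space of ordinary classical forms.

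The key mechanism is Proposition \ref{prop:kerspknilpotent}: the specialization map ${\rm sp}_\ulk: \hhh(N)/P_\ulk \hhh(N) \twoheadrightarrow \mathrm{H}^{\mathrm{ord}}_{\ulk}(N)$ is surjective with kernel contained in the nilradical of the source (the proof of the proposition actually gives nilpotence of each kernel element, not merely membership in the Jacobson radical, which is what we need). Since $\overline{\Q}_p$ is a domain, in particular reduced, every ring homomorphism to $\overline{\Q}_p$ vanishes on all nilpotent elements. Hence $\bar\lambda$ vanishes on $\ker({\rm sp}_\ulk)$ and therefore descends to a ring homomorphism $\tilde\lambda: \mathrm{H}^{\mathrm{ord}}_{\ulk}(N) \to \overline{\Q}_p$.

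Finally, I would observe that by the very definition of $\mathrm{H}^{\mathrm{ord}}_{\ulk}(N)$ as the $\cO$-subalgebra of $\End_\cO(S^{\mathrm{ord}}_\ulk(K_0(p),\cO))$ generated by the image of $\cH(N,\cI_p,\cO)$, any $\cO$-algebra homomorphism $\tilde\lambda: \mathrm{H}^{\mathrm{ord}}_{\ulk}(N)\to\overline{\Q}_p$ assigns Hecke eigenvalues in $\overline{\Q}_p$ to the universal Hecke operators, and these eigenvalues occur on some nonzero eigenform in $S^{\mathrm{ord}}_\ulk(K_0(p),\overline{\Q}_p)$ by the standard duality between $\mathrm{H}^{\mathrm{ord}}_\ulk(N)$-algebra characters and generalized eigenspaces in a finite-dimensional faithful module. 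This is precisely a $p$-adic Hecke eigensystem of weight $\ulk$.

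I do not anticipate any real obstacle: the whole argument is a short formal consequence of Proposition \ref{prop:kerspknilpotent} together with the reducedness of $\overline{\Q}_p$. The only thing one must be careful about is not to confuse the Jacobson radical with the nilradical in the statement of the proposition; for the argument it suffices that each element of $\ker({\rm sp}_\ulk)$ is actually nilpotent, which is exactly what the Cayley--Hamilton argument in the proof of Proposition \ref{prop:kerspknilpotent} establishes.
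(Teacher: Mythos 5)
Your argument is correct and is exactly the one the paper intends: it states the corollary as an immediate consequence of Proposition~\ref{prop:kerspknilpotent}, namely that any homomorphism to $\overline{\Q}_p$ factoring through $\hhh(N)/P_\ulk\hhh(N)$ kills the nilpotent kernel of ${\rm sp}_\ulk$ (since $\overline{\Q}_p$ is reduced) and hence descends to ${\rm H}^{\mathrm{ord}}_{\ulk}(N)$, giving a weight-$\ulk$ Hecke eigensystem. Your remark distinguishing nilpotence from mere membership in the Jacobson radical is a fair and accurate reading of the proposition's proof.
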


Let $\Delta$ be the torsion subgroup of $T^1(\Z_p)$ and let $\wh\Delta=\Hom(\Delta,\Z_p^\times)$ be the set of characters of $\Delta$. For each $\charcomp\in \wh\Delta$, define the idempotent 
\[e_{\charcomp}:=\frac{1}{\sharp\Delta}\sum_{\delta\in \Delta}\charcomp(\delta^{-1})[\delta]\in \wtd\Lambda.\]
Define
\[\bfS^\ord(K,\charcomp):=e_\charcomp\cdot\bfS^\ord(K);\quad \hhh(N,\charcomp)=e_\charcomp\cdot\hhh(N).\]

Let $\Gamma\subset T^1(\Z_p)$ be the subgroup defined by  
\[\Gamma:=\{\dG{t_1}{t_2}{1}\mid  t_1,t_2\in 1+p\Z_p\}\subset T^1(\Z_p).\]
Let $\Lambda$ denote the complete group algebra $\cO\powerseries{\Gamma}$ of $\Gamma$ over $\cO$. Then, $\Lambda$ turns out to be an Iwasawa algebra, being (non-canonically) isomorphic to the two-variable power series $\cO\powerseries{X_1,X_2}$. We also have the following decomposition:  
\begin{align}\label{eq:lambdadec}\wtd\Lambda=\prod_{\charcomp \in \wh\Delta}\Lambda e_{\charcomp}.
\end{align}
If the character component $\charcomp$ is clear from the context, we simply write $\Lambda$ instead of $\Lambda e_\charcomp$. Each maximal ideal $\frakm$ determines a unique $\charcomp_{\frakm}\in \wh\Delta$ such that 
\[\hhh(N,\charcomp)_{\frak{m}}=e_{\charcomp_{\frakm}} \hhh(N)_\frakm,\]
and there exists $(a,b)\in (\Z/p\Z)^2$ such that $\charcomp_{\frakm}(\dG{t_1}{t_2}{1})=t_1^at_2^b$. Put
\[\frakX^{\rm cls}_{\charcomp}(\Lambda):=\{P_\ulk\in\frakX^{\rm cls}\mid k_1\equiv a,\,k_2\equiv b\pmod{p}\}, \qquad \frakX^{\rm temp}_{\charcomp}:=\frakX^{\rm cls}_{\charcomp} \cap \frakX^{\rm temp}.\]
 Using Theorem \ref{T:control}, one can conclude that $\bfS^\ord(K)_{\frakm}$ is free of finite rank over $\Lambda$ and  
\[\bfS^\ord(K)_{\frakm}\otimes \Lambda/P_{\ulk}\iso \bfS_\ulk^\ord(K,\cO),\quad \ulk\in \frakX^{\rm cls}_{\frakm}.\]

\begin{proposition}\label{P:reduced} The Hecke algebra $\hhh(N)_{\frak{m}}$ is reduced local $\Lambda$-algebra. 
\end{proposition}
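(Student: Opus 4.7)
The plan is to embed $\hhh(N)_\frakm$ into the endomorphism algebra of a finite free $\Lambda$-module, exploit the fact that at classical tempered weights the Hecke algebra acts semisimply, and then interpolate by Zariski density. Localness is immediate because $\hhh(N)$ is module-finite over the complete local ring $\wtd\Lambda$ (and hence semi-local), so $\hhh(N)_\frakm$ is the local factor at $\frakm$.

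First I would record that $\hhh(N)_\frakm$ is $\Lambda$-torsion-free. By its very definition $\hhh(N)\hookrightarrow \End_{\wtd\Lambda}(\bfS^\ord(K))$, and applying the decomposition \eqref{eq:lambdadec} and localizing gives $\hhh(N)_\frakm \hookrightarrow \End_\Lambda(\bfS^\ord(K)_\frakm)$. By Theorem \ref{T:control} (and the observations immediately before the statement of the proposition), $\bfS^\ord(K)_\frakm$ is free of finite rank over $\Lambda$, so $\End_\Lambda(\bfS^\ord(K)_\frakm)$ is a matrix algebra over $\Lambda$ and in particular $\Lambda$-torsion-free; hence so is $\hhh(N)_\frakm$. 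This reduces reducedness of $\hhh(N)_\frakm$ to showing that any $t$ with $t^n=0$ annihilates $\bfS^\ord(K)_\frakm$.

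Next I would extract reducedness from each classical specialization. Suppose $t\in \hhh(N)_\frakm$ with $t^n=0$. For every $P_\ulk\in \frakX^{\rm temp}_\charcomp(\Lambda)$, Proposition \ref{prop:kerspknilpotent} gives a surjection ${\rm sp}_\ulk: \hhh(N)_\frakm / P_\ulk \hhh(N)_\frakm \onto {\rm H}_\ulk^{\rm ord}(N)_\frakm$ with nilpotent kernel, so $t_\ulk := {\rm sp}_\ulk(t)$ is nilpotent in ${\rm H}_\ulk^{\rm ord}(N)_\frakm$. I would then argue that ${\rm H}_\ulk^{\rm ord}(N)_\frakm$ is reduced for $\ulk$ tempered: after extending scalars to $\overline{\Q}_p$, the space $S_\ulk^\ord(K_0(p),\cO)\otimes_\cO\overline{\Q}_p$ decomposes as a direct sum of Hecke eigensystems attached to cuspidal automorphic representations of $\GSp_4(\AAA)$ whose archimedean parameter is tempered (by $k_1>k_2>3$), and the semisimplicity of the Iwahori Hecke action on such representations --- coming ultimately from Arthur's classification --- gives that ${\rm H}_\ulk^{\rm ord}(N)_\frakm\otimes_\cO\overline{\Q}_p$ is semisimple, hence reduced; flatness of $S_\ulk^\ord(K_0(p),\cO)_\frakm$ over $\cO$ then transfers this to ${\rm H}_\ulk^{\rm ord}(N)_\frakm$ itself. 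Consequently $t_\ulk=0$, i.e.\ $t\cdot \bfS^\ord(K)_\frakm \subseteq P_\ulk\bfS^\ord(K)_\frakm$ for every tempered $\ulk$.

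Finally I would close the argument by Zariski density. Since $\Lambda\cong \cO\powerseries{X_1,X_2}$ and $\frakX^{\rm temp}_\charcomp(\Lambda)$ is a two-parameter family of closed points, $\bigcap_{P_\ulk\in \frakX^{\rm temp}_\charcomp} P_\ulk = (0)$ in $\Lambda$. By freeness of $\bfS^\ord(K)_\frakm$ this gives $\bigcap_{P_\ulk} P_\ulk \bfS^\ord(K)_\frakm = 0$, so $t$ acts as zero on $\bfS^\ord(K)_\frakm$ and therefore $t=0$ in $\End_\Lambda(\bfS^\ord(K)_\frakm)\supseteq \hhh(N)_\frakm$. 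The main obstacle is the reducedness input at tempered classical weights: the $\Gl_2$ analogue is essentially Eichler--Shimura, but in the $\GSp_4$ setting it is precisely the semisimplicity of the Hecke action on tempered cuspidal automorphic representations that makes the hypothesis $k_2>3$ in the definition of $\frakX^{\rm temp}$ indispensable, and this is where I expect the real technical care to be required.
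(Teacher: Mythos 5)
Your proposal is correct and takes essentially the same route as the paper: embed $\hhh(N)_\frakm$ into $\End_{\Lambda}(\bfS^\ord(K)_\frakm)\iso \Mat_n(\Lambda)$ via the control theorem, observe that the nilpotent element dies in each specialization at a tempered classical weight because ${\rm H}_{\ulk}^{\mathrm{ord}}(N)$ is semisimple there (the paper simply cites \cite[Corollary 5.4]{MR2055355} for this input, which you sketch and correctly flag as the key technical point), and conclude by the Zariski density of $\frakX^{\rm temp}_{\frakm}$ as in Lemma \ref{lem:density}. The detour through Proposition \ref{prop:kerspknilpotent} is harmless but unnecessary, since the image of a nilpotent under ${\rm sp}_\ulk$ is nilpotent for trivial reasons.
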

\begin{proof}Let $t\in \hhh(N)_{\frakm}\subset \End_{\Lambda}(\bfS^\ord(K))\iso \Mat_n(\Lambda)$ be a nilpotent element with $t^m=0$. We have the semisimplicity of the ordinary Hecke algebra ${\rm H}_{\ulk}^{\mathrm{ord}}(N)$ for a fixed weight $\ulk\in\frakX_\frakm^{\rm temp}$. See \cite[Corollary 5.4]{MR2055355}. It thus follows that the image of $t$ in $ \Mat_n(\Lambda/P_{\ulk})$ equals zero for all tempered weights $\ulk\in\frakX_\frakm^{\rm temp}$. It follows that $t=0$ by the Zariski density of $\frakX_{\frakm}^{\rm temp}$. See Lemma \ref{lem:density}.
\end{proof}

\section{Automorphic representations on $\GSp_4$ and associated Galois representations} \label{sec:Galoisreps}
\subsection{Automorphic representations}
Let $\varPi$ be an irreducible cuspidal automorphic representation of $\GSp_4(\AAA)$ with the central character $\omega_\varPi$. We decompose $\varPi$ into the restricted tensor product of $(\frakg,K_\infty)$-module $\varPi_\infty$ and irreducible admissible representations of $\GSp_4(\Q_\ell)$:
\begin{align*}
\varPi \cong \varPi_\infty \otimes \left(\bigotimes\limits_{\ell\text{ finite}} \varPi_\ell\right)
\end{align*}
For any place $\ell$ of $\Q$, the representation $\varPi_\ell$ is called the local component of $\varPi$ at $\ell$. Let $K \coloneqq \prod_\ell K_\ell\subset \GSp(\AAA_f)$ be an open-compact subgroup such that $K_\ell=\GSp_4(\Z_\ell)$ for all primes $\ell\nmid N$. Suppose that the finite dimensional $\C$-vector space $\varPi^K\neq 0$.  Then
$\varPi^K$ is a $\cH(N,\C)$-module. In particular, we obtain a ring homomorphism:
\begin{equation}\label{eq:localhecke}
\phi_{\varPi}:\cH(N,\Z) \rightarrow \mathbb{C}
\end{equation}
such that $Tv=\phi_{\varPi}(T)v$ for every element $T$ in $\cH(N,\Z)$ and every vector $v$ in $\varPi^K$. This homomorphism $\phi_{\varPi}$ is called the Hecke eigensystem associated with $\varPi$. 

Let $(k_1,k_2)$ be integers with \beq\label{E:regular}k_1 \geq k_2 \geq 3.\eeq
We say that the archimedean component $\varPi_\infty$ is a holomorphic discrete series of weight $(k_1,k_2)$ if $\varPi_\infty\otimes\norm{\cdot}^\frac{k_1+k_2-3}{2} $ is the unitary holomorphic discrete series representation of weight $(k_1,k_2)$, i.e. $\varPi_\infty$ has Harish-Chandra parameter $(k_1-1,k_2-2)$.
Then $\varPi_\infty$ is cohomological and $\omega_\varPi \norm{\cdot}^{k_1+k_2-3} $ is unitary. It is well-known (using results of Chai--Faltings \cite{MR1083353})
that there exists a number field $E_\varPi$ such that $\phi_{\varPi}(\cH(N,\Z))$ lies in its ring of integers $\cO_{E_\varPi}$. 
For each $\ell\nmid N$, the abstract Hecke polynomial $Q_\ell(X)\in \cH(N,\Z)[X]$ at $\ell$ is defined as follows: 
\begin{align} \label{eq:abstractdegree4poly}
Q_\ell(X):=X^4 - T_\ell X^3  + \ell(R_\ell+(\ell^2+1)S_\ell)X^2 - \ell^3T_\ell S_\ell X + \ell^6S_\ell^2.
\end{align}
Let $Q_{\varPi_\ell}(X):=\phi_{\pi}(Q_\ell(X))\in \cO_{E_\varPi}[X]$.

\begin{theorem}\label{thm:GSP4weissauertaylorlaumon}[Laumon  \cite{MR2234859}, Taylor \cite{MR1240640},  Weissauer \cite{MR2234860}]\mbox{}
Let $\varPi$ be a cuspidal cohomological automorphic representation of $\GSp_4(\AAA)$ with $\varPi_\infty$ holomorphic discrete series of weight $(k_1,k_2)$ and $k_1>k_2\geq 3$.
\begin{enumerate}
\item\label{thm:galoispoint1} Let $E_\varPi$ denote the field obtained by adjoining to $\Q$, all roots of the Hecke polynomial $Q_{\varPi_\ell}(X)$ for every finite place $\ell \not\in {\rm Ram}(\varPi)$. Then, $E_\varPi$ is a number field.
\item\label{thm:galoispoint2} Let $\eta$ be a finite place. There exists a unique continuous semi-simple Galois representation $\varrho_{\varPi,\eta} :G_\Q \rightarrow \Gl_4(E_{\varPi,\eta})$ such that for every finite place $\ell \notin \rm Ram(\varPi) \cup \{\eta\}$, we have the following equality of characteristic polynomials:
\begin{align*}
\det\left(X \cdot \bfone_4-\varrho_{\varPi,\eta}(\Frob_\ell)\right) = Q_{\varPi_\ell}(X).
\end{align*}
Here, $E_{\varPi,\eta}$ denotes the compositum of the fields $E_\varPi$ and $\Q_\eta$ inside $\overline{\Q}_\eta$. In particular, the Galois representation $\varrho_{\varPi,\eta}$ is unramified at all primes $\ell \notin \rm Ram(\varPi) \cup \{\eta\}$. 
\item\label{thm:galoispoint3} The Galois representation $\varrho_{\varPi,\eta}$ is Hodge--Tate  with weights (we normalize the $\eta$-adic cylotomic character to have Hodge--Tate weight $1$)
\begin{align*}
0, \quad k_2 -2, \quad k_1-1, \quad k_1+k_2-3.
\end{align*}
\end{enumerate}
\end{theorem}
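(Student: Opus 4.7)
My plan is to follow the program carried out by Taylor, Laumon, and Weissauer, organized around the étale cohomology of Siegel threefolds. First, for part \ref{thm:galoispoint1}, I would observe that the Hecke operators act on the finite-dimensional space $\varPi^K$ and preserve a $\overline{\Q}$-rational structure coming from the $E$-rational structure on the coherent and étale cohomology of the Siegel threefold $\mathrm{Sh}_K$ of genus two (constructed, e.g., in Chai--Faltings). Since $\phi_\varPi(\cH(N,\Z))$ lies in the endomorphism ring of a finitely generated module over this integral structure, the image lies in an order of a number field, and adjoining the finitely many roots of $Q_{\varPi_\ell}(X)$ for each unramified $\ell$ gives a finite extension of $\Q$.

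For part \ref{thm:galoispoint2}, the construction proceeds by realizing $\varPi$ in the interior (or intersection) cohomology of the Siegel threefold with coefficients in the local system $V_{(k_1-3,k_2-3)}$ associated to the algebraic representation of $\GSp_4$ of highest weight $(k_1-3,k_2-3)$. Using Matsushima's formula together with Franke's theorem, the $\varPi_f$-isotypic component of $H^3_!(\mathrm{Sh}_K, V_{\ulk})$ is nonzero, and passing to the $\eta$-adic étale realization produces a candidate Galois representation. I would then compare the trace of $\Frob_\ell$ on this realization with $Q_{\varPi_\ell}(X)$ via the Langlands--Kottwitz method: stabilization of the twisted trace formula (for which one invokes Arthur's endoscopic classification for $\GSp_4$) identifies the contribution of $\varPi$ to the Lefschetz number with the conjugacy class prediction from local Langlands. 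The semi-simplification is then pinned down by Chebotarev, and uniqueness follows from the density of unramified primes.

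The main obstacle — and the reason this theorem required three separate works — is that not every cuspidal $\varPi$ of the given type contributes tempered cohomology classes: the non-tempered (CAP/endoscopic/Yoshida-type) packets force one to identify $\varrho_{\varPi,\eta}$ indirectly. I would handle these by using Arthur's multiplicity formula to express $\varPi$ as a transfer from a proper endoscopic group (either $\Gl_2 \times \Gl_2$ or $\Gl_4$ via theta correspondence) and then defining $\varrho_{\varPi,\eta}$ as the direct sum (or induction) of the Galois representations attached to the components by the $\Gl_2$ theorems of Deligne and Eichler--Shimura. Weissauer's contribution, which proves \emph{full} semisimplicity of the $\ell$-adic representation (not merely the semisimplification), is the deepest input here; it ultimately rests on the fact that the contribution of each Arthur packet to cohomology can be controlled via the stable trace formula.

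Finally, for part \ref{thm:galoispoint3} on Hodge--Tate weights, I would invoke Faltings' comparison theorem between $\eta$-adic étale cohomology and de Rham cohomology of $\mathrm{Sh}_K$ with coefficients in $V_{\ulk}$: the Hodge filtration on $H^3_{\rm dR}(\mathrm{Sh}_K, V_{\ulk})$ is computed explicitly from the BGG complex on the Siegel threefold, and its graded pieces correspond to the four Hodge types
\[
(0,3),\quad (k_2-2, k_1+1),\quad (k_1-1, k_2),\quad (k_1+k_2-3, 0),
\]
producing the stated Hodge--Tate numbers $0,\, k_2-2,\, k_1-1,\, k_1+k_2-3$. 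For the non-tempered packets, one checks directly on the building blocks that the Hodge--Tate weights match, using that automorphic induction and theta lifts are compatible with the expected Hodge numerology.
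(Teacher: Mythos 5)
Your outline matches the approach the paper relies on: the paper does not reprove this result but simply cites \cite[Theorem 10.1.3]{LSZ22JEMS}, which packages exactly the Laumon--Taylor--Weissauer program you describe (realization of $\varPi$ in the $\eta$-adic \'etale cohomology of the Siegel threefold with coefficients in the local system of highest weight $(k_1-3,k_2-3)$, comparison of Frobenius traces with the Hecke polynomial via the Langlands--Kottwitz/trace-formula method, separate treatment of the endoscopic and CAP packets, and Faltings' \'etale--de Rham comparison for the Hodge--Tate weights). Two small corrections to your sketch: the elliptic endoscopic group of $\GSp_4$ is $(\Gl_2\times\Gl_2)/\Gm$, not $\Gl_4$ (the transfer to $\Gl_4$ is a functorial lift, not endoscopy); and your list of Hodge types is internally inconsistent --- the pairs should all sum to the motivic weight $k_1+k_2-3$, namely $(0,k_1+k_2-3)$, $(k_2-2,k_1-1)$, $(k_1-1,k_2-2)$, $(k_1+k_2-3,0)$ --- although the resulting Hodge--Tate weights $0,\,k_2-2,\,k_1-1,\,k_1+k_2-3$ that you state are the correct ones. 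Also note that the theorem as stated only requires the semi-simplification, so Weissauer's stronger semisimplicity input is not strictly needed here.
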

\begin{proof} 
See \cite[Theorem 10.1.3]{LSZ22JEMS}.\end{proof}

Following the notation in \cite[Definition 2.5.2]{Gee19}, the Arthur parameter $\psi_\varPi$ can be written as a formal sum $\boxplus_{i=1}^r \pi_i[d_i]$, where $\pi_i$ are $\omega_\varPi$-self dual automorphic representations for $\GL_{N_i}$ and $d_i$ are positive integers such that $\sum_{i =1}^rN_id_i=4$. Based on Arthur's work, there are six types of Arthur parameters indexed by (a)-(f) in \cite[Remark 6.1.8]{Gee19}. We say $\varPi$ is stable if $\psi_\varPi=\pi$, where $\pi$ is a cuspidal automorphic representation of $\GL_4(\AAA)$ and $\varPi$ is of Yoshida type if $\psi_\varPi=\pi_1\boxplus \pi_2$, where $\pi_1$ and $\pi_2$ are cuspidal automorphic representations of $\GL_2(\AAA)$ with the same central character and $\pi_1\not\iso\pi_2$. 

\begin{proposition}\label{P:32}
Let $\varPi$ be a cohomological cohomological automorphic representation of weight $(k_1,k_2)$. Then \begin{enumerate}
\item $\varPi$ is either stable or of Yoshida type if $\varPi$ is tempered.
\item  $\varPi$ is \textit{tempered} if $k_1 > k_2 \geq  3$.
\end{enumerate}
\end{proposition}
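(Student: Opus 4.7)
The plan is to invoke Arthur's classification of cuspidal automorphic representations of $\GSp_4(\AAA)$, as recalled in \cite[Remark 6.1.8]{Gee19}. Every such $\varPi$ has Arthur parameter of one of the six types (a)--(f) listed there, and by direct inspection only types (a) (stable, $\psi_\varPi = \pi$ with $\pi$ cuspidal on $\GL_4$) and (b) (Yoshida, $\psi_\varPi = \pi_1 \boxplus \pi_2$ with non-isomorphic cuspidal $\pi_i$ on $\GL_2$ sharing central character) have all Speh-multiplicities $d_i = 1$; the remaining four types each feature at least one summand $\pi_i[d_i]$ with $d_i \geq 2$, or are one-dimensional.

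For (1), the key observation is the compatibility of Arthur parameters with local Satake data: a summand $\pi_i[d_i]$ with $d_i \geq 2$ forces the Satake parameters at almost all unramified places to carry eigenvalues of absolute value $q_v^{\pm(d_i-1)/2} \neq 1$, contradicting temperedness. Thus the temperedness assumption on $\varPi$ rules out types (c)--(f), establishing the stable-or-Yoshida dichotomy.

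For (2), I would invoke the Ramanujan conjecture for cohomological cuspidal representations of $\GSp_4$ of regular weight, proved by Weissauer \cite{MR2234860} (compare the discussion around \cite[Theorem 10.1.3]{LSZ22JEMS}): when $\varPi_\infty$ is a holomorphic discrete series of weight $(k_1,k_2)$ with $k_1 > k_2 \geq 3$, the representation $\varPi$ is tempered at every finite place. Archimedean temperedness is already built into the assumption that $\varPi_\infty$ is a discrete series, so combining this input with part (1) yields the claim.

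The main obstacle is precisely the input of Weissauer's deep temperedness theorem in part (2); this is the substantive content and is the reason the regularity assumption $k_1 > k_2$ is imposed. A more elementary alternative --- ruling out each non-tempered Arthur type by analyzing its archimedean $L$-parameter (Saito--Kurokawa forces $k_1=k_2$, Soudry and Howe--Piatetski-Shapiro types force further degenerations of the Harish-Chandra parameter, and one-dimensional parameters are obviously incompatible with regular weight) --- is possible, but it requires a somewhat lengthy case-by-case check and ultimately appeals to the same structure theorems for non-tempered $\GSp_4$ Arthur packets.
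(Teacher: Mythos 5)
Your argument is correct and takes essentially the same route as the paper: Arthur's classification into the six types of \cite[Remark 6.1.8]{Gee19}, temperedness forcing all $d_i=1$ and hence type (a) or (b) for part (1), and Weissauer's Ramanujan theorem combined with the regular-weight hypothesis $k_1>k_2\geq 3$ for part (2). The one caveat is that Weissauer's theorem excludes CAP (e.g.\ Saito--Kurokawa) representations rather than taking regular weight as a hypothesis, so the archimedean $L$-parameter inspection you describe as an ``alternative'' is precisely the step the paper uses to rule out types (c)--(f); with that reading the two proofs coincide.
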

\begin{proof} Let $\psi_\varPi=\boxplus_i \pi_i[d_i]$ be the Arthur parameter of $\varPi$. The Ramanujan conjecture for Siegel modular forms is known due to work of Weissauer \cite{MR2498783}. As a result, $\varPi$ is tempered if and only if  $d_i=1$ for all $i$. In view of the classification \cite[Remark 6.18]{Gee19}, we find that $\varPi$ is tempered if $\varPi$ is stable or of Yoshida type (types (a) and (b)) and is non-tempered for types (c)-(f). For (2), the inspection on the archimedean $L$-parameter rules out the possibility for types (c)-(f).
\end{proof}

\subsection{Yoshida lifts}\label{sec:yoshidalifts}
Cuspidal automorphic representations of $\GSp_4(\AAA)$ of \textit{Yoshida type} (as in Arthur's classification \cite{MR2058604}) are generated by so-called Yoshida lifts. 
The original constructions of Yoshida lifts by Yoshida and the generalization by  B\"{o}cherer--Schulze-Pillot involve producing explicit automorphic forms utilizing the theory of global theta correspondences. Let $S_\kappa^{\rm new}(N,\psi)$ be the set of cuspidal elliptic newforms of weight $\kappa$, level $\Gamma_1(N)$ and nebentypus $\psi$. 
For a newform $f=\sum_{n>0} a_n(f)q^n\in S_\kappa^{\rm new}(N,\psi)$, we let $\pi_f$ denote the associated cuspidal automorphic representations of $\Gl_2(\Q)$. Then the central character of $\pi_f$ is given by $\psi^{-1}\norm{\cdot}^\frac{1-\kappa}{2}$, whereas on the other hand, $\pi_{f,\infty}\norm{\cdot}^\frac{\kappa-1}{2}$ is a unitary discrete series representation of weight $\kappa$. Let 
\[ \varrho_{f,p}:G_\Q \rightarrow \Gl_2(\overline{\Q}_p)\] be the $p$-adic Galois representation associated to $f$ constructed by Shimura and Deligne. The Galois representations $\rho_{f,p}$ is unramified outside $pN$ and  for any prime $\ell \nmid N_fN_g\eta$, we have
\begin{align*}
\mathrm{Trace}(\rho_{f,\eta}(\mathrm{Frob}_\ell)) = a_\ell(f).\end{align*}

Now we let \[(f_1,f_2)\in S_{\kappa_1}^{\rm new}(N_1,\psi_1)\times S_{\kappa_2}^{\rm new}(N_2,\psi_2)\] be a pair of elliptic newforms with $\kappa_1\geq \kappa_2\geq 2$. Decompose $\pi_{f}$ and $\pi_{g}$ into the restricted tensor products
\begin{align*}
  \pi_{f_1} \cong   \pi_{f_1,\infty} \bigotimes \pi_{f_1,\ell}, \quad \pi_{f_2} \cong \pi_{f_2,\infty} \bigotimes \pi_{f_2,\ell}.
\end{align*}
Consider the following set:
\[S_{\mathrm{disc},f_1,f_2}=\left\{\text{ finite prime }\ell \mid  \pi_{f_1,\ell}, \ \pi_{f_2,\ell} \text{ are discrete series}\right\}.\]

We consider the following hypotheses on $f_1$, $f_2$:

\begin{hypothesis} \label{hyp:yoshida1} $f_1\neq f_2$. 
\end{hypothesis}

\begin{hypothesis}\label{hyp:yoshida2}    $\kappa_1\equiv \kappa_2\pmod{2}$.
\end{hypothesis}

\begin{hypothesis} \label{hyp:yoshida3} The set $S_{\mathrm{disc},f_1,f_2}$ is non-empty.
\end{hypothesis}

\begin{hypothesis} \label{hyp:yoshida4}
$\psi_1\psi_2^{-1}=\chi^2$ for some Dirichlet character $\chi$. 
\end{hypothesis}

By Arthur's multiplicity formula as established in \cite{Gee19} (or utilising the representation theoretic approach of Roberts \cite[Theorem 8.6
(2)]{MR1871665}), there exists a cuspidal automorphic representation $\varPi$ of Yoshida type such that \[\psi_\varPi=\pi_{f_1}\boxplus \pi_{f_2}\otimes\norm{\cdot}^\frac{\kappa_2-\kappa_1}{2},\]
and the archimedean component $\varPi_\infty$ is a discrete series of weight $(\frac{\kappa_1+\kappa_2}{2},\frac{\kappa_1-\kappa_2}{2}+2)$. Moreover every a cuspidal automorphic representation $\varPi$ of Yoshida type with $\varPi_\infty$ a discrete series of weight $(k_1,k_2)$ arises in this way. The $p$-adic Galois representation associated to $\varPi$ in Theorem  \ref{thm:GSP4weissauertaylorlaumon} is given by \[\varrho_{\varPi,p} \cong  \varrho_{f_1,p}\oplus   \varrho_{f_2,p} \left(\varepsilon^{\frac{\kappa_1-\kappa_2}{2}} \right),\]
where $\varepsilon$ is the $p$-adic cyclotomic character.

\begin{remark}\label{rem:auxtamelevel}
 Since the principal congruence subgroups form a basis of neighborhood of the identity element in $\mathrm{Sp}_4(\Z_\ell)$, to obtain vectors fixed by the principal congruence subgroup $\Gamma^{(2)}(N)$, one simply has to choose high enough powers of the primes $\ell$ dividing $N_fN_g$. 
 
 To have some optimal control on the tame level (away from the prime $l=2$), one can use Ganapathy's result \cite[Proposition 9.2]{MR3432266} that the local Langlands correspondence for $\GSp_4$  proved by Gan--Takeda \cite{MR2800725} preserves \textit{depth} for primes $l \geq 3$. The Local Langlands correspondence for $\Gl_2$ also preserves depth (\cite{MR3579297}). It will allow one to work with the following tame level(if $2$ is a ramified prime, one needs to choose a large enough $s$): 
\begin{align}\label{levelauxdepth}
    2^{s} \times \prod_{\substack{l \mid N_FN_G \\ l \neq 2}} l^{\max\{2,\val_p(N_F)+1, \val_p(N_G)+1\}}, \qquad s \gg 0
\end{align}
 For the relationship between the depth and fixed vectors for the Iwahori level, see \cite[Lemma 7.2]{MR3432266}.  This representation theoretic approach introduces an extra factor of $+1$ in the exponents of primes in the tame level. 
 \end{remark}

\subsection{$p$-ordinary automorphic representations}
Let $\varPi$ be a cuspidal automorphic representation of $\GSp_4(\AAA)$ with $\varPi_\infty$ a discrete series of weight $(k_1,k_2)$ and $k_1\geq k_2\geq 3$. Suppose that $p\nmid N$. Having fixed an embedding $E_\varPi \hookrightarrow \overline{\Q} \hookrightarrow \overline{\Q}_p$, we let $\{\al_0,\al_1,\al_2,\al_3\}$ be the roots of $Q_{\varPi_\ell}(X)$ such that 
\[\val_p(\al_0)\leq \val_p(\al_1)\leq \val_p(\al_2)\leq \val_p(\al_3).\]
We say $\varPi$ is $p$-ordinary if 
\begin{align} \label{eq:rootsvaluation}
\val_{p}(\alpha_0) = 0, \qquad \val_p(\alpha_1) = k_2-2, \qquad \val_p(\alpha_2) =k_1-1,  \qquad \val_p(\alpha_3) = k_1+k_2-3.
\end{align}

\begin{proposition}\label{P:33}Suppose that $\varPi$ is of Yoshida type with $\psi_\varPi=\pi_1\boxplus\pi_2$ and $\varPi_p$ is spherical. Then $\varPi$ is $p$-ordinary if and only if $\pi_1$ and $\pi_2$ are $p$-ordinary.
\end{proposition}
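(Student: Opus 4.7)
The plan is to compare the four $p$-adic valuations appearing in the ordinarity condition on $\varPi$ with those coming from $\pi_1$ and $\pi_2$ via the Galois-theoretic decomposition
\[
\varrho_{\varPi,p}\iso \varrho_{f_1,p}\oplus \varrho_{f_2,p}\bigl(\cyc^{(\kappa_1-\kappa_2)/2}\bigr)
\]
recalled in Section~\ref{sec:yoshidalifts}. Since $\varPi_p$ is spherical, this decomposition forces both $\pi_{1,p}$ and $\pi_{2,p}$ to be unramified, and it identifies the four roots $\{\al_0,\al_1,\al_2,\al_3\}$ of $Q_{\varPi_p}(X)$ with the multiset $\bigl\{\beta_1,\gamma_1,p^{(\kappa_1-\kappa_2)/2}\beta_2,p^{(\kappa_1-\kappa_2)/2}\gamma_2\bigr\}$, where $\{\beta_i,\gamma_i\}$ are the Hecke roots at $p$ of $\pi_i$. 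The weight translation from \eqref{eq:siegelweight}, namely $k_2-2=(\kappa_1-\kappa_2)/2$, $k_1-1=(\kappa_1+\kappa_2)/2-1$ and $k_1+k_2-3=\kappa_1-1$, reduces the statement to a combinatorial identity between two four-element multisets of $p$-adic valuations.

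For the implication ``$\pi_1,\pi_2$ both $p$-ordinary $\Rightarrow$ $\varPi$ is $p$-ordinary'', I would simply substitute $\{\val_p(\beta_i),\val_p(\gamma_i)\}=\{0,\kappa_i-1\}$ into the multiset above; the result is $\{0,\,(\kappa_1-\kappa_2)/2,\,(\kappa_1+\kappa_2)/2-1,\,\kappa_1-1\}=\{0,k_2-2,k_1-1,k_1+k_2-3\}$, which is exactly the condition \eqref{eq:rootsvaluation}.

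For the converse, I would invoke Mazur's Newton-above-Hodge inequality applied to the crystalline Galois representation $\varrho_{f_i,p}$ (crystalline because $p\nmid N$ and $\pi_{i,p}$ is unramified) with Hodge--Tate weights $\{0,\kappa_i-1\}$. Combined with the determinant relation $\val_p(\beta_i)+\val_p(\gamma_i)=\kappa_i-1$, this forces $\val_p(\beta_i),\val_p(\gamma_i)\in[0,\kappa_i-1]$, and hence after the twist the two valuations contributed by the $\pi_2$ summand lie in $[k_2-2,k_1-1]$. Under the $p$-ordinarity of $\varPi$, the valuation $k_1+k_2-3$ strictly exceeds $k_1-1$ (since $k_2\geq 3$), so it cannot originate from the twisted $\pi_2$ summand; it must therefore equal $\val_p(\beta_1)$ or $\val_p(\gamma_1)$, and its partner in that pair is forced to be $0$ by the sum relation. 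Hence $\{\val_p(\beta_1),\val_p(\gamma_1)\}=\{0,\kappa_1-1\}$, and the remaining $\varPi$-valuations $\{k_2-2,k_1-1\}$ yield, after subtracting the twist $(\kappa_1-\kappa_2)/2$, the identity $\{\val_p(\beta_2),\val_p(\gamma_2)\}=\{0,\kappa_2-1\}$. Both $\pi_1$ and $\pi_2$ are therefore $p$-ordinary.

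The main obstacle is really just bookkeeping: matching the degree-four Hecke polynomial normalization of \eqref{eq:abstractdegree4poly} with the standard degree-two $\GL_2$ polynomials after the cyclotomic twist, and invoking crystallinity (or, equivalently, the Ramanujan bound of Deligne together with the determinant identity) in order to rule out the alternative partition $\{0,k_2-2\}\cup\{k_1-1,k_1+k_2-3\}$ of the four valuations.
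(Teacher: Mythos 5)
Your proof is correct, and the combinatorial core — writing the four valuations as two self-dual pairs, bounding each pair, and letting the extremal valuation $k_1+k_2-3$ force the partition — is exactly the mechanism in the paper. The packaging differs: the paper stays entirely on the automorphic side, comparing archimedean $L$-parameters to pin down the discrete-series weights $a_1,a_2$ of $\pi_{1,\infty},\pi_{2,\infty}$ and then matching the multiset of Satake-parameter valuations $\{\pm b_i+c\}$ with $0\le b_i\le c$; you instead use the Galois direct-sum decomposition (equivalently the factorization of the Hecke polynomial as in equation (\ref{E:HeckeYoshida})), the explicit weight dictionary $k_2-2=(\kappa_1-\kappa_2)/2$, $k_1+k_2-3=\kappa_1-1$, and Newton-above-Hodge for the two $\GL_2$ factors. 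Your route avoids the archimedean parameter comparison altogether, at the cost of invoking crystallinity/local-global compatibility at $p$ for $\varrho_{f_i,p}$ — though in fact the bound $\val_p\in[0,\kappa_i-1]$ you need already follows from the integrality of $a_p(f_i)$ and the determinant relation, so no $p$-adic Hodge theory is strictly required. One small caveat: the parenthetical alternative ``the Ramanujan bound of Deligne together with the determinant identity'' does not substitute for this, since Ramanujan controls archimedean absolute values rather than $p$-adic valuations; replace it by the integrality argument (or keep Newton-above-Hodge).
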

\begin{proof}Let $c=\frac{k_1+k_2-3}{2}$. Since $\varPi_\infty$ is a discrete series, $\pi_{1,\infty}\norm{\cdot}^c$ and $\pi_{2,\infty}\norm{\cdot}^c$ are unitary discrete series of weights $a_1$ and $a_2$ respectively. From the comparison between the archimedean $L$-parameters of $\varPi$ and $\pi_1\boxplus\pi_2$, we find that
\[\{k_1+k_2-3, k_1-1,k_2-2,0\}=\{a_1+c,a_2+c,-a_2+c,-a_1+c\}.\]
On the other hand, Since $p\nmid N$, $\varPi_p$ is spherical, and hence the representations $\pi_1$, $\pi_2$ are unramified at $p$. Let $(b_1+c,-b_1+c)$ and $(b_2+c, -b_2+c)$ with $0\leq b_i\leq c$ be the $p$-adic valuations of the Satake parameters at $p$ for $\pi_1$ and $\pi_2$ respectively. 
It is clear that if $\pi_1$ and $\pi_2$ are $p$-ordinary, then $a_i=b_i$ and $\varPi$ is $p$-ordinary. 
Conversely, suppose that $\varPi$ is $p$-ordinary. We may assume $a_1\geq a_2> 0$. Since $k_1>k_2>3$, we obtain 
\[a_1=c=\frac{k_1+k_2-3}{2};\quad a_2=\frac{k_1-k_2+1}{2}.\]
The $p$-ordinary assumption tells us 
\[\{k_1+k_2-3, k_1-1,k_2-2,0\}=\{b_1+c,b_2+c,-b_2+c,-b_1+c\}.\]
Since $-b_1+c\leq -a_1+c$ and $-b_2+c\leq -a_2+c$, we conclude that $a_1=b_1$ and $a_2=b_2$. This shows that $\pi_1$ and $\pi_2$ are also $p$-ordinary.  
\end{proof}

\begin{theorem}[Urban] \label{thm:urban1}
Suppose that $\varPi$ is $p$-ordinary. Then there exists a matrix $C$ in $\Gl_4(\overline{\Q}_p)$ and four $\overline{\Q}_p$-valued characters $\psi_0, \ \psi_1, \ \psi_2,\ \psi_3$ unramified at $p$, such that we have the following equality of matrices in $\Gl_4(\overline{\Q}_p)$:
\begin{align} \label{eq:thmgaloisordinary}
 \varrho_{\varPi,p}(\sigma) = C \left[\begin{array}{cccc} \psi_3 \cyc^{k_1+k_2-3}(\sigma) & \star & \star & \star \\ 0 & \psi_2 \cyc^{k_1-1}(\sigma) & \star & \star \\ 0 & 0 & \psi_1 \cyc^{k_2-2}(\sigma) & \star \\ 0 & 0 & 0 & \psi_0(\sigma) \end{array} \right] C^{-1}, \qquad \forall \sigma \in \Gal{\overline{\Q}_p}{\Q_p}.
\end{align}
Following the notations in equation (\ref{eq:rootsvaluation}), the four unramified characters $\psi_0, \ \psi_1, \ \psi_2,\ \psi_3$ are determined by the following property:
\begin{align}\label{eq:eigenvaluation}
\psi_0(\Frob_p) = \alpha_0, \quad \psi_1(\Frob_p) = \dfrac{\alpha_1}{p^{k_2-2}}, \quad \psi_2(\Frob_p) = \dfrac{\alpha_2}{p^{k_1-1}}, \quad \psi_3(\Frob_p) = \dfrac{\alpha_3}{p^{k_1+k_2-3}}.
\end{align}
\end{theorem}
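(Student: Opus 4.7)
The plan is to exploit local--global compatibility at $p$ together with Fontaine's theory of filtered $\varphi$-modules. Since $\varPi_p$ is spherical (in particular unramified), the restriction $\varrho_{\varPi,p}|_{G_{\Q_p}}$ is crystalline, and by Theorem \ref{thm:GSP4weissauertaylorlaumon}\ref{thm:galoispoint3} its Hodge--Tate weights are the four distinct integers $0,\,k_2-2,\,k_1-1,\,k_1+k_2-3$ (distinctness uses $k_2\geq 3$). Unramified local--global compatibility identifies the Frobenius eigenvalues on the filtered $\varphi$-module $D_{\mathrm{crys}}(\varrho_{\varPi,p})$ with the four roots $\alpha_0,\alpha_1,\alpha_2,\alpha_3$ of the Hecke polynomial $Q_{\varPi_p}(X)$, because the characteristic polynomial of the crystalline Frobenius equals (up to normalization) the Hecke polynomial at the unramified prime $p$.

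The $p$-ordinarity assumption~(\ref{eq:rootsvaluation}) asserts that the $p$-adic valuations of $\alpha_0,\alpha_1,\alpha_2,\alpha_3$, arranged in increasing order, coincide exactly with the Hodge--Tate weights in increasing order. Equivalently, the Newton polygon of the filtered $\varphi$-module $D_{\mathrm{crys}}(\varrho_{\varPi,p})$ meets the Hodge polygon at every breakpoint. I would then invoke the standard fact for weakly admissible filtered $\varphi$-modules over $\Q_p$ with distinct Hodge--Tate weights: when the Newton and Hodge polygons coincide at each breakpoint, the module admits a canonical complete flag of $\varphi$-stable weakly admissible sub-modules whose successive quotients are weakly admissible of rank one. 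Via Fontaine's equivalence of categories this produces a decreasing filtration of $\varrho_{\varPi,p}|_{G_{\Q_p}}$ by $G_{\Q_p}$-stable subrepresentations, each successive quotient being one-dimensional.

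Any one-dimensional crystalline representation of $G_{\Q_p}$ with Hodge--Tate weight $w$ and Frobenius eigenvalue $\alpha$ has the form $\psi\cdot\cyc^w$, where $\psi$ is an unramified character sending $\Frob_p$ to $\alpha/p^w$. Applying this to each graded piece, and choosing the flag so that the innermost sub-representation carries the eigenvalue of largest slope and hence the largest Hodge--Tate weight, produces the prescribed upper-triangular form with diagonal characters $\psi_3\cyc^{k_1+k_2-3},\ \psi_2\cyc^{k_1-1},\ \psi_1\cyc^{k_2-2},\ \psi_0$ satisfying~(\ref{eq:eigenvaluation}). Choosing an ordered basis adapted to this flag supplies the conjugating matrix $C\in\GL_4(\overline{\Q}_p)$.

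The principal obstacle is the construction and matching step in the middle paragraph: one must produce a $\varphi$-stable sub-module of each prescribed rank whose induced Hodge filtration makes it weakly admissible, and verify that the slopes occurring on the successive quotients come in the required order. This is the classical check of the inequality $t_N\leq t_H$ on each candidate sub-module, using that the Frobenius slopes match the Hodge--Tate weights \emph{on the nose}; uniqueness of the flag then follows from the distinctness of slopes, and the asserted integrality/rationality of $C$ over $\overline{\Q}_p$ is automatic from the construction.
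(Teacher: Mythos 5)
Your route is genuinely different from the paper's, but it has a real gap at its foundation. Everything after your first paragraph (Newton polygon $=$ Hodge polygon at every breakpoint forces a full $\varphi$-stable flag of weakly admissible submodules, hence an ordinary filtration, and one-dimensional crystalline representations are unramified twists of powers of $\cyc$) is standard and would go through once the inputs are in place. The problem is the inputs: you assume that $\varrho_{\varPi,p}|_{\Gal{\overline{\Q}_p}{\Q_p}}$ is \emph{crystalline} and that the characteristic polynomial of the crystalline Frobenius on $D_{\mathrm{crys}}$ is the Hecke polynomial $Q_{\varPi_p}(X)$, i.e.\ full local--global compatibility at $p=\ell$. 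Theorem \ref{thm:GSP4weissauertaylorlaumon}, which is all the paper has at its disposal, gives only unramified compatibility at primes $\ell\neq p$ together with the Hodge--Tate property (and weights) at $p$; it does not give crystallinity, and it certainly does not identify the crystalline Frobenius eigenvalues with $\alpha_0,\dots,\alpha_3$. For $\GSp_4$ this crystalline compatibility is a deep statement --- for stable $\varPi$ it is essentially of the same depth as (and is proved by the same circle of ideas as) the ordinarity theorem of Urban that is being asserted, so invoking it as ``unramified local--global compatibility'' is circular in spirit and unsupported by the references in the paper.

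The paper avoids this entirely: by Proposition \ref{P:32} a tempered cohomological $\varPi$ is either stable or of Yoshida type; in the stable case it quotes Urban \cite{MR2234861} directly, and in the Yoshida case it writes $\varrho_{\varPi,p}=\varrho_{\pi_1,p}\oplus\varrho_{\pi_2,p}$, uses Proposition \ref{P:33} to see that $\pi_1,\pi_2$ are $p$-ordinary, and then invokes the classical $\Gl_2$ ordinarity results of Deligne and Mazur--Wiles to triangularize each $2$-dimensional block. If you want to keep your uniform $p$-adic Hodge-theoretic argument, you must either supply a reference establishing crystallinity and the crystalline Eichler--Shimura relation at $p$ for the $\GSp_4$ representations in question, or at minimum prove these inputs in the Yoshida case from the $\Gl_2$ theory (where they are known for $p\nmid N$) and fall back on Urban for the stable case --- at which point you have essentially reproduced the paper's proof with extra machinery.
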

\begin{proof}
By Proposition \ref{P:32}, $\varPi$ is either stable or of Yoshida type. If $\varPi$ is stable, the theorem follows from \cite[Theorem1, Corollary 1(iii)]{MR2234861}. Suppose that $\varPi$ is of Yoshida type with $\psi_\varPi=\pi_1\boxplus\pi_2$. We have $\rho_{\varPi,p}=\rho_{\pi_1,p}\oplus\rho_{\pi_2,p}$, where $\rho_{\pi_i,p}$ are the $p$-adic Galois representations associated to $\pi_i$ for $i=1,2$. By Proposition \ref{P:33}, $\pi_1$ and $\pi_2$ are $p$-ordinary, so the associated Galois representations $\rho_{\pi,p}$ are also ordinary at $p$ by results of Deligne and Mazur--Wiles (see \cite[Theorem 4.2.7]{MR2894984}). Namely $\rho_{\pi_i,p}$ restricted to $\Gal{\overline{\Q}_p}{\Q_p}$ must be upper-triangular for $i=1,2$. Up to a choice of basis over $\overline{\Q}_p$, the Galois representation associated to $\varPi$ must then have the following shape:
\[\left[\begin{array}{cccc} \psi_3 \cyc^{k_1+k_2-3} & \star & 0 & 0 \\ 0 & \psi_0  & 0 & 0 \\ 0 & 0 & \psi_2 \cyc^{k_1-1} & \star \\ 0 & 0 & 0 & \psi_1 \cyc^{k_2-2} \end{array} \right].\]
The theorem follows immediately.
\end{proof}

\begin{remark}
The contragradient $(\varrho_\eta^{-1})^{T}$ of the Galois representation $\varrho_\eta$ is the Galois  representation appearing in Urban's work \cite{MR2234861}, where the characteristic polynomial of $\Frob_\nu^{-1}$ (the geometric Frobenius) is equated to the Hecke polynomial $Q_{\varPi_\nu}(X)$. In our statements, we have used the arithmetic Frobenius. 
\end{remark}

\begin{proposition}\label{P:new}Let $\ulk=(k_1,k_2)$ with $k_1>k_2>3$. Any cuspidal automorphic representation generated by elements in $S_\ulk^\ord(K_0(p),\cO)$ must be spherical at $p$.\end{proposition}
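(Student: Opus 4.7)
The plan is to extract $p$-sphericality of $\varPi_p$ from the combination of temperedness at the archimedean place and the ordinarity at $p$. Since $\varPi$ is generated by some $f \in S_\ulk^{\ord}(K_0(p),\cO) = e_\ord \cdot S_\ulk(K_0(p),\cO)$, the invariants $\varPi_p^{\cI_p}$ are nonzero, and the commuting operators $U_\cP$ and $U_\cQ$ act via a unit of $\overline{\Z}_p$ on a nonzero vector there. At the same time, the condition $k_1 > k_2 > 3$ places $\varPi_\infty$ in the regular range, so Proposition \ref{P:32} applies and $\varPi_p$ is a tempered irreducible admissible representation of $\GSp_4(\Q_p)$ with nonzero Iwahori-fixed vectors.

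The crux is to show that no tempered non-spherical Iwahori-spherical representation of $\GSp_4(\Q_p)$ admits an Iwahori-fixed vector on which $U_\cP U_\cQ$ acts by a unit. Appealing to the Sally--Tadic classification of Iwahori-spherical representations of $\GSp_4(\Q_p)$ as described in Roberts--Schmidt \cite{MR2344630}, the tempered such representations, apart from the unitary unramified principal series, are (up to unramified twist) the Steinberg representation and the two generalized Steinberg representations attached to the Siegel and Klingen parabolics. For each of these families one computes the eigenvalues of $U_\cP$ and $U_\cQ$ on the finite-dimensional module $\varPi_p^{\cI_p}$ via the Bernstein presentation of the Iwahori--Hecke algebra: a Steinberg-type Jacquet-module exponent is always twisted by a strictly positive power of $|\cdot|_p$, which systematically shifts the $p$-adic valuation of every $U_\cP U_\cQ$-eigenvalue away from $0$ in the normalization dictated by the weight $\ulk$. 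Hence the only remaining possibility is that $\varPi_p$ is an unramified principal series, that is, spherical.

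The case-by-case computation above is the main obstacle. An alternative and arguably cleaner route is to argue on the Galois side: by Theorem \ref{thm:urban1}, once ordinarity is invoked, the restriction $\varrho_{\varPi,p}|_{G_{\Q_p}}$ is upper triangular with diagonal characters $\psi_i \cyc^{m_i}$ having four distinct Hodge--Tate weights $\{0,\, k_2-2,\, k_1-1,\, k_1+k_2-3\}$ and unramified $\psi_i$. Distinctness of the Hodge--Tate weights, together with the strict ordering of the ordinary filtration, rules out nontrivial monodromy, so the associated Weil--Deligne representation at $p$ is unramified. Local-global compatibility at $p$, which in the ordinary regular-weight setting follows from the construction underlying Theorem \ref{thm:GSP4weissauertaylorlaumon}, then identifies the Langlands parameter of $\varPi_p$ with an unramified parameter, forcing $\varPi_p$ to be an unramified principal series and hence spherical.
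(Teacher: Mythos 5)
Your main route is the same strategy as the paper's --- restrict to Iwahori-spherical representations via $\varPi_p^{\cI_p}\neq 0$ and use the Roberts--Schmidt classification \cite{MR2344630} to rule out the non-spherical types by a $p$-adic valuation argument --- but the step that carries all the weight is only asserted. You claim that ``one computes'' the $U_\cP,U_\cQ$-eigenvalues via the Bernstein presentation and that every eigenvalue is forced away from valuation $0$; this is precisely the content that needs proof, and it is where the hypothesis $k_1>k_2>3$ must enter (for the boundary case the analogue fails: a weight-$2$ elliptic form Steinberg at $p$ is ordinary and non-spherical). Your enumeration of the tempered non-spherical Iwahori-spherical representations is also incomplete: besides the Steinberg and the two generalized Steinbergs, the tempered non-discrete-series types IIa, IIIa and VIa (with unitary unramified inducing data) have Iwahori-fixed vectors and must be excluded as well. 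The paper avoids the Hecke-eigenvalue computation and the detour through Proposition \ref{P:32} altogether: ordinarity pins the valuations of the Satake-type parameters to $0,\,k_2-2,\,k_1-1,\,k_1+k_2-3$, whose consecutive gaps all exceed $1$ when $k_1>k_2>3$, whereas the $L$-parameters of Types II--VI in \cite{MR2344630} each contain a pair of characters differing by $\nu=|\cdot|$ (valuation difference exactly $1$); hence only Type I survives.

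The ``alternative cleaner route'' on the Galois side does not work as stated. First, it is circular: the paper's notion of $p$-ordinarity (and hence Theorem \ref{thm:urban1}) is formulated in terms of the roots of the Hecke polynomial at $p$, which presupposes that $\varPi_p$ is spherical --- exactly what you are trying to prove. Second, the claim that distinct Hodge--Tate weights together with the ordinary filtration rule out monodromy is false: an ordinary weight-$2$ form with multiplicative reduction at $p$ has distinct Hodge--Tate weights, an ordinary triangular local representation, and nontrivial monodromy; what kills monodromy is the valuation/weight gap, not distinctness of the weights. Third, Theorem \ref{thm:GSP4weissauertaylorlaumon} only gives unramifiedness and characteristic-polynomial compatibility away from $\mathrm{Ram}(\varPi)\cup\{p\}$, so the local--global compatibility at $p$ you invoke is not available in the form cited. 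If you want a complete proof, carry out your first route but replace the asserted eigenvalue computation by the paper's comparison of the ordinarity valuations with the Type II--VI parameters.
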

\begin{proof}
We have a decomposition of $\cH(N,\cI_p,\C)$-modules
\beq\label{E:decomposition}S_\ulk(K_0(p),\C)=\bigoplus_{\varPi}m_{\varPi}\varPi_f^{K_0(p)},\eeq
where $\varPi$ runs over cuspidal automorphic representations of $\GSp_4(\AAA)$ with cohomological weight $\ulk$ and $\varPi_f^{K_0(p)}\neq0$. We claim that if $\varPi$ is ordinary, then $\varPi_p$ must be spherical. To see the claim, we note that according to the classification of Iwahori-spherical representations at non-archimedean places in \cite[Table A13, page 293]{MR2344630}, By the ordinary hypothesis, the $p$-adic valuations of the Satake parameters of $\varPi_p$ is given by 
\[a_1=0\leq a_2=k_2-2 \leq a_3=k_1-1 \leq a_4=k_1+k_2-3.\]
We find that $a_{i+1}-a_i>1$ for all $i=1,2,3$.
It follows from the $L$-parameters of representations of Type II-VI in \cite[page 53-55]{MR2344630} that $\varPi_p$ must be of Type I, and hence $\varPi_p$ is spherical.
\end{proof}
\subsection{Hecke eigensystems associated with $p$-ordinary representations}
We retain the notation in the previous subsection. Let $\varPi$ be an $p$-ordinary automorphic representations with $\varPi^{K}\neq 0$. 
There exists a non-zero eigenvector $v_\ord\in \varPi^{K_0(p)}$ of $U_\cP$ and $U_\cQ$ such that 
\begin{align*}
U_\cP v_\ord=\alpha_0v_\ord; \quad U_\cQ v_\ord=p^{2-k_2}\alpha_1\alpha_0 v_\ord.
\end{align*}

This allows us to define the Hecke eigensystem $\phi_\varPi^\ord: \cH(N,\cI_p,\Z) \to \overline{\Z}$ by
\begin{align*}
\notag \phi_{\varPi}^\ord(T) &=\phi_{\varPi}(T), \qquad \forall \ T\in \cH(Np),\\
 \phi_{\varPi}^\ord(U_\cP)&=\alpha_0,\quad \phi_p^\ord( U_\cQ)=p^{2-k_2}\alpha_1\alpha_0. 
\end{align*}
If $\varPi$ is of Yoshida lift with $\psi_\varPi=\pi_{f_1}\boxplus \pi_{f_2}\otimes\norm{\cdot}^\frac{\kappa_2-\kappa_1}{2}$ for a pair of elliptic newforms $(f_1,f_2)$. For $i=1,2$, let $a_p(f_i^\dagger)$ be the $p$-adic unit root of the Hecke polynomial $X^2-a_p(f_i)X+\psi(p)p^{\kappa_i-1}$. Then we have
\begin{equation}\label{E:HeckeYoshida}
\begin{aligned}\phi_{\varPi}^\ord(Q_\ell(X))&=(X^2-a_\ell(f_1)X+\psi_1(\ell)\ell^{\kappa_1-1})(X^2-a_\ell(f_2)\ell^\frac{\kappa_1-\kappa_2}{2}\chi(\ell)X+\psi_1(\ell)\ell^{\kappa_1-1}),\\
\phi_{\varPi}^\ord(U_\cP)&=a_p(f_1^\dagger);\quad \phi_{\varPi}(U_\cQ)=a_p(f_1^\dagger)a_p(f_2^\dagger).
\end{aligned}
\end{equation}

\section{Pseudocharacters and Generalized Matrix Algebras}\label{sec:pseudocharactersGMA}

Let $S$ denote the set of primes dividing $Np$. Let $\Q_S$ denote the maximal extension of $\Q$ unramified outside $S$ and $\infty$. Let $G_S$ denote $\Gal{\Q_S}{\Q}$. Let $\RRR$ be a finitely generated $\Lambda$-algebra. Let $n$ be a positive integer such that $n!$ is invertible in $\RRR$. We will consider a \textit{pseudocharacter of dimension $n$} (following Taylor \cite{MR1115109} and Bella\"{\i}che--Chenevier \cite{MR2656025}), which is a  function \[\mathrm{T}: G_S \rightarrow \RRR.\] 
 Furthermore, we will say that the pseudocharacter is said to be \textit{continuous} if the map $\pcT:G_S \rightarrow \RRR$ is continuous. The map $\pcT$ extends to an $\RRR$-linear map $\RRR[G_S] \rightarrow \RRR$ which is also a \textit{pseudocharacter} in the language of \cite[Definition 1.2.1]{MR2656025} and which by abuse of notation will also be denoted $\pcT$. We refer the reader to \cite{MR2656025} for the precise properties that $\pcT$ satisfies. Following \cite{MR2656025}, we call the following two-sided ideal of $\RRR[G_S]$ as the kernel of $\pcT$. 
\begin{align}\label{def:kerneltrace}
	\ker(\pcT) \coloneqq \left\{x \in \RRR[G_S], \text{ such that } \pcT(xy)=0, \text{ for all $y$ in $\RRR[G_S]$} \right\}.  
\end{align}

Let $\RRR$ be a reduced ring that is  a finite integral extension of $\Lambda$. We set 
\begin{align*}\frakX^{\rm cls}_{\charcomp}(\RRR)&:=\{\p \in \Spec(\RRR), \text{ such that } \p \cap \Lambda = P_{\ulk} \text{ with }\ulk=(k_1,k_2)\in\frakX^{\rm cls}_\charcomp\}, \\ \frakX^{\rm temp}_{\charcomp}(\RRR) &:=\{\p \in \Spec(\RRR), \text{ such that } \p \cap \Lambda = P_{\ulk} \text{ with }\ulk=(k_1,k_2)\in\frakX^{\rm temp}_\charcomp\}.
\end{align*}

Let $\hhh(N,\charcomp)_\m$ denote a local component of the ordinary Hida Hecke algebra introduced in Section \ref{sec:Hecke}. Let $S$ denote the set of primes in $\Q$ containing the infinite primes, $p$  and the primes dividing the tame level $N$. As an immediate corollary to Proposition \ref{P:reduced} and Lemma \ref{lem:density}, we have 
\begin{corollary}\label{cor:densityhomega} The sets $\frakX^{\rm temp}_{\charcomp}\left(\hhh(N,\charcomp)_\m\right)$ and $\frakX^{\rm cls}_{\charcomp}\left(\hhh(N,\charcomp)_\m\right)$ are dense in  $\Spec(\hhh(N,\charcomp)_{\frak{m}})$.
\end{corollary}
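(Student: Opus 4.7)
The plan is to combine two ingredients already collected in the paper: reducedness of $\hhh(N,\charcomp)_\m$ (Proposition~\ref{P:reduced}) together with the finiteness of this algebra over $\Lambda$ coming from the control theorem (Theorem~\ref{T:control}), and the Zariski density of the set $\frakX^{\rm temp}_\charcomp$ (hence also $\frakX^{\rm cls}_\charcomp$) inside $\Spec(\Lambda)$, which is the content of the referenced Lemma~\ref{lem:density}. Since $\frakX^{\rm cls}_\charcomp\setminus\frakX^{\rm temp}_\charcomp$ is cut out by the single equation $k_1=k_2$ (a proper closed subset of $\Spec(\Lambda)$), it suffices to prove density for $\frakX^{\rm temp}_\charcomp(\hhh(N,\charcomp)_\m)$.

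Concretely, I would show that $\bigcap_{\p \in \frakX^{\rm temp}_\charcomp(\hhh(N,\charcomp)_\m)} \p = (0)$ in $\hhh(N,\charcomp)_\m$. Given any nonzero $x\in \hhh(N,\charcomp)_\m$, reducedness (Proposition~\ref{P:reduced}) produces a minimal prime $\eta$ with $x\notin \eta$, so the image $\bar x$ in the domain $A:=\hhh(N,\charcomp)_\m/\eta$ is nonzero. Since $A$ is a domain and a finite integral extension of the normal Noetherian domain $\Lambda$, the element $\bar x$ satisfies a monic minimal polynomial over $\Frac(\Lambda)$ whose coefficients lie in $\Lambda$; its constant term $c=\pm N_{\Frac A/\Frac \Lambda}(\bar x)$ is therefore a nonzero element of $\Lambda$, and lies in $\Lambda\cap \mathfrak{q}$ for every prime $\mathfrak{q}$ of $A$ containing $\bar x$. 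By Lemma~\ref{lem:density}, the dense set $\frakX^{\rm temp}_\charcomp$ contains some $P_\ulk$ with $c\notin P_\ulk$. Applying going-up for the finite integral extension $\Lambda \hookrightarrow A$ yields a prime $\mathfrak{q}\in\Spec A$ over $P_\ulk$; pulling back to $\Spec(\hhh(N,\charcomp)_\m)$ we obtain a prime $\p\supset \eta$ lying over $P_\ulk$, hence $\p\in\frakX^{\rm temp}_\charcomp(\hhh(N,\charcomp)_\m)$. Since $c\notin P_\ulk=\p\cap\Lambda$, the element $\bar x$ cannot lie in $\mathfrak{q}$, and so $x\notin \p$.

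The routine subtlety is the passage from density in $\Spec(\Lambda)$ to density in $\Spec(\hhh(N,\charcomp)_\m)$, and this is precisely where both hypotheses are used: finiteness lets one control primes of $\hhh(N,\charcomp)_\m$ above $P_\ulk$ by going-up and lets one write every element as a root of a monic $\Lambda$-polynomial, while reducedness lets the minimal-prime reduction reach every nonzero $x$. There is no deep step here; the only place where something could genuinely go wrong is if $\hhh(N,\charcomp)_\m$ carried nilpotents, in which case the restriction of $x$ to every classical specialization could be nilpotent without $x$ being zero. Proposition~\ref{P:reduced} rules this out, so the argument is immediate.
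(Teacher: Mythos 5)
Your proof is correct and follows essentially the same route as the paper: the paper's Lemma~\ref{lem:density} is already stated for an arbitrary reduced finite integral extension $\RRR$ of $\Lambda$, and its proof compresses exactly your steps (pass to a minimal prime using reducedness from Proposition~\ref{P:reduced}, take norms over $\Lambda \hookrightarrow \RRR/\eta$ to reduce to the explicit density statement in $\Lambda$, then lift the classical weight back via lying over), so the corollary is obtained by applying that lemma to $\hhh(N,\charcomp)_\m$ just as you do, with your norm/going-up paragraph merely spelling out the "we may assume $\RRR=\Lambda$" reduction. The only cosmetic remark is that density of $\frakX^{\rm cls}_\charcomp$ follows trivially from the inclusion $\frakX^{\rm temp}_\charcomp\subseteq\frakX^{\rm cls}_\charcomp$, so the aside about the locus $k_1=k_2$ is unnecessary.
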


Corollaries \ref{cor:everyspecclassical} and \ref{cor:densityhomega} along with Theorem \ref{thm:GSP4weissauertaylorlaumon} allow us to deduce that there exists a unique continuous pseudocharacter $\pcT: G_S \rightarrow \hhh(N,\charcomp)_\m$ such that for all primes $\ell$ not in $S$, we have $\pcT(\Frob_\ell) = T_\ell$. See  Theorem \ref{thm:pseudochar} for the proof. \\

Let $\gamma_0$ denote the topological generator of $\Gal{\Q_\Cyc}{\Q}$ that maps to $1+p$ under the isomorphism provided by the cylotomic character $\Gal{\Q_\Cyc}{\Q} \cong 1+p\Z_p$. Consider  the tautological characters $\kappa_1$ and $\kappa_2$ of the global Galois group $G_S$ obtained by identifying the topological generator $\gamma_0$ of $\Gal{\Q_\Cyc}{\Q}$ with $1+X_1$ and $1+X_2$ respectively. 
\begin{align*}
	\kappa_1: G_S \twoheadrightarrow \Gal{\Q_\Cyc}{\Q} \hookrightarrow \Z_p[\![\Gal{\Q_\Cyc}{\Q}]\!]^\times &\xrightarrow{\cong} \Z_p[\![X_1]\!]^\times \hookrightarrow \Lambda^\times \hookrightarrow \TTT^\times,  \\	\kappa_2: G_S \twoheadrightarrow \Gal{\Q_\Cyc}{\Q} \hookrightarrow  \Z_p[\![\Gal{\Q_\Cyc}{\Q}]\!]^\times &\xrightarrow{\cong} \Z_p[\![X_2]\!]^\times  \hookrightarrow \Lambda^\times \hookrightarrow \TTT^\times.
\end{align*}

\begin{lemma}\label{lem:similitudechar}
There exists an unramified-at-$p$ global character $\widetilde{\lambda}_{\mathrm{ur}}: G_S \rightarrow \mathbb{F}_q^\times \hookrightarrow \OO \hookrightarrow \TTT^\times$ such that the specialization  of the following global character $\widetilde{\lambda}$ at each classical prime $\p$ in $\frakX^{\rm temp}_{\charcomp}\left(\TTT\right)$ equals the similitude character of the corresponding Siegel cuspidal eigenform:
\begin{align}\label{eq:simshape}
\widetilde{\lambda} \coloneqq \widetilde{\lambda}_{\mathrm{ur}}  \cyc^{-3} \omega^a \omega^b \kappa_1 \kappa_2	:G_S \rightarrow \TTT^\times.
\end{align}
Furthermore, the determinant of the GMA in equation (\ref{iso:gma}) equals $\widetilde{\lambda}^2$. 
\end{lemma}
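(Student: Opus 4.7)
The plan is to interpolate the similitude characters $\lambda_\p$ attached to the classical members of the Hida family into a single character $\widetilde\lambda: G_S \to \TTT^\times$, then factor out the explicit cyclotomic contribution to extract $\widetilde\lambda_{\mathrm{ur}}$. For the existence of $\widetilde\lambda$, the constant term $\ell^6 S_\ell^2$ of the abstract Hecke polynomial~\eqref{eq:abstractdegree4poly} specializes to $\det\varrho_{\varPi_\p,p}(\Frob_\ell) = \lambda_\p(\Frob_\ell)^2$ at every classical point $\p$, forcing $\lambda_\p(\Frob_\ell) = \ell^3 \cdot \varphi_\p(S_\ell)$ with the sign pinned down by continuity in weight. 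I would therefore prescribe $\widetilde\lambda(\Frob_\ell) := \ell^3\, S_\ell \in \TTT^\times$ for $\ell \nmid Np$, and extend to a continuous character $\widetilde\lambda: G_S \to \TTT^\times$ using Chebotarev density combined with the reducedness of $\TTT$ (Proposition~\ref{P:reduced}) and the Zariski density of $\frakX^{\rm temp}_\charcomp(\TTT)$ in $\Spec\,\TTT$ (Lemma~\ref{lem:density}); these last two facts guarantee the consistency of the prescription and the well-definedness of the extension, since each classical $\widetilde\lambda|_\p = \lambda_\p$ is a bona fide character.

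Next, I would verify the shape~\eqref{eq:simshape} by computing specializations. At a classical weight $(k_1,k_2)$ with $k_1 \equiv a$ and $k_2 \equiv b \pmod{p-1}$, the character $\omega^a \kappa_1$ specializes to $\sigma \mapsto \omega(\cyc(\sigma))^a\langle\cyc(\sigma)\rangle^{k_1} = \cyc(\sigma)^{k_1}$ and similarly $\omega^b \kappa_2$ to $\cyc^{k_2}$, so $\cyc^{-3}\omega^a\omega^b\kappa_1\kappa_2$ specializes to $\cyc^{k_1+k_2-3}$. On the other hand, the similitude character $\lambda_\p$, being crystalline at $p$ with Hodge--Tate weight $k_1+k_2-3$ (half the sum of the Hodge--Tate weights of $\varrho_{\varPi_\p,p}$ from Theorem~\ref{thm:GSP4weissauertaylorlaumon}\ref{thm:galoispoint3}), factors as $\psi_{\varPi_\p}\cdot\cyc^{k_1+k_2-3}$ with $\psi_{\varPi_\p}$ a finite-order character unramified at $p$ (using the sphericity of $\varPi_\p$ at $p$ provided by Proposition~\ref{P:new}). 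Setting
\[
\widetilde\lambda_{\mathrm{ur}} := \widetilde\lambda\cdot\bigl(\cyc^{-3}\omega^a\omega^b\kappa_1\kappa_2\bigr)^{-1},
\]
the specialization at each classical $\p$ is $\psi_{\varPi_\p}$, and density together with continuity yields the unramifiedness of $\widetilde\lambda_{\mathrm{ur}}$ at $p$ globally.

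To see that $\widetilde\lambda_{\mathrm{ur}}$ takes values in $\F_q^\times \subset \OO^\times$, I would argue that the orders of the $\psi_{\varPi_\p}$ are uniformly bounded across $\p$: the fixed torsion component $\charcomp\in\wh\Delta$ together with the fixed tame level $N$ bound the conductors of $\psi_{\varPi_\p}$, so the continuous character $\widetilde\lambda_{\mathrm{ur}}$ has image contained in the roots of unity $\mu(\OO) = \F_q^\times$ (assuming $\OO$ unramified over $\Z_p$ and large enough). For the determinant identity, the GMA in~\eqref{iso:gma} is by construction a realization of the pseudocharacter $\pcT$, so its determinant equals the determinant character of $\pcT$; at every classical $\p$, $\det\pcT|_\p = \det\varrho_{\varPi_\p,p} = \lambda_\p^2 = \widetilde\lambda|_\p^2$, whence density and reducedness of $\TTT$ yield $\det\pcT = \widetilde\lambda^2$.

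The main obstacle I anticipate is the finite-image statement for $\widetilde\lambda_{\mathrm{ur}}$: one must obtain a \emph{uniform} bound on the orders of the nebentypus characters $\psi_{\varPi_\p}$ across the Hida family, reflecting the rigidity imposed by the fixed component $\charcomp$. The remaining steps are standard interpolation and density arguments.
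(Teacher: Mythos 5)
Your proposal follows essentially the same route as the paper: define $\widetilde{\lambda}$ on Frobenii by $\Frob_\ell \mapsto \ell^3 S_\ell$, extend it by the continuity-and-density argument of Theorem~\ref{thm:pseudochar} (using Proposition~\ref{P:reduced} and Lemma~\ref{lem:density}), and then read off the shape~\eqref{eq:simshape} from the structure at $p$ of the tempered classical specializations. Two small points of comparison. First, your identification $\lambda_\p(\Frob_\ell)=\ell^3\varphi_\p(S_\ell)$ is extracted from the constant term $\ell^6 S_\ell^2$ of~\eqref{eq:abstractdegree4poly}, which only determines $\lambda_\p(\Frob_\ell)$ up to sign; ``pinned down by continuity in weight'' is not an argument (for a fixed $\p$ and $\ell$ there is nothing to vary), and the paper instead takes this identification as a known input, citing the deformation-theoretic and Hecke-theoretic computations of \cite{MR2234862} and \cite{MR4771233}. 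Second, the step you single out as the main obstacle --- a uniform bound on the orders of the characters $\psi_{\varPi_\p}$ --- is not needed: once the density argument shows that $\widetilde{\lambda}_{\mathrm{ur}} = \widetilde{\lambda}\cdot(\cyc^{-3}\omega^a\omega^b\kappa_1\kappa_2)^{-1}$ is unramified at $p$, the paper simply observes that a continuous $\TTT^\times$-valued character of $G_S$ unramified at $p$ automatically has finite image (the principal units of $\TTT$ are pro-$p$, while the prime-to-$p$ inertia contributes only finitely much $p$-part), so the finite-image statement is a one-line consequence rather than a delicate uniformity across the family. With these two adjustments your argument matches the paper's proof, including the determinant identity, which in both treatments is the same specialization-and-density comparison of $\det\pcT$ with $\lambda_\p^2$.
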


\begin{proof}

For each classical prime $\p$ in $\frakX^{\rm temp}_{\charcomp}\left(\TTT\right)$,  the similitude character $\lambda_\p:G_S\rightarrow \overline{\Z}_p^\times$ is given by the composition of the similitude character of $\GSp_4$ with the Galois representation (as given in Theorem \ref{thm:GSP4weissauertaylorlaumon}) associated to the Siegel cuspidal automorphic representation for the specialization corresponding to $\p$. The global character  $\lambda_\p$ of $G_S$, corresponds via class field theory, to the central character of the corresponding specialized automorphic representation (say $\varPi_\p$). Furthermore the global character $\lambda_\p$ is determined, via the Chebatorev density theorem, by the following assignment given below:
\begin{align} \label{eq:tl0}
\notag	\lambda_\p: G_S &\rightarrow \overline{\Q}_p^\times, \\ 
	\Frob_\ell &\mapsto \ell^3S_\ell(\varPi_\p), \quad \forall \ \ell  \notin S.
\end{align}

For a Galois deformation theoretic deduction of this assignment, one can refer to \cite[sections 3 and 4]{MR2234862}. For a direct deduction of this assignment from a Hecke theoretic perspective, one can refer to the calculations in \cite[Section 7]{MR4771233}. The lemma now follows by employing a density argument as in Theorem \ref{thm:pseudochar} and by considering the following assignment:
\begin{align*}
	\widetilde{\lambda}: G_S &\rightarrow \TTT^\times, \\
	\Frob_\ell &\mapsto \ell^3S_\ell,  \quad \forall \ \ell  \notin S.
\end{align*}

The precise shape of the similitude character as given in equation (\ref{eq:simshape}) follows by using Theorem \ref{thm:urban1}. One sees that the global $p$-adic character  $\dfrac{\widetilde{\lambda}}{\cyc^{-3}\omega^a\omega^b\kappa_1 \kappa_2}$ of $G_S$, which is valued in $\TTT^\times$,  is unramified at $p$ and hence must have finite image.  
\end{proof}

One may also employ a density argument as in Theorem \ref{thm:pseudochar} for pseudocharacters of $G_{\Q_p}$. Using such a density argument along with Theorem \ref{thm:urban1}, we similarly obtain two unramified characters of $G_{\Q_p}$ as follows: 
\begin{align}
\widetilde{\psi}_0: G_{\Q_p} &\rightarrow \TTT^\times, \qquad \qquad & \widetilde{\psi}_1: G_{\Q_p} &\rightarrow \TTT^\times,  \\
\notag \Frob_p &\mapsto U_{\cP}, & \Frob_p &\mapsto \dfrac{U_{\cQ}}{U_{\cP}}.
\end{align}
Furthermore, one can conclude that the restriction of the pseudocharacter $\pcT$ to $G_{\Q_p}$ is a sum of four characters as described below:
\begin{align}\label{eq:bigpseudorepdecompositionlocal}
\pcT \big\vert_{G_{\Q_p}} = 	\dfrac{\widetilde{\lambda}}{\widetilde{\psi}_0} +  \dfrac{\widetilde{\lambda}}{ \widetilde{\psi}_1\cyc^{-2}\omega^b\kappa_2} + \widetilde{\psi}_1\cyc^{-2}\omega^b\kappa_2 +  \widetilde{\psi}_0. 
\end{align}

\subsection{Generalized Matrix Algebras}\label{subsec:gmas}

 Let $\TTT$ denote the local ring given by the surjection $\hhh(N,\charcomp)_\m  \twoheadrightarrow \TTT$  as described in Section \ref{subsec:HidafamilyGSP4}. Note that $\TTT$ is also a reduced local ring. Note also that we have a natural surjection $\hhh(N,\charcomp)_\m \twoheadrightarrow \TTT$. We let $\m$ denote the maximal ideal of $\TTT$ as well. We let $\Frac(\TTT)$ denote $\TTT \otimes_\Lambda \Frac(\Lambda)$.  In this section, we shall recall the structure theory of pseudocharacters using generalized matrix algebras (following Bella\"{\i}che--Chenevier \cite{MR2656025}). The structure theory shall be applied to the  pseudocharacter:
\[\pcT: G_S \rightarrow \hhh(N,\charcomp)_\m  \twoheadrightarrow \TTT. \]

\begin{hypothesis}\label{item:globalhyp} 	For $i\in \{1,2\}$, suppose there exists two absolutely irreducible \textit{$p$-ordinary} \textit{modular} Galois representations \[\overline{\varrho}_i:G_S \rightarrow \Gl_2(\overline{\mathbb{F}}_p),\]  
such that for each $g$ in $G_S$, we have 
\begin{enumerate}[noitemsep]
\item $\overline{\varrho}_1 \ncong \overline{\varrho}_2$ as representations over $G_S$, and  
\item $\det(\overline{\varrho}_1)= \det(\overline{\varrho}_2)$ as characters of $G_S$, and 
\item $\pcT(g) \equiv \mathrm{Trace}(\overline{\varrho}_1(g)) + \mathrm{Trace}(\overline{\varrho}_2(g))\  \left(\mathrm{mod} \  \mmm \right)$. \\
\end{enumerate}

\end{hypothesis}
\begin{hypothesis}\label{item:localhypord}
$\det(\overline{\rho}_1)   \equiv \widetilde{\lambda} \ (\mathrm{mod} \ \m)$ and $\det(\overline{\rho}_2)   \equiv \widetilde{\lambda} \ (\mathrm{mod} \ \m)$.
\end{hypothesis}

\begin{hypothesis}\label{item:localhypdisgsp4}
 The residual characters of $G_{\Q_p}$ associated to the four characters appearing in equation (\ref{eq:bigpseudorepdecompositionlocal}) are pairwise distinct.  
\end{hypothesis}

Without loss of generality, we may thus write 
\begin{align} \label{eq:residuallyordinary}
\overline{\varrho}_2\big\vert_{G_{\Q_p}}  \equiv  \dfrac{\widetilde{\lambda}}{\widetilde{\psi}_0}  + \widetilde{\psi}_0  \ (\mathrm{mod}\ \mmm), \qquad 	\overline{\varrho}_1\big\vert_{G_{\Q_p}}  \equiv \dfrac{\widetilde{\lambda}}{ \widetilde{\psi}_1\cyc^{-2}\omega^b\kappa_2} + \widetilde{\psi}_1\cyc^{-2}\omega^b\kappa_2 \ (\mathrm{mod}\ \mmm).
\end{align}

The reducibility ideal $\JJJ_{\mathrm{red}}$ is the smallest integral ideal inside $\TTT$ such that 
\begin{enumerate}[label=(\roman*)]
\item The pseudocharacter $G_S \xrightarrow{\pcT} \TTT \rightarrow \TTT/\JJJ_{\mathrm{red}}$ is a sum of two $2$-dimensional pseudocharacters. 
\item If $I$ is an integral ideal in $\TTT$ such that the pseudocharacter $G_S \xrightarrow{\pcT} \TTT \rightarrow \TTT/I$ is a sum of two $2$-dimensional pseudocharacters,then $I \supset \JJJ_{\mathrm{red}}$. 
\end{enumerate}

The existence of the reducibility ideal is established in \cite[Proposition 1.5.1]{MR2656025}. Let $\BBB$ and $\CCC$ be two finitely generated fractional ideals inside $\Frac(\TTT)$ satisfying the following conditions:
\begin{enumerate}
\item (reducibility loci) $\BBB \CCC = \JJJ_{\mathrm{red}}$. 
\item (semi-simplicity) 	If under the natural embedding \[i_\eta: \TTT \twoheadrightarrow \TTT/\eta \hookrightarrow \Frac(\TTT_\eta),\] the fractional ideal $i_\eta(BC)$ equals zero, then the  fractional ideals $i_\eta(B)$ and $i_\eta(C)$ are both equal to zero.
\end{enumerate}

 The algebra
\begin{align}
	\left(\begin{array}{cc} M_2(\TTT) & M_2(\mathcal{B}) \\ M_2(\mathcal{C}) & M_2(\TTT) \end{array}\right),
\end{align}
naturally viewed as a subalgebra of $M_4\left(\Frac(\TTT)\right)$, is said to be a generalized matrix algebra (or a \textit{GMA} for short).  One can restrict the trace function on $M_4\left(\Frac(\TTT)\right)$ to obtain a trace function on the GMA. The image of the trace function on the GMA is valued in $\TTT$. The GMA naturally acts on the following torsion-free $\TTT$-module of rank four:
\begin{align} \label{eq:defininglattice}
\LLLL \coloneqq \BBB' \oplus \BBB' \oplus \TTT \oplus \TTT.
\end{align}
where $\BBB'$ equals the $\TTT$-module $\BBB + \TTT$. Here, the $\TTT$-module sum is considered inside $\Frac(\TTT)$. Since $\LLLL$ is a torsion-free module of rank four inside $\Frac(\TTT)$, we can find a non-zero element $\theta$ such that 
\begin{align}\label{eq:opencondnfreeness}
\text{$\LLLL_\p$ is a free $\TTT_\p$-module, for all primes $\p$ in $\TTT$ not containing $\theta$.} 	
\end{align}
 Note that since $\LLLL$ is a torsion-free $\TTT$-module of rank four, one has a natural inclusion \[\LLLL \hookrightarrow \LLLL \otimes_\TTT \Frac(\TTT) \cong \Frac(\TTT)^4.\] By considering the standard basis over $\Frac(\TTT)$, we may equip $\LLLL$ with a natural action of the GMA induced from the natural action of $M_4\left(\Frac(\TTT)\right)$ on $\Frac(\TTT)^4$. Since $\BBB'$ is finitely generated over $\TTT$, one can naturally equip $\LLLL$ with the $\Lambda$-adic topology.

\begin{theorem}[Bella\"{\i}che--Chenevier, \cite{MR2656025}]\label{thm:pseudorepBC} Suppose that hypotheses \ref{item:globalhyp}, \ref{item:localhypord} and \ref{item:localhypdisgsp4} hold. There exists two finitely generated $\TTT$-fractional ideals $\BBB$ and $\CCC$  satisfying the following conditions:
\begin{enumerate}
\item\label{item:pseudo1} 	There exists an isomorphism of $\TTT$-algebras 
{\def\arraystretch{2}
	\begin{align}\label{iso:gma}
	\dfrac{\TTT[G_S]}{\ker(\pcT)} \cong \left(\begin{array}{cc} M_2(\TTT) & M_2(\mathcal{B}) \\ M_2(\mathcal{C}) & M_2(\TTT) \end{array}\right).	
	\end{align}}
	such that under the isomorphism given in equation (\ref{iso:gma}), the pseudocharacter $\pcT$ on $G_S$ coincides with the trace function on the GMA on the RHS of equation (\ref{iso:gma}). 	
\item\label{item:pseudo2}   $\BBB \CCC = \JJJ_{\mathrm{red}}$.
	\item\label{item:pseudo3}  Let $\eta$ be a minimal prime in $\TTT$. Suppose that under the natural embedding \[i_\eta: \TTT \twoheadrightarrow \TTT/\eta \hookrightarrow \Frac(\TTT_\eta),\] the ideal $i_\eta(BC)$ equals $(0)$. Then, the  fractional ideals $i_\eta(B)$ and $i_\eta(C)$ are both equal to $(0)$.

	\item\label{item:pseudo4}  The natural induced map $G_S \rightarrow \left(\begin{array}{cc} M_2(\TTT) & M_2(\mathcal{B}) \\ M_2(\mathcal{C}) & M_2(\TTT) \end{array}\right)$ is continuous.
\item\label{item:pseudo5}  There exists a non-zero element $\Theta$ in $\Lambda$ such that for all primes $\p$ in $\frakX^{\rm temp}_{\charcomp}\left(\TTT\right)$ not containing $\Theta$, the $\TTT_\p$-module $\LLLL_\p$ is free of rank four and the induced Galois representation \[G_S \rightarrow \mathrm{Aut}_{\TTT/\p}\left(\LLLL_\p/\p\LLLL_\p\right) \cong \mathrm{GL}_4\left(\Frac(\TTT/\p)\right)\] is semi-simple and is isomorphic to the semi-simple Galois representation given in Theorem \ref{thm:GSP4weissauertaylorlaumon}.

\item\label{item:pseudo6}  Let $I$ be an ideal in $\TTT$ containing $\JJJ_{\mathrm{red}}$.  There exists two pseudocharacters $\pcT_{1,I}:G_S \rightarrow \TTT/I$ and $\pcT_{2,I}:G_S\rightarrow \TTT/I$ such that 
\begin{enumerate}
\item $\pcT (g) = \pcT_{1,I} + \pcT_{2,I} \ (\mathrm{mod} \ I)$, for all $g$ in $G_S$, 
\item for each $i$ in $\{1,2\}$, we have $\pcT_{i,I} \equiv \mathrm{Trace}(\overline{\varrho}_i) \ (\mathrm{mod} \ \mmm)$,
\item For each $i$ in $\{1,2\}$, the pseudocharacter $\pcT_{i,I}$ is given by the following composition of maps:
\begin{align*}
G_S  \rightarrow \dfrac{\TTT[G_S]}{\ker(T)} \xrightarrow{\cong} \left(\begin{array}{cc} M_2(\TTT) & M_2(\mathcal{B}) \\ M_2(\mathcal{C}) & M_2(\TTT) \end{array}\right) \rightarrow e_i\left(\begin{array}{cc} M_2(\TTT) & M_2(\mathcal{B}) \\ M_2(\mathcal{C}) & M_2(\TTT) \end{array}\right)e_i &\xrightarrow{\cong}  M_2(\TTT) \rightarrow  \\ & \rightarrow M_2(\TTT/I) \xrightarrow {\mathrm{Trace}} \TTT/I,
\end{align*}

\end{enumerate}

Here, $e_1$ and $e_2$ are idempotents in the GMA given below: 
\begin{align}
e_1 =  \left(\begin{array}{cc} I_2 & 0_2 \\ 0_2  & 0_2 \end{array}\right), \qquad e_2 =  \left(\begin{array}{cc} 0_2 & 0_2 \\ 0_2  & I_2 \end{array}\right),
\end{align}
where, $I_2$ and $0_2$ are the $2\times 2$ identity matrix and zero matrix respectively. 

\item\label{item:pseudo7} For every element $\sigma$ in the decomposition group $\Gal{\overline{\Q}_p}{\Q_p}$, its image in the GMA $\left(\begin{array}{cc} M_2(\TTT) & M_2(\mathcal{B}) \\ M_2(\mathcal{C}) & M_2(\TTT) \end{array}\right)$ has the following shape: 	
\begin{align} \label{eq:galoisordinary}
  \left(\begin{array}{cccc}  \dfrac{\widetilde{\lambda}}{ \widetilde{\psi}_1\cyc^{-2}\omega^b\kappa_2}(\sigma) & \star & 0 & \star \\ 0 &\widetilde{\psi}_1\cyc^{-2}\omega^b\kappa_2(\sigma)   & 0 & \star \\ \star & \star &  \dfrac{\widetilde{\lambda}}{\widetilde{\psi}_0}(\sigma) & \star \\ 0 & 0 & 0 & \widetilde{\psi}_0(\sigma) \end{array} \right).
\end{align}
\end{enumerate}

\end{theorem}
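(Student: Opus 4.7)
The plan is to apply the general theory of generalized matrix algebras of Bellaïche--Chenevier \cite{MR2656025} to the pseudocharacter $\pcT$ produced from the ordinary Hida Hecke algebra, and then refine the output at the place $p$ using Urban's Theorem \ref{thm:urban1} together with the $p$-distinguished hypothesis \ref{item:localhypdisgsp4}. Items (\ref{item:pseudo1})--(\ref{item:pseudo4}) and (\ref{item:pseudo6}) are essentially formal. Hypothesis \ref{item:globalhyp} gives a residual decomposition $\overline{\pcT} = \mathrm{Tr}(\overline{\varrho}_1) + \mathrm{Tr}(\overline{\varrho}_2)$ with $\overline{\varrho}_1 \not\cong \overline{\varrho}_2$ both absolutely irreducible, and by \cite[Theorem 1.4.4]{MR2656025} the corresponding orthogonal residual idempotents lift to orthogonal idempotents $e_1, e_2$ in the local ring $\TTT[G_S]/\ker(\pcT)$. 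The resulting block decomposition produces the GMA isomorphism (\ref{item:pseudo1}); continuity (\ref{item:pseudo4}) follows from the continuity of $\pcT$ via \cite[Proposition 1.6.1]{MR2656025}; the identification $\BBB \CCC = \JJJ_{\mathrm{red}}$ in (\ref{item:pseudo2}) is \cite[Proposition 1.5.1]{MR2656025}; and (\ref{item:pseudo6}) follows by composing with the projection $\TTT \twoheadrightarrow \TTT/I$ and splitting the image of the GMA along $e_1, e_2$.

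For item (\ref{item:pseudo5}), I would combine Corollary \ref{cor:densityhomega} with Theorem \ref{thm:GSP4weissauertaylorlaumon}. At any tempered classical prime $\p$, the specialized pseudocharacter is the trace of the semisimple Galois representation of Laumon--Taylor--Weissauer, so the induced representation on $\LLLL_\p/\p\LLLL_\p$ is semisimple whenever $\LLLL_\p$ is free over $\TTT_\p$. Freeness fails only on the support of the torsion part of $\BBB'$, which sits in a proper closed subscheme of $\Spec(\TTT)$; intersecting its defining ideal with $\Lambda$ produces the element $\Theta$ in the statement. The semi-simplicity assertion (\ref{item:pseudo3}) then follows: if $i_\eta(\BBB\CCC) = 0$ while $i_\eta(\BBB) \neq 0$, then a classical specialization over $\eta$ for which $\LLLL_\p$ is free would carry a nontrivial extension inside a Galois representation known to be semisimple and reducible, a contradiction.

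The main obstacle is item (\ref{item:pseudo7}), where one must show that the ordinary upper-triangular shape holds \emph{integrally} over $\TTT$ rather than merely generically. The input is the decomposition \eqref{eq:bigpseudorepdecompositionlocal} of $\pcT\vert_{G_{\Q_p}}$ into a sum of four characters, obtained by a density argument analogous to Theorem \ref{thm:pseudochar} starting from Theorem \ref{thm:urban1}. Hypothesis \ref{item:localhypdisgsp4} ensures that the four residual characters are pairwise distinct modulo $\mmm$; this is precisely the input needed to re-run the idempotent-lifting of \cite[Theorem 1.4.4]{MR2656025} inside $\TTT[G_{\Q_p}]/\ker(\pcT\vert_{G_{\Q_p}})$ to produce four orthogonal idempotents $f_0,f_1,f_2,f_3$ that isolate the four characters integrally. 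The crucial compatibility, which I expect to be the most delicate bookkeeping step, is to verify from \eqref{eq:residuallyordinary} that $f_3+f_0$ and $f_1+f_2$ agree with the restrictions to $G_{\Q_p}$ of the global idempotents $e_2$ and $e_1$, yielding exactly the block pattern of \eqref{eq:galoisordinary}. The zero entries within each diagonal $2\times 2$ block then follow from Urban's theorem at classical specializations: the character of larger Hodge--Tate weight must sit in the upper left of each $\GL_2$ block, and density propagates this upper-triangular shape to the entire GMA.
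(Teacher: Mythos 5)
Your outline for items (\ref{item:pseudo1}), (\ref{item:pseudo2}), (\ref{item:pseudo4}) and (\ref{item:pseudo6}) matches the paper, but the heart of the argument has genuine gaps. First, in item (\ref{item:pseudo5}) you claim that at a tempered classical prime $\p$ where $\LLLL_\p$ is free, semisimplicity of $\LLLL_\p/\p\LLLL_\p$ follows because its trace equals the trace of the semisimple Laumon--Taylor--Weissauer representation. That inference is false: equality of traces only identifies the semisimplification, and a free lattice can perfectly well reduce to a non-split extension at such a $\p$ --- indeed the primes where this happens (classical points of a stable branch lying on the reducibility locus) are exactly the congruence primes the whole paper is about, and Proposition \ref{prop:inclusion} later exploits precisely these non-split reductions. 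The paper avoids them by building into $\Theta$, besides the freeness factor $\Theta_1$, a norm $\Theta_\eta$ of an element $x_\eta\in i_\eta(\BBB\CCC)$ for every minimal prime with $i_\eta(\BBB\CCC)\neq(0)$; away from these, either $\BBB\CCC\not\subset\p$ (so the image algebra is all of $M_4(\Frac(\TTT/\p))$ and the reduction is irreducible) or one is on a branch with $i_\eta(\BBB)=i_\eta(\CCC)=(0)$, where the action is block diagonal with residually irreducible $2\times2$ blocks. That second case uses item (\ref{item:pseudo3}), which the paper proves by a purely algebraic argument: if $i_\eta(\CCC)=(0)$ but $i_\eta(\BBB)\neq(0)$, one exhibits an explicit nonzero element of the GMA, supported in the upper-right block and killed on every other minimal prime, lying in $\ker(\pcT)$, contradicting faithfulness of the trace. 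Your proposed deduction of (\ref{item:pseudo3}) from semisimplicity of classical specializations is therefore circular: the semisimplicity you invoke is exactly what fails when the extension you want to rule out exists. You also never address finite generation of $\BBB$ and $\CCC$, which is part of the statement and which the paper deduces from (\ref{item:pseudo3}); it is not supplied by \cite[Theorem 1.4.4]{MR2656025} alone.

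Second, for item (\ref{item:pseudo7}) your plan to lift four orthogonal idempotents inside the faithful quotient $\TTT[G_{\Q_p}]/\ker(\pcT\vert_{G_{\Q_p}})$ does not do the required work: that quotient is not naturally a subalgebra of the global GMA, and even granting idempotents, nothing in your sketch controls whether the conjugation realizing the shape (\ref{eq:galoisordinary}) lies in the integral unit group of the GMA, i.e.\ has diagonal blocks in $\Gl_2(\TTT)$ and off-diagonal entries in $\BBB$ and $\CCC$. This integrality is the entire point of (\ref{item:pseudo7}) (the paper explicitly remarks that Sen-theory arguments would only give a change of basis over $\Frac(\TTT)$, which is useless for inducing an automorphism of $\LLLL$ as needed in Proposition \ref{prop:inclusion}). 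The paper's proof instead residually triangularizes the two diagonal blocks via Brauer--Nesbitt, constructs an eigenvector $\vec{v}$ for $\widetilde{\lambda}/\widetilde{\psi}_0$ by specializing at tempered classical primes, invoking Theorem \ref{thm:urban1}, and using density plus reducedness of $\TTT$ (following Skinner--Urban), and then uses the $p$-distinguished hypothesis \ref{item:localhypdisgsp4} to make the relevant pivots units so that Gaussian elimination normalizes $\vec{v}$ to have entries in $\BBB,\BBB,1,\TTT$; iterating produces an explicit matrix $M$ in the unit group of the GMA. Your proposal defers exactly this step as ``delicate bookkeeping'' without a mechanism that yields integrality, so (\ref{item:pseudo7}) is not established as written.
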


\begin{proof}
	One can use \cite[Theorem 1.4.4]{MR2656025} to conclude that there exists two $\TTT$-fractional ideals $\BBB$ and $\CCC$ such that there is an isomorphism of $\TTT$-algebras, as given in equation (\ref{iso:gma}), such that the pseudocharacter on $G_S$ coincides with the trace function on the GMA on the RHS of equation (\ref{iso:gma}). The trace function and the pseudocharacter $\pcT$ are faithful (in the sense of \cite[Section 1.2.4]{MR2656025}) as we have taken the quotient by the two-sided ideal $\ker(\pcT)$. Note that (\ref{item:pseudo2}) follows from \cite[Proposition 1.5.1]{MR2656025}. To complete the proof of (\ref{item:pseudo1}), we will have to show that $\BBB$ and $\CCC$ are finitely generated $\TTT$-modules.

We will first be required to prove (\ref{item:pseudo3}). Let $\{\eta_1,\cdots\eta_t\}$ denote the minimal primes of $\TTT$. Suppose without loss of generality that $\eta=\eta_1$. For the sake of contradiction, assume that $i_\eta(\CCC)$ equals $(0)$, but $i_\eta(\BBB)$ does not equal $(0)$. Choose an element $b \in \TTT$ such that $i_\eta(b) \neq 0$. If $t > 1$, choose an element $x$ that belongs to the $\displaystyle \bigcap \limits_{i=2}^t \eta_i \setminus \eta$. Otherwise if $t=1$, note that $\eta$ must equal $(0)$ since $\TTT$ is reduced; choose $x$ to equal $1$. We claim that the non-zero element $\left(\begin{array}{cc}0_{2\times2} & x b\cdot I_{2\times2} \\ 0_{2\times2} & 0_{2\times2} \end{array}\right)$ of the GMA belongs to kernel of the trace map as defined in equation (\ref{def:kerneltrace}). To see this, observe that this element of the GMA reduces to the zero matrix under $i_{\eta_j}$ for every minimal prime $\eta_j \neq \eta$, since $\displaystyle x \in \bigcap_{i=2}^t \eta_i \setminus \eta$. Since $i_\eta(\CCC)=(0)$, the following computation then shows that the non-zero element $\left(\begin{array}{cc}0_{2\times2} & x b\cdot I_{2\times2} \\ 0_{2\times2} & 0_{2\times2} \end{array}\right)$ of the GMA belongs to kernel of the trace map:

\[\left(\begin{array}{cc} 0_{2\times2} & i_\eta(x b)I_{2\times2}  \\  0_{2\times2} &  0_{2\times2} \end{array}\right) \cdot \left(\begin{array}{cc}U & V \\ 0_{2\times2}  & W \end{array}\right) = \left(\begin{array}{cc}0_{2\times2} & i_\eta(x b) W\\ 0_{2\times2}  & 0_{2\times2}  \end{array}\right), \qquad \forall \  U,W \in M_2(\TTT/\eta), \ V \in M_2(i_\eta(\BBB)). \]

This contradicts the fact that the trace map on the GMA is faithful. \\ 

To show that $\BBB$ and $\CCC$ are finitely generated $\TTT$-modules, it is enough to show that  for each minimal prime $\eta$, the $\TTT/\eta$-modules $i_\eta(\BBB)$ and $i_\eta(\CCC)$ are finitely generated since we have natural injections $\BBB \hookrightarrow \prod i_\eta(\BBB)$ and $\CCC \hookrightarrow \prod i_\eta(\CCC)$ over the Noetherian ring $\TTT$. Fix a minimal prime $\eta$. Without loss of generality, we may thus assume that both $i_\eta(\BBB)$ and $i_\eta(\CCC)$ are non-zero fractional ideals since we have just shown that if one of the fractional ideals equals $(0)$, then the other fractional ideal is automatically $(0)$. Let $b$ and $c$ be non-zero elements of $i_\eta(\BBB)$ and $i_\eta(\CCC)$ respectively. Since the product $i_\eta(\BBB\CCC)$ equals the integral $\TTT/\eta$-ideal $i_\eta(\JJJ_{\mathrm{red}})$, the product ideals $c \cdot i_\eta(\BBB)$ and $b \cdot i_\eta(\CCC)$ must also be integral $\TTT/\eta$-ideals and hence finitely generated. Since $\TTT/\eta$ is an integral domain, the natural isomorphisms of $\TTT/\eta$-modules $i_\eta(\BBB) \cong c \cdot i_\eta(\BBB)$ and $i_\eta(\CCC) \cong b \cdot i_\eta(\CCC)$ now tell us that $i_\eta(\BBB)$ and $i_\eta(\CCC)$ are finitely generated $\TTT/\eta$-modules. \\ 

 To prove continuity in (\ref{item:pseudo4}), note that $\TTT$ is a profinite topological ring and hence compact. Note also that the GMA $\left(\begin{array}{cc} M_2(\TTT) & M_2(\mathcal{B}) \\ M_2(\mathcal{C}) & M_2(\TTT) \end{array}\right)$ is finitely generated over $\TTT$. Let $g_1,\cdots,g_s$ denote elements of $G_S$ that generate the LHS of equation (\ref{iso:gma}) as a $\TTT$-module. Since the pseudocharacter is continuous, we can define a continuous map $\varsigma:G_S \rightarrow \TTT^s$, as given in equation (\ref{eq:isocontinuity}), from the LHS of  equation (\ref{iso:gma}) to $\TTT^s$. The map $\varsigma$ must be continuous since the pseudocharacter is continuous.  For each element $g$ in $G_S$, we let $A_g$ denote the corresponding element of the GMA under (\ref{iso:gma}). We may similarly define a continuous map $\varsigma_0$, as given in equation (\ref{eq:isocontinuity}), from the RHS of equation  (\ref{iso:gma}) to $\TTT^s$. Since $\pcT$ is continuous, faithful and since its domain is compact, the map $\varsigma_0$ defines a homeomorphism onto $\mathrm{Image}(\varsigma_0)$. We obtain the desired continuity result:
\begin{align}\label{eq:isocontinuity}
\xymatrix@R=0.5mm{
G_S \ar[r]^{\varsigma} & \mathrm{Image}(\varsigma_0)  & \ar[l]^{\varsigma_0 \qquad}_{\qquad \cong \qquad \qquad} {\left(\begin{array}{cc} M_2(\TTT) & M_2(\mathcal{B}) \\ M_2(\mathcal{C}) & M_2(\TTT) \end{array}\right) }  \\ 
 g  \ar@{|->}[r]& (\pcT(gg_1),\cdots 	\pcT(gg_s)) &  A_g  \ar@{|->}[l]
}
\end{align}

To prove (\ref{item:pseudo5}), we first choose $\Theta_1 \in \Lambda$ that annihilates the cokernel of the following natural injections of $\Lambda$-modules with ranks equal to  $4 \times \mathrm{rank}_{\Lambda}(\TTT)$  and  $\mathrm{rank}_{\Lambda}(\TTT)$  respectively:
\[\TTT \oplus \TTT  \oplus \TTT  \oplus \TTT \hookrightarrow \LLLL, \qquad \TTT \hookrightarrow \BBB + \TTT \]
The sum $\BBB + \TTT$ is considered inside $\Frac(\TTT)$. Let $\p$ be a prime in $\frakX^{\rm temp}_{\charcomp}\left(\TTT\right)$ not containing $\Theta_1$. Note that the action of $G_S$ on the localization $\LLLL_\p$ may not be continuous. The action of $G_S$ on the quotient $\LLLL_\p/\p\LLLL_\p$ (which is a free $\TTT_\p/\p\TTT_\p$-module of rank four) is given by the image of the GMA in equation (\ref{iso:gma}) under the natural reduction $\TTT_\p \rightarrow \TTT_\p/\p\TTT_\p$. To ensure semi-simplicity, for each minimal prime $\eta$ such that $i_\eta(BC) \neq (0)$, choose an element $x_\eta$ in the integral ideal $i_\eta(BC)$; let $\Theta_\eta$ equal $\mathrm{Norm}_{\Frac(\Lambda)}^{\TTT_\eta}(x_\eta)$. If, for a minimal prime $\eta$, the ideal $i_\eta(BC)$ equals  $(0)$, then let $\Theta_\eta$ equal $1$. Observe that the following choice of $\Theta$ works to prove (\ref{item:pseudo5}). 
 \[\Theta = \Theta_1 \times \prod_{i=1}^t \Theta_{\eta_i}.\]

(\ref{item:pseudo6}) follow from \cite[Lemma 1.5.4]{MR2656025}.

To prove (\ref{item:pseudo7}), 
it suffices to produce a \textit{change of basis} matrix $M$ in  $\left(\begin{array}{cc} \Gl_2(\TTT) & M_2(\mathcal{B}) \\ M_2(\mathcal{C}) & \Gl_2(\TTT) \end{array}\right)$ and then considering the conjugate $M^{-1}\left(\begin{array}{cc} M_2(\TTT) & M_2(\mathcal{B}) \\ M_2(\mathcal{C}) & M_2(\TTT) \end{array}\right)M$  of the GMA $\left(\begin{array}{cc} M_2(\TTT) & M_2(\mathcal{B}) \\ M_2(\mathcal{C}) & M_2(\TTT) \end{array}\right)$ satisfying properties (\ref{item:pseudo1})-(\ref{item:pseudo6}) of Theorem \ref{thm:pseudorepBC} by the change of basis matrix $M$. \\

	  Since $\overline{\varrho}_1$ and $\overline{\varrho}_2$ are absolutely irreducible, they are valued in $\Gl_2(\overline{\mathbb{F}}_p)$. The Brauer Nesbitt theorem and equation (\ref{eq:residuallyordinary}) then tell us that there exists constant matrices $\overline{S_1}$ and $\overline{S}_2$ in $\Gl_2(\overline{\mathbb{F}}_p)$ such that the conjugates $\overline{S}_i^{-1}\overline{\varrho}_i\vert_{G_{\Q_p}} \overline{S}_i$  are either upper-triangular or lower-triangular. Choose (any) lifts $S_1$ and $S_2$ in $\Gl_2(\TTT)$. Let $\mathcal{A}_0$ denote the GMA $\left(\begin{array}{cc} M_2(\TTT) & M_2(\mathcal{B}) \\ M_2(\mathcal{C}) & M_2(\TTT) \end{array}\right)$  afforded by Theorem \ref{thm:pseudorepBC}. Consider its conjugate \[\mathcal{A}_1 \coloneqq  \left(\begin{array}{cc} S_1 & 0_{2 \times 2} \\ 0_{2 \times 2} & S_2\end{array}\right)^{-1}\mathcal{A}_0 \left(\begin{array}{cc} S_1 & 0_{2 \times 2} \\ 0_{2 \times 2} & S_2\end{array}\right),\] which, as a GMA, is also equal to  $\left(\begin{array}{cc} M_2(\TTT) & M_2(\mathcal{B}) \\ M_2(\mathcal{C}) & M_2(\TTT) \end{array}\right)$.  Consequently, for each $\sigma \in G_{\Q_p}$, its image in $\mathcal{A}_1$ is equal to 	
{\setstretch{2.5}	\begin{align}\label{eq:localmatrixshape} \left(\begin{array}{cc:cc}  \dfrac{\widetilde{\lambda}}{ \widetilde{\psi}_1\cyc^{-2}\omega^b\kappa_2}(\sigma) + m_{\sigma,(1,1)} & x_{\sigma,(1,2)} & b_{\sigma,(1,3)} & b_{\sigma,(1,4)}\\ x_{\sigma,(2,1)} &\widetilde{\psi}_1\cyc^{-2}\omega^b\kappa_2(\sigma) + m_{\sigma,(2,2)}   & b_{\sigma,(2,3)} & b_{\sigma,(2,4)} \\ \hdashline[4pt/4pt]    c_{\sigma,(3,1)} & c_{\sigma,(3,2)} &  \dfrac{\widetilde{\lambda}}{\widetilde{\psi}_0}(\sigma) + m_{\sigma,(3,3)} &  x_{\sigma,(3,4)}  \\ c_{\sigma,(4,1)} & c_{\sigma,(4,2)} & x_{\sigma,(4,3)}  & \widetilde{\psi}_0(\sigma)+ m_{\sigma,(4,4)}  \end{array} \right),
		\end{align}}		
	where, the elements $m_{\sigma,(1,2)}$, $m_{\sigma,(2,2)}$, $m_{\sigma,(3,3)}$, $m_{\sigma,(4,4)}$ belong to $\mmm$ while the elements $x_{\sigma,(1,2)}$, $x_{\sigma,(2,1)}$,$x_{\sigma,(3,4)}$ and $x_{\sigma,(4,3)}$ belong to $\TTT$. Let $B^{\mathrm{loc}}_{(1,2)}$ and $B^{\mathrm{loc}}_{(3,4)}$ denote the integral $\TTT$-ideal generated by $x_{\sigma,(1,2)}$'s and $x_{\sigma,(3,4)}$'s respectively, as $\sigma$ varies over all elements of $G_{\Q_p}$.  Similarly, let $C^{\mathrm{loc}}_{(2,1)}$ and $C^{\mathrm{loc}}_{(4,3)}$ denote the integral $\TTT$-ideal generated by $x_{\sigma,(1,2)}$'s and $x_{\sigma,(3,4)}$'s respectively, as $\sigma$ varies over all elements of $G_{\Q_p}$. Since $\overline{\varrho}_1$ and $\overline{\varrho}_2$ are $p$-ordinary, furthermore we have 
\begin{align}\label{eq:pordinclusion}
	B^{\mathrm{loc}}_{(1,2)} C^{\mathrm{loc}}_{(2,1)} \subset \mmm, \qquad 	B^{\mathrm{loc}}_{(3,4)} C^{\mathrm{loc}}_{(4,3)} \subset \mmm.
\end{align}

We will first show that there exists a non-zero vector $\vec{v}$ in $\Frac(\TTT)^4$ (unique upto a scalar) satisfying equation (\ref{eq:propertyvector}). We follow \cite[Proof of Lemma 7.2]{MR1693583}. Choose an element $\sigma_0$ in $G_{\Q_p}$ mapping to a topological generator of $\Gal{\Q_{p,\mathrm{cyc}}}{\Q_p}$. One can use equation (\ref{eq:bigpseudorepdecompositionlocal}) and the Brauer--Nesbitt theorem over each of the fields appearing in the field decomposition of $\Frac(\TTT)$ to conclude that we can construct a vector $\vec{v}$ (unique upto a scalar) in $\Frac(\TTT)$ such that $\sigma_0(\vec{v})=\dfrac{\widetilde{\lambda}}{\widetilde{\psi}_0}(\sigma_0)\vec{v}$. By considering the numerators and denominators of the coordinates of $\vec{v}$, we can find a non-zero element $\Theta_2$ in $\Lambda$ such that for all prime ideals $\p$ in $\TTT$ such that $\Theta_2 \notin \p$, the vector $\vec{v}$ is an element in $\TTT_\p^4-\p\TTT_\p^4$. One can now use Theorem \ref{thm:pseudorepBC}(\ref{item:pseudo5}) to consider that the natural image $i_\p(\vec{v})$ (which is a non-zero vector) of $\vec{v}$ inside the semi-simple Galois module $\LLLL_\p/\p\LLLL_\p \cong \Frac(\TTT/\p)^4$, for all primes $\p \in \frakX^{\rm temp}_{\charcomp}\left(\TTT\right)$ such that $\Theta \Theta_2 \notin \p$. The action of the Galois group is provided by the natural reduction modulo $\p$ of the GMA $\mathcal{A}_1$. For all such classical primes $\p$, Theorem \ref{thm:urban1} tells us that inside $\LLLL_\p/\p\LLLL_\p$, there is a unique one-dimensional $\Frac(\TTT/\p)$-subspace, generated by $i_\p(\vec{v})$, on which $G_{\Q_p}$ acts via the character $\dfrac{\widetilde{\lambda}}{\widetilde{\psi}_0}$ modulo $\p$. As a result, we have 
\begin{align}\label{eq:propertyvector}\sigma \cdot i_\p(\vec{v}) - \dfrac{\widetilde{\lambda}}{\widetilde{\psi}_0}(\sigma)\vec{v}   \equiv 0 \ \mathrm{mod} \  \p, \qquad \text{ for all} \ \sigma \in G_{\Q_p} \text{ and for all } \p \in \frakX^{\rm temp}_{\charcomp}\left(\TTT\right) \text{ such that } \Theta\Theta_2 \notin \p. \end{align}
By using the density result provided in Lemma \ref{lem:density} and the fact that the local ring $\TTT$ is reduced, one can conclude that $\sigma \vec{v} =  \dfrac{\widetilde{\lambda}}{\widetilde{\psi}_0}(\sigma)\vec{v} $, for all $\sigma$ in $G_{\Q_p}$.
 Observe that the $p$-distinguished hypothesis \ref{item:localhypdisgsp4} tells us that there exists elements $\sigma_1$, $\sigma_2$ and $\sigma_4$ in $G_{\Q_p}$ such that modulo $\mmm$, we have
  \[\dfrac{\widetilde{\lambda}}{ \widetilde{\psi}_1\cyc^{-2}\omega^b\kappa_2}(\sigma_1) \not\equiv \dfrac{\widetilde{\lambda}}{\widetilde{\psi}_0}(\sigma_1), \quad \widetilde{\psi}_1\cyc^{-2}\omega^b\kappa_2(\sigma_2) \not\equiv \dfrac{\widetilde{\lambda}}{\widetilde{\psi}_0}(\sigma_2), \qquad \widetilde{\psi}_0(\sigma_4) \not\equiv \dfrac{\widetilde{\lambda}}{\widetilde{\psi}_0}(\sigma_4). \]
 As a result, the $(1,1)$, $(2,2)$ and $(4,4)$ entries, of the  matrix in equation (\ref{eq:matricesgaussian}), are units in $\TTT$.  This matrix is obtained by choosing the first, second, third and fourth row respectively from the images of $\sigma_1$, $\sigma_2$, $\sigma_1$ and $\sigma_4$ respectively in $\mathcal{A}_1$.   One observes that the nullspace of this matrix is one-dimensional, and that the vector $\vec{v}$, being a non-zero vector in its nullspace, must generate it. Letting $\vec{v}=\left[ \begin{array}{c}b_{1,3} \\ b_{2,3} \\ r \\ r_{4,3} \end{array}\right]$.  Applying Gaussian elimination tells us that 
 
 \begin{align} \label{eq:columnvectorchoice}
 		\dfrac{b_{1,3}}{r} \in \BBB, \qquad \dfrac{b_{2,3}}{r} \in \BBB, \qquad \dfrac{r_{4,3}}{r} \in \TTT.
 	\end{align}
 One can then normalize the vector $\vec{v}$ by choosing $r$ to equal $1$.  This normalized choice of $\vec{v}$ becomes the third column of the change of basis matrix $M$.

{\setstretch{2}\begin{align}\label{eq:matricesgaussian} \left(\begin{array}{c:c:c:c}  \dfrac{\widetilde{\lambda}}{ \widetilde{\psi}_1\omega^b\cyc^{-2}\kappa_2}(\sigma_1) - \dfrac{\widetilde{\lambda}}{\widetilde{\psi}_0}(\sigma_1)+ m_{\sigma_j,(1,1)} & x_{\sigma_1,(1,2)} & b_{\sigma_1,(1,3)} & b_{\sigma_1,(1,4)}\\ \hdashline[4pt/4pt]  x_{\sigma_2,(2,1)} &\widetilde{\psi}_1\omega^b\cyc^{-2}\kappa_2(\sigma_2) - \dfrac{\widetilde{\lambda}}{\widetilde{\psi}_0}(\sigma_2) + m_{\sigma_2,(2,2)}   & b_{\sigma_2,(2,3)} & b_{\sigma_2,(2,4)} \\ \hdashline[4pt/4pt]    c_{\sigma_1,(3,1)} & c_{\sigma_1,(3,2)} &    m_{\sigma_1,(3,3)} &  x_{\sigma_1,(3,4)}  \\ \hdashline[4pt/4pt]  c_{\sigma_4,(4,1)} & c_{\sigma_4,(4,2)} & x_{\sigma_4,(4,3)}  & \widetilde{\psi}_0(\sigma_4) - \dfrac{\widetilde{\lambda}}{\widetilde{\psi}_0}(\sigma_4) + m_{\sigma_4,(4,4)}  \end{array} \right).
		\end{align}}

 One next conjugates the GMA $\mathcal{A}_1$ to obtain a new GMA $\mathcal{A}_2$ by updating the basis from the standard basis for $\mathcal{A}_1$ to $\left\{\left[\begin{array}{c}1 \\0 \\ 0 \\0 \end{array}\right], \left[\begin{array}{c}0 \\1 \\ 0 \\0 \end{array}\right],\vec{v},\left[\begin{array}{c}0 \\0 \\ 0 \\1 \end{array}\right]\right\}$. Note that this change of basis matrix  belongs to $\left(\begin{array}{cc} \Gl_2(\TTT) & M_2(\mathcal{B}) \\ M_2(\mathcal{C}) & \Gl_2(\TTT) \end{array}\right)$ and hence induces an automorphism of $\LLLL$. The updated basis produces a surjection of $\TTT[G_{\Q_p}]$-modules: \[\LLLL \twoheadrightarrow \BBB' \oplus \BBB' \oplus \{0\} \oplus \TTT.\] Combining our observations we note that for each $\sigma$ in $G_{\Q_p}$ its image in $\mathcal{A}_2$ will be of the following form, similar to the matrix in equation (\ref{eq:localmatrixshape}):
 
 {\setstretch{2.5}	\begin{align}\label{eq:localmatrixshape2} \left(\begin{array}{cc:c:c}  \dfrac{\widetilde{\lambda}}{ \widetilde{\psi}_1\cyc^{-2}\kappa_2}(\sigma) + m'_{\sigma,(1,1)} & x'_{\sigma,(1,2)} & 0 & b'_{\sigma,(1,4)}\\ x'_{\sigma,(2,1)} &\widetilde{\psi}_1\cyc^{-2}\kappa_2(\sigma) + m'_{\sigma,(2,2)}   & 0 & b'_{\sigma,(2,4)} \\ \hdashline[4pt/4pt]    c'_{\sigma,(3,1)} & c'_{\sigma,(3,2)} &  \dfrac{\widetilde{\lambda}}{\widetilde{\psi}_0}(\sigma)  &  x'_{\sigma,(3,4)}  \\ \hdashline[4pt/4pt] c'_{\sigma,(4,1)} & c'_{\sigma,(4,2)} & 0  & \widetilde{\psi}_0(\sigma)+ m'_{\sigma,(4,4)}  \end{array} \right).
		\end{align}}

 One now repeats the above procedure by deleting the third column and row from the matrices in equation (\ref{eq:localmatrixshape2}) and applying Gaussian elimination again, thereby iteratively deriving the shape of the $1$\textsuperscript{st} column and similarly then the $2$\textsuperscript{nd} column of $M$. These iterations would yield a constant change of basis matrix $M$ (with respect to $\mathcal{A}_1$) of the form 
 $\left(\begin{array}{cccc} 1 & 0 & b_{1,3} & 0 \\ r_{2,1} & 1 & b_{2,3} & 0 \\ 0 & 0 & 1 & 0 \\ c_{4,1} & c_{4,2} & r_{4,3} & 1  \end{array}\right)$. 
 where the elements $r_{2,1}, r_{4,3}$ belong to $\TTT$, the elements  $c_{4,1}, c_{4,2}$ belong to  $\CCC$ and the elements $b_{1,3}, b_{2,3}$ belong to $\BBB$. One sees that this constant matrix $M$ belongs to $\left(\begin{array}{cc} \Gl_2(\TTT) & M_2(\mathcal{B}) \\ M_2(\mathcal{C}) & \Gl_2(\TTT) \end{array}\right)$. This completes the proof.

\end{proof}

\begin{remark}
For our application to the Iwasawa main conjecture using Proposition \ref{prop:inclusion}, we need the change of basis matrix $M$ to have an integral structure so as to induce an automorphism of the $\TTT$-module $\LLLL$. The $p$-distinguished condition for $\mathrm{GSp}_4$ plays an important role. Keeping this in mind, we prefer to avoid directly applying Sen's theory (e.g.  used in \cite{MR1693583,MR1813234}) since we can then only deduce that the change of basis matrix has coefficients in $\mathrm{Frac}(\TTT)$.
\end{remark}

\subsection{Producing Selmer classes in the $\GSp_4$ setting}\label{sec:selmerclassprod}

Throughout this subsection, we maintain all the hypotheses in Proposition Theorem \ref{thm:pseudorepBC}. Let $\JJJ_0$ be a closed ideal in $\TTT$  containing the reducibility ideal $\JJJ_{\mathrm{red}}$. 
Consider a continuous map of local rings
\[\sigma:\TTT \twoheadrightarrow \dfrac{\TTT}{\JJJ_0} \hookrightarrow \III_0,\]
such that 
 $\III_0$ is finitely-generated as a module over $\TTT/\JJJ_0$. Consider the GMA $\left(\begin{array}{cc} M_2(\TTT) & M_2(\mathcal{B}) \\ M_2(\mathcal{C}) & M_2(\TTT) \end{array}\right)$, call it $\mathfrak{A}$, afforded by Theorem \ref{thm:pseudorepBC}.  Using Theorem \ref{thm:pseudorepBC}(\ref{item:pseudo6}), we have two free $\III_0$-modules of rank $2$, call them $\SLLL_{1,\III_0}$ and $\SLLL_{2,\III_0}$ respectively, on which $G_S$ acts via the $2 \times 2$ GMAs $\sigma(e_1\mathfrak{A} e_1)$ and $\sigma(e_2\mathfrak{A} e_2)$ respectively. Furthermore, let $\Fil^+\SLLL_{2,\III_0} $ denote the free $\III_0$-module of rank $1$ with an action of $G_{\Q_p}$ via the character $\sigma \circ \dfrac{\widetilde{\lambda}}{\widetilde{\psi}_0}$. Theorem \ref{thm:pseudorepBC}(\ref{item:pseudo7}) provides us an natural inclusion $\Fil^+\SLLL_{2,\III_0} \hookrightarrow \SLLL_{2,\III_0}$ of free $\III_0$-modules that is $G_{\Q_p}$-equivariant. We consider the related discrete modules:
\begin{align*}
\DDD_{\III_0}\coloneqq \Hom_{\III_0}\left(\SLLL_{2,\III_0},\ \SLLL_{1,\III_0}\right)\otimes_{\III_0} \widehat{\III_0}, \qquad \DDD_{\III_0,\Fil^+} \coloneqq \Hom_{\III_0}\left(\dfrac{\SLLL_{2,\III_0}}{\Fil^+\SLLL_{2,\III_0}},\ \SLLL_{1,\III_0}\right)\otimes_{\III_0} \widehat{\III_0},
\end{align*}
along with the discrete non-primitive Selmer group and the discrete non-primitive \textit{strict} Selmer group:
\begin{align*}
\Sel^{\Sigma_0}( \DDD_{\III_0}) &\coloneqq \ker\bigg(H^1_{\mathrm{cont}}\left(G_S, \DDD_{\III_0} \right) \xrightarrow{\mathrm{Res}} H^1_{\mathrm{cont}}\left(I_p, \ \dfrac{\DDD_{\III_0}}{\DDD_{\III_0,\Fil^+}} \right)  \bigg),\\
\Sel^{\Sigma_0}_{\mathrm{str}}( \DDD_{\III_0}) &\coloneqq \ker\bigg(H^1_{\mathrm{cont}}\left(G_S, \DDD_{\III_0} \right) \xrightarrow{\mathrm{Res}} H^1_{\mathrm{cont}}\left(\Gal{\overline{\Q}_p}{\Q_p}, \ \dfrac{ \DDD_{\III_0}}{\DDD_{\III_0,\Fil^+}} \right)  \bigg).
\end{align*}

Here, the quotient $\III_0$-module $\dfrac{ \DDD_{\III_0}}{\DDD_{\III_0,\Fil^+}}$ is isomorphic to $\Hom\left(\Fil^+\SLLL_{2,\III_0},\SLLL_{1,\III_0}\right)\otimes_{\III_0} \widehat{\III_0}$. Since $\SLLL_{1,\III_0}$ is a free $\III_0$-module, we have the natural $\III_0$-module isomorphism that is $G_S$-equivaraint:
\begin{align}\label{iso:freehomtensor}
\Hom\left(\SLLL_{2,\III_0},\SLLL_{1,\III_0}\right)\otimes_{\III_0} \widehat{\III_0} \cong \Hom\left(\SLLL_{2,\III_0},\SLLL_{1,\III_0}\otimes_{\III_0} \widehat{\III_0}\right).
\end{align} 
 
To describe the action of $G_S$ on the module on the RHS of the equation above, note that if $g\in G_S$ and $\varphi \in \Hom\left(\SLLL_{2,\III_0},\SLLL_{1,\III_0}\otimes_{\III_0} \widehat{\III_0}\right)$, then $(g \cdot \varphi)(x)$ equals $\sigma_1(g) \cdot \varphi\left(\sigma_2^{-1}(g)\cdot x\right)$. Here, $\sigma_{1}(g)$ and $\sigma_{2}(g)$ are the natural images of $g$ inside the $2 \times 2$ GMAs $\sigma(e_1\mathfrak{A} e_1)$ and $\sigma(e_2\mathfrak{A} e_2)$ respectively. We let $\left[\begin{array}{cc}b_{g,1} & b_{g,2} \\ b_{g,3} & b_{g,4}\end{array}\right]$ denote the matrix element in $M_2(\BBB)$ corresponding to the natural image of $g$ in the GMA $\left(\begin{array}{cc} M_2(\TTT) & M_2(\mathcal{B}) \\ M_2(\mathcal{C}) & M_2(\TTT) \end{array}\right)$. We denote elements inside $\underbrace{\SLLL_{2,\III_0}}_{\cong \III_0^2}$ and $\underbrace{\SLLL_{1,\III_0} \otimes_{\III_0} \widehat{\III_0}}_{\cong \widehat{\III_0}^2}$ by column vectors. \\

We now construct an $\III_0$-module homomorphism of discrete $\III_0$-modules:

\begin{align}i_{\III_0}: \Hom_{\TTT}\left(\BBB,\widehat{\III_0}\right) \rightarrow H^1\left(G_S,\Hom\left(\SLLL_{2,\III_0},\SLLL_{1,\III_0}\right)\otimes_{\III_0} \widehat{\III_0}\right).
\end{align}
 Let $\phi \in \Hom_{\TTT}\left(\BBB,\widehat{\III_0}\right)$. The $1$-cocycle $i_{\III_0}(g)$ is defined as follows:

\begin{align}\label{def:1cocycle}
i_{\III_0}: G_S &\rightarrow \Hom\left(\SLLL_{2,\III_0},\SLLL_{1,\III_0}\otimes_{\III_0} \widehat{\III_0}\right), \\[2mm] 
i_{\III_0}(\phi)(g) : \underbrace{\left[\begin{array}{c}x_2 \\ y_2\end{array}\right]}_{\in \SLLL_{2,\III_0}} &\mapsto \phi\left(\underbrace{\left[\begin{array}{cc}b_{g,1} & b_{g,2} \\ b_{g,3} & b_{g,4} \end{array}\right]\sigma_2(g)^{-1}\left[\begin{array}{c}x_2 \\ y_2\end{array}\right]}_{\BBB^2}\right) \in \SLLL_{1,\III_0}\otimes_{\III_0} \widehat{\III_0}. \notag
\end{align}

Theorem \ref{thm:pseudorepBC}(\ref{item:pseudo6}) allows one to check that $i_{\III_0}(g)$ defines a $1$-cocycle. One can also verify that $i_{\III_0}$ is an $\III_0$-module homomorphism.  One can use the cocycle $i_{\III_0}(\phi)$ to build a $G_S$-extension as below: 

\begin{align}\label{extfrom1cocycle}
0 \rightarrow \underbrace{\SLLL_{1,\III_0}\otimes_{\III_0} \widehat{\III_0}}_{\widehat{\III_0}^2}  \rightarrow E_{i_{\III_0}(\phi)} \rightarrow \underbrace{\SLLL_{2,\III_0}}_{\III_0^2} \rightarrow 0.
\end{align}
As an $\III_0$-module, $E_{i_{\III_0}(\phi)}$ is isomorphic to $\widehat{\III_0}^2 \oplus \III_0^2$. The action of $G_S$ on $E_{i_{\III_0}(\phi)}$ is given below:

\begin{align}\label{formula:GSext}
g \cdot \left(\underbrace{\left[\begin{array}{c}x_1 \\ y_1\end{array}\right]}_{\in \widehat{\III_0}^2}, \underbrace{\left[\begin{array}{c}x_2 \\ y_2\end{array}\right]}_{\in \III_0^2}\right) = 
\left( \underbrace{\sigma_1(g)\left[\begin{array}{c}x_1 \\ y_1\end{array}\right] +i_{\III_0}(g)\left(\sigma_2(g)\left[\begin{array}{c}x_2 \\ y_2\end{array}\right] \right)}_{\widehat{\III_0}^2}, \ \underbrace{\sigma_2(g)\left[\begin{array}{c}x_2 \\ y_2\end{array}\right]}_{\III_0^2} \right)
\end{align}
Conversely, any such $G_S$-extension as given in equation (\ref{extfrom1cocycle}) defines a $1$-cocycle in $H^1\left(G_S,\Hom\left(\SLLL_{2,\III_0},\SLLL_{1,\III_0}\right)\otimes_{\III_0} \widehat{\III_0}\right)$. The $1$-cocycle $i_{\III_0}(\phi)$ is trivial if and only if the $G_S$-extension in equation (\ref{extfrom1cocycle}) splits.

\begin{proposition}\label{prop:inclusionfirst}
We have a natural injection of discrete $\III_0$-modules:
\begin{align}i_{\III_0}: \Hom_{\TTT}\left(\BBB,\widehat{\III_0}\right) \hookrightarrow H^1_\cont\left(G_S,\Hom\left(\SLLL_{2,\III_0},\SLLL_{1,\III_0}\right)\otimes_{\III_0} \widehat{\III_0}\right)
\end{align} 
\end{proposition}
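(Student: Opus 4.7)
The plan is to verify that (\ref{def:1cocycle}) produces a continuous $1$-cocycle for each $\phi$, and then to establish injectivity by translating the coboundary condition into a matrix identity over the GMA and exploiting faithfulness. Throughout, write $B(g) \in M_2(\BBB)$ for the upper-right $2\times 2$ block $\left(\begin{smallmatrix} b_{g,1} & b_{g,2} \\ b_{g,3} & b_{g,4} \end{smallmatrix}\right)$ of the image of $g$ under (\ref{iso:gma}), and $A(g), C(g), D(g)$ for the other three blocks. Well-definedness of the cocycle follows from Theorem \ref{thm:pseudorepBC}\ref{item:pseudo4} together with continuity of $\sigma$. The cocycle relation is obtained by applying $\phi$ entrywise to the GMA multiplication identity $B(g_1 g_2) = A(g_1) B(g_2) + B(g_1) D(g_2)$, while observing that after reduction modulo $\JJJ_0$ the diagonal blocks $A(g), D(g)$ become the representations $\sigma_1(g), \sigma_2(g)$ (the $BC$-mixing contributions to the diagonal blocks vanish since $\BBB \CCC \subseteq \JJJ_{\mathrm{red}} \subseteq \JJJ_0$, by Theorem \ref{thm:pseudorepBC}\ref{item:pseudo2}); matching with the $G_S$-action on $\Hom_{\III_0}(\SLLL_{2,\III_0}, \SLLL_{1,\III_0}\otimes_{\III_0}\widehat{\III_0})$ recovers the cocycle identity.

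For injectivity, suppose $\phi \in \ker(i_{\III_0})$, so the extension $E_{i_{\III_0}(\phi)}$ of (\ref{extfrom1cocycle}) splits. Encoding the splitting via $h \in M_2(\widehat{\III_0})$ under the canonical identification $\Hom_{\III_0}(\SLLL_{2,\III_0}, \SLLL_{1,\III_0}\otimes_{\III_0}\widehat{\III_0}) \cong M_2(\widehat{\III_0})$, one obtains the identity
\[
\phi(B(g)) \;=\; \sigma_1(g)\, h \;-\; h\, \sigma_2(g) \qquad \text{for all } g \in G_S
\]
in $M_2(\widehat{\III_0})$, with $\phi$ applied entrywise. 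By Theorem \ref{thm:pseudorepBC}\ref{item:pseudo1}, $\mathfrak{A}$ is identified with $\TTT[G_S]/\ker(\pcT)$; since $e_1 \cdot \TTT \cdot e_2 = 0$, the block $M_2(\BBB) = e_1 \mathfrak{A} e_2$ is $\TTT$-generated by $\{B(g):g\in G_S\}$, and therefore $\BBB$ itself is $\TTT$-generated by the entries $\{b_{g,i}\}_{g \in G_S,\, 1 \leq i \leq 4}$.

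The hard part will be to turn the pointwise identity into the vanishing $\phi = 0$. The strategy I would pursue is to interpret the pair $(\sigma, \phi)$ as defining a square-zero deformation over the ring $\III_0 \oplus \widehat{\III_0}$ (with $\widehat{\III_0}$ squaring to zero) of the block-diagonal representation $\sigma_1 \oplus \sigma_2$ of $G_S$, whose extension class in the adjoint cohomology is represented by the cocycle $g \mapsto \left(\begin{smallmatrix} 0 & \phi(B(g)) \\ 0 & 0 \end{smallmatrix}\right)$; the matrix $H := \left(\begin{smallmatrix} 0 & h \\ 0 & 0 \end{smallmatrix}\right)$ trivializes this deformation. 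Combined with the semisimplicity clause Theorem \ref{thm:pseudorepBC}\ref{item:pseudo3} (ruling out degenerate behavior on minimal-prime components $\TTT/\eta$) and the $p$-distinguished hypothesis \ref{item:localhypdisgsp4} (which via Schur excludes nontrivial intertwiners between the reductions of $\sigma_1$ and $\sigma_2$ on suitable decomposition-group elements), this trivialization will force $\phi(b_{g,i}) = 0$ for every $g$ and $i$, whence $\phi = 0$ by $\TTT$-linearity.
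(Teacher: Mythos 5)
Your construction of the cocycle and your reduction of injectivity to the identity $\phi(B(g)) = \sigma_1(g)h - h\,\sigma_2(g)$ (with $h \in M_2(\widehat{\III_0})$, $\phi$ applied entrywise) are fine, and your remark that the entries $b_{g,i}$ generate $\BBB$ over $\TTT$ is exactly the right observation. But the step you yourself call ``the hard part'' is a genuine gap, and the route you sketch will not close it. Trivializing the square-zero deformation is not an extra tool: it is a restatement of the coboundary hypothesis, so it yields no contradiction on its own; and excluding intertwiners between the reductions of $\sigma_1$ and $\sigma_2$ (whether via \ref{item:localhypdisgsp4} or via $\overline{\varrho}_1 \ncong \overline{\varrho}_2$) does not help, because $h$ is never claimed to be an intertwiner --- the identity says precisely that its failure to intertwine equals $\phi(B(g))$, which is perfectly consistent with a nonzero $h$ unless more structure is invoked. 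The missing mechanism is the multiplicative structure of the GMA, i.e.\ Theorem \ref{thm:pseudorepBC}\ref{item:pseudo1}: since $\TTT[G_S]$ surjects onto the GMA in (\ref{iso:gma}), the coboundary identity extends $\TTT$-linearly to $\phi(B(\alpha)) = \sigma_1(\alpha)h - h\,\sigma_2(\alpha)$ for every $\alpha \in \TTT[G_S]$, where $A(\alpha),B(\alpha),D(\alpha)$ are the blocks of the image of $\alpha$ and $\sigma_i(\alpha)$ is $\sigma$ applied to the diagonal blocks. Taking $\alpha$ whose image is the elementary matrix with an arbitrary $b_0 \in \BBB$ in the upper-right block and zero diagonal blocks gives $\phi(b_0) = 0\cdot h - h\cdot 0 = 0$, hence $\phi = 0$. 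This is exactly the paper's argument (following \cite[Theorem 1.5.5]{MR2656025}), phrased there via the splitting $s$ of (\ref{extfrom1cocycle}): $s$ is $\TTT[G_S]$-linear, and applying such an $\alpha$ to $s$ of the first basis vector of $\SLLL_{2,\III_0}$ produces $\phi(b_0)$ in the first coordinate on one side and $s(\alpha \cdot \text{(basis vector)}) = s(0) = 0$ on the other, a contradiction unless $\phi(b_0)=0$. No semisimplicity, Schur-type, or $p$-distinguished input is needed for this proposition.

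A smaller point: for the class to land in $H^1_{\cont}$ you must also check continuity of the cocycle, and for that you need $\phi$ itself to be continuous; your appeal to Theorem \ref{thm:pseudorepBC}\ref{item:pseudo4} alone does not cover this. It is automatic, but it requires the observation (made in the paper, following \cite[Proposition 1.5.10]{MR2656025}) that $\BBB$ is finitely generated over $\TTT$, so the image of $\phi$ lies in $\widehat{\III_0}[\mmm^n]$ for some $n$ and hence $\ker(\phi) \supseteq \mmm^n\BBB$, which gives continuity for the $\mmm$-adic topology on $\BBB$.
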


\begin{proof}
The proof of the injectivity follows the proof of \cite[Theorem 1.5.5]{MR2656025}. Suppose for the sake of contradiction that we have a non-trivial element $\phi$  in $\Hom_{\TTT}\left(\BBB,\widehat{\III_0}\right)$ and a splitting of a $G_S$-extension in equation (\ref{extfrom1cocycle}) via a $\TTT[G_S]$-module homomorphism:
\begin{align}
s: \SLLL_{2,\III_0} \rightarrow E_{i_{\III_0}(\phi)}.
\end{align}

Let $b_0$ be an element of $\BBB$ such that $\phi(b_0)$ is a non-zero element in $\widehat{\III_0}$. Choose an element $\alpha$ in the group ring $\TTT[G_S]$ such that its natural image in the GMA $\left(\begin{array}{cc} M_2(\TTT) & M_2(\mathcal{B}) \\ M_2(\mathcal{C}) & M_2(\TTT) \end{array}\right)$ equals $\left[\begin{array}{cccc}0 & 0 & b_0 & 0 \\ 0 & 0 & 0 & 0 \\ 0 & 0 & 0 & 0 \\ 0 & 0 & 0 & 0 \end{array}\right]$. \\

Note that \[s\left(\left[\begin{array}{c}1 \\ 0 \end{array}\right]\right)= \left(\left[\begin{array}{c}x_0 \\ y_0\end{array}\right], \left[\begin{array}{c}1 \\ 0\end{array}\right]\right), \quad \text{for some } x_0,y_0 \in \widehat{\III_0}.\]

On the one hand, $\alpha\cdot\left[\begin{array}{c} 1 \\ 0 \end{array}\right]$ equals $\left[\begin{array}{c} 0 \\ 0 \end{array}\right]$. Thus, $s\left(\alpha\cdot\left[\begin{array}{c} 1 \\ 0 \end{array}\right]\right)=\left(\left[\begin{array}{c} 0 \\ 0 \end{array}\right],\left[\begin{array}{c} 0 \\ 0 \end{array}\right]\right)$. \\

On the other hand, the formula in equation (\ref{formula:GSext}) tells us that $\alpha\cdot s \left(\left[\begin{array}{c} 1 \\ 0 \end{array}\right]\right)=  \left(\left[\begin{array}{c} \phi(b_0)  \\ 0 \end{array}\right], \left[\begin{array}{c} 0\\ 0 \end{array}\right]\right)$.  

We thus arrive at a contradiction that the splitting map is $G_S$-equivariant. Thus, the map $i_{\III_0}$ must be an injection.  \\

The proof of the continuity of the cocycle $i_{\III_0}(\phi)$ follows the proof of \cite[Proposition 1.5.10]{MR2656025}. Firstly, observe that $\phi$ is continuous. To see this, observe that since $\BBB$ is finitely generated, the image of $\phi$ lands inside $\widehat{\III_0}[\mmm^n]$, for some power $\mmm^n$ of the maximal ideal $\mmm$ of $\TTT$. As a result, $\ker(\phi)$ contains $\mmm^n\BBB$. This allows us to deduce the continuity of $\phi$ since $\BBB$ is endowed the $\mmm$-adic topology. The continuity of the cocycle $i_{\III_0}(\phi)$, as given in equation (\ref{def:1cocycle}), 
now follows from the continuity of the $G_S$-action from Theorem \ref{thm:pseudorepBC} and the continuity of $\phi$.
\end{proof}

\begin{proposition}\label{prop:inclusion}
We have a natural injection of discrete $\III_0$-modules:
	\begin{align}\label{eq:propdiscreteinj}
	i_{\III_0}:  \Hom_\TTT\left(\mathcal{B}, \ \widehat{\III_0}\right)	 \ \lhook\joinrel\longrightarrow  \Sel^{\Sigma_0}_{\mathrm{str}}( \DDD_{\III_0}) \ \lhook\joinrel\longrightarrow  \Sel^{\Sigma_0}( \DDD_{\III_0}).
	\end{align}
	Consequently, we have a surjection of compact $\III_0$-modules:
	\begin{align}\label{eq:compactsurjection}
		\Sel^{\Sigma_0}( \DDD_{\III_0})^\vee \twoheadrightarrow 		\Sel^{\Sigma_0}_{\mathrm{str}}( \DDD_{\III_0})^\vee \twoheadrightarrow \BBB \otimes_\TTT \III_0.
	\end{align}
\end{proposition}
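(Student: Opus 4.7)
The plan is to leverage Proposition \ref{prop:inclusionfirst}, which already produces the injection $i_{\III_0}: \Hom_{\TTT}(\BBB, \widehat{\III_0}) \hookrightarrow H^1_\cont(G_S, \DDD_{\III_0})$, and to verify that its image in fact lands in the strict Selmer group $\Sel^{\Sigma_0}_{\mathrm{str}}(\DDD_{\III_0})$. The containment $\Sel^{\Sigma_0}_{\mathrm{str}}(\DDD_{\III_0}) \subseteq \Sel^{\Sigma_0}(\DDD_{\III_0})$ is immediate from the definitions (the strict condition at $p$ is more restrictive than the Panchishkin-type condition), so the main work is purely local at $p$.

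To check the local condition at $p$, I need to show that for every $\phi \in \Hom_\TTT(\BBB, \widehat{\III_0})$, the cocycle $i_{\III_0}(\phi)$ from equation (\ref{def:1cocycle}), when restricted to $G_{\Q_p}$ and pushed to $\Hom(\Fil^+\SLLL_{2,\III_0}, \SLLL_{1,\III_0})\otimes_{\III_0}\widehat{\III_0}$, is identically zero (not just a coboundary). Here $\Fil^+\SLLL_{2,\III_0}$ is the rank-one $\III_0$-submodule on which $G_{\Q_p}$ acts via $\sigma \circ \frac{\widetilde{\lambda}}{\widetilde{\psi}_0}$, corresponding to the third column of the matrix in Theorem \ref{thm:pseudorepBC}(\ref{item:pseudo7}). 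Concretely, pick the basis so that $\Fil^+\SLLL_{2,\III_0}$ is generated by $\begin{pmatrix}1\\0\end{pmatrix} \in \SLLL_{2,\III_0}$. For $g \in G_{\Q_p}$, the shape of the image in the GMA given by equation (\ref{eq:galoisordinary}) shows that the $(1,3)$ and $(2,3)$ entries vanish, so the upper-right block $\begin{pmatrix}b_{g,1}&b_{g,2}\\b_{g,3}&b_{g,4}\end{pmatrix} \in M_2(\BBB)$ has $b_{g,1}=b_{g,3}=0$. Since $\sigma_2(g)^{-1}\begin{pmatrix}1\\0\end{pmatrix}$ preserves the first coordinate up to an element of $\TTT$ (the block $\sigma_2(g)$ is upper-triangular on $G_{\Q_p}$ with respect to the chosen basis), the product $\begin{pmatrix}b_{g,1}&b_{g,2}\\b_{g,3}&b_{g,4}\end{pmatrix}\sigma_2(g)^{-1}\begin{pmatrix}1\\0\end{pmatrix}$ lands in the zero vector of $\BBB^2$. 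Thus $i_{\III_0}(\phi)(g)$ vanishes on $\Fil^+\SLLL_{2,\III_0}$ for all $g \in G_{\Q_p}$, as desired.

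This yields the first injection of (\ref{eq:propdiscreteinj}); composing with the inclusion $\Sel^{\Sigma_0}_{\mathrm{str}} \hookrightarrow \Sel^{\Sigma_0}$ gives the full display. For the second assertion, Pontryagin dualizing the first injection produces the surjections $\Sel^{\Sigma_0}(\DDD_{\III_0})^\vee \twoheadrightarrow \Sel^{\Sigma_0}_{\mathrm{str}}(\DDD_{\III_0})^\vee \twoheadrightarrow \Hom_\TTT(\BBB,\widehat{\III_0})^\vee$, and the usual tensor-hom adjunction identifies
\[
\Hom_\TTT(\BBB, \widehat{\III_0})^\vee = \Hom_{\Z_p}\!\left(\Hom_\TTT(\BBB, \Hom_{\Z_p}(\III_0, \Q_p/\Z_p)),\, \Q_p/\Z_p\right) \cong \BBB \otimes_\TTT \III_0,
\]
using that $\BBB$ is a finitely generated $\TTT$-module.

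The main obstacle is verifying the strict local condition cleanly, which requires precisely the integral upper-triangular shape of the $G_{\Q_p}$-action provided by Theorem \ref{thm:pseudorepBC}(\ref{item:pseudo7}); this is where the $p$-distinguished hypothesis \ref{lab:pdist-gsp4} enters essentially (as noted in the remark following that theorem). Once the zero entries at positions $(1,3)$ and $(2,3)$ are in hand over the integral ring $\TTT$, the verification reduces to the matrix computation sketched above, and the remaining claims follow formally.
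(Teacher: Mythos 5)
Your proposal is correct and follows essentially the same route as the paper: injectivity comes from Proposition \ref{prop:inclusionfirst}, the strict local condition at $p$ is verified by noting that the upper-triangular integral shape in Theorem \ref{thm:pseudorepBC}(\ref{item:pseudo7}) forces $b_{g,1}=b_{g,3}=0$ for $g\in G_{\Q_p}$ (so the cocycle restricted to $\Fil^+\SLLL_{2,\III_0}$ vanishes identically), and the dual statement follows by Pontryagin duality together with the Hom--tensor identification $\Hom_\TTT(\BBB,\widehat{\III_0})\cong\Hom_{\III_0}(\BBB\otimes_\TTT\III_0,\widehat{\III_0})$. Your extra remark that $\sigma_2(g)^{-1}$ preserves the line spanned by the first basis vector is exactly the implicit step in the paper's computation, so there is no substantive difference.
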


\begin{proof}
It will be sufficient to prove the proposition for the strict Selmer group.  \\

The restriction of the cocycle $i_{\III_0}$ to $G_{\Q_p}$ for the $\III_0$-module $\Hom\left(\Fil^+\SLLL_{2,\III_0},\SLLL_{1,\III_0}\right)\otimes_{\III_0} \widehat{\III_0}$ is given below
\begin{align}\label{def:1cocycleres}
i_{\III_0}: G_{\Q_p} &\rightarrow \Hom\left(\Fil^+\SLLL_{2,\III_0},\SLLL_{1,\III_0}\otimes_{\III_0} \widehat{\III_0}\right), \\[2mm] 
i_{\III_0}(\phi)(g) : x &\mapsto \phi\left(x\left[\begin{array}{c}b_{g,1}  \\ b_{g,3}\\  \end{array}\right]\sigma_2(g)^{-1}\right) \in \SLLL_{1,\III_0}\otimes_{\III_0} \widehat{\III_0}. \notag
\end{align}

By Theorem \ref{thm:pseudorepBC}(\ref{item:pseudo7}), both $b_{g,1}$ and $b_{g,3}$ equal zero for all elements $g$ in $G_{\Q_p}$. Consequently, the image of $i_{\III_0}$ lies in $\Sel^{\Sigma_0}_{\mathrm{str}}( \DDD_{\III_0})$. \\ 

Equation (\ref{eq:compactsurjection}) follows from equation (\ref{eq:propdiscreteinj}) by considering Pontryagin duals and using the fact that one can identify the $\III_0$-module $\Hom_{\TTT}\left(\BBB, \ \widehat{\III_0}\right)$, by Hom-Tensor adjunction, with $\Hom_{\III_0}\left(\BBB \otimes_\TTT \III_0, \ \widehat{\III_0}\right)$; this latter module can now be identified with the Pontryagin dual of the $\III$-module $\BBB \otimes_\TTT \III_0$. See \cite[Section 2.9.1]{MR2333680} and \cite[Lemma 4.7]{MR4385094}. The proposition follows. 
\end{proof}

\section{Proofs of main results} \label{sec:proofs}

Let $\hF$ and $\hG$ be two primitive Hida families of tame levels $N_{\hF}$ and $N_{\hG}$. Let $S_{\hF,\hG,\mathrm{disc}}$ be the set of finite places $q\mid \gcd(N_\hF,N_\hG)$ such that $\pi_{\hF_{k_1},q}$ and $\pi_{\hG_{k_2},q}$ are discrete series representations. We assume $S_{\hF,\hG,\mathrm{disc}}$ is non-empty. Let $f_0$ and $g_0$ be the specialization of the Hida families $\hF$ and $\hG$ at $(k_1,k_2)\equiv (a,b)\pmod{p-1}$. One can consider the automorphic representation $\varPi_0$ corresponding to the Yoshida lift $f_0$ and $g_0$, denoted $\varPi(f_0,g_0)$ in the introduction.  We may choose the set $S_{\hF,\hG,\mathrm{disc}}$ to equal $\{l_0\}$. The hypotheses  from Section \ref{sec:yoshidalifts} hold because of the hypotheses \ref{lab:uni}, \ref{lab:yosdet}, \ref{lab:yos} and \ref{lab:yos-neb} in the introduction. Proposition \ref{P:33} lets us deduce that the automorphic representation $\varPi_0$ is $p$-ordinary. The similitude character of the automorphic representation $\varPi_0$, which is given by $\det(\rho_{f_0})$, equals the $p$-adic character:
\begin{align}
\label{eq:similitude}\chi_\hF \cyc^{k_{f_0}-1}.	
\end{align}
Observe that the  hypothesis \ref{lab:yosdet} which tells us that $k_{f_0} \equiv k_{g_0} \ (\mathrm{mod} \ 2(p-1))$ is stronger than hypothesis \ref{hyp:yoshida2} which only requires $k_{f_0} \equiv k_{g_0} \ (\mathrm{mod} \ 2)$. We need the stronger hypotheses to ensure that the residual representation associated to the Yoshida lift $\varPi_0$ equals
\[\overline{\rho}_{f_0} \oplus \overline{\rho}_{g_0}. \]
These observations along with the hypotheses \ref{lab:irr}, \ref{lab:multfree} and \ref{lab:pdist-gsp4} tell us that Hypotheses \ref{item:globalhyp}, \ref{item:localhypord} and \ref{item:localhypdisgsp4} from Section \ref{sec:pseudocharactersGMA} hold. 

One can consider the local component $\TTT$ determined by (the residual representation associated to) $\varPi_0$, as in Section \ref{subsec:gmas}. The tame level for $\TTT$ corresponds to the tame level of the Yoshida lift of $f_0$ and $g_0$ as in Remark \ref{rem:auxtamelevel}. The congruence class $(a,b)$ modulo $(p-1)\Z^2$ associated to $\TTT$ is determined by the weight \[\left((k_{f_0}+k_{g_0})/2, (k_{f_0}-k_{g_0})/2+2\right)\] of $\Pi_0$. Since we have $k_{f_0} \equiv k_{g_0} \ (\mathrm{mod} \ 2(p-1))$, we have the following equality of congruence classes: \[(a,b) \equiv (k_{f_0},2) \ \mathrm{mod} (p-1)\Z^2.\] We can consider the GMA afforded by Theorem \ref{thm:pseudorepBC}:
\[\left(\begin{array}{cc} M_2(\TTT) & M_2(\mathcal{B}) \\ M_2(\mathcal{C}) & M_2(\TTT) \end{array}\right).\]

Following the notations in Section \ref{subsec:gmas}, we have
\[\overline{\varrho}_2 \cong \overline{\rho}_{f_0}, \qquad \overline{\varrho}_1 \cong \overline{\rho}_{g_0}\]

One can determine the similitude character $\widetilde{\lambda}$, afforded by Lemma \ref{lem:similitudechar}, associated to the local component $\TTT$ by studying the similitude character of the specialization $\varPi_0$, since we only need to identify the finite global unramified-at-$p$ character $\widetilde{\lambda}_{\mathrm{ur}}$.  We have
 \[ \widetilde{\lambda} = \chi_\hF \cyc^{-3} \omega^a \omega^b \kappa_1 \kappa_2.\]

Let $\alpha_p(f_0)$ and $\alpha_p(g_0)$ denote the $p$-adic unit roots of the Hecke polynomial at $p$ for the eigenforms $f_0$ and $g_0$ respectively. Let $\p_{\varPi_0}$ denote the classical specialization of $\TTT$ corresponding to the Yoshida lift $\varPi_0$. Proposition \ref{P:33} lets us deduce that we have an inclusion $\p_{\varPi_0} \supset \JJJ_{\mathrm{red}}$. Let $\eta$ denote the unique minimal prime contained in the classical specialization $\p_{\varPi_0}$. Consider the following natural ring homomorphism:
\begin{align}\label{eq:specializationf0g0}
	\varpi_\OO : \TTT \twoheadrightarrow \TTT/\eta \twoheadrightarrow \TTT/\p_{\varPi_0} \hookrightarrow \OO.
\end{align}

The data afforded by the map $\varPi_0$  can be placed in the setting of Section \ref{sec:selmerclassprod}. Comparing $p$-adic valuations tells us that 
\begin{align}
\varpi_\OO(U_{\cP}) = \alpha_p(f_0), \qquad \varpi_\OO\left(\dfrac{U_{\cQ}}{U_{\cP}}\right) = \alpha_p(g_0).
\end{align}

Consider the two dimensional Galois representations $\rho_{f_0}:G_\Q \rightarrow \Gl_2(\OO)$, attached to the $p$-ordinary forms $f_0$ and $g_0$. Let $L_{f_0}$ and $L_{g_0}$ denote the free $\OO$-modules of rank two . Since $f_0$ is $p$-ordinary and because $\overline{\rho}_{f_0}$ is $p$-distinguished, we have a short exact sequence of free $\OO$-modules that is $G_{\Q_p}$-equivariant:

\begin{align}
0 \rightarrow \Fil^+L_{f_0} \rightarrow L_{f_0} \rightarrow \frac{ L_{f_0}}{\Fil^+ L_{f_0}} \rightarrow 0.
\end{align}

The action of $G_{\Q_p}$ on  the rank one $\OO$-module $\frac{ L_{f_0}}{\Fil^+ L_{f_0}}$ is given by the unramified character $\varpi_\OO \circ \widetilde{\psi}_0:G_{\Q_p} \rightarrow \OO^\times$, which sends $\Frob_p$ to $\alpha_p(f_0)$. The action of $G_{\Q_p}$ on  the rank one $\OO$-module $\frac{ L_{f_0}}{\Fil^+ L_{f_0}}$ is given by the ramified character $\varpi_\OO \circ \left(\frac{\widetilde{\lambda}}{\widetilde{\psi}_0}\right):G_{
Q_p} \rightarrow \OO^\times$, that equals the character $\frac{\det(\rho_{f_0})}{\varpi_\OO \circ \widetilde{\psi}_0}$. We consider the following discrete modules over $\OO$:
\begin{align}
	\mathrm{D}_{\OO} &\coloneqq \Hom_{\OO}\left(L_{f_0},L_{g_0}\right)\left(\cyc^{k_2-2}\right)  \otimes_\OO \widehat{\OO}, \qquad \mathrm{D}_{\OO,\Fil^+} \coloneqq \Hom_{\OO}\left(\dfrac{L_{f_0}}{\Fil^+L_{f_0}},L_{g_0}\right)\left(\cyc^{k_2-2}\right)  \otimes_\OO \widehat{\OO}. 
\end{align}

The non-primitive Selmer group associated to the Rankin--Selberg product of $f_0$ and $g_0$, alluded in hypothesis \ref{lab:tor} is given below: 
\begin{align}\label{eq:defRS}
\Sel^{\Sigma_0}(\mathrm{D}_{\OO}) \coloneqq \ker \bigg( H^1_{\mathrm{cont}}\left(G_S, \ \mathrm{D}_{\OO}\right) \rightarrow H^1_{\mathrm{cont}}\left(I_p, \ \dfrac{\mathrm{D}_{\OO}}{\mathrm{D}_{\OO,\Fil^+}} \right) \bigg).
\end{align}

\subsection{Proof of Theorem \ref{thm:yoshidafamily}: $p$-adic families of Hecke eigensystems associated to Yoshida lifts}

{\theoremone*}

We will first show that the minimal prime $\eta$ contains the reducibility ideal $\JJJ_{\mathrm{red}}$. That is, we will show $i_\eta(\BBB)=(0)$ and $i_\eta(\CCC)=(0)$. By Theorem \ref{thm:pseudorepBC}(\ref{item:pseudo3}), it is enough to show $i_\eta(\BBB)=(0)$.  Proposition \ref{prop:inclusion} lets us deduce the following surjection of $\OO$-modules:
\begin{align} \label{eq:surjselmer}
\Sel^{\Sigma_0}(\mathrm{D}_{\OO})^\vee \twoheadrightarrow 	\BBB \otimes_{\varpi_\OO} \OO.  
\end{align}

Note that equation (\ref{eq:specializationf0g0}) tells us that $\BBB \otimes_{\varpi_\OO} \OO$ to is isomorphic to $ (\BBB \otimes_\TTT \TTT/\eta) \ \otimes_{\varpi_\OO} \OO$.\\
 
We will use properties of (zeroth) Fitting ideals.  Suppose, for the sake of contradiction, that the fractional ideal $i_\eta(\BBB)$ over the domain $\TTT/\eta$ is non-zero. Then, $\TTT/\eta$-module $i_\eta(\BBB)$ must be faithful. By \cite[Proposition 20.7]{MR1322960}, we have $\Fitt_{\TTT/\eta}(i_\eta(\BBB)) = (0)$. The $\TTT/\eta$-module surjection $\BBB \otimes_\TTT \TTT/\eta \twoheadrightarrow  i_\eta(\BBB)$ then lets us conclude that $\Fitt_{\TTT/\eta}(\BBB \otimes_\TTT \TTT/\eta) =(0)$. Using properties of Fitting ideals  under base change \cite[Corollary 20.5]{MR1322960}, we can then conclude that $\Fitt_{\OO}\left((\BBB \otimes_\TTT \TTT/\eta) \ \otimes_{\varpi_\OO} \OO\right) = (0)$. The surjection in equation (\ref{eq:surjselmer}) then lets us conclude that $\Fitt_{\OO}(\Sel^{\Sigma_0}(\mathrm{D}_{\OO})^\vee)=(0)$. By hypothesis \ref{lab:tor}, the $\OO$-module $\Sel^{\Sigma_0}(\mathrm{D}_{\OO})^\vee$ must be finitely generated and torsion. Since $\OO$ is a principal ideal domain, this is impossible, unless $i_\eta(\BBB)$ equals $(0)$.  What we have thus shown is that the the minimal prime ideal $\eta$ contains the reducibility ideal. \\

  Applying Theorem \ref{thm:pseudorepBC}(\ref{item:pseudo6}), we can conclude that the induced  pseudocharacter $G_S \rightarrow \TTT/\eta$ attached to the branch $\TTT/\eta$ is a sum of two $2$-dimensional pseudocharacters, say $\mathrm{Trace}(\varrho_1) \oplus \mathrm{Trace}(\varrho_2)$.  What we have implicitly used here is that the residual $G_S$-representations $\overline{\rho}_\hF$ and $\overline{\rho}_\hG$ are both irreducible. As a result, theorems of Nyssen \cite{MR1411348}  and Rouquier \cite{MR1378546} tell us that any pseudopseudocharacter valued in $\TTT/\eta$, lifting $\overline{\rho}_\hF$ or $\overline{\rho}_\hG$, comes from the trace of a $2$-dimensional $G_S$ representations $\varrho_i: G_\Q \rightarrow \Gl_2(\TTT/\eta)$, for $i \in \{1,2\}$. Note that this can also be directly observed from the structure of the GMA as afforded by Theorem \ref{thm:pseudorepBC} since $i_\eta(\BBB) = i_\eta(\CCC)=0$. \\ 
  
We will now prove that the branch $\eta$ corresponds to the $p$-adic family of Hecke eigensystems associated to the space of Yoshida lifts of $\hF$, $\hG$, thus proving both \ref{thm:yoshidafamily}\ref{thmcond:yoshidafamily} and \ref{thmcond:unique}.  The arguments now required to prove  \ref{thm:yoshidafamily}\ref{thmcond:yoshidafamily} and \ref{thmcond:unique} are nearly identical to the proof of \ref{thm:yoshidafamily}\ref{thm:yoshidafamily}\ref{thmcond:otheryoshida}. Therefore, to avoid repetition and to keep our discussions shorter, we simply focus on the proof of Theorem \ref{thm:yoshidafamily}\ref{thmcond:yoshidafamily} and leave the details of the proof of Theorem \ref{thm:yoshidafamily}\ref{thmcond:otheryoshida} to the interested reader to complete. The interested reader can prove Theorem \ref{thm:yoshidafamily}\ref{thmcond:otheryoshida} using an argument by contradiction: if the Galois representation associated to an irreducible component $\eta'$ in $\TTT_{\perp,\stab}$ turns out to be reducible, one obtains a semi-simple Galois representation for all tempered classical specializations  of $\TTT_{\perp,\stab}$ as in Theorem \ref{thm:pseudorepBC}; one arrives at a contradiction by repeating the arguments involving modularity results of Skinner--Wiles deducing that the irreducible component $\eta'$ (belonging to $\TTT_{\perp,\stab}$) must also correspond to Yoshida lifts of Hida families. \\

For every classical prime $\p_{k_1,k_2}$ in $\frakX^{\rm temp}_{\omega}\left(\TTT/\eta\right)$, we fix a ring homomorphism into $\overline{\Q}_p$: \[\varphi_{\p_{k_1,k_2}}: \TTT/\eta \rightarrow \dfrac{\TTT/\eta }{\p_{k_1,k_2}} \hookrightarrow \overline{\Q}_p. \]

This allows us to consider the Galois representations: \[\varphi_{\p_{k_1,k_2}} \circ\varrho_1 : G_\Q \rightarrow \Gl_2(\overline{\Q}_p), \quad \varphi_{\p_{k_1,k_2}} \circ\varrho_2 : G_\Q \rightarrow \Gl_2(\overline{\Q}_p)\]

\begin{enumerate}
\item The hypotheses \ref{lab:irr} lets us conclude that the residual representations $\overline{\varphi_{\p_{k_1,k_2}} \circ\varrho_1(\cyc^{2}\omega^{-b}\kappa_2^{-1})}$ and $\overline{\varphi_{\p_{k_1,k_2}} \circ\varrho_2}$ are absolutely irreducible. 

\item Theorem \ref{thm:pseudorepBC}(\ref{item:pseudo7}) and hypothesis \ref{lab:pdist-gsp4} let us conclude that the semi-simplifications of the local $p$-ordinary residual representations $\overline{\varphi_{\p_{k_1,k_2}} \circ\varrho_1(\cyc^{2}\omega^{-b}\kappa_2^{-1})} \vert_{G_{\Q_p}}$ and $\overline{\varphi_{\p_{k_1,k_2}} \circ\varrho_2}\vert_{G_{\Q_p}}$, are both sums of two distinct characters. 

\item Since our local component $\TTT$ passes through the Yoshida lift of $f_0$ and $g_0$, the residual global representations $\overline{\varphi_{\p_{k_1,k_2}} \circ\varrho_1(\cyc^{2}\omega^{-b}\kappa_2^{-1})}$ and $\overline{\varphi_{\p_{k_1,k_2}} \circ\varrho_2}$ \textit{come from} modular forms $g_0$ and $f_0$ respectively. 

\item Theorem \ref{thm:pseudorepBC}(\ref{item:pseudo7}) lets us conclude that the restrictions $\varphi_{\p_{k_1,k_2}} \circ \varrho_1(\cyc^{2}\omega^{-b}\kappa_2^{-1})\vert_{I_p}$ and $\varphi_{\p_{k_1,k_2}} \circ \varrho_2\vert_{I_p}$ of the characteristic zero representations to the local inertia group $I_p$ at $p$ have a unique $1$-dimensional quotient on which $I_p$ acts trivially.  The image of $\Frob_p$ on these unique unramified quotients is given by $\varphi_{\p_{k_1,k_2}} \circ \widetilde{\psi}_1(\Frob_p)$  and $\varphi_{\p_{k_1,k_2}} \circ \widetilde{\psi}_0(\Frob_p)$ respectively. 

\item\label{pt:five} The ratios of the $G_\Q$-characters $\dfrac{\det(\varphi_{\p_{k_1,k_2}} \circ \varrho_1(\cyc^{2}\omega^{-b}\kappa_2^{-1}))}{\cyc^{k_1-k_2+1}}$ and $\dfrac{\det(\varphi_{\p_{k_1,k_2}} \circ \varrho_2)}{\cyc^{k_1+k_2-3}}$ are  unramified outside primes dividing $N$, and are valued in the units of the ring $\TTT/\eta$. Consequently, both of these ratios  are of finite order. 
\end{enumerate}

All the hypotheses of \cite[Theorem]{MR1928993} are satisfied.  Consequently, the $G_S$-representations \[\varphi_{\p_{k_1,k_2}} \circ\varrho_1(\cyc^{2}\omega^{-b}\kappa_2^{-1}), \quad \varphi_{\p_{k_1,k_2}} \circ\varrho_2\] are modular.  One can deduce that the forms are $p$-ordinary using results of Mazur--Wiles since their associated Galois representation at $p$ is $p$-ordinary (see \cite{MR1658011}). One can use results of Carayol \cite[Proposition 2]{MR1046750} and Livn\'{e} \cite[Theorem 2]{MR0987567} to further conclude that the tame level of these modular representations is divisible by $N_2$. Therefore, the $\Gl_2$-Hecke eigensystems 
{\small \[\left\{\varphi_{\p_{k_1,k_2}} \circ \widetilde{\psi}_1(\Frob_p), \ \left\{\mathrm{Trace}\left( \varphi_{\p_{k_1,k_2}} \circ\varrho_1(\cyc^{2}\omega^{-b}\kappa_2^{-1})(\Frob_\ell)\right)\right\}_{l \nmid Np} \right\}, \left\{\varphi_{\p_{k_1,k_2}} \circ \widetilde{\psi}_0(\Frob_p), \ \left\{\mathrm{Trace}\left(  \varphi_{\p_{k_1,k_2}} \circ\varrho_2(\Frob_\ell)\right)\right\}_{l \nmid Np} \right\}\] }

determine ring homomorphisms $ \bbT_{\hG,N_2} \rightarrow \dfrac{\TTT/\eta	}{\p_{k_1,k_2}}$ and $ \bbT_{\hF,N_2} \rightarrow \dfrac{\TTT/\eta	}{\p_{k_1,k_2}}$. The density of classical primes $\p_{k_1,k_2}$ of $\frakX^{\rm temp}_{\charcomp}\left(\TTT/\eta\right)$ due to  Lemma \ref{lem:density} tells us that the natural diagonal map $\TTT/\eta \hookrightarrow \prod  \dfrac{\TTT/\eta	}{\p_{k_1,k_2}}$ is an inclusion.  Collecting all of these ring homomorphisms afforded by the Hecke eigensystems
\[\left\{\widetilde{\psi}_1(\Frob_p), \ \left\{\mathrm{Trace}\left( \varrho_1(\cyc^{2}\omega^{-b}\kappa_2^{-1})(\Frob_\ell)\right)\right\}_{l \nmid Np} \right\}, \left\{\widetilde{\psi}_0(\Frob_p), \ \left\{\mathrm{Trace}\left(  \varrho_2(\Frob_\ell)\right)\right\}_{l \nmid Np} \right\},\] we get
 natural ring homomorphisms maps $i_{G,\eta}:\bbT_{\hG,N_2} \rightarrow  \TTT/\eta$ and $i_{F,\eta}:\bbT_{\hF,N_2} \rightarrow \TTT/\eta$ given below:

\begin{align}\label{eq:commdiaghidafamilies}
\xymatrix @R=.3pc{
\bbT_{\hG,N_2} \ar[r] \ar@{-->}[ddd]  \ar[rddd]&  \prod  \dfrac{\TTT/\eta	}{\p_{k_1,k_2}}  \\ \\ \\
 \mathbb{T}_{\hG,N_{\hG}}  \ar@{-->}[r] & \displaystyle   \TTT/\eta  \ar@{^{(}->}[uuu]  \\ 
 U_p  \ar@{|->}[r] & \widetilde{\psi}_1(\Frob_p), \\ 
 T_\ell \ar@{|->}[r]& \mathrm{Trace}\left( \varrho_1(\cyc^{2}\omega^{-b}\kappa_2^{-1})(\Frob_\ell)\right). 
}
 \qquad \qquad \qquad 
\xymatrix@R=.3pc{
\bbT_{\hF,N_2} \ar[r] \ar@{-->}[ddd] \ar[rddd] &  \prod  \dfrac{\TTT/\eta	}{\p_{k_1,k_2}}  \\ \\ \\ 
 \mathbb{T}_{\hF,N_{\hF}} \ar@{-->}[r] & \displaystyle \TTT/\eta  \ar@{^{(}->}[uuu] \\
  U_p  \ar@{|->}[r] & \widetilde{\psi}_0(\Frob_p), \\ 
 T_\ell \ar@{|->}[r]& \mathrm{Trace}\left( \varrho_2(\Frob_\ell)\right). 
}
\end{align}

Since $\TTT/\eta$ is a domain, the morphisms  $i_{\hG,\eta}$ and  $i_{\hF,\eta}$ must factor through the unique minimal prime ideals $\eta_\hG$ and $\eta_\hF$, in $\bbT_{\hG,N_2}$ and $\bbT_{\hF,N_2}$ respectively, passing through $g_0$ and $f_0$ respectively. Uniqueness of Hida families (passing through $g_0$ and $f_0$ respectively) then lets us conclude that these ring maps must naturally factor through $T_{\hG,N_2}$ and $T_{\hF,N_2}$ respectively. Let $\mathbb{T}_{\hF,\eta_\hF}$ and $\mathbb{T}_{\hG,\eta_\hG}$ denote  the quotient rings $\mathbb{T}_{\hF,N_{\hF}}/\eta_\hF$ and $\mathbb{T}_{\hG,N_{\hG}}/\eta_\hG$ respectively. Since the integral closure of $\Z_p$ inside $\mathbb{T}_{\hF,\eta_\hF}$ and $\mathbb{T}_{\hG,\eta_\hG}$  is chosen to be $\OO$, the completed  tensor product $\mathbb{T}_{\hF,\eta_\hF} \hotimes \mathbb{T}_{\hG,\eta_\hG}$ over $\OO$ is a domain. To see this, one may identify the completed tensor product $\mathbb{T}_{\hF,\eta_\hF} \hotimes \mathbb{T}_{\hG,\eta_\hG}$ with the tensor product $\TT_{\hF,\eta_\hF}[\![x_\hG]\!] \otimes_{\OO[\![x_\hG]\!]} \mathbb{T}_{\hG,\eta_\hG}$ and embed this tensor product into a tensor product of two linearly disjoint fields $\Frac(\TT_{\hF,\eta_\hF}[\![x_\hG]\!]) \otimes_{\Frac(\OO[\![x_\hG]\!])} \Frac(\mathbb{T}_{\hG,\eta_\hG})$.  See \cite[Corollary to Proposition 1, Chapter V, \S 17]{MR1994218}. We get an induced ring map of integral domains: 

\[\iota:\mathbb{T}_{\hF,\eta_\hF} \hotimes \mathbb{T}_{\hG,\eta_\hG} \hookrightarrow \TTT/\eta.\]

We need to now show that the ring homomorphism $\iota$ given above is a surjection. One can directly check the assignments of 
Hecke operators as given in equation (\ref{eq:Upassignments}) of Proposition \ref{prop:LRYoshidafam}. Observe that the commutative diagrams in equation (\ref{eq:commdiaghidafamilies}) let us conclude the following isomorphism of Galois representations:
\[\iota \circ \rho_\hG \cong \varrho_1 (\cyc^{2}\omega^{-b}\kappa_2^{-1}), \qquad \iota\circ \rho_\hF \cong \varrho_2.\]
Moreover, observation \ref{pt:five} given above lets us obtain the following equality of characters whose image is pro-$p$:
\begin{align*}
 \iota \circ \kappa_\hG = \kappa_1 \kappa_2^{-1}\cyc^{2} \omega^{-b}, \qquad \iota \circ \kappa_\hF = \kappa_1 \kappa_2 \cyc^{-2} \omega^{b}.
\end{align*}
We can now conclude that the minimal prime ideal $\eta$ corresponds to the $p$-adic family of Hecke eigensystems associated to the space of Yoshida lifts of the Hida families $F$ and $G$. This is because the Galois representation $\varrho_\eta$ is isomorphic to  $\varrho_1 \oplus \varrho_2$, and by combining  our observations above, we see that
\begin{align}
\varrho_1 \cong  \iota \circ \rho_\hG (\sqrt{\kappa_\hF/\kappa_\hG}), \qquad \varrho_2 \cong \iota \circ \rho_\hF.	
\end{align}
Since $p$ is odd, one can take square-roots of elements in $1+\mmm_{\Lambda_\OO}$. These observations let us conclude that we have the following isomorphism of $\Lambda$-algebras under $\iota$:
\begin{align}\label{eq:assignmentslambda}
\notag \OO[\![x_\hF,x_\hG]\!]	&\cong \OO[\![x_1,x_2]\!], \\
\notag x_\hF +1 &\mapsto (x_1 + 1)(x_2+1)(\cyc^{-2}\omega^{b}), \\
\notag x_\hG + 1 &\mapsto (x_1 + 1)(x_2+1)^{-1}(\cyc^{2}\omega^{-b}), \\
\notag \sqrt{(x_\hF+1)(x_\hG+1)} & \mapsfrom x_1 +1, \\
\sqrt{(x_\hF+1)(x_\hG+1)^{-1}} \cyc^{-2} \omega^{b} & \mapsfrom x_2 +1.
\end{align}

Let $\iota_\eta:\TTT \rightarrow \TTT/\eta$ denote the natural reduction map. To prove that the map $\iota$ is surjective, we need to show that for all primes $\ell \in S_{\mathrm{good}}$, the image of the Hecke operators $i_\eta(T_\ell), i_\eta(R_\ell)$ and $i_\eta(S_\ell)$, along with $\iota_\eta(U_{\cP}), \iota_\eta(U_{\cQ})$ lie inside $\mathrm{Image}(\iota)$. First note that the characteristic polynomials for $\Frob_\ell$ for $\rho_\hF$ and $\rho_\hG$ are respectively equal to 
\begin{align} \label{eq:charpolygl2}
x^2 - T_\ell(\hF)x + \psi_\hF\omega^{k_{f_0}-1}(\Frob_\ell) \langle \ell \rangle_\hF, \qquad 	x^2 - T_\ell(\hG)x + \psi_\hG\omega^{k_{g_0}-1}(\Frob_\ell) \langle \ell \rangle_\hG.
\end{align}

Here, $T_\ell(\hF),  \langle  \ell \rangle_\hF$ and $T_\ell(\hG), \langle  \ell \rangle_\hG$ are the image of the Hecke operator and (pro $p$-part of the) diamond operator at $\ell$, associated to $\hF$ and $\hG$ respectively. By using the construction of Yoshida lifts as in Section \ref{sec:yoshidalifts} along with a density argument, one can deduce that the global similitude character, as given in Lemma \ref{lem:similitudechar}, equals $\det(\rho_\hF)$. Using equation (\ref{eq:tl0}), one  can conclude that $i_\eta(S_\ell)$ lies in the image of $\iota$. Characteristic polynomials of matrices are invariant under conjugation. Therefore, for all primes $\ell \nmid Np$, a comparison of the $\GSp_4$-Hecke polynomial at $l$ given in equation (\ref{eq:abstractdegree4poly}) and the product of the characteristic polynomials of $\rho_\hG (\sqrt{\kappa_\hF/\kappa_\hG})(\Frob_\ell)$ and $\rho_\hF(\Frob_\ell)$ will let us conclude that the Hecke operators $i_\eta(T_\ell)$ and $i_\eta(R_\ell)$ lie in the image of $\iota$. 

One can use Theorem \ref{thm:pseudorepBC}(\ref{item:pseudo7}) to compare the image of $\Frob_p$ under the unique $1$-dimensional unramified quotients of the Galois representations $\rho_\eta \mid_{G_{\Q_p}}$ and $\rho_\eta(\kappa_2^{-1}\cyc^{2}\omega^{-b}) \mid_{G_{\Q_p}}$ respectively. This allows us to deduce the following equality, which in turn lets us conclude that $\iota_\eta(U_{\cP}), \iota_\eta(U_{\cQ})$ also lie in the image of $\iota$.  
\begin{align}
	\iota(U_p(\hF)) = \iota_\eta(U_{\cP}), \qquad \iota(U_p(\hG)) = \iota_\eta\left(\dfrac{U_{\cQ}}{U_{\cP}}\right).
\end{align}

To see that the ring homomorphism $\iota$ given above is an injection, notice that the kernel of the ring homomorphism must be a minimal prime ideal (and hence equal to $(0)$), since $\iota$ is a homomorphism of rings with the same Krull dimension. This completes the proof.

\begin{proposition} \label{prop:LRYoshidafam}
Suppose that one of the following hypothesis holds:
\begin{enumerate}[style=sameline, style=sameline, align=left,label=(\scshape{ODDL}), ref=\scshape{ODDL},partopsep=0pt,parsep=0pt]
  \item\label{lab:oddl} The levels $N_f$ and $N_g$ are odd. 
\end{enumerate} 
\begin{enumerate}[style=sameline, style=sameline, align=left,label=(\scshape{LR}), ref=\scshape{LR},partopsep=0pt,parsep=0pt]
  \item\label{lab:hypLR} The tame levels $N_{\hF}$ and $N_{\hG}$ are squarefree. The Nebentypus of the eigenforms $f_0$ and $g_0$ are trivial. Furthermore, for all primes $\ell$ dividing $\mathrm{gcd}(N_{\hF},N_{\hG})$, the Atkin--Lehner eigenvalues at $\ell$ for both $f_0$ and $g_0$ coincide. 
\end{enumerate}

Then, the hypothesis \ref{lab:exist} holds. That is, there exists a $p$-adic family of Hecke eigensystems associated to the space of Yoshida lifts of $\hF$ and $\hG$.
\end{proposition}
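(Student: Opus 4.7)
The plan is to construct the $p$-adic family of Hecke eigensystems by interpolating classical Yoshida lifts $\varPi(f_i,g_j)$ of pairs of classical specializations of $(\hF,\hG)$, once a uniform bound on the tame Siegel level can be secured across the family. With such a uniform tame level $N$ in hand, Hida's vertical control theorem (Theorem \ref{T:control}) together with the Zariski density of $\frakX^{\rm temp}_{\charcomp}(\wtd\Lambda)$ (Lemma \ref{lem:density}) will let one assemble the collection of classical Hecke eigensystems $\phi^\ord_{\varPi(f_i,g_j)} : \hhh(N)_\m \to \overline{\Q}_p$ into a single ring homomorphism $\phi_{\hF,\hG} : \hhh(N)_\m \to \II_{\hF,\hG}$ whose kernel contains a unique minimal prime $\eta$.

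First I would fix $N$ according to the hypothesis. Under \ref{lab:oddl}, take $N$ as in equation (\ref{levelauxdepth}), invoking Ganapathy's theorem \cite{MR3432266} that the local Langlands correspondence for $\GSp_4$ preserves depth at primes $\ell \geq 3$, combined with the analogous depth preservation for $\Gl_2$ and the relationship between depth and Iwahori-fixed vectors recalled in Remark \ref{rem:auxtamelevel}. Under \ref{lab:hypLR}, take $N = \mathrm{lcm}(N_{\hF}, N_{\hG})$ and invoke the explicit construction of Yoshida lifts via test functions by the first author and Namikawa \cite{MR3623733} at the common bad places, together with the Saha--Schmidt \cite{MR3092267} analysis of local components; the hypothesis that the Atkin--Lehner eigenvalues agree at primes dividing $\gcd(N_{\hF},N_{\hG})$ is precisely the input required for these test functions to produce a nonzero vector fixed by $K_0(p) \cap K(N)$ at each classical specialization in the family.

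Next, at every pair $(f_i,g_j)$ of classical specializations of $(\hF,\hG)$ in temperate weight satisfying Hypotheses \ref{hyp:yoshida1}--\ref{hyp:yoshida4} (inherited from \ref{lab:multfree}, \ref{lab:yosdet}, \ref{lab:yos} and \ref{lab:yos-neb}), I would construct $\varPi(f_i,g_j)$ by Arthur's multiplicity formula as in Section \ref{sec:yoshidalifts}. By Proposition \ref{P:33} each such lift is $p$-ordinary, and by the previous step it has a nonzero vector fixed by $K_0(p) \cap K(N)$, contributing to $S^\ord_\ulk(K_0(p),\cO)$ at the weight $\ulk$ of equation (\ref{eq:siegelweight}). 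The associated ordinary Hecke eigensystem is then prescribed by the explicit formulas (\ref{E:HeckeYoshida}), which express $\phi^\ord(Q_\ell(X))$, $\phi^\ord(U_\cP)$ and $\phi^\ord(U_\cQ)$ as polynomial expressions in the Hecke parameters of $f_i$ and $g_j$ and in the values of $\kappa_\hF, \kappa_\hG$ at the specialization. Because these expressions lift verbatim to $\II_{\hF,\hG}$-valued formulas, Zariski density of $\frakX^{\rm temp}_{\charcomp}(\wtd\Lambda)$ allows one to glue the classical eigensystems into a single $\phi_{\hF,\hG}$, and the trace-compatibility of Definition \ref{def:Yoshidalifthidafamilies} is verified by comparing the characteristic polynomial of $\rho_\eta(\Frob_\ell)$ (read off from the pseudocharacter of Theorem \ref{thm:pseudorepBC}) with the product of the characteristic polynomials of $\rho_\hG(\sqrt{\kappa_\hF/\kappa_\hG})(\Frob_\ell)$ and $\rho_\hF(\Frob_\ell)$ at the dense classical locus.

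The main obstacle is the uniform bound on the tame level. Under \ref{lab:oddl}, one must carefully track conductor exponents of $\varPi(f_i,g_j)$ at each bad prime via depth preservation, which accounts for the $+1$ shift in exponents in equation (\ref{levelauxdepth}); under \ref{lab:hypLR}, the technical difficulty shifts to ensuring that the test functions of \cite{MR3623733} produce non-vanishing Yoshida lifts uniformly across the family, where the matching Atkin--Lehner condition at common squarefree primes is exactly the hypothesis that prevents obstructions in the local Shimizu-type integral used to produce the ramified components of $\varPi(f_i,g_j)$.
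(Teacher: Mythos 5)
Your overall strategy is the same as the paper's: fix a bounded tame Siegel level, produce a classical Yoshida lift at a dense set of classical specializations of $(\hF,\hG)$, and glue the resulting ordinary eigensystems by a density argument. But two steps that carry the real content of the paper's proof are missing. First, under \ref{lab:hypLR} the conditions (trivial nebentypus, equal Atkin--Lehner eigenvalues at primes dividing $\gcd(N_{\hF},N_{\hG})$) are imposed only on the single pair $(f_0,g_0)$; you declare that they are ``inherited'' by every classical specialization $(f_{\mathfrak{P}},g_{\mathfrak{P}'})$, yet this is precisely what must be established before the constructions of \cite{MR3623733,MR3092267} yield lifts of level $\Gamma^{(2)}_0(\mathrm{lcm}(N_{\hF},N_{\hG}))$ at more than the one point $(f_0,g_0)$. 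The paper handles this by restricting to specializations whose weights are congruent to $\mathrm{wt}(f_0)$, $\mathrm{wt}(g_0)$ modulo $2(p-1)$ (so the forms stay on $\Gamma_0(N_{\hF})$, $\Gamma_0(N_{\hG})$) and by proving that Atkin--Lehner eigenvalues are \emph{constant along a $\Gl_2$ Hida family}, because the Atkin--Lehner involution preserves $p$-integrality of Fourier coefficients and hence congruences modulo $p$ (cf.\ \cite{MR2501296}). Without this rigidity your interpolation has no dense supply of classical Yoshida lifts at the fixed level.

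Second, your gluing step appeals to density of $\frakX^{\rm temp}_{\charcomp}(\wtd\Lambda)$, i.e.\ density in the weight space, whereas what is needed is (i) that a pair of classical primes $(\mathfrak{P},\mathfrak{P}')$ of $\II_{\hF}$, $\II_{\hG}$ actually induces a specialization of the normalization $\II_{\hF,\hG}$ of the completed tensor product, and (ii) that the resulting primes $\mathfrak{Q}_{k(\mathfrak{P}),k(\mathfrak{P}')}$ are Zariski dense in $\Spec(\II_{\hF,\hG})$. Point (i) is not formal: the paper proves it via the \'etale/regularity argument --- $(\OO[\![x_\hG]\!])_{\p'}\hookrightarrow(\II_{\hG})_{\mathfrak{P}'}$ is \'etale at classical weight $\geq 2$ by \cite[Corollary 1.4]{MR848685}, \'etaleness is stable under base change, so the relevant localized tensor product is regular and hence integrally closed, and therefore receives $\II_{\hF,\hG}$ --- and only then does the argument of Lemma \ref{lem:density} give the density in $\Spec(\II_{\hF,\hG})$ that lets one define $\TTT\to\II_{\hF,\hG}$ by the explicit Hecke-operator assignments and check it interpolates the classical eigensystems. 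Supplying these two points would bring your outline in line with the paper's proof; the remainder (choice of level under \ref{lab:oddl} via depth preservation, $p$-ordinarity via Proposition \ref{P:33}, and the explicit eigensystem formulas) matches the paper.
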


\begin{proof}

We illustrate the proof under the hypothesis \ref{lab:hypLR}. The proof works similarly under hypothesis \ref{lab:oddl}. See also Remark \ref{rem:auxtamelevel}. As indicated in the introduction, the key point is to show that the tame level is bounded as one varies over the classical specializations of $\hF$ and $\hG$. \\

Consider the following subsets of classical specializations of $\II_{\hF}$ and $\II_{\hG}$ respectively: 

\begin{multline*}
 S_{f_0}  \coloneqq \{\mathfrak{P} \in \mathrm{Spec}_{\mathrm{ht}=1}\II_{\hF},  \text{ where $\mathfrak{P}$ is a classical height one prime in $\II_{\hF}$ with weight } \\   k(\mathfrak{P}) \equiv \mathrm{wt}(f_0) \ \mathrm{mod} \ 2( p-1), \ k(\mathfrak{P}) \geq 2 \}.
 \end{multline*}
 \begin{multline*}
S_{g_0} \coloneqq \{\mathfrak{P}  \in \mathrm{Spec}_{\mathrm{ht}=1}\II_{\hG},  \text{ where $\mathfrak{P}$ is a classical height one prime in $\II_{\hG}$ with weight }  \\ k(\mathfrak{P}) \equiv \mathrm{wt}(g_0) \ \mathrm{mod} \ 2( p-1), \ k(\mathfrak{P}) \geq 2\}.
\end{multline*}

Since the $\Gl_2$ forms $f_0$ and $g_0$ have levels $\Gamma_0(N_{\hF})$ and $\Gamma_0(N_{\hG})$ respectively, the congruence condition modulo $p-1$ ensure that all the classical specializations of $\II_{\hF}$ and $\II_{\hG}$  belonging respectively to $S_{f_0}$ and $S_{g_0}$ also have levels $\Gamma_0(N_{\hF})$ and $\Gamma_0(N_{\hG})$ respectively. The Atkin--Lehner involution preserves the $p$-integrality of the Fourier coefficients and hence also preserves congruences modulo $p$. See, for example, \cite[Theorem A1]{MR2501296}. As a result, the eigenvalues of the Atkin--Lehner involution (which are either $+1$ or $-1$) remain constant in a $\Gl_2$ Hida family, as the Fourier coefficients are congruent modulo $p$. Consequently, all the classical specializations $f_\mathfrak{P}$ and $g_{\mathfrak{P}'}$ of $\II_{\hF}$ and $\II_{\hG}$ also satisfy hypothesis \ref{lab:hypLR} of the Proposition. For the $\Gamma_0(N_\hF)$ and $\Gamma_0(N_{\hG})$ forms $f_\mathfrak{P}$ and $g_{\mathfrak{P}'}$, note that hypotheses \ref{hyp:yoshida2} and \ref{hyp:yoshida3} of section \ref{sec:yoshidalifts} are guaranteed by hypothesis \ref{lab:hypLR} of the proposition. Similarly, hypothesis \ref{hyp:yoshida4} of section \ref{sec:yoshidalifts} is guaranteed by the congruence conditions modulo $p-1$ of the sets $S_{f_0}$ and $S_{g_0}$. Therefore, under the hypothesis \ref{lab:hypLR} and subject to Hypotheses \ref{hyp:yoshida1} of section \ref{sec:yoshidalifts}, one has a classical Yoshida  lift of $f_\mathfrak{P}$ and $g_{\mathfrak{P}'}$ with Siegel congruence level $\Gamma^{(2)}_0\left(\mathrm{lcm}(N_{\hF},N_{\hG})\right)$. See works of the first author with Namikawa \cite{MR3623733} and Saha--Schmidt \cite[Proposition 3.1]{MR3092267}. Let $\TTT$ denote Hida's  $p$-ordinary Hecke algebra with tame level $\mathrm{lcm}(N_{\hF},N_{\hG})$ for $\GSp_4$ as in Section \ref{subsec:gmas}. The Hecke eigensystem afforded by the Yoshida lift of $f_{\mathfrak{P}}$ and $g_{\mathfrak{P}'}$ determines a ring homomorphism $\phi_{k(\mathfrak{P}),k(\mathfrak{P'})}:\TTT \rightarrow \overline{\Q}_p$ for the tempered weight $(w_1,w_2)=\left(\dfrac{k(\mathfrak{P})+k(\mathfrak{P}')}{2}, \dfrac{k(\mathfrak{P})-k(\mathfrak{P}')}{2}+2\right)$.  

Let $\mathfrak{P}$ and $\mathfrak{P}'$ belong to $S_{f_0}$ and $S_{g_0}$ respectively. Let \[\phi_{\mathfrak{P}}: \II_{\hF} \hookrightarrow (\II_{\hF})_{\mathfrak{P}} \rightarrow \overline{\Q}_p, \qquad \phi_{\mathfrak{P}'}: \II_{\hG} \hookrightarrow (\II_{\hG})_{\mathfrak{P}'} \rightarrow \overline{\Q}_p\] 
denote the corresponding ring homomorphisms determined by the corresponding $\Gl_2$ Hecke eigensystems. Let $\p$ and $\p'$ denote the prime ideals of $\OO[\![x_\hF]\!]$ and $\OO[\![x_\hG]\!]$ lying below $\mathfrak{P}$ and $\mathfrak{P}'$ respectively. Let $\mathfrak{Q}$ denote the prime ideal corresponding to the kernel of the induced ring homomorphism $\II_{\hF}[\![x_\hG]\!]  \otimes_{\OO[\![x_\hG]\!]} \II_{\hG} \rightarrow \overline{\Q}_p$. Let $\mathfrak{q}$ denote the prime ideal of $\II_{\hF}[\![x_\hG]\!]$ lying below $\mathfrak{Q}$. We have the following commutative digram of ring homomorphisms:
 
\begin{align*}
\xymatrix{
\left(\II_{\hF}[\![x_\hG]\!]\right)_{\mathfrak{q}}\ar@{^{(}->}[r]& \left(\II_{\hF}[\![x_\hG]\!] \right)_{\mathfrak{q}} \otimes_{(\OO[\![x_\hG]\!])_{\mathfrak{p}'}} \left(\II_{\hG}\right)_{\mathfrak{P}'} \\\ 
\left(\OO[\![x_\hG]\!]\right)_{\mathfrak{p}'} \ar@{^{(}->}[u] \ar@{^{(}->}[r]& \left(\II_{\hG}\right)_{\mathfrak{P}'} \ar@{^{(}->}[u]
}	
\end{align*}

Linear disjointness lets us conclude that  $\left(\II_{\hF}[\![x_\hG]\!] \right)_{\mathfrak{q}} \otimes_{(\OO[\![x_\hG]\!])_{\mathfrak{p}'}} \left(\II_{\hG}\right)_{\mathfrak{P}'}$ is a domain. Since $\mathfrak{P}'$ corresponds to a classical prime of $\II_{\hG}$ with weight $\geq 2$, the extension $(\OO[\![x_\hG]\!])_{\mathfrak{p}'} \hookrightarrow \left(\II_{\hG}\right)_{\mathfrak{P}'}$ is \'etale. See \cite[Corollary 1.4]{MR848685}.  \'Etale morphisms are preserved under base change (\cite[Chapter 4, Proposition 3.22]{MR1917232}). Since the height two prime ideal $\mathfrak{q}$ contains a classical height one prime of $\OO[\![x_\hG]\!]$, note that \cite[Lemma 4.16]{MR3919711} now forces $(\II_{\hF}[\![x_\hG]\!])_{\mathfrak{q}}$ to be a regular local ring with Krull dimension $2$. Since the morphism $\left(\II_{\hF}[\![x_\hG]\!]\right)_{\mathfrak{q}} \rightarrow  \left(\II_{\hF}[\![x_\hG]\!] \right)_{\mathfrak{q}} \otimes_{(\OO[\![x_\hG]\!])_{\mathfrak{p}'}} \left(\II_{\hG}\right)_{\mathfrak{P}'}$ is \'etale, the ring $\left(\II_{\hF}[\![x_\hG]\!] \right)_{\mathfrak{q}} \otimes_{(\OO[\![x_\hG]\!])_{\mathfrak{p}'}} \left(\II_{\hG}\right)_{\mathfrak{P}'}$ is regular too (Corollary 3.24 in Chapter 4 of \cite{MR1917232}), and therefore integrally closed (\cite[\href{https://stacks.math.columbia.edu/tag/0567}{Tag 0567}]{stacks-project}). Thus, we get a ring homomorphism:
\[\II_{\hF,\hG} \hookrightarrow \left(\II_{\hF}[\![x_\hG]\!] \right)_{\mathfrak{q}} \otimes_{(\OO[\![x_\hG]\!])_{\mathfrak{p}'}} \left(\II_{\hG}\right)_{\mathfrak{P}'} \rightarrow \overline{\Q}_p.\] 

Let $\mathfrak{Q}_{k(\mathfrak{P}),k(\mathfrak{P}')}$ denote the kernel of the ring homomorphism $\II_{\hF,\hG} \rightarrow \overline{\Q}_p$. Lemma \ref{lem:density} (rather its proof) tells us that the set of such $\mathfrak{Q}_{k(\mathfrak{P}),k(\mathfrak{P}')}$'s satisfying (i) $\mathfrak{P} \in S_{f_0}$, (ii) $\mathfrak{P}' \in S_{g_0}$ and (iii) $k(\mathfrak{P}) > k(\mathfrak{P}')$, is a dense subset of $\Spec(\II_{\hF,\hG})$. Combining all our observations provides us the following commutative diagram:

	\begin{align*}
\xymatrix @R=.3pc{
\TTT \ar@{-->}[r]   \ar[rddd]&  \II_{\hF,\hG} \ar@{^{(}->}[ddd] \\ \\ \\
  & \displaystyle   \prod_{\substack{\mathfrak{P} \in S_{f_0} \\ \mathfrak{P}' \in S_{g_0} \\ k(\mathfrak{P})> k(\mathfrak{P}')}} \dfrac{\II_{\hF,\hG}}{\mathfrak{Q}_{k(\mathfrak{P}),k(\mathfrak{P}')}}
  }
  \end{align*}     
 determined by the following consistent assignments:
\begin{align}\label{eq:Upassignments}
\notag U_{\cP}  &\mapsto U_{p}(\hF), \qquad 
& U_{\cQ} &\mapsto U_p(\hF)U_p(\hG), \\
\notag T_\ell  &\mapsto   T_{\ell}(\hF)+T_\ell(\hG) \sqrt{\dfrac{\langle l \rangle_\hF}{\langle l \rangle_\hG}}, \qquad & \ell R_\ell &\mapsto \dfrac{1}{\ell^3}(\ell^3-\ell) \chi_\hF \omega^{k_{f_0}-1}(\Frob_\ell)\langle \ell \rangle_\hF + T_\ell(\hF)T_\ell(\hG) \sqrt{\dfrac{\langle \ell \rangle_\hF}{\langle \ell \rangle_\hG}},\\
\ell^3S_\ell  &\mapsto  \chi_\hF \omega^{k_{f_0}-1}(\Frob_\ell)\langle \ell \rangle_\hF,
\end{align}
 along with the mapping  $\OO[\![x_1,x_2]\!] \xrightarrow{\cong} \OO[\![x_\hF,x_\hG]\!]$ of Iwasawa algebras as given in equation (\ref{eq:assignmentslambda}).  We thus obtain the desired ring homomorphism $\TTT \rightarrow \II_{\hF,\hG}$ of the proposition interpolating the Hecke eigensystems associated to a $p$-adic family of Yoshida lifts of two Hida families $F$ and $G$.  
\end{proof}

\subsection{Proof of Theorem \ref{thm:mainconj2}: the congruence ideal and the Selmer group} \label{sec:proofthm2}

{\theoremtwo*}

We reintroduce all the objects involved in our setup. We have $\II_{\hF,\hG}[G_S]$-modules: 
\begin{align*}
\SLLL_{\II_{\hF,\hG}}  \coloneqq \Hom_{\II_{\hF,\hG}}\left(L_\hF \otimes_{\II_{\hF}} \II_{\hF,\hG}, L_\hG \otimes_{\II_{\hG}} \II_{\hF,\hG}\right) \otimes_{\II_{\hF,\hG}} \II_{\hF,\hG}(\sqrt{\kappa_\hF \kappa_\hG^{-1}}), \qquad \DDD_{\II_{\hF,\hG}} \coloneqq \SLLL_{\II_{\hF,\hG}}  \otimes_{\II_{\hF,\hG}} \widehat{\II_{\hF,\hG}}.
\end{align*}
We also have $\II_{\hF,\hG}[\Gal{\overline{\Q}_p}{\Q_p}]$-modules:
\begin{align*}
\SLLL_{\II_{\hF,\hG},\Fil^+}  \coloneqq \Hom_{\II_{\hF,\hG}}\left(\dfrac{L_\hF}{\Fil^+L_\hF} \otimes_{\II_{\hF}} \II_{\hF,\hG}, L_\hG \otimes_{\II_{\hG}} \II_{\hF,\hG}\right) \otimes_{\II_{\hF,\hG}} \II_{\hF,\hG}(\sqrt{\kappa_\hF \kappa_\hG^{-1}}), \qquad \DDD_{\II_{\hF,\hG},\Fil^+} \coloneqq \SLLL_{\II_{\hF,\hG},\Fil^+}  \otimes_{\II_{\hF,\hG}} \widehat{\II_{\hF,\hG}}.
\end{align*}
Here, we have used the fact there is a natural short exact sequence of free $\II_{\hF}$-modules that is $\Gal{\overline{\Q}_p}{\Q_p}$-equivariant, attached to the Hida family $\hF$ since it satisfies the hypothesis \ref{lab:pdist}.
\begin{align}\label{eq:filF}
	0 \rightarrow \Fil^+L_\hF \rightarrow L_\hF \rightarrow \dfrac{L_\hF}{\Fil^+L_\hF} \rightarrow 0
\end{align}

We can define the usual non-primitive Selmer group (which is denoted $\Sel^{\Sigma_0}_{\rhopmb{3}{2}}(\Q)$ in the introduction):
\begin{align*}
\Sel^{\Sigma_0}( \DDD_{\II_{\hF,\hG}}) \coloneqq \ker\left(H^1_{\mathrm{cont}}\left(G_S, \DDD_{\II_{\hF,\hG}} \right) \xrightarrow{\mathrm{Res}} H^1_{\mathrm{cont}}\left(I_p, \ \dfrac{\DDD_{\II_{\hF,\hG}}}{\DDD_{\II_{\hF,\hG},\Fil^+}} \right)  \right).
\end{align*}
We will also consider the non-primitive strict Selmer group:
\begin{align*}
\Sel^{\Sigma_0}_{\mathrm{str}}( \DDD_{\II_{\hF,\hG}}) \coloneqq \ker\left(H^1_{\mathrm{cont}}\left(G_S, \DDD_{\II_{\hF,\hG}} \right) \xrightarrow{\mathrm{Res}} H^1_{\mathrm{cont}}\left(\Gal{\overline{\Q}_p}{\Q_p}, \ \dfrac{\DDD_{\II_{\hF,\hG}}}{\DDD_{\II_{\hF,\hG},\Fil^+}} \right)  \right).
\end{align*}
Observe that, we have the following natural inclusion of discrete $\II_{\hF,\hG}$-modules:
\begin{align}\label{eq:strictincl}
	\Sel^{\Sigma_0}_{\mathrm{str}}( \DDD_{\II_{\hF,\hG}}) \subseteq \Sel^{\Sigma_0}( \DDD_{\II_{\hF,\hG}}).
\end{align}

We have the following commutative diagram:

\begin{align} \label{eq:controlselmer1}
\xymatrix{
H^1_{\mathrm{cont}}(G_S,\DDD_{\II_{\hF,\hG}}[\CCC]) \ar[d]_{\mathrm{Res}} \ar[r]^{\cong}& H^1_{\mathrm{cont}}(G_S,\DDD_{\II_{\hF,\hG}})[\CCC] \ar[d]^{\mathrm{Res} \quad }\\ 
H^1_{\mathrm{cont}}\left(\Gal{\overline{\Q}_p}{\Q_p}, \dfrac{\DDD_{\II_{\hF,\hG}}[\CCC]}{\DDD_{\II_{\hF,\hG},\Fil^+}[\CCC]}\right) \ar[r]^{\cong}& H^1_{\mathrm{cont}}\left(\Gal{\overline{\Q}_p}{\Q_p}, \dfrac{\DDD_{\II_{\hF,\hG}}}{\DDD_{\II_{\hF,\hG},\Fil^+}}\right) [\CCC]
}	
\end{align}

The isomorphisms in the top and bottom row both follow from an application of \cite[Proposition 3.4]{MR2290593}. While the isomorphism in the top row relies on the hypothesis \ref{lab:multfree}, the isomorphism on the bottom row relies on the hypothesis \ref{lab:pdist-gsp4}. This allows us to deduce an exact control theorem for the non-primitive strict Selmer group. We have an isomorphism of $\dfrac{\II_{\hF,\hG}}{\CCC}$-modules:  
\begin{align}\label{eq:controlexact}
	\Sel^{\Sigma_0}_{\mathrm{str}}( \DDD_{\II_{\hF,\hG}}[\CCC] )\cong \Sel^{\Sigma_0}_{\mathrm{str}}( \DDD_{\II_{\hF,\hG}})[\CCC].
\end{align}

Theorem \ref{thm:pseudochar} will allow us to consider pseudocharacters $\pcT:G_S \rightarrow \TTT$, along with $\pcT_\YYY:G_S \rightarrow \TTT_\YYY$ and  $\pcT_\perp:G_S \rightarrow \TTT_{\perp,\stab}$ respectively.  The associated commutative diagram below
\begin{align*}
\xymatrix{
G_S \ar[r]^{\pcT}& \TTT\ar[d] \ar[r]& \TTT_{\perp,\stab} \ar[r]& \TTT_{\perp,\stab}/\JJJ\ar[d]^{\cong}\ar[r]& \dfrac{\II_{\hF,\hG}}{\CCC}  \\
& \TTT_\YYY \ar[rr]&& \dfrac{\TTT_\YYY}{\CCC} \ar@{^{(}->}[ru]
}	
\end{align*}
lets us conclude that  $\JJJ$  belongs to the reducibility ideal for the pseudocharacter $\pcT_{\perp}:G_S \rightarrow \TTT_{\perp,\stab}$. We have the following equality of pseudocharacters valued in $\II_{\hF,\hG}/\CCC$:
\begin{align*}\pcT_{\perp} \ \mathrm{mod}\ \JJJ & \equiv \pcT_\YYY \  \mathrm{mod} \  \CCC, \\ &\equiv \mathrm{Trace}(\rho_\hF) + \mathrm{Trace}\left(\rho_\hG(\kappa_\hF\kappa_\hG^{-1})\right) \ \mathrm{mod} \ \CCC. \end{align*}
Note that by Theorem \ref{thm:yoshidafamily}, the integral closure $\widetilde{\TTT}_\YYY$ of $\TTT_\YYY$ coincides with $\II_{\hF,\hG}$. By results of Nagata \cite[Theorem 7]{MR0063354}, also observe that $\II_{\hF,\hG}$ is finitely generated over $\TTT_\YYY$. Let $\BBB_{\perp,\stab}$ denote $\BBB \otimes_\TTT \TTT_{\perp,\stab}$. Let $i_{\perp,\stab}(\BBB)$ denote the natural image of the fractional ideal $\BBB$ inside the fraction field of $\TTT_{\perp,\stab}$. Using Proposition  \ref{prop:inclusion} along with equations (\ref{eq:strictincl}) and (\ref{eq:controlexact}), we obtain the natural surjection of $\II_{\hF,\hG}$-modules in the top row of the commutative diagram below:

\begin{align}\label{eq:selcongruenceidealsurj}
\xymatrix@C-=50cm{
\Sel^{\Sigma_0}( \DDD_{\II_{\hF,\hG}} )^\vee \ar@{->>}[d]_{\text{eq.~(\ref{eq:strictincl})} } \ar@{->>}[rr]&& \BBB \otimes_\TTT \dfrac{\II_{\hF,\hG}}{\CCC} \cong \BBB_\perp \otimes_{\TTT_{\perp,\stab}}\dfrac{ \II_{\hF,\hG}}{\CCC}, \\
\Sel^{\Sigma_0}_{\mathrm{str}}( \DDD_{\II_{\hF,\hG}} )^\vee \ar@{->>}[r]& \left(\Sel^{\Sigma_0}_{\mathrm{str}}( \DDD_{\II_{\hF,\hG}} )[\CCC]\right)^\vee \ar[r]^{\cong}_{\text{eq.~(\ref{eq:controlexact})}} & 
 \Sel^{\Sigma_0}_{\mathrm{str}}( \DDD_{\II_{\hF,\hG}}[\CCC] )^\vee \ar@{->>}[u]_{\text{\qquad \ \ \ \ \ \ Prop. \ref{prop:inclusion}}} & }
\end{align}

Theorem \ref{thm:yoshidafamily}\ref{thmcond:otheryoshida} lets us conclude that for each minimal prime ideal $\eta$ of $\TTT_{\perp,\stab}$, the image $i_\eta(\BBB)$ of the fractional ideal $\BBB_\perp$ inside the field $\TTT_\eta$ is non-zero. Since $\TTT_{\perp,\stab}$ is a reduced ring, this observation in turn lets us conclude that $i_{\perp,\stab}(\BBB)$ is a faithful $\TTT_{\perp,\stab}$-module. As a result, we have $\mathrm{Fitt}_{\TTT_{\perp,\stab}} (i_{\perp,\stab}(\BBB)) = (0)$. See \cite[Proposition 20.7]{MR1322960}. Note that we have a natural surjective map $\BBB_\perp \twoheadrightarrow i_{\perp,\stab}(\BBB)$. By using properties of Fitting ideals under surjection, we have $\mathrm{Fitt}_{\TTT_{\perp,\stab}} (\BBB_\perp) = (0)$. One can now use, following \cite[Corollary 20.5]{MR1322960}, properties of Fitting ideals under the base change  $\TTT \rightarrow \TTT_{\perp,\stab} \rightarrow \dfrac{\II_{\hF,\hG}}{\CCC}$  and the surjections given in equation (\ref{eq:selcongruenceidealsurj}).  
\begin{align}\label{eq:fittingidealinclusion}
\notag \mathrm{Fitt}_{\TTT_{\perp,\stab}} (\BBB_\perp) = (0) &\implies 	\mathrm{Fitt}_{\dfrac{ \II_{\hF,\hG}}{\CCC}} \left(\BBB_\perp \otimes_{\TTT_{\perp,\stab}}\dfrac{ \II_{\hF,\hG}}{\CCC}\right) = (0), \\ 
\notag& \implies \mathrm{Fitt}_{\dfrac{ \II_{\hF,\hG}}{\CCC}}\left(\left(\Sel^{\Sigma_0}( \DDD_{\II_{\hF,\hG}})[\CCC]\right)^\vee\right) =(0), \\ 
 \notag& \implies \mathrm{Fitt}_{\II_{\hF,\hG}}\left(\left(\Sel^{\Sigma_0}( \DDD_{\II_{\hF,\hG}})[\CCC] \right)^\vee\right) \subseteq \CCC, \\ 
 & \implies \mathrm{Fitt}_{\II_{\hF,\hG}}\left(\Sel^{\Sigma_0}( \DDD_{\II_{\hF,\hG}})^\vee\right) \subseteq \CCC.
\end{align}

For every height one prime ideal $\p$ in $\II_{\hF,\hG}$, the localization $(\II_{\hF,\hG})_\p$ is a DVR. Using properties of Fitting ideals under the base change $\II_{\hF,\hG} \rightarrow (\II_{\hF,\hG})_\p$, we get the following inequality of lengths using equation (\ref{eq:fittingidealinclusion}):

\begin{align}\label{eq:lenfit}
\mathrm{len}_{(\II_{\hF,\hG})_\p}	\left(\Sel^{\Sigma_0}( \DDD_{\II_{\hF,\hG}})^\vee\right)_\p \geq \mathrm{len}_{(\II_{\hF,\hG})_\p} \left(\dfrac{(\II_{\hF,\hG})_\p}{\CCC_\p}\right), \quad \text{for every height one prime ideal $\p$ in $\II_{\hF,\hG}$}.
\end{align}

Equation (\ref{eq:main1}) now follows from the inclusion (\ref{eq:lenfit}). The theorem follows.

\subsection{Proof of Corollary \ref{cor:mainconj3}: specialization of Selmer groups}

{\corollarythirteen*}

In the setting of Corollary \ref{cor:mainconj3}, we have the following $\II_{\hF,\hG}[\![\Gamma_\Cyc]\!][G_S]$-modules: 
\begin{align*}
\SLLL_{\II_{\hF,\hG}[\![\Gamma_\Cyc]\!]}  &\coloneqq \Hom_{\II_{\hF,\hG}}\left(L_\hF \otimes_{\II_{\hF}} \II_{\hF,\hG}, L_\hG \otimes_{\II_{\hG}} \II_{\hF,\hG}\right) \otimes_{\II_{\hF,\hG}} \II_{\hF,\hG}[\![\Gamma_\Cyc]\!](\widetilde{\kappa}^{-1}), \\ \qquad \DDD_{\II_{\hF,\hG}[\![\Gamma_\Cyc]\!]} &\coloneqq \SLLL_{\II_{\hF,\hG}[\![\Gamma_\Cyc]\!]}  \otimes_{\II_{\hF,\hG}[\![\Gamma_\Cyc	]\!]} \widehat{\II_{\hF,\hG}[\![\Gamma_\Cyc]\!]}.
.\end{align*}
Just as in section \ref{sec:proofthm2}, we also have the following discrete non-primitive Selmer group (denoted $\Sel^{\Sigma_0}_{\rhopmb{4}{3}}(\Q)$ in the introduction) and the non-primitive strict Selmer group:
\begin{align}
\Sel^{\Sigma_0}( \DDD_{\II_{\hF,\hG}[\![\Gamma_\Cyc]\!]}) &\coloneqq \ker\bigg(H^1_{\mathrm{cont}}\left(G_S, \DDD_{\II_{\hF,\hG}[\![\Gamma_\Cyc]\!]} \right) \xrightarrow{\mathrm{Res}} H^1_{\mathrm{cont}}\left(I_p, \ \dfrac{\DDD_{\II_{\hF,\hG}[\![\Gamma_\Cyc]\!]}}{\DDD_{\II_{\hF,\hG}[\![\Gamma_\Cyc]\!],\Fil^+}} \right)  \bigg), 
\end{align}
\begin{align} \label{eq:strictselmer43}
 \Sel^{\Sigma_0}_{\mathrm{str}}( \DDD_{\II_{\hF,\hG}[\![\Gamma_\Cyc]\!]}) &\coloneqq \ker\bigg(H^1_{\mathrm{cont}}\left(G_S, \DDD_{\II_{\hF,\hG}[\![\Gamma_\Cyc]\!]} \right) \xrightarrow{\mathrm{Res}} H^1_{\mathrm{cont}}\left(\Gal{\overline{\Q}_p}{\Q_p}, \ \dfrac{\DDD_{\II_{\hF,\hG}[\![\Gamma_\Cyc]\!]}}{\DDD_{\II_{\hF,\hG}[\![\Gamma_\Cyc]\!],\Fil^+}} \right)  \bigg).
\end{align}

We consider the following claim: 
\begin{claim}\label{claim:isoselm}
We have the following isomorphisms of Selmer groups over $\II_{\hF,\hG}[\![\Gamma_\Cyc]\!]$ and $\II_{\hF,\hG}$ respectively: 
\begin{align}
	\Sel^{\Sigma_0}_{\mathrm{str}}( \DDD_{\II_{\hF,\hG}[\![\Gamma_\Cyc]\!]}) \stackrel{\cong}{\hookrightarrow}   \Sel^{\Sigma_0}( \DDD_{\II_{\hF,\hG}[\![\Gamma_\Cyc]\!]}), \qquad \Sel^{\Sigma_0}_{\mathrm{str}}( \DDD_{\II_{\hF,\hG}})  \stackrel{\cong}{\hookrightarrow} \Sel^{\Sigma_0}( \DDD_{\II_{\hF,\hG}}).
\end{align}
\end{claim}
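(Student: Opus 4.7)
The plan is to establish the reverse inclusion $\Sel^{\Sigma_0}(\DDD_{\bullet}) \subseteq \Sel^{\Sigma_0}_{\mathrm{str}}(\DDD_{\bullet})$ in both settings (writing $\bullet$ for either $\II_{\hF,\hG}$ or $\II_{\hF,\hG}[\![\Gamma_\Cyc]\!]$), since the opposite containment is automatic from the definitions. Applying inflation--restriction to the exact sequence
\[
0 \longrightarrow H^1\bigl(G_{\Q_p}/I_p,\ (\DDD_{\bullet}/\DDD_{\bullet,\Fil^+})^{I_p}\bigr) \longrightarrow H^1\bigl(G_{\Q_p},\ \DDD_{\bullet}/\DDD_{\bullet,\Fil^+}\bigr) \longrightarrow H^1\bigl(I_p,\ \DDD_{\bullet}/\DDD_{\bullet,\Fil^+}\bigr),
\]
the quotient $\Sel^{\Sigma_0}/\Sel^{\Sigma_0}_{\mathrm{str}}$ injects into the first term, which equals $(\DDD_{\bullet}/\DDD_{\bullet,\Fil^+})^{I_p}/(\Frob - 1)$ since $G_{\Q_p}/I_p$ is topologically generated by $\Frob$. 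It thus suffices to show that this cokernel vanishes.

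The ordinary filtration $\Fil^+ L_{\hG} \subset L_{\hG}$ induces a $G_{\Q_p}$-equivariant short exact sequence of divisible modules $0 \to M_1 \to \DDD_{\bullet}/\DDD_{\bullet,\Fil^+} \to M_2 \to 0$, where $G_{\Q_p}$ acts on the rank-one pieces $M_1, M_2$ through characters $\chi_1, \chi_2$. Matching against Theorem \ref{thm:pseudorepBC}\ref{item:pseudo7}, the residual characters $\bar\chi_1, \bar\chi_2$ are ratios of distinct pairs among the four characters $\widetilde{\psi}_0,\, \widetilde{\lambda}/\widetilde{\psi}_0,\, \widetilde{\psi}_1\cyc^{-2}\omega^b\kappa_2,\, \widetilde{\lambda}/(\widetilde{\psi}_1\cyc^{-2}\omega^b\kappa_2)$. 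Hypothesis \ref{lab:pdist-gsp4} then forces each $\bar\chi_i$ to be a non-trivial character of $G_{\Q_p}$ modulo $\mmm$. In the cyclotomic case, the additional twist by $\widetilde\kappa^{-1}$ takes values in $1 + \mathrm{aug}(\II_{\hF,\hG}[\![\Gamma_\Cyc]\!]) \subset 1 + \mmm$, so the residual characters are unchanged and the same analysis applies.

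For each $i \in \{1,2\}$, I would split into two cases. If $\bar\chi_i|_{I_p}$ is non-trivial then there exists $g \in I_p$ with $\chi_i(g) - 1$ a unit, forcing $M_i^{I_p} = 0$. Otherwise $\bar\chi_i$ factors through $G_{\Q_p}/I_p$ and non-triviality forces $\chi_i(\Frob) - 1$ to be a unit, whence $\Frob - 1$ acts invertibly on the whole of $M_i$. Either way, $(\Frob - 1)$ is surjective on $M_i^{I_p}$. The snake lemma applied to the short exact sequence $0 \to M_1^{I_p} \to (\DDD_{\bullet}/\DDD_{\bullet,\Fil^+})^{I_p} \to K \to 0$, where $K \hookrightarrow M_2^{I_p}$ is the image in $M_2$, with $(\Frob-1)$ acting vertically, then yields the required vanishing. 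The main obstacle I anticipate is the careful identification of $\chi_1, \chi_2$ in terms of the four local characters of Theorem \ref{thm:pseudorepBC}\ref{item:pseudo7}, together with correctly tracking the self-dual twist by $\sqrt{\kappa_\hF/\kappa_\hG}$ used in defining $\rhopmb{3}{2}$; once that bookkeeping is in place, \ref{lab:pdist-gsp4} delivers the required non-triviality and the cohomological argument concludes.
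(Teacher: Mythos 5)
Your argument is correct, and its first half coincides with the paper's: both reduce, via inflation--restriction, to showing that the unramified cohomology $H^1\bigl(G_{\Q_p}/I_p, (\DDD_{\bullet}/\DDD_{\bullet,\Fil^+})^{I_p}\bigr) \cong (\DDD_{\bullet}/\DDD_{\bullet,\Fil^+})^{I_p}/(\Frob_p-1)$ vanishes. Where you diverge is in how that vanishing is obtained. The paper stays with the full corank-two quotient $\DDD_{\bullet}/\DDD_{\bullet,\Fil^+}$: it dualizes the sequence $0 \to H^0(G_{\Q_p},\cdot) \to H^0(I_p,\cdot) \xrightarrow{\Frob_p-1} H^0(I_p,\cdot) \to H^1(\Gamma_p, H^0(I_p,\cdot)) \to 0$, invokes \ref{lab:pdist-gsp4} together with \cite[Corollary 3.1.1]{MR2290593} to get $H^0\bigl(\Gal{\overline{\Q}_p}{\Q_p}, \DDD_{\bullet}/\DDD_{\bullet,\Fil^+}\bigr)=0$, and then uses the fact that a surjective endomorphism of a finitely generated module over $\II_{\hF,\hG}$ (resp. $\II_{\hF,\hG}[\![\GGG_\Cyc]\!]$) is an isomorphism, so $\Frob_p-1$ is bijective on the dual and the $H^1$ term dies. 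You instead refine the quotient by the ordinary filtration of $L_\hG$ into two corank-one pieces and show directly that $\Frob_p-1$ is surjective on inertia invariants, via the ramified/unramified dichotomy for the residual graded characters; your identification of those characters as ratios $\overline{\widetilde{\lambda}/(\widetilde{\psi}_1\cyc^{-2}\omega^b\kappa_2)}\cdot\overline{(\widetilde{\lambda}/\widetilde{\psi}_0)}^{-1}$ and $\overline{\widetilde{\psi}_1\cyc^{-2}\omega^b\kappa_2}\cdot\overline{(\widetilde{\lambda}/\widetilde{\psi}_0)}^{-1}$ is correct, and \ref{lab:pdist-gsp4} makes both nontrivial, exactly as it underlies the paper's $H^0$-vanishing. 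So the two proofs rest on the same hypothesis; the paper's is shorter and avoids any decomposition or character bookkeeping, while yours is more self-contained (no appeal to Greenberg's control results or the Nakayama/Vasconcelos trick for this step) and makes explicit, via the observation that the $\widetilde{\kappa}^{-1}$-twist is trivial modulo $\mmm$, why the cyclotomic case is identical — a point the paper only asserts is "similarly deduced."
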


\begin{proof}[Proof of Claim  \ref{claim:isoselm}]
	We provide the proof of the isomorphism of $\II_{\hF,\hG}$-modules. The proof for the isomorphism of $\II_{\hF,\hG}[\![\Gamma_\Cyc]\!]$-modules can be similarly deduced. The hypothesis \ref{lab:pdist-gsp4} crucially comes into play. Without this assumption (e.g. in the case of trivial zeroes), such an equality of divisors need not always hold.  One can use the inflation-restriction map and the snake lemma to show that the cokernel of the natural inclusion $\Sel^{\Sigma_0}_{\mathrm{str}}( \DDD_{\II_{\hF,\hG}}) \hookrightarrow \Sel^{\Sigma_0}( \DDD_{\II_{\hF,\hG}})$ is an $\II_{\hF,\hG}$-submodule of 
	\begin{align}\label{eq:claimpn} 
	H^1_{\mathrm{cont}}\left(\Gamma_p\,H^0_{\mathrm{cont}}\left(I_p, \dfrac{\DDD_{\II_{\hF,\hG}}}{\DDD_{\II_{\hF,\hG},\Fil^+}}\right)\right).
	\end{align}
	Here $\Gamma_p$, which is isomorphic to $\hat{\Z}$, denotes the Galois group over $\Q_p$ of the maximal unramified extension of $\Q_p$. We let the Frobenius $\Frob_p$ denote a topological generator of $\Gamma_p$. To prove the claim, it suffices to show that the Pontryagin dual of the $\II_{\hF,\hG}$-module appearing in equation (\ref{eq:claimpn}) is zero. Consider the following short exact sequence of $\II_{\hF,\hG}$-modules.
\begin{align}\label{ses:frobp1}
\notag	0 \rightarrow H^1_{\mathrm{cont}}\left(\Gamma_p\,H^0_{\mathrm{cont}}\left(I_p, \dfrac{\DDD_{\II_{\hF,\hG}}}{\DDD_{\II_{\hF,\hG},\Fil^+}}\right)\right)^\vee   \rightarrow H^0_{\mathrm{cont}}\left(I_p, \dfrac{\DDD_{\II_{\hF,\hG}}}{\DDD_{\II_{\hF,\hG},\Fil^+}}\right)^\vee \xrightarrow {\Frob_p-1} & H^0_{\mathrm{cont}}\left(I_p, \dfrac{\DDD_{\II_{\hF,\hG}}}{\DDD_{\II_{\hF,\hG},\Fil^+}}\right)^\vee \\ &  \underbrace{\rightarrow H^0_{\mathrm{cont}}\left(\Gal{\overline{\Q}_p}{\Q_p}, \dfrac{\DDD_{\II_{\hF,\hG}}}{\DDD_{\II_{\hF,\hG},\Fil^+}}\right)^\vee}_{0} \rightarrow 0. 
\end{align}

Hypothesis \ref{lab:pdist-gsp4} and \cite[Corollary 3.1.1]{MR2290593} let us conclude that $H^0\left(\Gal{\overline{\Q}_p}{\Q_p}, \dfrac{\DDD_{\II_{\hF,\hG}}}{\DDD_{\II_{\hF,\hG},\Fil^+}}\right)~=~0$. An application of Nakayama's lemma would tell us that a surjective endomorphism of a finitely generated $\II_{\hF,\hG}$-module must be an isomorphism. Consequently, the kernel of $\Frob_p-1$ in equation (\ref{ses:frobp1}), and hence the $\II_{\hF,\hG}$-module appearing in equation (\ref{eq:claimpn}), equals zero. The claim follows. 
\end{proof}

We consider the continuous $\II_{\hF,\hG}$-algebra homomorphism:
\begin{align*}
	\varpi_{3,2}:\II_{\hF,\hG}[\![\Gamma_\Cyc]\!] &\twoheadrightarrow \II_{\hF,\hG},\\
	\gamma &\mapsto \left(\sqrt{\kappa_\hF(\gamma)\kappa_\hG(\gamma)^{-1}}\right)^{-1}, \qquad \forall \ \gamma \in \Gamma_\Cyc.
\end{align*}

By the construction of the ring homomorphism, we have a natural isomorphism $ \DDD_{\II_{\hF,\hG}} \cong \DDD_{\II_{\hF,\hG}[\![\Gamma_\Cyc]\!]}[\ker{\varpi_{3,2}}]$   of discrete $\II_{\hF,\hG}[G_S]$-modules. Note that $\ker(\varpi_{3,2})$ is a principal ideal in $\II_{\hF,\hG}[\![\Gamma_\Cyc]\!]$; a generator can be given by \[\gamma_0 - \left(\sqrt{\kappa_\hF(\gamma_0)\kappa_\hG(\gamma_0)^{-1}}\right)^{-1},\] where $\gamma_0$ is a topological generator of $\Gamma_\Cyc$. Just as in equation (\ref{eq:controlselmer1}), an application of \cite[Proposition 3.4]{MR2290593} along with the hypotheses \ref{lab:multfree} and \ref{lab:pdist-gsp4} will provide us the following commutative diagram:

\begin{align} 
\xymatrix{
H^1_{\mathrm{cont}}(G_S,\DDD_{\II_{\hF,\hG}}) \ar[d]_{\mathrm{Res}} \ar[r]^{\cong}& H^1_{\mathrm{cont}}(G_S,\DDD_{\II_{\hF,\hG}[\![\Gamma_\Cyc]\!]})[\ker(\varpi_{3,2})] \ar[d]^{\mathrm{Res} \quad }\\ 
H^1_{\mathrm{cont}}\left(\Gal{\overline{\Q}_p}{\Q_p}, \dfrac{\DDD_{\II_{\hF,\hG}}}{\DDD_{\II_{\hF,\hG},\Fil^+}}\right) \ar[r]^{\cong \qquad }& H^1_{\mathrm{cont}}\left(\Gal{\overline{\Q}_p}{\Q_p}, \dfrac{\DDD_{\II_{\hF,\hG}[\![\Gamma_\Cyc]\!]}}{\DDD_{\II_{\hF,\hG}[\![\Gamma_\Cyc]\!],\Fil^+}}\right) [\ker(\varpi_{3,2})]
}	
\end{align}

Just as in equation (\ref{eq:controlexact}), we have an exact control theorem for the ring homomorphism $\varpi_{3,2}$, providing the following isomorphism of discrete $\II_{\hF,\hG}$-modules:
\begin{align}\label{eq:controlselmer32}
\Sel^{\Sigma_0}_{\mathrm{str}}( \DDD_{\II_{\hF,\hG}}) \cong \Sel^{\Sigma_0}_{\mathrm{str}}( \DDD_{\II_{\hF,\hG}[\![\Gamma_\Cyc]\!]})[\ker{\varpi_{3,2}}].
\end{align}

\begin{claim}\label{claim:nopn}
The $\II_{\hF,\hG}[\![\Gamma_\Cyc]\!]$-module $\Sel^{\Sigma_0}_{\mathrm{str}}( \DDD_{\II_{\hF,\hG}[\![\Gamma_\Cyc]\!]})^\vee$ has no non-trivial pseudonull submodules.	 
\end{claim}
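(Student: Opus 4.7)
The plan is to reduce Claim \ref{claim:nopn} to a Greenberg-type criterion for the absence of pseudo-null submodules in the Pontryagin dual of a non-primitive Panchishkin Selmer group over a multivariable Iwasawa algebra.

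First, I would establish the cyclotomic analogue of Claim \ref{claim:isoselm}: the natural inclusion $\Sel^{\Sigma_0}_{\mathrm{str}}(\DDD_{\II_{\hF,\hG}[\![\Gamma_\Cyc]\!]}) \hookrightarrow \Sel^{\Sigma_0}(\DDD_{\II_{\hF,\hG}[\![\Gamma_\Cyc]\!]})$ is in fact an equality. The argument would mirror the proof of Claim \ref{claim:isoselm}: the cokernel is controlled by $H^1(\Gamma_p, H^0(I_p, \DDD_{\II_{\hF,\hG}[\![\Gamma_\Cyc]\!]}/\DDD_{\II_{\hF,\hG}[\![\Gamma_\Cyc]\!],\Fil^+}))$, whose Pontryagin dual sits in an exact sequence involving $\Frob_p - 1$ on the $H^0(I_p,-)^\vee$-term, and hypothesis \ref{lab:pdist-gsp4} forces $H^0(\Gal{\overline{\Q}_p}{\Q_p}, \DDD_{\II_{\hF,\hG}[\![\Gamma_\Cyc]\!]}/\DDD_{\II_{\hF,\hG}[\![\Gamma_\Cyc]\!],\Fil^+})=0$, so Nakayama's lemma yields the desired vanishing. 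Thus it suffices to prove the claim for the non-strict Panchishkin Selmer group $\Sel^{\Sigma_0}_{\rhopmb{4}{3}}$ with $\hF$ dominant.

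Next, I would invoke a Greenberg-type criterion in the spirit of \cite{Lei_2020}. Such a criterion (traceable to Greenberg's original work on almost-divisibility of dual Selmer groups) asserts that the Pontryagin dual of a non-primitive Panchishkin Selmer group over a regular Iwasawa algebra has no non-trivial pseudo-null submodules, provided that (a) the residual Galois representation associated to the underlying Galois module is absolutely irreducible as a $G_\Q$-module, (b) the Panchishkin filtration at $p$ is non-degenerate in the sense that the relevant local $H^0$ vanishes, and (c) the non-primitive structure removes local conditions at the primes in $\Sigma_0$, so the global-to-local map admits the needed surjectivity. In our setting, (a) follows from \ref{lab:irr} combined with the fact that $\overline{\rho}_\hF \not\cong \overline{\rho}_\hG$ via \ref{lab:multfree}, which ensures that the four-dimensional representation $\Hom(\overline{\rho}_\hF, \overline{\rho}_\hG)\bigl(\sqrt{\kappa_\hF\kappa_\hG^{-1}}\bigr)$ has no non-trivial Galois-stable subrepresentation suitably incompatible with the Panchishkin structure. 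Condition (b) is precisely the content of hypothesis \ref{lab:pdist-gsp4}, since the four residual characters appearing in the $G_{\Q_p}$-restriction are pairwise distinct and hence no graded piece acquires a trivial zero. Condition (c) is built into our working with the $\Sigma_0$-non-primitive Selmer group throughout the paper.

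The main obstacle will be to rigorously transport the $p$-distinguished hypothesis \ref{lab:pdist-gsp4}, which is a residual condition at the maximal ideal of $\TTT$, to the appropriate non-degeneracy statement at every height-one prime of the three-variable Iwasawa algebra $\II_{\hF,\hG}[\![\Gamma_\Cyc]\!]$; this requires tracking how the four characters on the graded pieces of $\rhopmb{4}{3}$ restricted to $G_{\Q_p}$ interact with the cyclotomic twist by $\widetilde{\kappa}^{-1}$, and checking that the local $H^0$ vanishing needed to apply the Greenberg criterion is stable under base change to the localization at each height-one prime. Once this verification is in place, the formalism of Greenberg's criterion, as adapted in \cite{Lei_2020} for Rankin--Selberg-type Selmer groups, supplies the absence of pseudo-null submodules.
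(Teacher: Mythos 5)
Your overall instinct---that this is an application of a Greenberg-type ``no pseudo-null submodules'' criterion (the paper indeed applies \cite[Proposition 4.1.1]{MR3629652})---points in the right direction, but your proposal stops at naming such a criterion, and the conditions (a)--(c) you list are not the hypotheses that actually have to be verified, so the substantive content of the proof is missing. Beyond the formal inputs that are easy here because $\rhopmb{4}{3}$ is a cyclotomic twist deformation of a co-free module, Greenberg's proposition requires three things your sketch never addresses: (i) the \emph{almost divisibility} of the local conditions defining the strict Selmer group, which the paper proves by showing $H^2\left(\Gal{\overline{\Q}_p}{\Q_p}, \DDD_{\II_{\hF,\hG}[\![\Gamma_\Cyc]\!],\Fil^+}\right)=0$ (it is simultaneously co-reflexive and, by local duality and the cyclotomic-twist structure, co-torsion) and then invoking \cite[Propositions 5.1 and 5.3]{MR2290593}, with a parallel argument at the primes $\nu\neq p$; (ii) the weak Leopoldt statement $H^2\left(G_S,\DDD_{\II_{\hF,\hG}[\![\Gamma_\Cyc]\!]}\right)=0$; and (iii) the co-rank condition, which amounts to the torsionness of $\Sel^{\Sigma_0}_{\mathrm{str}}(\DDD_{\II_{\hF,\hG}[\![\Gamma_\Cyc]\!]})^\vee$---this is where the genuine arithmetic input enters: one specializes through the exact control theorems (which rest on \ref{lab:multfree} and \ref{lab:pdist-gsp4}) down to the pair $(f_0,g_0)$, uses \ref{lab:tor}, and then the global and local Euler--Poincar\'e formulas supply the corank identities the criterion demands. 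None of (i)--(iii) appears in your plan, and your stated conditions do not substitute for them: absolute irreducibility of the residual four-dimensional representation is not what this criterion needs (in the paper, \ref{lab:irr} is used for the separate finite-projective-dimension claim), and non-primitivity at $\ell_0$ does not by itself give surjectivity of the global-to-local map---it only furnishes the hypothesis of Greenberg's proposition concerning the prime at which the local condition is dropped.

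Two further points. Your announced ``main obstacle''---transporting \ref{lab:pdist-gsp4} to every height-one prime of $\II_{\hF,\hG}[\![\Gamma_\Cyc]\!]$---is a red herring: the $p$-distinguished hypothesis enters only through the vanishing of the relevant local $H^0$ needed for the control isomorphisms, and no localization at height-one primes occurs in the proof of this claim. Also, the reduction from the strict to the non-strict Selmer group is unnecessary and goes in the unhelpful direction: the strict Selmer group is precisely the one whose local condition at $p$ is a quotient of $H^1\left(\Gal{\overline{\Q}_p}{\Q_p},\DDD_{\II_{\hF,\hG}[\![\Gamma_\Cyc]\!],\Fil^+}\right)$, which is what makes the almost-divisibility verification work; replacing it by the $I_p$-condition would only complicate that step.
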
 

\begin{proof}[Proof of Claim \ref{claim:nopn}]

We now consider a few observations to show that all the hypotheses of \cite[Proposition 4.1.1]{MR3629652} are satisfied. 
\begin{itemize}
\item Since $\DDD_{\II_{\hF,\hG}[\![\Gamma_\Cyc]\!]}$ is co-free as a $\II_{\hF,\hG}[\![\Gamma_\Cyc]\!]$-module, the hypothesis labelled $\mathrm{RFX}(D)$ in \cite{MR3629652} is automatically satisfied. Similarly, since $\rhopmb{4}{3}$ is a cyclotomic twist deformation, the hypotheses labelled $\mathrm{LOC}^{(1)}_\nu(\DDD_{\II_{\hF,\hG}[\![\Gamma_\Cyc]\!]})$ and $\mathrm{LOC}^{(2)}_\nu(\DDD_{\II_{\hF,\hG}[\![\Gamma_\Cyc]\!]})$ in \cite{MR3629652} are satisfied for all non-archimedean primes $\nu$. 
\item The local condition at $p$ defining the Selmer group $\Sel^{\Sigma_0}_{\mathrm{str}}( \DDD_{\II_{\hF,\hG}[\![\Gamma_\Cyc]\!]})$ is given by the $\II_{\hF,\hG}[\![\Gamma_\Cyc]\!]$-module:
\begin{align}\label{eq:localcondatpstr} \ker\left(H^1_{\mathrm{cont}}\left(\Gal{\overline{\Q}_p}{\Q_p}, \DDD_{\II_{\hF,\hG}[\![\Gamma_\Cyc]\!]} \right)  \rightarrow H^1_{\mathrm{cont}}\left(\Gal{\overline{\Q}_p}{\Q_p}, \dfrac{\DDD_{\II_{\hF,\hG}[\![\Gamma_\Cyc]\!]}}{\DDD_{\II_{\hF,\hG}[\![\Gamma_\Cyc]\!],\Fil^+}} \right) \right)
\end{align}

This $\II_{\hF,\hG}[\![\Gamma_\Cyc]\!]$-module in turn is a quotient of the $\II_{\hF,\hG}[\![\Gamma_\Cyc]\!]$-module $H^1\left(\Gal{\overline{\Q}_p}{\Q_p}, \DDD_{\II_{\hF,\hG}[\![\Gamma_\Cyc]\!],\Fil^+} \right)$. Note that the second local Galois cohomology group $H^2\left(\Gal{\overline{\Q}_p}{\Q_p}, \DDD_{\II_{\hF,\hG}[\![\Gamma_\Cyc]\!],\Fil^+} \right)$ equals zero. This is because on the one hand, by \cite[Proposition 5.1]{MR2290593}, 
it is a co-reflexive $\II_{\hF,\hG}[\![\Gamma_\Cyc]\!]$-module; while on the other hand, since the Galois representation $\rhopmb{4}{3}$ is a cyclotomic twist deformation, using local duality lets us conclude that it is a co-torsion $\II_{\hF,\hG}[\![\Gamma_\Cyc]\!]$-module. Applying \cite[Propsosition 5.3]{MR2290593} lets us conclude that $H^1\left(\Gal{\overline{\Q}_p}{\Q_p}, \DDD_{\II_{\hF,\hG}[\![\Gamma_\Cyc]\!],\Fil^+} \right)$ is an \textit{almost divisible} $\II_{\hF,\hG}[\![\Gamma_\Cyc]\!]$-module (following the terminology of \cite{MR3629652}). 

Similarly, for all non-archimedean primes $\nu \neq p$, the local condition at a prime $\nu \neq p$ defining the Selmer group $\Sel^{\Sigma_0}_{\mathrm{str}}( \DDD_{\II_{\hF,\hG}[\![\Gamma_\Cyc]\!]})$ is equal to $H^1\left(\Gal{\overline{\Q}_\nu}{\Q_\nu}, \DDD_{\II_{\hF,\hG}[\![\Gamma_\Cyc]\!]} \right)$. As earlier, using the fact that the Galois representation $\rhopmb{4}{3}$ is a cyclotomic twist deformation, along with local Tate duality, lets us conclude that $H^2\left(\Gal{\overline{\Q}_p}{\Q_p}, \DDD_{\II_{\hF,\hG}[\![\Gamma_\Cyc]\!],\Fil^+} \right)$ equals zero. Applying \cite[Propsosition 5.3]{MR2290593} once again lets us conclude that $H^1\left(\Gal{\overline{\Q}_\nu}{\Q_\nu}, \DDD_{\II_{\hF,\hG}[\![\Gamma_\Cyc]\!]} \right)$ is an \textit{almost divisible} $\II_{\hF,\hG}[\![\Gamma_\Cyc]\!]$-module (following the terminology of \cite{MR3629652}).

Since the local condition at $p$, given in equation (\ref{eq:localcondatpstr}), is an $\II_{\hF,\hG}[\![\Gamma_\Cyc]\!]$-module quotient of the Galois cohomology group $H^1\left(\Gal{\overline{\Q}_p}{\Q_p}, \DDD_{\II_{\hF,\hG}[\![\Gamma_\Cyc]\!],\Fil^+}\right)$, the local condition at $p$ is also an \textit{almost divisible} $\II_{\hF,\hG}[\![\Gamma_\Cyc]\!]$-module.  One can thus conclude that the entire local condition defining the non-primitive strict Selmer group $\Sel^{\Sigma_0}_{\mathrm{str}}( \DDD_{\II_{\hF,\hG}[\![\Gamma_\Cyc]\!]})$ is \textit{almost divisible} .  

\item As indicated in the proof of Proposition \ref{prop:LRYoshidafam}, the ring homomorphisms $\varpi_f: \II_{\hF} \hookrightarrow (\II_{\hF})_{\p_{f_0}} \rightarrow \overline{\Q}_p$  and $\varpi_g: \II_{\hG} \hookrightarrow (\II_{\hG})_{\p_{g_0}} \rightarrow \overline{\Q}_p$ corresponding to the classical specializations $f_0$, $g_0$ of $F$, $G$ respectively induce a ring homomorphism $\varpi_{f_0,g_0}: \II_{\hF,\hG} \rightarrow \overline{\Q}_p$. Following the same reasoning that we had used earlier, to obtain the control theorems in equations (\ref{eq:controlexact}) and (\ref{eq:controlselmer32}), one can similarly deduce a control theorem using \cite[Proposition 3.4]{MR2290593}  for the non-primitive strict Selmer group under the ring homomorphism $\varpi_{f_0,g_0}$:
\begin{align}\label{eq:controlselmerfinite}
\Sel^{\Sigma_0}_{\mathrm{str}}(\mathrm{D}_{\OO}) \cong \Sel^{\Sigma_0}_{\mathrm{str}}( \DDD_{\II_{\hF,\hG}})[\ker{\varpi_{f_0,g_0}}].
\end{align}
Equation (\ref{eq:controlselmerfinite}) and the hypothesis \ref{lab:tor} now lets us conclude that $\II_{\hF,\hG}$-module $\Sel^{\Sigma_0}_{\mathrm{str}}( \DDD_{\II_{\hF,\hG}})^\vee$ is torsion. Morever, the control theorem in equation (\ref{eq:controlselmer32}) then lets us conclude that the $\II_{\hF,\hG}[\![\Gamma_\Cyc]\!]$-module $\Sel^{\Sigma_0}_{\mathrm{str}}( \DDD_{\II_{\hF,\hG}[\![\Gamma_\Cyc]\!]})^\vee$ is torsion ,that is, we have the following equality of ranks: 
\begin{align}\label{eq:rankselmer}
		\mathrm{Rank}_{\II_{\hF,\hG}[\![\Gamma_\Cyc]\!]}\Sel^{\Sigma_0}_{\mathrm{str}}( \DDD_{\II_{\hF,\hG}[\![\Gamma_\Cyc]\!]})^\vee=0.
\end{align}

Since $\rhopmb{4}{3}$ is a cyclotomic twist deformation, the local Euler--Poincar\'e formula \cite[Proposition 4.2]{MR2290593} would give us the following equality of ranks for the first local Galois cohomology group 
\begin{align}\label{eq:firstgaloislocal}
	\mathrm{Rank}_{\II_{\hF,\hG}[\![\Gamma_\Cyc]\!]} H^1_{\mathrm{cont}}\left(\Gal{\overline{\Q}_p}{\Q_p}, \dfrac{\DDD_{\II_{\hF,\hG}[\![\Gamma_\Cyc]\!]}}{\DDD_{\II_{\hF,\hG}[\![\Gamma_\Cyc]\!],\Fil^+}} \right)^\vee =2.
	\end{align}
Combining the torsion-ness of the $\II_{\hF,\hG}[\![\Gamma_\Cyc]\!]$-module $\Sel^{\Sigma_0}_{\mathrm{str}}( \DDD_{\II_{\hF,\hG}[\![\Gamma_\Cyc]\!]})^\vee$ with the global Euler--Poincar\'e formula \cite[Proposition 4.1]{MR2290593} would provide us the following equality of ranks:
\begin{align}\label{eq:rankglobalgalois}
\mathrm{Rank}_{\II_{\hF,\hG}[\![\Gamma_\Cyc]\!]} H^1_{\mathrm{cont}}\left(G_S, \DDD_{\II_{\hF,\hG}[\![\Gamma_\Cyc]\!]}\right)^\vee=2, \qquad 	\mathrm{Rank}_{\II_{\hF,\hG}[\![\Gamma_\Cyc]\!]} H^2_{\mathrm{cont}}\left(G_S, \DDD_{\II_{\hF,\hG}[\![\Gamma_\Cyc]\!]}\right)^\vee=0.
\end{align}

Combing the equality of ranks in equations (\ref{eq:rankselmer}), (\ref{eq:firstgaloislocal}) and (\ref{eq:rankglobalgalois}) would allow us to deduce the hypothesis labeled $\mathrm{CRK}(\DDD_{\II_{\hF,\hG}[\![\Gamma_\Cyc]\!]}),\mathcal{L})$ in \cite{MR3629652} holds. Combining the second equality in equation (\ref{eq:rankglobalgalois}) along with \cite[Proposition 6.1]{MR2290593} would let us deduce that $H^2_{\mathrm{cont}}\left(G_S, \DDD_{\II_{\hF,\hG}[\![\Gamma_\Cyc]\!]}\right)=0$. The weak Leopoldt conjecture, denoted $\mathrm{LEO}(\DDD_{\II_{\hF,\hG}[\![\Gamma_\Cyc]\!]})$ in \cite{MR3629652}, must also therefore hold. 
\item Since we are looking at the non-primitive Selmer group, the local condition at the prime $\ell_0$ is zero. Note also that $\rhopmb{4}{3}$ is a cyclotomic twist deformation. Consequently, $\mathrm{LOC}^{(1)}_{\ell_0}(\DDD_{\II_{\hF,\hG}[\![\Gamma_\Cyc]\!]})$ and  hypothesis (c) of \cite[Proposition 4.1.1]{MR3629652} are also satisfied. 
\end{itemize}

One can now use \cite[Proposition 4.1.1]{MR3629652} to conclude that $\Sel^{\Sigma_0}_{\mathrm{str}}( \DDD_{\II_{\hF,\hG}[\![\Gamma_\Cyc]\!]})^\vee$ has no non-trivial $\II_{\hF,\hG}[\![\Gamma_\Cyc]\!]$-pseudonull submodules.
\end{proof}

\begin{claim}\label{claim:finproj}
For every height two prime ideal $\QQ$ in the ring $\II_{\hF,\hG}[\![\Gamma_\Cyc]\!]$, the $\left(\II_{\hF,\hG}[\![\Gamma_\Cyc]\!]\right)_\QQ$-module $\left(\Sel^{\Sigma_0}_{\mathrm{str}}( \DDD_{\II_{\hF,\hG}[\![\Gamma_\Cyc]\!]})^\vee\right)_\QQ$ has finite projective dimension. 
\end{claim}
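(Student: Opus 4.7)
The plan is to realize the Pontryagin dual
$\mathrm{Sel}^\vee := \Sel^{\Sigma_0}_{\mathrm{str}}(\DDD_{\II_{\hF,\hG}[\![\Gamma_\Cyc]\!]})^\vee$
as the cohomology of a perfect complex of $R$-modules, where $R := \II_{\hF,\hG}[\![\Gamma_\Cyc]\!]$. Once this identification is in place, localization at a height two prime $\QQ$ preserves perfectness, so $(\mathrm{Sel}^\vee)_\QQ$ is the cohomology of a perfect complex of $R_\QQ$-modules and hence a finitely generated $R_\QQ$-module of finite Tor-dimension. For finitely generated modules over a Noetherian local ring, finite Tor-dimension coincides with finite projective dimension, which will yield the claim.

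To construct the perfect complex, let $T := \SLLL_{\II_{\hF,\hG}[\![\Gamma_\Cyc]\!]}$ and $T^+ := \SLLL_{\II_{\hF,\hG}[\![\Gamma_\Cyc]\!],\Fil^+}$. These are free $R$-modules, the inclusion $T^+ \hookrightarrow T$ being $R$-split because the ordinary filtration $\Fil^+L_\hF\subset L_\hF$ supplied by hypothesis \ref{lab:pdist} splits as a sequence of free $\II_{\hF}$-modules. Consider the Nekov\'a\v{r}-style Selmer complex
\[
C^\bullet \;:=\; \mathrm{Cone}\Bigl( R\Gamma(G_S, T) \;\longrightarrow\; R\Gamma(\mathrm{Gal}(\overline{\Q}_p/\Q_p), T/T^+) \Bigr)[-1].
\]
Since $T$ and $T/T^+$ are finitely generated free $R$-modules and since $G_S$ and $\mathrm{Gal}(\overline{\Q}_p/\Q_p)$ have finite $p$-cohomological dimension on $p$-adic modules, both continuous-cochain complexes admit bounded representatives by finitely generated free $R$-modules, making $C^\bullet$ a perfect complex over $R$.

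Next, I would Pontryagin-dualize the defining short exact sequence \eqref{eq:strictselmer43} and invoke the vanishings $H^0(G_S, \DDD_{\II_{\hF,\hG}[\![\Gamma_\Cyc]\!]}) = 0$ (which follows from \ref{lab:irr} and \ref{lab:multfree}, since $\overline{\rho}_\hF$ and $\overline{\rho}_\hG$ are non-isomorphic absolutely irreducible representations) and $H^2(G_S, \DDD_{\II_{\hF,\hG}[\![\Gamma_\Cyc]\!]}) = 0$ (the weak Leopoldt conjecture already established in the proof of Claim \ref{claim:nopn}). These vanishings remove the error terms that would otherwise obstruct writing $\mathrm{Sel}^\vee$ as an appropriate cohomology module of $C^\bullet$ (or of its Poitou--Tate dual, depending on conventions).

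The main obstacle is the bookkeeping in this last identification: one must carefully track that the \emph{strict} local condition at $p$ (a kernel into the full local $H^1$, rather than a quotient onto the singular piece) and the non-primitive modifications at the bad primes in $\Sigma_0$ (which trivialize the local conditions there and hence do not disturb perfectness) are correctly transported to the compact side via Tate local duality, so that the compact Selmer complex really does have $\mathrm{Sel}^\vee$ as its cohomology in the expected degree. Once this translation is accomplished, the finite projective dimension of $(\mathrm{Sel}^\vee)_\QQ$ for every height two prime $\QQ$ is an immediate formal consequence of the perfectness of $C^\bullet$.
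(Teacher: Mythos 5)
Your strategy (realize $\Sel^{\Sigma_0}_{\mathrm{str}}(\DDD_{\II_{\hF,\hG}[\![\Gamma_\Cyc]\!]})^\vee$ inside a perfect complex and localize) is the standard mechanism behind statements of this type, but as written it has a genuine gap at the decisive step. The inference ``$(\mathrm{Sel}^\vee)_\QQ$ is the cohomology of a perfect complex of $(\II_{\hF,\hG}[\![\Gamma_\Cyc]\!])_\QQ$-modules and hence has finite Tor-dimension'' is false in general: an individual cohomology module of a perfect complex need not have finite projective dimension (already $R\xrightarrow{\ \epsilon\ }R$ over $R=k[\epsilon]/(\epsilon^2)$ has cohomology $k$ in both degrees). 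What you actually need is that, after localizing at $\QQ$, your complex is acyclic outside a single degree, so that $(\mathrm{Sel}^\vee)_\QQ$ is itself quasi-isomorphic to a perfect complex. Securing that concentration requires more than the two global vanishings you invoke: one also needs $H^0_{\mathrm{cont}}\big(G_{\Q_p},\DDD_{\II_{\hF,\hG}[\![\Gamma_\Cyc]\!]}/\DDD_{\II_{\hF,\hG}[\![\Gamma_\Cyc]\!],\Fil^+}\big)=0$ (this is where \ref{lab:pdist-gsp4} enters) and $H^2_{\mathrm{cont}}\big(G_{\Q_p},\DDD_{\II_{\hF,\hG}[\![\Gamma_\Cyc]\!]}/\DDD_{\II_{\hF,\hG}[\![\Gamma_\Cyc]\!],\Fil^+}\big)=0$ (local Tate duality plus the fact that $\rhopmb{4}{3}$ is a cyclotomic twist deformation), equivalently the surjectivity of the global-to-local map; none of these local inputs appear in your outline. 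Moreover, the identification of the Pontryagin dual of the \emph{discrete} Selmer group of equation (\ref{eq:strictselmer43}) with a cohomology group of your \emph{compact} complex built from $T=\SLLL_{\II_{\hF,\hG}[\![\Gamma_\Cyc]\!]}$ is precisely what you defer to ``bookkeeping'': by Poitou--Tate/Nekov\'a\v{r} duality that dual is computed by the Selmer complex of the Tate-dual lattice with the complementary local conditions, not by $\mathrm{Cone}\big(R\Gamma(G_S,T)\to R\Gamma(G_{\Q_p},T/T^+)\big)[-1]$ itself, and since $\rhopmb{4}{3}$ is not self-dual this is not a harmless change of convention. That translation, together with the vanishings above, is where the content of the claim lies.

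For comparison, the paper's proof avoids Selmer complexes entirely: using the almost-divisibility facts from the proof of Claim~\ref{claim:nopn} and \cite[Corollary 3.2.3]{MR2740696}, the global-to-local map defining the strict Selmer group is surjective, which upon dualizing exhibits $\Sel^{\Sigma_0}_{\mathrm{str}}(\DDD_{\II_{\hF,\hG}[\![\Gamma_\Cyc]\!]})^\vee$ as the quotient of $H^1_{\mathrm{cont}}(G_S,\DDD_{\II_{\hF,\hG}[\![\Gamma_\Cyc]\!]})^\vee$ by $H^1_{\mathrm{cont}}\big(G_{\Q_p},\DDD_{\II_{\hF,\hG}[\![\Gamma_\Cyc]\!]}/\DDD_{\II_{\hF,\hG}[\![\Gamma_\Cyc]\!],\Fil^+}\big)^\vee$; each of these two duals is then shown to have finite projective dimension after localization at any height two prime $\QQ$ by \cite[Proposition 5.5]{MR3919711}, whose hypotheses are exactly the vanishing of the corresponding $H^0$ and $H^2$ ($H^0(G_S,\DDD)=0$ from \ref{lab:irr}, $H^2(G_S,\DDD)=0$ from weak Leopoldt, and the two local vanishings recalled above). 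Your plan can be completed, but only by supplying these local vanishings and carrying out the duality translation you flagged as the main obstacle; the paper's citation of \cite[Proposition 5.5]{MR3919711} is precisely the packaged form of that argument.
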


\begin{proof}[Proof of Claim \ref{claim:finproj}]
Observe that using the ingredients in the proof of Claim \ref{claim:nopn} and \cite[Corollary 3.2.3]{MR2740696}, one deduces that the global-to-local map defining the strict Selmer group is surjective. It thus suffices to show that for every height two prime ideal $\QQ$ in the ring $\II_{\hF,\hG}[\![\Gamma_\Cyc]\!]$, the following $\left(\II_{\hF,\hG}[\![\Gamma_\Cyc]\!]\right)_\QQ$-modules have finite projective dimension:
\begin{align}
	 \left(H^1_{\mathrm{cont}}\left(G_S, \DDD_{\II_{\hF,\hG}[\![\Gamma_\Cyc]\!]}\right)^\vee\right)_\QQ, \qquad \left(H^1_{\mathrm{cont}}\left(\Gal{\overline{\Q}_p}{\Q_p}, \dfrac{\DDD_{\II_{\hF,\hG}[\![\Gamma_\Cyc]\!]}}{\DDD_{\II_{\hF,\hG}[\![\Gamma_\Cyc]\!],\Fil^+}} \right)^\vee\right)_\QQ
\end{align}
Hypothesis  \ref{lab:irr} lets us conclude that $H^0_{\mathrm{cont}}\left(G_S, \DDD_{\II_{\hF,\hG}[\![\Gamma_\Cyc]\!]}\right)=0$. In the proof of Claim \ref{claim:nopn}, to establish the weak Leopoldt conjecture, we showed that $H^2_{\mathrm{cont}}\left(G_S, \DDD_{\II_{\hF,\hG}[\![\Gamma_\Cyc]\!]}\right)=0$. Applying \cite[Proposition 5.5]{MR3919711} lets us conclude that  for every height two prime ideal $\QQ$ in the ring $\II_{\hF,\hG}[\![\Gamma_\Cyc]\!]$, the $\left(\II_{\hF,\hG}[\![\Gamma_\Cyc]\!]\right)_\QQ$-module $\left(H^1_{\mathrm{cont}}\left(G_S, \DDD_{\II_{\hF,\hG}[\![\Gamma_\Cyc]\!]}\right)^\vee\right)_\QQ$ has finite projective dimension.

Using local Tate duality and the fact that the Galois representation $\rhopmb{4}{3}$ is a cyclotomic twist deformation, one can conclude that $H^2_{\mathrm{cont}}\left(\Gal{\overline{\Q}_p}{\Q_p}, \dfrac{\DDD_{\II_{\hF,\hG}[\![\Gamma_\Cyc]\!]}}{\DDD_{\II_{\hF,\hG}[\![\Gamma_\Cyc]\!],\Fil^+}} \right)=0$. Moreover, hypothesis \ref{lab:pdist-gsp4} lets us conclude that $H^0_{\mathrm{cont}}\left(\Gal{\overline{\Q}_p}{\Q_p}, \dfrac{\DDD_{\II_{\hF,\hG}[\![\Gamma_\Cyc]\!]}}{\DDD_{\II_{\hF,\hG}[\![\Gamma_\Cyc]\!],\Fil^+}} \right)=0$. 
Applying \cite[Proposition 5.5]{MR3919711} again lets us conclude that  for every height two prime ideal $\QQ$ in the ring $\II_{\hF,\hG}[\![\Gamma_\Cyc]\!]$, the $\left(\II_{\hF,\hG}[\![\Gamma_\Cyc]\!]\right)_\QQ$-module $\left(H^1_{\mathrm{cont}}\left(\Gal{\overline{\Q}_p}{\Q_p}, \dfrac{\DDD_{\II_{\hF,\hG}[\![\Gamma_\Cyc]\!]}}{\DDD_{\II_{\hF,\hG}[\![\Gamma_\Cyc]\!],\Fil^+}} \right)^\vee\right)_\QQ$ has finite projective dimension. 
The proof of the claim follows. 
\end{proof}

Claim \ref{claim:isoselm} and the hypothesis of Corollary \ref{cor:mainconj3} will provide us  the following inequality of divisors in $\II_{\hF,\hG}[\![\Gamma_\Cyc]\!]$:
 
  \begin{align} \label{eq:eulersysteminequality}
   & \Div\left(\Sel^{\Sigma_0}_{\mathrm{str}}( \DDD_{\II_{\hF,\hG}[\![\Gamma_\Cyc]\!]})^\vee\right) =   \Div\left(\Sel^{\Sigma_0}( \DDD_{\II_{\hF,\hG}[\![\Gamma_\Cyc]\!]})^\vee\right) \leq  \Div(\theta^{\Sigma_0}_{\pmbtwo{4}{3}}). 
\end{align}	

\cite[Proposition 2.2]{Lei_2020} and hypothesis \ref{lab:irr} tell us that divisor $\Div\left(\Sel^{\Sigma_0}_{\mathrm{str}}( \DDD_{\II_{\hF,\hG}[\![\Gamma_\Cyc]\!]})^\vee\right)$ associated to the non-primitive Selmer group of a cyclotomic twist deformation is principal.  Claims \ref{claim:nopn}, \ref{claim:finproj} and this observation allow us to apply \cite[Proposition 5.4]{MR3919711}; we have the following inequality of divisors in $\II_{\hF,\hG}$, with equality in equation (\ref{eq:eulersysteminequality}) if and only if there is equality in equation (\ref{eq:specialization}) :
 
\begin{align} \label{eq:specialization}
 \Div\left(\Sel^{\Sigma_0}_{\mathrm{str}}( \DDD_{\II_{\hF,\hG}})^\vee \right) \leq \Div\left(\theta^{\Sigma_0}_{\pmbtwo{3}{2}}\right). 
\end{align}
  
 Here, we have used the fact that the $2$-variable $p$-adic $L$-function  $\theta^{\Sigma_0}_{\pmbtwo{3}{2}}$ is defined as the specialization of the $3$-variable $p$-adic $L$-function $\theta^{\Sigma_0}_{\pmbtwo{4}{3}}$ under the specialization map $\varpi_{3,2}$. 
  
Claim \ref{claim:isoselm} and the hypothesis of Corollary \ref{cor:mainconj3} will provide the following inequality of divisors in $\II_{\hF,\hG}$:

\begin{align} \label{eq:conginequality}
    \Div\left(\Sel^{\Sigma_0}( \DDD_{\II_{\hF,\hG}})^\vee \right) =   \Div\left(\Sel^{\Sigma_0}_{\mathrm{str}}( \DDD_{\II_{\hF,\hG}})^\vee \right) \geq \Div\left({\CCC}\right) \geq \Div(\theta^{\Sigma_0}_{\pmbtwo{3}{2}}).
  \end{align}

Combining the inequalities in equations (\ref{eq:eulersysteminequality}), (\ref{eq:specialization}) and (\ref{eq:conginequality}) completes the proof of Corollary \ref{cor:mainconj3}. \qed \\ 

As an immediate application of Proposition \ref{prop:LRYoshidafam}, we have the following corollary under hypothesis \ref{lab:hypLR}:

\begin{corollary} 
 Suppose that the hypotheses \ref{lab:irr}, \ref{lab:pdist-gsp4}, \ref{lab:multfree},  \ref{lab:yosdet}  \ref{lab:yos}, \ref{lab:yos-neb}, \ref{lab:hypLR} and \ref{lab:stab} hold.  Then, we have the following inequality of lengths:
  \begin{align} 
    \Div\left(\Sel^{\Sigma_0}_{\rhopmb{3}{2}}(\Q)^\vee\right) \geq \Div\left({\CCC}\right).
  \end{align}

  Furthermore, if in addition, one also supposes that the following hypotheses:
  \begin{enumerate}
  \item We have the following inequality of divisors in $\II_{\hF,\hG}[\![\Gamma_{\Cyc}]\!]$:
  \begin{align}
    \Div\left(\Sel^{\Sigma_0}_{\rhopmb{4}{3}}(\Q)^\vee\right) \leq  \Div(\theta^{\Sigma_0}_{\pmbtwo{4}{3}}),
  \end{align}

 \item We have the following inequality of divisors in $\II_{\hF,\hG}$:
  \begin{align}
    \Div({\CCC}) {\geq} \Div(\theta^{\Sigma_0}_{\pmbtwo{3}{2}}).
  \end{align}
    \end{enumerate}

  Then, Conjectures \ref{conj:mainconj2} and \ref{conj:mainconj3} hold. That is, we obtain the following equality of divisors in  $\II_{\hF,\hG}[\![\Gamma_{\Cyc}]\!]$:
  \begin{align}
    \Div\left(\Sel^{\Sigma_0}_{\rhopmb{4}{3}}(\Q)^\vee\right) =  \Div\left(\theta^{\Sigma_0}_{\pmbtwo{4}{3}}\right),
  \end{align}
  along with the following equality of divisors in  $\II_{\hF,\hG}$:
  \begin{align}
    \Div\left(\Sel^{\Sigma_0}_{\rhopmb{3}{2}}(\Q)^\vee\right) = \Div\left({\CCC}\right) = \Div\left(\theta^{\Sigma_0}_{\pmbtwo{3}{2}}\right).
  \end{align}
\end{corollary}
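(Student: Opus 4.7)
The plan is to derive this corollary as an immediate consequence of three earlier results: Proposition \ref{prop:LRYoshidafam} (which converts the representation-theoretic hypothesis \ref{lab:hypLR} into the abstract existence hypothesis \ref{lab:exist}), Theorem \ref{thm:mainconj2} (which gives the first inequality), and Corollary \ref{cor:mainconj3} (which upgrades this to an equality under the Euler system and congruence-ideal hypotheses). The only role of \ref{lab:hypLR} in the statement is to replace the abstract hypothesis \ref{lab:exist} by a concrete condition that can be verified from the Galois/automorphic data of $\hF$ and $\hG$.

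First I would invoke Proposition \ref{prop:LRYoshidafam}. Under hypothesis \ref{lab:hypLR}, that proposition exhibits a continuous $\OO$-algebra homomorphism $\TTT \to \II_{\hF,\hG}$ whose specializations at a Zariski-dense set of classical arithmetic points match the Hecke eigensystems of the Yoshida lifts of $(f_\mathfrak{P}, g_{\mathfrak{P}'})$. The crucial input is that squarefree levels with matching Atkin--Lehner eigenvalues and trivial Nebentypus produce, via the constructions of Saha--Schmidt \cite{MR3092267} and the first author--Namikawa \cite{MR3623733}, classical Yoshida lifts at a uniformly bounded Siegel tame level $\Gamma_0^{(2)}(\mathrm{lcm}(N_\hF, N_\hG))$ across the entire family; the congruence class modulo $2(p-1)$ in the definition of $S_{f_0}, S_{g_0}$ ensures constancy of the Nebentypus and of the Atkin--Lehner signs throughout each Hida family. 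In particular, the conclusion of Theorem \ref{thm:yoshidafamily}\ref{thmcond:yoshidafamily} holds, which is precisely the content of hypothesis \ref{lab:exist}.

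With \ref{lab:exist} now established, every hypothesis of Theorem \ref{thm:mainconj2} is satisfied (the hypotheses \ref{lab:irr}, \ref{lab:pdist-gsp4}, \ref{lab:multfree}, \ref{lab:yosdet}, \ref{lab:yos}, \ref{lab:yos-neb}, \ref{lab:stab} are assumed verbatim in the corollary). Applying Theorem \ref{thm:mainconj2} directly yields the first inequality
\[ \Div\left(\Sel^{\Sigma_0}_{\rhopmb{3}{2}}(\Q)^\vee\right) \geq \Div(\CCC). \]
For the second part of the corollary, I would then invoke Corollary \ref{cor:mainconj3} with the two additional hypotheses supplied, namely the Euler system upper bound $\Div(\Sel^{\Sigma_0}_{\rhopmb{4}{3}}(\Q)^\vee) \leq \Div(\theta^{\Sigma_0}_{\pmbtwo{4}{3}})$ and the congruence-ideal lower bound $\Div(\CCC) \geq \Div(\theta^{\Sigma_0}_{\pmbtwo{3}{2}})$. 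Since \ref{lab:exist} has already been verified, all hypotheses of Corollary \ref{cor:mainconj3} are in place, and its conclusion delivers both equalities of divisors.

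Since the statement is flagged as an \emph{immediate application} of Proposition \ref{prop:LRYoshidafam}, the proof is essentially a matter of matching hypotheses; no substantive obstacle arises beyond what has already been overcome in Proposition \ref{prop:LRYoshidafam}, Theorem \ref{thm:mainconj2}, and Corollary \ref{cor:mainconj3}. The only point requiring a small check is that the congruence-class conditions defining $S_{f_0}$ and $S_{g_0}$ in the proof of Proposition \ref{prop:LRYoshidafam}, together with \ref{lab:yosdet}, are compatible with the weight conditions needed for the density argument in $\Spec(\II_{\hF,\hG})$, which is provided by Lemma \ref{lem:density} and the \'etale specialization argument recorded in the proof of Proposition \ref{prop:LRYoshidafam}.
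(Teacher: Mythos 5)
Your proposal matches the paper's intended argument exactly: the paper presents this corollary as an immediate application of Proposition \ref{prop:LRYoshidafam}, which under \ref{lab:hypLR} supplies hypothesis \ref{lab:exist}, after which Theorem \ref{thm:mainconj2} gives the inequality $\Div\left(\Sel^{\Sigma_0}_{\rhopmb{3}{2}}(\Q)^\vee\right) \geq \Div\left({\CCC}\right)$ and Corollary \ref{cor:mainconj3} upgrades it to the stated equalities under the two additional divisor hypotheses. Your hypothesis-matching, including the role of the congruence classes modulo $2(p-1)$ and the bounded Siegel tame level, is the same as what the paper relies on, so there is nothing further to add.
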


\subsection{Proof of Theorem \ref{thm:selmstructure}: pseudo-cyclicity of the primitive Selmer group}

{\theoremthree* }

In this section, we consider two \textit{primitive Selmer groups}, each corresponding to a choice of dominant Hida family ($F$ or $G$). Just as in equation (\ref{eq:filF}), one has a $\Gal{\overline{\Q}_p}{\Q_p}$-equivariant filtration of free $\II_{\hF,\hG}[\![\Gamma_\Cyc]\!]$-modules associated to the Hida family $G$. This in turn allows us to define two free $\II_{\hF,\hG}[\![\Gamma_\Cyc]\!][\Gal{\overline{\Q}_p}{\Q_p}]$-modules of rank $2$ as follows: 
\begin{align}\label{eq:filchoicedom}
\SLLL_{\II_{\hF,\hG}[\![\Gamma_\Cyc]\!],\Fil_{(1)}^+}  &\coloneqq \Hom_{\II_{\hF,\hG}}\left(\dfrac{L_\hF}{\Fil^+L_\hF} \otimes_{\II_{\hF}} \II_{\hF,\hG}, L_\hG \otimes_{\II_{\hG}} \II_{\hF,\hG}\right)  \otimes_{\II_{\hF,\hG}} \II_{\hF,\hG}[\![\Gamma_\Cyc]\!](\widetilde{\kappa}^{-1}), \notag \\
\SLLL_{\II_{\hF,\hG}[\![\Gamma_\Cyc]\!],\Fil_{(2)}^+}  &\coloneqq \Hom_{\II_{\hF,\hG}}\left(L_\hF \otimes_{\II_{\hF}} \II_{\hF,\hG}, \Fil^+L_\hG \otimes_{\II_{\hG}} \II_{\hF,\hG}\right) \otimes_{\II_{\hF,\hG}}  \otimes_{\II_{\hF,\hG}} \II_{\hF,\hG}[\![\Gamma_\Cyc]\!](\widetilde{\kappa}^{-1}).
\end{align}
We have two discrete $\II_{\hF,\hG}[\![\Gamma_\Cyc]\!][\Gal{\overline{\Q}_p}{\Q_p}]$-modules associated to these compact Galois lattices: 
\begin{align*}
\DDD_{\II_{\hF,\hG}[\![\Gamma_\Cyc]\!],\Fil_{(1)}^+} \coloneqq \SLLL_{\II_{\hF,\hG}[\![\Gamma_\Cyc]\!],\Fil_{(1)}^+}  \otimes_{\II_{\hF,\hG}[\![\Gamma_\Cyc]\!]} \widehat{\II_{\hF,\hG}[\![\Gamma_\Cyc]\!]}, \quad \DDD_{\II_{\hF,\hG}[\![\Gamma_\Cyc]\!],\Fil_{(2)}^+} \coloneqq \SLLL_{\II_{\hF,\hG}[\![\Gamma_\Cyc]\!],\Fil_{(2)}^+}  \otimes_{\II_{\hF,\hG}[\![\Gamma_\Cyc]\!]} \widehat{\II_{\hF,\hG}[\![\Gamma_\Cyc]\!]}.
\end{align*}

Let $\mathrm{Loc}(\Q_\nu, \DDD_{\II_{\hF,\hG}[\![\Gamma_\Cyc]\!]})$ denote the $\II_{\hF,\hG}[\![\Gamma_\Cyc]\!]$-module $H^1_{\mathrm{cont}}\left(\Gal{\overline{\Q}_\nu}{\Q_\nu}, \ \DDD_{\II_{\hF,\hG}[\![\Gamma_\Cyc]\!]} \right)$. We can define two \textit{primitive (cyclotomic) strict} Selmer groups as given below. Note that these primitive primitive (cyclotomic) strict Selmer groups are isomorphic respectively, as shown in Claim \ref{claim:isoselm}, to the corresponding usual primitive (cyclotomic) Selmer groups, which were denoted $\Sel_{\rhopmb{4}{3},\dominant{F}}(\Q)^\vee$  and  $\Sel_{\rhopmb{4}{3},\dominant{G}}(\Q)^\vee$ in the statement of Theorem \ref{thm:selmstructure}. 
\begin{align*}
\Sel_{\mathrm{str},(1)}( \DDD_{\II_{\hF,\hG}[\![\Gamma_\Cyc]\!]}) \coloneqq \ker\bigg(& H^1_{\mathrm{cont}}\left(G_S, \DDD_{\II_{\hF,\hG}[\![\Gamma_\Cyc]\!]} \right) \xlongrightarrow{\mathrm{Res}} H^1_{\mathrm{cont}}\left(\Q_p, \ \dfrac{\DDD_{\II_{\hF,\hG}[\![\Gamma_\Cyc]\!]}}{\DDD_{\II_{\hF,\hG}[\![\Gamma_\Cyc]\!],\Fil_{(1)}^+}} \right) \times \prod \limits_{\nu \mid N} \mathrm{Loc}(\Q_\nu, \DDD_{\II_{\hF,\hG}[\![\Gamma_\Cyc]\!]}) \bigg).
\end{align*}
\begin{align*}
\Sel_{\mathrm{str},(2)}( \DDD_{\II_{\hF,\hG}[\![\Gamma_\Cyc]\!]}) \coloneqq \ker\bigg(& H^1_{\mathrm{cont}}\left(G_S, \DDD_{\II_{\hF,\hG}[\![\Gamma_\Cyc]\!]} \right) \xlongrightarrow{\mathrm{Res}}  H^1_{\mathrm{cont}}\left(\Q_p, \ \dfrac{\DDD_{\II_{\hF,\hG}[\![\Gamma_\Cyc]\!]}}{\DDD_{\II_{\hF,\hG}[\![\Gamma_\Cyc]\!],\Fil_{(2)}^+}} \right) \times \prod_{\nu \mid N} \mathrm{Loc}(\Q_\nu, \DDD_{\II_{\hF,\hG}[\![\Gamma_\Cyc]\!]}) \bigg).	
\end{align*}
Inside $\SLLL_{\II_{\hF,\hG}[\![\Gamma_\Cyc]\!]}$, which is a free $\II_{\hF,\hG}[\![\Gamma_\Cyc]\!]$-module of rank four, we consider the following free $\II_{\hF,\hG}[\![\Gamma_\Cyc]\!]$-module of rank three (which is $\Gal{\overline{\Q}_p}{\Q_p}$-stable): 
\begin{align}\label{eq:sumlattice}
	\SLLL_{\II_{\hF,\hG}[\![\Gamma_\Cyc]\!],\Fil_{\mathrm{sum}}^+} \coloneqq \SLLL_{\II_{\hF,\hG}[\![\Gamma_\Cyc]\!],\Fil_{(1)}^+} + \SLLL_{\II_{\hF,\hG}[\![\Gamma_\Cyc]\!],\Fil_{(2)}^+}.
\end{align}
The cokernel of the inclusion of the $\II_{\hF,\hG}[\![\Gamma_\Cyc]\!]$-module of equation (\ref{eq:sumlattice}), inside $\SLLL_{\II_{\hF,\hG}[\![\Gamma_\Cyc]\!]} $, is a free $\II_{\hF,\hG}[\![\Gamma_\Cyc]\!]$-module of rank one; it is isomorphic to $\Hom_{\II_{\hF,\hG}}\left(\Fil^+L_\hF \otimes_{\II_{\hF}} \II_{\hF,\hG}, \dfrac{L_\hG}{\Fil^+L_\hG} \otimes_{\II_{\hG}} \II_{\hF,\hG}\right)  \otimes_{\II_{\hF,\hG}} \II_{\hF,\hG}[\![\Gamma_\Cyc]\!](\widetilde{\kappa}^{-1})$. Consequently, we can consider the following discrete $\II_{\hF,\hG}[\![\Gamma_\Cyc]\!]$-submodule of $\DDD_{\II_{\hF,\hG}[\![\Gamma_\Cyc]\!]}$ (which is $\Gal{\overline{\Q}_p}{\Q_p}$-stable):

\[\DDD_{\II_{\hF,\hG}[\![\Gamma_\Cyc]\!],\Fil_{\mathrm{sum}}^+} \coloneqq \SLLL_{\II_{\hF,\hG}[\![\Gamma_\Cyc]\!],\Fil_{\mathrm{sum}}^+}  \otimes_{\II_{\hF,\hG}[\![\Gamma_\Cyc]\!]} \widehat{\II_{\hF,\hG}[\![\Gamma_\Cyc]\!]}.\]
This, in turn, allows us to consider the following strict primitive cyclotomic Selmer group:
\begin{align*}
\Sel_{\mathrm{str},\mathrm{sum}}( \DDD_{\II_{\hF,\hG}[\![\Gamma_\Cyc]\!]}) \coloneqq \ker\bigg(& H^1_{\mathrm{cont}}\left(G_S, \DDD_{\II_{\hF,\hG}[\![\Gamma_\Cyc]\!]} \right) \xlongrightarrow{\mathrm{Res}}  H^1_{\mathrm{cont}}\left(\Q_p, \ \dfrac{\DDD_{\II_{\hF,\hG}[\![\Gamma_\Cyc]\!]}}{\DDD_{\II_{\hF,\hG}[\![\Gamma_\Cyc]\!],\Fil_{\mathrm{sum}}^+}} \right) \times \prod_{\nu \mid N} \mathrm{Loc}(\Q_\nu, \DDD_{\II_{\hF,\hG}[\![\Gamma_\Cyc]\!]}) \bigg).	
\end{align*}

Let $i \in \{1,2\}$. Applying the Snake Lemma to the following commutative diagram, 
{\tiny \begin{align}
\xymatrix{
 & H^1_{\mathrm{cont}}\left(G_S, \DDD_{\II_{\hF,\hG}[\![\Gamma_\Cyc]\!]} \right) \ar[d]\ar[r]^{\cong}& H^1_{\mathrm{cont}}\left(G_S, \DDD_{\II_{\hF,\hG}[\![\Gamma_\Cyc]\!]} \right) \ar[d] \\
& H^1_{\mathrm{cont}}\left(\Q_p, \ \dfrac{\DDD_{\II_{\hF,\hG}[\![\Gamma_\Cyc]\!]}}{\DDD_{\II_{\hF,\hG}[\![\Gamma_\Cyc]\!],\Fil_{(1)}^+}} \right) \times \prod \limits_{\nu \mid N} \mathrm{Loc}(\Q_\nu, \DDD_{\II_{\hF,\hG}[\![\Gamma_\Cyc]\!]})
 \ar[r]& H^1_{\mathrm{cont}}\left(\Q_p, \ \dfrac{\DDD_{\II_{\hF,\hG}[\![\Gamma_\Cyc]\!]}}{\DDD_{\II_{\hF,\hG}[\![\Gamma_\Cyc]\!],\Fil_{\mathrm{sum}}^+}} \right) \times \prod \limits_{\nu \mid N} \mathrm{Loc}(\Q_\nu, \DDD_{\II_{\hF,\hG}[\![\Gamma_\Cyc]\!]})
}	
\end{align}
}
gives us the following exact sequence of $\II_{\hF,\hG}[\![\Gamma_\Cyc]\!]$-modules:
\begin{align}\label{eq:longexseq}
\notag \mathrm{coker}(\phi_{\Sel_{\mathrm{str},\mathrm{sum}}})^\vee \rightarrow \mathrm{coker}(\phi_{\Sel_{\mathrm{str},(i)}})^\vee & \rightarrow H^1_{\mathrm{cont}}\left(\Q_p, \ \dfrac{\DDD_{\II_{\hF,\hG}[\![\Gamma_\Cyc]\!],\Fil_{\mathrm{sum}}^+}}{\DDD_{\II_{\hF,\hG}[\![\Gamma_\Cyc]\!],\Fil_{(i)}^+}} \right) ^\vee \\ & \rightarrow \Sel_{\mathrm{str},\mathrm{sum}}( \DDD_{\II_{\hF,\hG}[\![\Gamma_\Cyc]\!]})^\vee \rightarrow 	\Sel_{\mathrm{str},(i)}( \DDD_{\II_{\hF,\hG}[\![\Gamma_\Cyc]\!]})^\vee \rightarrow 0.
\end{align}

To justify the claim that the Snake lemma provides the exact sequence in equation (\ref{eq:longexseq}), we need to show that the kernel of the bottom row is isomorphic to the $\II_{\hF,\hG}[\![\Gamma_\Cyc]\!]$-module $H^1_{\mathrm{cont}}\left(\Q_p, \ \dfrac{\DDD_{\II_{\hF,\hG}[\![\Gamma_\Cyc]\!],\Fil_{\mathrm{sum}}^+}}{\DDD_{\II_{\hF,\hG}[\![\Gamma_\Cyc]\!],\Fil_{(i)}^+}} \right) ^\vee$. This follows from an application of the long exact sequence in 
Galois cohomology. The hypothesis \ref{lab:pdist-gsp4} allows us to deduce that $H^0_{\mathrm{cont}}\left(\Q_p, \ \dfrac{\DDD_{\II_{\hF,\hG}[\![\Gamma_\Cyc]\!]}}{\DDD_{\II_{\hF,\hG}[\![\Gamma_\Cyc]\!],\Fil_{\mathrm{sum}}^+}} \right)$ equals zero. \\

\cite[Corollary 3.2.3]{MR2740696} and the proof of Claim \ref{claim:nopn} tell us that the global-to-local map defining the non-primitive Selmer group is surjective. Since the Galois representation $\rhopmb{4}{3}$ is a cyclotomic twist deformation, one can then conclude using \cite[Corollary 3.2.5]{MR2740696} that the cokernel $\mathrm{coker}(\phi_{\Sel_{\mathrm{str},(i)}})^\vee $, of the map defining the primitive Selmer group $\Sel_{\mathrm{str},(i)}( \DDD_{\II_{\hF,\hG}[\![\Gamma_\Cyc]\!]})$ is a co-torsion $\II_{\hF,\hG}[\![\Gamma_\Cyc]\!]$-module, for each $i \in \{1,2\}$. Equation (\ref{eq:longexseq}) then lets us deduce that the cokernel $\mathrm{coker}(\phi_{\Sel_{\mathrm{str},\mathrm{sum}}})$, of the map defining the Selmer group is also a co-torsion $\II_{\hF,\hG}[\![\Gamma_\Cyc]\!]$-module. The torsion-ness of the Pontryagin dual of the primitive cyclotomic Selmer group is implicit in hypothesis \ref{lab:gcd}. An application of both the global Euler--Poincar\'e formula \cite[Proposition 4.1]{MR2290593} and the local Euler--Poincar\'e formula \cite[Proposition 4.2]{MR2290593} along with the computation of the co-ranks of the discrete Galois cohomology groups in the proof of Claim \ref{claim:nopn} then provides us the following equality of ranks: 
\begin{align}\label{eq:rank1crit}
\mathrm{Rank}_{\II_{\hF,\hG}[\![\Gamma_\Cyc]\!]} \ \Sel_{\mathrm{str},\mathrm{sum}}( \DDD_{\II_{\hF,\hG}[\![\Gamma_\Cyc]\!]})^\vee= 1. 	
\end{align}

\begin{claim}\label{claim:notorsion}
	For each $i \in \{1,2\}$, the $\II_{\hF,\hG}[\![\Gamma_\Cyc]\!]$-module $H^1_{\mathrm{cont}}\left(\Q_p, \ \dfrac{\DDD_{\II_{\hF,\hG}[\![\Gamma_\Cyc]\!],\Fil_{\mathrm{sum}}^+}}{\DDD_{\II_{\hF,\hG}[\![\Gamma_\Cyc]\!],\Fil_{(i)}^+}} \right)^\vee$ has no non-zero torsion submodules. 
\end{claim}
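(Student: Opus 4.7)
The plan is structured in three steps. First, I will identify the $\II_{\hF,\hG}[\![\Gamma_\Cyc]\!]$-module structure of $\mathcal{Q}_i \coloneqq \SLLL_{\II_{\hF,\hG}[\![\Gamma_\Cyc]\!],\Fil_{\mathrm{sum}}^+}/\SLLL_{\II_{\hF,\hG}[\![\Gamma_\Cyc]\!],\Fil_{(i)}^+}$ via a direct rank computation. Each of $\SLLL_{\Fil_{(1)}^+}$ and $\SLLL_{\Fil_{(2)}^+}$ is free of rank two over $\II_{\hF,\hG}[\![\Gamma_\Cyc]\!]$, and their intersection $\Hom_{\II_{\hF,\hG}}(L_{\hF}/\Fil^+L_{\hF} \otimes_{\II_{\hF}} \II_{\hF,\hG},\, \Fil^+L_{\hG} \otimes_{\II_{\hG}} \II_{\hF,\hG}) \otimes \II_{\hF,\hG}[\![\Gamma_\Cyc]\!](\widetilde{\kappa}^{-1})$ is free of rank one. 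Inclusion--exclusion then shows that $\SLLL_{\Fil_{\mathrm{sum}}^+}$ is free of rank three, and hence $\mathcal{Q}_i$ is free of rank one. Explicitly, one recognizes $\mathcal{Q}_1 \cong \Hom(\Fil^+L_{\hF}, \Fil^+L_{\hG}) \otimes \II_{\hF,\hG}[\![\Gamma_\Cyc]\!](\widetilde{\kappa}^{-1})$ and $\mathcal{Q}_2 \cong \Hom(L_{\hF}/\Fil^+L_{\hF},\, L_{\hG}/\Fil^+L_{\hG}) \otimes \II_{\hF,\hG}[\![\Gamma_\Cyc]\!](\widetilde{\kappa}^{-1})$. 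The associated discrete module $\mathcal{D}_i \coloneqq \mathcal{Q}_i \otimes_{\II_{\hF,\hG}[\![\Gamma_\Cyc]\!]} \widehat{\II_{\hF,\hG}[\![\Gamma_\Cyc]\!]}$ is thus a co-free cyclotomic twist deformation of a rank-one $G_{\Q_p}$-module.

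The key technical step will be to establish the vanishing $H^2(\Gal{\overline{\Q}_p}{\Q_p}, \mathcal{D}_i) = 0$. By local Tate duality, this reduces to showing that the $\Gal{\overline{\Q}_p}{\Q_p}$-invariants of the rank-one $\II_{\hF,\hG}[\![\Gamma_\Cyc]\!]$-module $\Hom_{\II_{\hF,\hG}[\![\Gamma_\Cyc]\!]}(\mathcal{Q}_i, \II_{\hF,\hG}[\![\Gamma_\Cyc]\!])(1)$ vanish. Since $\Gamma_\Cyc$ acts on $\mathcal{Q}_i$ through the tautological character $\widetilde{\kappa}^{-1}$, the induced $\Gal{\overline{\Q}_p}{\Q_p}$-action on the Tate dual carries a non-trivial factor of $\widetilde{\kappa}\cdot\cyc$ in addition to characters depending only on $\rho_{\hF}|_{G_{\Q_p}}$ and $\rho_{\hG}|_{G_{\Q_p}}$. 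Because $\widetilde{\kappa}$ has infinite order and is not trivial modulo any height-one prime of $\II_{\hF,\hG}[\![\Gamma_\Cyc]\!]$, the resulting character cannot be trivial, so the invariants indeed vanish.

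With $H^2(\Q_p,\mathcal{D}_i) = 0$ and $\mathcal{D}_i$ being a co-free cyclotomic twist deformation, I will then invoke the general framework of Greenberg \cite[Propositions 5.1 and 5.3]{MR2290593} --- exactly as used earlier in the proof of Claim \ref{claim:nopn} --- to conclude that $H^1(\Q_p, \mathcal{D}_i)$ is co-reflexive as an $\II_{\hF,\hG}[\![\Gamma_\Cyc]\!]$-module. Since the Pontryagin dual of a co-reflexive module is reflexive, and reflexive modules over the Noetherian integrally closed domain $\II_{\hF,\hG}[\![\Gamma_\Cyc]\!]$ are in particular torsion-free, the Pontryagin dual of $H^1(\Q_p, \mathcal{D}_i)$ has no non-zero torsion submodules, as required. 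The principal obstacle is the $H^2$-vanishing step; once that is in hand, the co-reflexivity is a routine application of Greenberg's machinery for local Galois cohomology of cyclotomic twist deformations already marshalled elsewhere in the paper.
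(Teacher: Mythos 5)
There is a genuine gap, and it sits exactly at what you call the "key technical step." Vanishing of $H^2_{\mathrm{cont}}\bigl(\Q_p,\mathcal{D}_i\bigr)$ is, by local duality, equivalent to the vanishing of $H^0$ of the \emph{compact} Tate dual $\mathfrak{T}^*_i=\Hom_{\II_{\hF,\hG}[\![\Gamma_\Cyc]\!]}(\mathcal{Q}_i,\II_{\hF,\hG}[\![\Gamma_\Cyc]\!])(\cyc)$, i.e.\ to the mere non-triviality of the rank-one character $\chi$ through which $G_{\Q_p}$ acts --- in other words to the ideal $I_\chi=(\chi(g)-1: g\in G_{\Q_p})$ having height $\geq 1$. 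But torsion-freeness of $H^1_{\mathrm{cont}}(\Q_p,\mathcal{D}_i)^\vee$ requires divisibility of $H^1$ by \emph{every} nonzero element, and (using $H^2=0$ and the exact sequence for multiplication by $r$) this amounts to $H^0\bigl(\Q_p,\mathfrak{T}^*_i/r\mathfrak{T}^*_i\bigr)=0$ for all $r\neq 0$, which holds precisely when $I_\chi$ has height $\geq 2$, i.e.\ when $H^0(\Q_p,\mathcal{D}^*_i)^\vee$ (the \emph{discrete} Tate dual) is pseudo-null. Your citation of Greenberg does not bridge this: Proposition 5.3 of \cite{MR2290593} yields only \emph{almost divisibility} of $H^1$ (equivalently, torsion of the dual supported at finitely many height-one primes), not co-reflexivity; co-reflexivity of $H^1$ does not follow from $H^2=0$ alone, and if $\chi$ were trivial modulo some height-one prime $\p$ then $H^1(\Q_p,\mathcal{D}_i)^\vee$ would in fact acquire $\p$-torsion. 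The paper's proof uses precisely the pseudo-nullity statement and then invokes \cite[Proposition 5.7]{MR2290593}.

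Moreover, your argument for non-triviality only exploits the cyclotomic factor $\widetilde{\kappa}$, and the parenthetical assertion that $\widetilde{\kappa}$ "is not trivial modulo any height-one prime" is false: it is trivial modulo the augmentation ideal $(\gamma_0-1)$. More seriously, even granting non-triviality of the full character, that gives only one direction of variation, hence only height $\geq 1$ for $I_\chi$. The essential second input, which your proposal never uses, is arithmetic: restricting $\chi$ to $\Gal{\overline{\Q}_p}{\Q_p(\mu_{p^\infty})}$ kills the ramified/cyclotomic factors and leaves (up to finite order) the unramified character $\epsilon_\hF^{-1}\epsilon_\hG$ determined by $\Frob_p\mapsto a_p(\hF)^{-1}a_p(\hG)$; since $a_p(\hF)$ and $a_p(\hG)$ are non-constant and $\II_\hF\cap\II_\hG=\OO$, this character has infinite order, producing a nonzero annihilator $r\in\II_{\hF,\hG}$ of $H^0(\Q_p,\mathcal{D}^*_i)^\vee$ in addition to a monic polynomial $\beta$ in the cyclotomic variable. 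Together $(\beta,r)$ forces height $\geq 2$, i.e.\ pseudo-nullity, and only then does Greenberg's Proposition 5.7 deliver the claim. Your rank-one identification of $\mathcal{Q}_i$ and the appeal to duality are fine, but without this height-two argument the proof does not go through.
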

\begin{proof}[Proof of Claim \ref{claim:notorsion}]
Consider the following free rank one $\II_{\hF,\hG}$ and $\II_{\hF,\hG}[\![\Gamma_\Cyc]\!]$-modules (which are $\Gal{\overline{\Q}_p}{\Q_p}$-stable):
\begin{align*}
 \SLLL_{\II_{\hF,\hG},\mathrm{quot}}  &\coloneqq \Hom_{\II_{\hF,\hG}}\left(\Fil^+L_\hF \otimes_{\II_{\hF}} \II_{\hF,\hG}, \Fil^+L_\hG \otimes_{\II_{\hG}} \II_{\hF,\hG}\right), \\  	\SLLL_{\II_{\hF,\hG}[\![\Gamma_\Cyc]\!],\mathrm{quot}}  &\coloneqq \SLLL_{\II_{\hF,\hG},\mathrm{quot}}  \otimes_{\II_{\hF,\hG}} \II_{\hF,\hG}[\![\Gamma_\Cyc]\!](\widetilde{\kappa}^{-1}).
\end{align*}
Analysing equations (\ref{eq:filchoicedom}) and (\ref{eq:sumlattice}) tells us that we have the following isomorphism of discrete $\II_{\hF,\hG}[\![\Gamma_\Cyc]\!]$-modules:
\begin{align*}
 \dfrac{\DDD_{\II_{\hF,\hG}[\![\Gamma_\Cyc]\!],\Fil_{\mathrm{sum}}^+}}{\DDD_{\II_{\hF,\hG}[\![\Gamma_\Cyc]\!],\Fil_{(i)}^+}}	\cong \SLLL_{\II_{\hF,\hG}[\![\Gamma_\Cyc]\!],\mathrm{quot}}  \otimes_{\II_{\hF,\hG}[\![\Gamma_\Cyc]\!]} \widehat{\II_{\hF,\hG}[\![\Gamma_\Cyc]\!]}.
	\end{align*}

We also consider the free rank one $\II_{\hF,\hG}$ and $\II_{\hF,\hG}[\![\Gamma_\Cyc]\!]$-modules, associated to the Tate-twist of $\SLLL_{\II_{\hF,\hG}[\![\Gamma_\Cyc]\!],\mathrm{quot}}$ (which are $\Gal{\overline{\Q}_p}{\Q_p}$-stable):
	\begin{align}\label{eq:defquotfiltate}
	\SLLL^*_{\II_{\hF,\hG},\mathrm{quot}} &\coloneqq, \Hom_{\II_{\hF,\hG}} \left(\SLLL_{\II_{\hF,\hG},\mathrm{quot}} , \ \II_{\hF,\hG}(\cyc)\right) \notag 
	 \cong \Hom_{\II_{\hF,\hG}}\left(\Fil^+L_\hG \otimes_{\II_{\hG}} \II_{\hF,\hG}, \Fil^+L_\hF \otimes_{\II_{\hF}} \II_{\hF,\hG}\right)(\cyc), \\
	\SLLL^*_{\II_{\hF,\hG}[\![\Gamma_\Cyc]\!],\mathrm{quot}} &\coloneqq \Hom_{\II_{\hF,\hG}[\![\Gamma_\Cyc]\!]} \left(\SLLL_{\II_{\hF,\hG}[\![\Gamma_\Cyc]\!],\mathrm{quot}} , \ \II_{\hF,\hG}[\![\Gamma_\Cyc]\!](\cyc)\right) \cong \SLLL^*_{\II_{\hF,\hG},\mathrm{quot}} \otimes_{\II_{\hF,\hG}} \II_{\hF,\hG}[\![\Gamma_\Cyc]\!](\widetilde{\kappa}).  
\end{align}
We consider the associated discrete $\II_{\hF,\hG}$ and $\II_{\hF,\hG}[\![\Gamma_\Cyc]\!]$-modules:
\begin{align}\label{eq:iotaiso}
	\notag \DDD^*_{\II_{\hF,\hG},\mathrm{quot}} &\coloneqq \SLLL^*_{\II_{\hF,\hG},\mathrm{quot}} \otimes_{\Z_p[\![x_\hF,x_\hG]\!]} \widehat{\Z_p[\![x_\hF,x_\hG]\!]}, \\ \DDD^*_{\II_{\hF,\hG}[\![\Gamma_\Cyc]\!],\mathrm{quot}} &\coloneqq \SLLL^*_{\II_{\hF,\hG}[\![\Gamma_\Cyc]\!],\mathrm{quot}} \otimes_{\Z_p[\![x_\hF,x_\hG]\!][\![\Gamma_\Cyc]\!]} \widehat{\Z_p[\![x_\hF,x_\hG]\!][\![\Gamma_\Cyc]\!]}.
\end{align}

Since the Galois representation $\rhopmb{4}{3}$ is a cyclotomic twist deformation, one can use \cite[Proposition 4.1]{MR3919711} to conclude that as an $\II_{\hF,\hG}$-module, $H^0\left(\Q_p, \DDD^*_{\II_{\hF,\hG}[\![\Gamma_\Cyc]\!],\mathrm{quot}}\right)^\vee$ must be finitely generated; consequently, as an $\II_{\hF,\hG}[\![\Gamma_\Cyc]\!]$-module, $H^0\left(\Q_p, \DDD^*_{\II_{\hF,\hG}[\![\Gamma_\Cyc]\!],\mathrm{quot}}\right)^\vee$ is torsion. In fact, it is annihilated by a monic polynomial, say $\beta$, in $\II_{\hF,\hG}[\![x]\!]$, if we identify $\II_{\hF,\hG}[\![\Gamma_\Cyc]\!]$ with $\II_{\hF,\hG}[\![x]\!]$, by identifying a topological generator $\gamma_0$ of $\Gamma_\Cyc$ with $x+1$.  \\

The action of $G_{\Q_p}\coloneqq \Gal{\overline{\Q}_p}{\Q_p}$ on the rank one $\II_{\hF,\hG}$-module $\SLLL^*_{\II_{\hF,\hG},\mathrm{quot}}$, and hence on $\DDD^*_{\II_{\hF,\hG},\mathrm{quot}}$, is given by the character:
\begin{align}\label{eq:charquot}
	\chi_0 \cyc \dfrac{\kappa_\hF}{\epsilon_\hF} \left(\dfrac{\kappa_\hG}{\epsilon_\hG}\right)^{-1} : G_{\Q_p} \rightarrow \II_{\hF,\hG}^\times, 
\end{align}

 where
 \begin{itemize}
 \item 	 $\chi_0:G_{\Q_p} \rightarrow \OO^\times \hookrightarrow \II_{\hF,\hG}^\times$ is a finite-order character, 
 \item $\kappa_\hF:G_{\Q_p} \twoheadrightarrow \Gal{\Q_{p,\Cyc}}{\Q_p} \hookrightarrow \OO[\![x_\hF]\!] \hookrightarrow \II_{\hF,\hG}^\times$ and $\kappa_\hG:G_{\Q_p} \twoheadrightarrow \Gal{\Q_{p,\Cyc}}{\Q_p} \hookrightarrow \OO[\![x_\hG]\!] \hookrightarrow \II_{\hF,\hG}^\times$ are the tautological characters,
 \item $\epsilon_\hF$ and $\epsilon_\hG$ are unramified characters of $G_{\Q_p}$ characterized by the properties that $\epsilon_\hF(\Frob_p) = a_p(\hF)$ and $\epsilon_\hG(\Frob_p)=a_p(\hG)$.
 \end{itemize}
 
Since the classical specializations of the $U_p$-eigenvalue of Hida families varies with weight, one can conclude that $a_p(\hF)$ and $a_p(\hG)$ (respectively) are elements of $\II_{\hF}$ and $\II_{\hG}$ (respectively), are non-constant and hence do not belong to $\OO$. Note also that the extension $\Q_p(\mu_{p^\infty})/\Q_p$ is totally ramified. Consequently, the restriction of the character in equation (\ref{eq:charquot}) to the subgroup $\Gal{\overline{\Q}_p}{\Q_p(\mu_{p^\infty})}$ equals $\psi_0 \epsilon_\hF^{-1}\epsilon_\hG^{-1}$ for some finite order character $\psi_0$. Since $\II_{\hF} \cap \II_{\hG} = \OO$, the character $\psi_0 \epsilon_\hF^{-1}\epsilon_\hG^{-1}$ of $\Gal{\overline{\Q}_p}{\Q_p(\mu_{p^\infty})}$ must be of infinite order, and hence non-trivial. This observation provides a non-zero element in $\II_{\hF,\hG}$ that annihilates $H^0\left(\Q_p, \DDD^*_{\II_{\hF,\hG}[\![\Gamma_\Cyc]\!],\mathrm{quot}}\right)^\vee$, and hence a non-zero annihilator $r$ of the $\II_{\hF,\hG}[\![\Gamma_\Cyc]\!]$-module in equation (\ref{eq:iotaiso}). Since $\mathrm{Ann}_{\II_{\hF,\hG}[\![\Gamma_\Cyc]\!]}  H^0\left(\Q_p, \DDD^*_{\II_{\hF,\hG}[\![\Gamma_\Cyc]\!],\mathrm{quot}}\right)^\vee \supseteq (\beta, r)$, 
the annihilator ideal in $\II_{\hF,\hG}[\![\Gamma_\Cyc]\!]$ must have height $\geq 2$. Thus, the $\II_{\hF,\hG}[\![\Gamma_\Cyc]\!]$-module $H^0\left(\Q_p, \DDD^*_{\II_{\hF,\hG}[\![\Gamma_\Cyc]\!],\mathrm{quot}}\right)^\vee$ must be pseudo-null. The claim now follows from a direct application of \cite[Proposition 5.7]{MR2290593}.

\end{proof}

We can now deduce the following short exact sequence from the long exact sequence in equation (\ref{eq:longexseq}), Claim \ref{claim:notorsion} and the fact that $\mathrm{coker}(\phi_{\Sel_{\mathrm{str},(i)}})^\vee $ is a co-torsion $\II_{\hF,\hG}[\![\Gamma_\Cyc]\!]$-module for each $i \in \{1,2\}$:
\begin{align}\label{eq:sesdom}
0 \rightarrow H^1_{\mathrm{cont}}\left(\Q_p, \ \dfrac{\DDD_{\II_{\hF,\hG}[\![\Gamma_\Cyc]\!],\Fil_{\mathrm{sum}}^+}}{\DDD_{\II_{\hF,\hG}[\![\Gamma_\Cyc]\!],\Fil_{(i)}^+}} \right) ^\vee \rightarrow \Sel_{\mathrm{str},\mathrm{sum}}( \DDD_{\II_{\hF,\hG}[\![\Gamma_\Cyc]\!]})^\vee \rightarrow 	\Sel_{\mathrm{str},(i)}( \DDD_{\II_{\hF,\hG}[\![\Gamma_\Cyc]\!]})^\vee \rightarrow 0.
\end{align}
This, in turn, allows us to deduce the following inclusions of torsion $\II_{\hF,\hG}[\![\Gamma_\Cyc]\!]$-modules:
\begin{align}
\left(\Sel_{\mathrm{str},\mathrm{sum}}( \DDD_{\II_{\hF,\hG}[\![\Gamma_\Cyc]\!]})^\vee\right)_{\mathrm{tor}} &\hookrightarrow \Sel_{\mathrm{str},(1)}( \DDD_{\II_{\hF,\hG}[\![\Gamma_\Cyc]\!]})^\vee, \\ \left(\Sel_{\mathrm{str},\mathrm{sum}}( \DDD_{\II_{\hF,\hG}[\![\Gamma_\Cyc]\!]})^\vee\right)_{\mathrm{tor}} &\hookrightarrow \Sel_{\mathrm{str},(2)}( \DDD_{\II_{\hF,\hG}[\![\Gamma_\Cyc]\!]})^\vee.
\end{align}
Hypothesis \ref{lab:gcd} now lets us conclude that the $\II_{\hF,\hG}[\![\Gamma_\Cyc]\!]$ torsion-submodule $\left(\Sel_{\mathrm{str},\mathrm{sum}}( \DDD_{\II_{\hF,\hG}[\![\Gamma_\Cyc]\!]})^\vee\right)_{\mathrm{tor}}$ is pseudo-null. Since the localization of the ring $\II_{\hF,\hG}[\![\Gamma_\Cyc]\!]$ at a height one prime ideal $\p$ is a DVR, one can use the structure theorem for DVRs. Equation (\ref{eq:rank1crit}) allows us to deduce the following isomorphism for all height one prime ideals $\p$ in $\II_{\hF,\hG}[\![\Gamma_\Cyc]\!]$:
\begin{align}\label{eq:rank1free}
\left(\Sel_{\mathrm{str},\mathrm{sum}}( \DDD_{\II_{\hF,\hG}[\![\Gamma_\Cyc]\!]})^\vee\right)_\p 	\cong \left(\II_{\hF,\hG}[\![\Gamma_\Cyc]\!]\right)_\p.  
\end{align}
The theorem now follows from equation (\ref{eq:rank1free}) and the surjections of $\II_{\hF,\hG}[\![\Gamma_\Cyc]\!]$-modules:
\[\Sel_{\mathrm{str},\mathrm{sum}}( \DDD_{\II_{\hF,\hG}[\![\Gamma_\Cyc]\!]})^\vee \twoheadrightarrow 	\Sel_{\mathrm{str},(i)}( \DDD_{\II_{\hF,\hG}[\![\Gamma_\Cyc]\!]})^\vee, \qquad  i \in \{1,2\},\] given in equation (\ref{eq:sesdom}).

\begin{remark}
The proof of theorem \ref{thm:selmstructure} is motivated by \cite[Proposition 4.1]{MR3993809}, which is obtained under more restrictive hypotheses than ours. This necessitates adapting the proof of Claim \ref{claim:notorsion} in our setting  since we are working with fewer hypotheses than \cite{MR3993809}.
\end{remark}

\section*{Acknowledgements}

We are grateful to Shih-Yu Chen, Shaunak Deo, Mladen Dimitrov, Radhika Ganapathy, Jaclyn Lang and  Abhishek Saha for helpful discussions at various stages of the project.  The research of M.-L. Hsieh is partially supported by a NSTC grant 113-2628-M-002-017. BP's research is partially supported by the Infosys Young Investigator Award from the Infosys Foundation Bangalore along with the SERB-MATRICS grant MTR/2022/000244 and DST FIST program 2021 [TPN - 700661].
 During the initial stages of this project, BP was a postdoctoral fellow at the National Center for Theoretical Sciences, Taiwan. NCTS also supported our collaboration by hosting BP in July 2023. BP would like to thank the faculty and staff affiliated with NCTS, especially Jungkai Chen, Yng-Ing Lee and Peggy Lee, for their constant support and encouragement. 

\appendix

\section{Miscellaneous results}

In this appendix, we collect proofs of a few miscellaneous results, whose proofs will be well-known to the experts. We have included them for the ease of the reader.

\begin{lemma} \label{lem:density} 
The set $\frakX^{\rm temp}_{\charcomp}\left(\RRR\right)$ is dense in $\Spec(\RRR)$. That is, for every element $\theta$ in $\RRR$, there exists a prime ideal $\p$ in $\frakX^{\rm temp}_{\charcomp}\left(\RRR\right)$ such that $\theta$ does not belong to $\p$. As a result, 
we have 

\begin{align}\label{eq:intersectionofallclassical}
\bigcap_{\mathfrak{p} \in \frakX^{\rm temp}_{\charcomp}\left(\RRR\right)}\mathfrak{p}  = \{0\}.
\end{align}	

\end{lemma}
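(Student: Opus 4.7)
The plan is to reduce the density statement to a Weierstrass-preparation argument over the Iwasawa algebra $\Lambda$ itself, and then transfer it to $\RRR$ via the integral extension $\Lambda \hookrightarrow \RRR$. Concretely, one first shows that for every nonzero $f \in \Lambda$ there exists $P_\ulk \in \frakX^{\rm temp}_\charcomp(\Lambda)$ with $f \notin P_\ulk$; given $0\neq \theta \in \RRR$, one extracts from $\theta$ a nonzero element of $\Lambda$ (the constant term of the minimal polynomial of $\theta$ modulo a well-chosen minimal prime of $\RRR$), applies the $\Lambda$-level density to it, and invokes lying-over. The intersection statement (\ref{eq:intersectionofallclassical}) is then an immediate consequence of the elementwise density.

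For the density in $\Lambda \cong \cO\powerseries{X_1,X_2}$, observe that $P_\ulk$ is the kernel of the specialization sending $(X_1,X_2)$ to $\bigl((1+p)^{k_1}-1,\,(1+p)^{k_2}-1\bigr)$, so it suffices to exhibit $(k_1,k_2)$ in the prescribed congruence class with $k_1 > k_2 > 3$ such that $f$ does not vanish at this point. Writing $f = \sum_{n \geq 0} c_n(X_2)X_1^n$ with $c_n \in \cO\powerseries{X_2}$, for any fixed $k_2$ the one-variable specialization $g_{k_2}(X_1) := f(X_1,(1+p)^{k_2}-1) \in \cO\powerseries{X_1}$ has, by Weierstrass preparation, only finitely many zeros in the maximal ideal of $\cO$ unless $g_{k_2}$ vanishes identically. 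The latter can occur for only finitely many $k_2$ with $k_2 \equiv b \pmod p$ and $k_2 > 3$, for otherwise each coefficient $c_n(X_2)$ would vanish at infinitely many distinct points of the maximal ideal of $\cO$, forcing $c_n = 0$ by Weierstrass preparation applied now in the $X_2$-variable, and hence $f = 0$. Choosing such a $k_2$, all but finitely many admissible $k_1 > k_2$ give $f \notin P_\ulk$.

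To transfer the density to $\RRR$, let $0\neq \theta \in \RRR$. Since $\RRR$ is reduced, some minimal prime $\eta$ of $\RRR$ avoids $\theta$. The contraction $\eta \cap \Lambda$ is a minimal prime of the domain $\Lambda$, hence $(0)$, so $\Lambda \hookrightarrow \RRR/\eta$ is an integral extension of domains. Because $\Lambda$ is a regular local ring and therefore normal, the minimal polynomial $m(X) = X^n + a_{n-1}X^{n-1} + \cdots + a_0$ of the image $\bar\theta \in \RRR/\eta$ over $\Frac(\Lambda)$ has coefficients $a_i \in \Lambda$; the irreducibility of $m$ together with $\bar\theta \neq 0$ forces $a_0 \neq 0$. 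Applying the $\Lambda$-level density to $a_0$ yields $P_\ulk \in \frakX^{\rm temp}_\charcomp(\Lambda)$ with $a_0 \notin P_\ulk$. Lying-over for the integral extension $\Lambda \subset \RRR/\eta$ produces a prime $\mathfrak{q}$ of $\RRR/\eta$ above $P_\ulk$; letting $\p \subset \RRR$ be its preimage, one has $\p \cap \Lambda = P_\ulk$ and therefore $\p \in \frakX^{\rm temp}_\charcomp(\RRR)$. If $\theta$ lay in $\p$, then reducing $\bar\theta^n + a_{n-1}\bar\theta^{n-1} + \cdots + a_0 = 0$ modulo $\mathfrak{q}$ would give $a_0 \in \mathfrak{q} \cap \Lambda = P_\ulk$, contradicting the choice of $P_\ulk$. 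The only nontrivial input is the one-variable Weierstrass preparation theorem, which drives both the $\Lambda$-level density and, indirectly via the minimal polynomial, the transfer to $\RRR$; no step presents a genuine obstacle.
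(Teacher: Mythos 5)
Your proof is correct and follows essentially the same route as the paper: reduce to the Iwasawa algebra $\Lambda$ via a minimal prime avoiding $\theta$ and the finite integral extension $\Lambda\hookrightarrow \RRR/\eta$, then specialize the two weight variables one at a time, using that a nonzero one-variable power series has only finitely many zeros. Your explicit use of the constant term of the minimal polynomial together with lying-over is just an unpacking of the paper's terse ``by considering norms \dots we may assume $\RRR=\Lambda$'' reduction, so no substantive difference.
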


\begin{proof}
Let $\theta$ be an element of $\RRR$. Since $\RRR$ is a reduced ring, there exists a minimal prime $\eta$ such that $\theta \notin \eta$. As a result, the image of $\theta$ in the integral domain $\RRR/\eta$ is non-zero.  By considering norms over the finite integral extension $\Lambda \hookrightarrow \RRR/\eta$,  we may assume without loss of generality that $\RRR$ equals $\Lambda$.  We will identify $\Lambda$ with $\Z_p\powerseries{X_1,X_2}$. Furthermore, we will also work with the natural identification of the power series ring $\Z_p\powerseries{X_1,X_2}$ with $\Z_p[\![X_2]\!][\![X_1]\!]$.  Let $\theta$ be a non-zero element in $\Lambda$. Let us write 
\[\theta = \sum_{n=0}^\infty a_n(X_2) X_1^n \in \Z_p[\![X_2]\!][\![X_1]\!].\]
	
Here, $a_n(X_2)$ belongs to $\Z_p[\![X_2]\!]$. One of the coefficients $a_n(X_2)$ must then be non-zero. Without loss of generality, assume that $a_0(X_2) \neq 0$. For each positive integer $k_2 > 3$, the assignment $X_2 \mapsto (1+p)^{k_2}-1$ determines a height one prime ideal in $\Z_p[\![X_2]\!]$. Since $a_0(X_2)$ is non-zero, for all but finitely many  such assignments with the weight $k_2 > 3$, the element $a_0(X_2)$ must map to a non-zero element in $\Z_p$. Choose one such ring homomorphism $\alpha_{j_0}: \Z_p[\![X_2]\!]\rightarrow \Z_p$, with weight $j_0 \geq M$ and $j_0 \equiv b \ (\mathrm{mod} \ p-1)$ along with $\alpha_{j_0}(a_n) \neq 0$, determined by this assignment. Applying $\alpha_{j_0}$ to each coefficient of $\theta$ gives us a non-zero element $\alpha_{j_0}(\theta)$ in $\Z_p[\![X_1]\!]$. Once again, we are able to choose a ring homomorphism with  $\beta_{k_0}:\Z_p[\![X_1]\!] \rightarrow \Z_p$, with weight $k_0 > j_0$ determined by the assignment $X_1 \mapsto (1+p)^{k_0-1}$, such that $k_0 \equiv a \ (\mathrm{mod} \ p-1)$  and $\beta_{k_0}\left(\alpha_{j_0}\left(\theta\right)\right) \neq 0$.  The kernel of the composite map given below must be $P_{(k_0,j_0)}$.   
\begin{align*}
\xymatrix@C=2pc @R=.4pc{
\Z_p[\![X_2]\!][\![X_1]\!] \ar[r]^{\alpha_{l_0}} & \Z_p[\![X_1]\!] \ar[r]^{\beta_{k_0}} & \Z_p, \\
\theta \ar[rr]&& \beta_{k_0}\left(\alpha_{l_0}\left(\theta\right)\right).
}
\end{align*}

This shows that $\theta$ does not belong to $P_{(k_0,l_0)}$. The lemma follows. 
\end{proof}

\begin{theorem}\label{thm:pseudochar}
There exists a unique continuous pseudocharacter $\pcT: G_S \rightarrow \hhh(N,\charcomp)_\m$ such that for all primes $\ell$ not in $S$, we have $\pcT(\Frob_\ell) = T_\ell$.
\end{theorem}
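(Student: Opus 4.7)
The plan is to construct $\pcT$ by assembling classical pseudocharacters at tempered specializations, and then descend to $\hhh(N,\charcomp)_\m$ using the density of classical points together with reducedness. First, fix a classical prime $\p\in \frakX^{\rm temp}_{\charcomp}\bigl(\hhh(N,\charcomp)_\m\bigr)$. By Corollary \ref{cor:everyspecclassical}, the induced ring homomorphism $\hhh(N,\charcomp)_\m \twoheadrightarrow \hhh(N,\charcomp)_\m/\p \hookrightarrow \overline{\Q}_p$ is the Hecke eigensystem of some $p$-ordinary classical cusp form $\varPi_\p$ of cohomological weight $\ulk$ with $k_1>k_2>3$ (hence tempered by Proposition \ref{P:32}). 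Theorem \ref{thm:GSP4weissauertaylorlaumon} therefore attaches to $\varPi_\p$ a continuous semi-simple Galois representation $\varrho_\p:G_S \to \GL_4(\overline{\Q}_p)$ whose trace at $\Frob_\ell$ ($\ell\notin S$) equals the image of $T_\ell$ in $\hhh(N,\charcomp)_\m/\p$. Setting $\pcT_\p := \mathrm{trace}\,\varrho_\p$ yields a continuous pseudocharacter $G_S \to \hhh(N,\charcomp)_\m/\p$ agreeing with $T_\ell \bmod \p$ on Frobenii.

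Next, I would bundle these together. Let $X := \frakX^{\rm temp}_{\charcomp}\bigl(\hhh(N,\charcomp)_\m\bigr)$ and define
\[
\widetilde{\pcT} : G_S \longrightarrow \prod_{\p \in X} \hhh(N,\charcomp)_\m/\p, \qquad g \longmapsto \bigl(\pcT_\p(g)\bigr)_{\p}.
\]
Each component being a continuous pseudocharacter, $\widetilde{\pcT}$ is itself a continuous pseudocharacter of dimension $4$ valued in the product (which I endow with the product topology, each factor being a finite extension of $\OO$ with its $p$-adic topology). By Proposition \ref{P:reduced}, the ring $\hhh(N,\charcomp)_\m$ is reduced, and by Lemma \ref{lem:density}/Corollary \ref{cor:densityhomega} the classical points in $X$ are Zariski dense, so $\displaystyle \bigcap_{\p\in X}\p = (0)$. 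Hence the natural map
\[
\iota : \hhh(N,\charcomp)_\m \lhook\joinrel\longrightarrow \prod_{\p\in X}\hhh(N,\charcomp)_\m/\p
\]
is injective. Note that, by construction, $\widetilde{\pcT}(\Frob_\ell)=\iota(T_\ell)$ for every $\ell\notin S$.

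The key step will be to verify that $\widetilde{\pcT}(G_S)\subseteq \iota(\hhh(N,\charcomp)_\m)$. Since $\hhh(N,\charcomp)_\m$ is a finitely generated module over the compact Iwasawa algebra $\Lambda$, it is a profinite (hence compact Hausdorff) topological ring under its $\frakm$-adic topology, and the map $\iota$ is continuous with closed (thus compact) image in the Hausdorff product. The map $\widetilde{\pcT}$ is continuous with values in the profinite product; by Chebotarev density the Frobenii $\{\Frob_\ell\}_{\ell\notin S}$ form a dense subset of $G_S$, on which $\widetilde{\pcT}$ already takes values inside $\iota(\hhh(N,\charcomp)_\m)$. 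Closedness of $\iota(\hhh(N,\charcomp)_\m)$ and continuity of $\widetilde{\pcT}$ together force $\widetilde{\pcT}(G_S)\subseteq\iota(\hhh(N,\charcomp)_\m)$. Pulling back through $\iota$ gives the desired pseudocharacter $\pcT:G_S\to\hhh(N,\charcomp)_\m$ with $\pcT(\Frob_\ell)=T_\ell$; continuity of $\pcT$ follows because $\iota$ is a topological embedding onto its (compact) image, and the pseudocharacter identities, being polynomial relations in traces, hold after applying $\iota$ in each component and therefore hold in $\hhh(N,\charcomp)_\m$ by injectivity of $\iota$.

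Finally, uniqueness is straightforward: any two continuous pseudocharacters $G_S\to \hhh(N,\charcomp)_\m$ sending $\Frob_\ell\mapsto T_\ell$ for all $\ell\notin S$ must agree on the dense subset $\{\Frob_\ell\}$, hence on all of $G_S$ by continuity (the target being Hausdorff). The main obstacle in the argument is the descent step --- controlling topologies carefully enough to conclude that $\iota(\hhh(N,\charcomp)_\m)$ is closed in the product --- and the rest of the proof is formal once this closedness is in hand.
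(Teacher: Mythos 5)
Your proof is correct and follows essentially the same route as the paper: attach Galois representations at a dense set of classical specializations via Theorem \ref{thm:GSP4weissauertaylorlaumon}, take traces, embed $\hhh(N,\charcomp)_\m$ diagonally into the product of its classical quotients using reducedness and density, and descend via compactness/closedness of that image together with density of the Frobenii. The only cosmetic differences are that you work with tempered rather than all classical points (harmless, and it guarantees the hypothesis $k_1>k_2$ of Theorem \ref{thm:GSP4weissauertaylorlaumon}), and that you should justify, as the paper does, that $\mathrm{trace}\,\varrho_\p$ takes values in $\hhh(N,\charcomp)_\m/\p$ rather than merely in $E_\p$ --- which follows from the same closedness-plus-density-of-Frobenii argument you already use at the product level.
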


\begin{proof}
The idea of the proof follows the use of pseudocharacters by Wiles \cite{MR0969243}. For each prime ideal $\p$ in $\frakX^{\rm cls}_{\charcomp}\left(\hhh(N,\charcomp)_\m\right)$, the quotient ring $\hhh(N,\charcomp)_\m/\p$ is a finitely generated module over $\Z_p$. The quotient topology on $\hhh(N,\charcomp)_\m/\p$ coincides with the $p$-adic topology. We may thus identify the fraction field of $\hhh(N,\charcomp)_\m/\p$ with a finite extension $E_\p$ of $\Q_p$. Observe that the quotient $\Lambda/(\p\cap\Lambda)$ is canonically identified with $\Z_p$. We endow the infinite direct product \[\prod \limits_{\p \in \frakX^{\rm cls}_{\charcomp}\left(\hhh(N,\charcomp)_\m\right)} \hhh(N,\charcomp)_\m/\p\] with the direct product topology. Using Corollary \ref{cor:densityhomega}, one can conclude that the intersection $\bigcap \limits_{\p \in \frakX^{\rm cls}_{\charcomp}\left(\hhh(N,\charcomp)_\m\right)} \p$ must equal $\{0\}$.  As a result, the diagonal embedding 
\begin{align*}
i: \hhh(N,\charcomp)_\m &\hookrightarrow \prod \limits_{\p \in \frakX^{\rm cls}_{\charcomp}\left(\hhh(N,\charcomp)_\m\right)} \hhh(N,\charcomp)_\m/\p
\end{align*}
must be a continuous injective map. Furthermore, since $\hhh(N,\charcomp)_\m$ is profinite (and hence compact), the image of the diagonal embedding must be closed in $\prod \limits_{\p \in  \frakX^{\rm cls}_{\charcomp}\left(\hhh(N,\charcomp)_\m\right)} \hhh(N,\charcomp)_\m/\p$. Under the map $i$, the image of $T_{l}$ corresponds to 
\[\left(\varphi_\p(T_{l})\right)_\p.\]
As indicated in Corollary \ref{cor:everyspecclassical}, the map $\varphi_\p:\hhh(N,\charcomp)_\m \rightarrow \hhh(N,\charcomp)_\m/\p \hookrightarrow E_\p$ corresponds to a   classical specialization determined (up to Galois conjugacy) by $\p$. For each  prime ideal $\p$ in $ \frakX^{\rm cls}_{\charcomp}\left(\hhh(N,\charcomp)_\m\right)$, by Theorem \ref{thm:GSP4weissauertaylorlaumon}, we have an associated continuous semi-simple Galois representation $\rho_\p: G_S \rightarrow \mathrm{GL}_4(E_\p)$. This Galois representation is unramified at all primes $\ell$ not belonging to $S$. Furthermore, $\mathrm{Trace}(\rho_\p)(\Frob_\ell)$ equals $\varphi_\p(T_{l})$. Since the $\Frob_\ell$'s generate a dense subset of $G_S$, the image of the associated trace function must lie in $\hhh(N,\charcomp)_\m/\p$, since $\hhh(N,\charcomp)_\m/\p$ is closed in $E_\p$. The associated trace function $\mathrm{Trace}(\rho_\p):G_S \rightarrow \hhh(N,\charcomp)_\m/\p$ is a continuous pseudocharacter of dimension $4$. Since the dimension of the pseudocharacter remains constant as we vary over the classical primes $\p$'s, the following function must be a continuous pseudocharacter: 
\begin{align}
\xymatrix@C+6pc{ G_S \ar[r]^{\prod \limits_{\p \in  \frakX^{\rm cls}_{\charcomp}\left(\hhh(N,\charcomp)_\m\right)}\mathrm{Trace}(\rho_\p)\quad} & \prod \limits_{\p \in  \frakX^{\rm cls}_{\charcomp}\left(\hhh(N,\charcomp)_\m\right)} \hhh(N,\charcomp)_\m/\p, \\
\Frob_\ell \ar@{|->}[r]&  \left(\varphi_\p(T_{l})\right)_\p. 
}
\end{align}

The image of the $\Frob_\ell$'s land inside the closed subset $\hhh$ of $\prod \limits_{\p \in  \frakX^{\rm cls}_{\charcomp}\left(\hhh(N,\charcomp)_\m\right)} \hhh(N,\charcomp)_\m/\p$. Since the $\Frob_\ell$'s generate a dense subset of $G_S$, the image of the entire continuous pseudocharacter lands in $\hhh$. We get an induced continuous pseudocharacter $\pcT:G_S \rightarrow \hhh(N,\charcomp)_\m$, such that $\pcT(\Frob_\ell)$ equals $T_{l}$. The uniqueness of the pseudocharacter follows since the $\Frob_\ell$'s generate a dense subset of $G_S$. 
\end{proof}

\bibliographystyle{amsalpha}
\bibliography{yoshida_bib}

\newcommand{\etalchar}[1]{$^{#1}$}
\providecommand{\bysame}{\leavevmode\hbox to3em{\hrulefill}\thinspace}
\providecommand{\MR}{\relax\ifhmode\unskip\space\fi MR }
\providecommand{\MRhref}[2]{%
  \href{http://www.ams.org/mathscinet-getitem?mr=#1}{#2}
}
\providecommand{\href}[2]{#2}
\begin{thebibliography}{BCG{\etalchar{+}}20}

\bibitem[ABPS16]{MR3579297}
Anne-Marie Aubert, Paul Baum, Roger Plymen, and Maarten Solleveld, \emph{The
  local {L}anglands correspondence for inner forms of {$\mathrm{SL}_n$}}, Res.
  Math. Sci. \textbf{3} (2016), Paper No. 32, 34. \MR{3579297}

\bibitem[AK13]{MR3045201}
Mahesh Agarwal and Krzysztof Klosin, \emph{Yoshida lifts and the {B}loch-{K}ato
  conjecture for the convolution {$L$}-function}, J. Number Theory \textbf{133}
  (2013), no.~8, 2496--2537.

\bibitem[And87]{MR884891}
Anatolij~N. Andrianov, \emph{Quadratic forms and {H}ecke operators},
  Grundlehren der Mathematischen Wissenschaften [Fundamental Principles of
  Mathematical Sciences], vol. 286, Springer-Verlag, Berlin, 1987.

\bibitem[Art04]{MR2058604}
James Arthur, \emph{Automorphic representations of {${\rm GSp(4)}$}},
  Contributions to automorphic forms, geometry, and number theory, Johns
  Hopkins Univ. Press, Baltimore, MD, 2004, pp.~65--81. \MR{2058604}

\bibitem[BC06]{MR2225045}
J.~Bella\"{\i}che and G.~Chenevier, \emph{Lissit\'{e} de la courbe de {H}ecke
  de {$\rm GL_2$} aux points {E}isenstein critiques}, J. Inst. Math. Jussieu
  \textbf{5} (2006), no.~2, 333--349.

\bibitem[BC09]{MR2656025}
Jo\"{e}l Bella\"{\i}che and Ga\"{e}tan Chenevier, \emph{Families of {G}alois
  representations and {S}elmer groups}, Ast\'{e}risque (2009), no.~324,
  xii+314.

\bibitem[BCG{\etalchar{+}}20]{MR4084165}
F.~M. Bleher, T.~Chinburg, R.~Greenberg, M.~Kakde, G.~Pappas, R.~Sharifi, and
  M.~J. Taylor, \emph{Higher {C}hern classes in {I}wasawa theory}, Amer. J.
  Math. \textbf{142} (2020), no.~2, 627--682.

\bibitem[BDSP12]{MR2998926}
Siegfried B\"{o}cherer, Neil Dummigan, and Rainer Schulze-Pillot, \emph{Yoshida
  lifts and {S}elmer groups}, J. Math. Soc. Japan \textbf{64} (2012), no.~4,
  1353--1405.

\bibitem[BH06]{MR2234120}
Colin~J. Bushnell and Guy Henniart, \emph{The local {L}anglands conjecture for
  {$\rm GL(2)$}}, Grundlehren der Mathematischen Wissenschaften [Fundamental
  Principles of Mathematical Sciences], vol. 335, Springer-Verlag, Berlin,
  2006.

\bibitem[BO23]{MR4603000}
K\^{a}z\i~m B\"{u}y\"{u}kboduk and Tadashi Ochiai, \emph{Main conjectures for
  higher rank nearly ordinary families---{I}}, J. Number Theory \textbf{252}
  (2023), 1--73. \MR{4603000}

\bibitem[Bou03]{MR1994218}
Nicolas Bourbaki, \emph{Algebra {II}. {C}hapters 4--7}, english ed., Elements
  of Mathematics (Berlin), Springer-Verlag, Berlin, 2003. \MR{1994218}

\bibitem[BPS16]{BPS16}
St\'{e}phane Bijakowski, Vincent Pilloni, and Beno\^{i}t Stroh,
  \emph{Classicit\'{e} de formes modulaires surconvergentes}, Ann. of Math. (2)
  \textbf{183} (2016), no.~3, 975--1014. \MR{3488741}

\bibitem[BSP91]{MR1096467}
Siegfried B\"{o}cherer and Rainer Schulze-Pillot, \emph{Siegel modular forms
  and theta series attached to quaternion algebras}, Nagoya Math. J.
  \textbf{121} (1991), 35--96.

\bibitem[BSP97]{MR1475167}
S.~B\"{o}cherer and R.~Schulze-Pillot, \emph{Siegel modular forms and theta
  series attached to quaternion algebras. {II}}, Nagoya Math. J. \textbf{147}
  (1997), 71--106.

\bibitem[Car89]{MR1046750}
Henri Carayol, \emph{Sur les repr\'{e}sentations galoisiennes modulo {$l$}
  attach\'{e}es aux formes modulaires}, Duke Math. J. \textbf{59} (1989),
  no.~3, 785--801. \MR{1046750}

\bibitem[CH20]{chenprimitive}
Shih-Yu Chen and Ming-Lun Hsieh, \emph{On primitive {$p$}-adic
  {R}ankin-{S}elberg {$L$}-functions}, Development of {I}wasawa theory---the
  centennial of {K}. {I}wasawa's birth, Adv. Stud. Pure Math., vol.~86, Math.
  Soc. Japan, Tokyo, [2020] \copyright 2020, pp.~195--242. \MR{4385083}

\bibitem[Das16]{MR3514962}
Samit Dasgupta, \emph{Factorization of {$p$}-adic {R}ankin {$L$}-series},
  Invent. Math. \textbf{205} (2016), no.~1, 221--268.

\bibitem[Dia96]{MR1405946}
Fred Diamond, \emph{On deformation rings and {H}ecke rings}, Ann. of Math. (2)
  \textbf{144} (1996), no.~1, 137--166. \MR{1405946}

\bibitem[Eis95]{MR1322960}
David Eisenbud, \emph{Commutative algebra}, Graduate Texts in Mathematics, vol.
  150, Springer-Verlag, New York, 1995, With a view toward algebraic geometry.
  \MR{1322960}

\bibitem[Eme99]{MR1710074}
Matthew Emerton, \emph{The {E}isenstein ideal in {H}ida's ordinary {H}ecke
  algebra}, Internat. Math. Res. Notices (1999), no.~15, 793--802.

\bibitem[FC90]{MR1083353}
Gerd Faltings and Ching-Li Chai, \emph{Degeneration of abelian varieties},
  Ergebnisse der Mathematik und ihrer Grenzgebiete (3) [Results in Mathematics
  and Related Areas (3)], vol.~22, Springer-Verlag, Berlin, 1990, With an
  appendix by David Mumford. \MR{1083353}

\bibitem[Gan15a]{Radhika15}
Radhika Ganapathy, \emph{The local {L}anglands correspondence for {$\rm GSp_4$}
  over local function fields}, Amer. J. Math. \textbf{137} (2015), no.~6,
  1441--1534. \MR{3432266}

\bibitem[Gan15b]{MR3432266}
\bysame, \emph{The local {L}anglands correspondence for {$\rm GSp_4$} over
  local function fields}, Amer. J. Math. \textbf{137} (2015), no.~6,
  1441--1534. \MR{3432266}

\bibitem[Gel75]{MR0379375}
Stephen~S. Gelbart, \emph{Automorphic forms on ad\`ele groups}, Princeton
  University Press, Princeton, N.J.; University of Tokyo Press, Tokyo, 1975.

\bibitem[Gre77]{MR0444614}
Ralph Greenberg, \emph{On {$p$}-adic {$L$}-functions and cyclotomic fields.
  {II}}, Nagoya Math. J. \textbf{67} (1977), 139--158. \MR{444614}

\bibitem[Gre94]{MR1265554}
\bysame, \emph{Iwasawa theory and {$p$}-adic deformations of motives}, Motives
  ({S}eattle, {WA}, 1991), Proc. Sympos. Pure Math., vol.~55, Amer. Math. Soc.,
  Providence, RI, 1994, pp.~193--223.

\bibitem[Gre01]{MR1846466}
\bysame, \emph{Iwasawa theory---past and present}, Class field theory---its
  centenary and prospect ({T}okyo, 1998), Adv. Stud. Pure Math., vol.~30, Math.
  Soc. Japan, Tokyo, 2001, pp.~335--385.

\bibitem[Gre06]{MR2290593}
\bysame, \emph{On the structure of certain {G}alois cohomology groups}, Doc.
  Math. (2006), 335--391. \MR{2290593}

\bibitem[Gre10]{MR2740696}
\bysame, \emph{Surjectivity of the global-to-local map defining a {S}elmer
  group}, Kyoto J. Math. \textbf{50} (2010), no.~4, 853--888. \MR{2740696}

\bibitem[Gre16]{MR3629652}
\bysame, \emph{On the structure of {S}elmer groups}, Elliptic curves, modular
  forms and {I}wasawa theory, Springer Proc. Math. Stat., vol. 188, Springer,
  Cham, 2016, pp.~225--252. \MR{3629652}

\bibitem[GT05]{MR2234862}
Alain Genestier and Jacques Tilouine, \emph{Syst\`emes de {T}aylor-{W}iles pour
  {$\rm GSp_4$}}, no. 302, 2005, Formes automorphes. II. Le cas du groupe $\rm
  GSp(4)$, pp.~177--290. \MR{2234862}

\bibitem[GT11]{MR2800725}
Wee~Teck Gan and Shuichiro Takeda, \emph{The local {L}anglands conjecture for
  {${\rm GSp}(4)$}}, Ann. of Math. (2) \textbf{173} (2011), no.~3, 1841--1882.

\bibitem[GT19]{Gee19}
Toby Gee and Olivier Ta\"{\i}bi, \emph{Arthur's multiplicity formula for {${\bf
  GSp}_4$} and restriction to {${\bf Sp}_4$}}, J. \'{E}c. polytech. Math.
  \textbf{6} (2019), 469--535. \MR{3991897}

\bibitem[GV00]{MR1784796}
Ralph Greenberg and Vinayak Vatsal, \emph{On the {I}wasawa invariants of
  elliptic curveurs}, Invent. Math. \textbf{142} (2000), no.~1, 17--63.

\bibitem[Hid85]{MR774534}
Haruzo Hida, \emph{A {$p$}-adic measure attached to the zeta functions
  associated with two elliptic modular forms. {I}}, Invent. Math. \textbf{79}
  (1985), no.~1, 159--195.

\bibitem[Hid86a]{MR848685}
\bysame, \emph{Galois representations into {${\rm GL}_2({\bf Z}_p[[X]])$}
  attached to ordinary cusp forms}, Invent. Math. \textbf{85} (1986), no.~3,
  545--613.

\bibitem[Hid86b]{MR868300}
\bysame, \emph{Iwasawa modules attached to congruences of cusp forms}, Ann.
  Sci. \'{E}cole Norm. Sup. (4) \textbf{19} (1986), no.~2, 231--273.

\bibitem[Hid88]{MR976685}
\bysame, \emph{A {$p$}-adic measure attached to the zeta functions associated
  with two elliptic modular forms. {II}}, Ann. Inst. Fourier (Grenoble)
  \textbf{38} (1988), no.~3, 1--83.

\bibitem[Hid93]{MR1216135}
\bysame, \emph{Elementary theory of {$L$}-functions and {E}isenstein series},
  London Mathematical Society Student Texts, vol.~26, Cambridge University
  Press, Cambridge, 1993.

\bibitem[Hid02]{MR1954939}
\bysame, \emph{Control theorems of coherent sheaves on {S}himura varieties of
  {PEL} type}, J. Inst. Math. Jussieu \textbf{1} (2002), no.~1, 1--76.

\bibitem[Hid04]{MR2055355}
\bysame, \emph{{$p$}-adic automorphic forms on {S}himura varieties}, Springer
  Monographs in Mathematics, Springer-Verlag, New York, 2004. \MR{2055355}

\bibitem[Hid12]{MR2894984}
\bysame, \emph{Geometric modular forms and elliptic curves}, second ed., World
  Scientific Publishing Co. Pte. Ltd., Hackensack, NJ, 2012. \MR{2894984}

\bibitem[HN17]{MR3623733}
Ming-Lun Hsieh and Kenichi Namikawa, \emph{Bessel periods and the non-vanishing
  of {Y}oshida lifts modulo a prime}, Math. Z. \textbf{285} (2017), no.~3-4,
  851--878.

\bibitem[HN18]{MR3858470}
\bysame, \emph{Inner product formula for {Y}oshida lifts}, Ann. Math. Qu\'{e}.
  \textbf{42} (2018), no.~2, 215--253.

\bibitem[HP92]{MR1237103}
G\"{u}nter Harder and Richard Pink, \emph{Modular konstruierte unverzweigte
  abelsche {$p$}-{E}rweiterungen von {${\bf Q}(\zeta_p)$} und die {S}truktur
  ihrer {G}aloisgruppen}, Math. Nachr. \textbf{159} (1992), 83--99.

\bibitem[HT93]{MR1209708}
H.~Hida and J.~Tilouine, \emph{Anti-cyclotomic {K}atz {$p$}-adic
  {$L$}-functions and congruence modules}, Ann. Sci. \'{E}cole Norm. Sup. (4)
  \textbf{26} (1993), no.~2, 189--259.

\bibitem[HT94]{MR1269427}
\bysame, \emph{On the anticyclotomic main conjecture for {CM} fields}, Invent.
  Math. \textbf{117} (1994), no.~1, 89--147.

\bibitem[HY24]{MR4771233}
Ming-Lun Hsieh and Shunsuke Yamana, \emph{Bessel periods and anticyclotomic
  {$p$}-adic spinor {$L$}-functions}, Trans. Amer. Math. Soc. \textbf{377}
  (2024), no.~8, 5617--5672. \MR{4771233}

\bibitem[JS77]{JS76}
Herv\'{e} Jacquet and Joseph~A. Shalika, \emph{A non-vanishing theorem for zeta
  functions of {${\rm GL}_{n}$}}, Invent. Math. \textbf{38} (1976/77), no.~1,
  1--16. \MR{432596}

\bibitem[KLZ17]{MR3637653}
Guido Kings, David Loeffler, and Sarah~Livia Zerbes, \emph{Rankin-{E}isenstein
  classes and explicit reciprocity laws}, Camb. J. Math. \textbf{5} (2017),
  no.~1, 1--122.

\bibitem[KLZ20]{MR4060872}
\bysame, \emph{Rankin-{E}isenstein classes for modular forms}, Amer. J. Math.
  \textbf{142} (2020), no.~1, 79--138. \MR{4060872}

\bibitem[Kur93]{MR1247385}
Masato Kurihara, \emph{Ideal class groups of cyclotomic fields and modular
  forms of level {$1$}}, J. Number Theory \textbf{45} (1993), no.~3, 281--294.

\bibitem[Lau05]{MR2234859}
G\'erard Laumon, \emph{Fonctions z\^etas des vari\'et\'es de {S}iegel de
  dimension trois}, no. 302, 2005, Formes automorphes. II. Le cas du groupe
  $\rm GSp(4)$, pp.~1--66. \MR{2234859}

\bibitem[Liu02]{MR1917232}
Qing Liu, \emph{Algebraic geometry and arithmetic curves}, Oxford Graduate
  Texts in Mathematics, vol.~6, Oxford University Press, Oxford, 2002,
  Translated from the French by Reinie Ern\'e, Oxford Science Publications.
  \MR{1917232}

\bibitem[Liv89]{MR0987567}
Ron Livn\'{e}, \emph{On the conductors of mod {$l$} {G}alois representations
  coming from modular forms}, J. Number Theory \textbf{31} (1989), no.~2,
  133--141. \MR{987567}

\bibitem[LLZ14]{MR3224721}
Antonio Lei, David Loeffler, and Sarah~Livia Zerbes, \emph{Euler systems for
  {R}ankin-{S}elberg convolutions of modular forms}, Ann. of Math. (2)
  \textbf{180} (2014), no.~2, 653--771.

\bibitem[Loe17]{MR3576325}
David Loeffler, \emph{Images of adelic {G}alois representations for modular
  forms}, Glasg. Math. J. \textbf{59} (2017), no.~1, 11--25. \MR{3576325}

\bibitem[LP19]{MR3993809}
Antonio Lei and Bharathwaj Palvannan, \emph{Codimension two cycles in {I}wasawa
  theory and elliptic curves with supersingular reduction}, Forum Math. Sigma
  \textbf{7} (2019), Paper No. e25, 81.

\bibitem[LP22]{Lei_2020}
\bysame, \emph{Codimension two cycles in {I}wasawa theory and tensor product of
  {H}ida families}, Math. Ann. \textbf{383} (2022), no.~1-2, 39--75.
  \MR{4444115}

\bibitem[LSZ22]{LSZ22JEMS}
David Loeffler, Christopher Skinner, and Sarah~Livia Zerbes, \emph{Euler
  systems for {${\rm GSp}(4)$}}, J. Eur. Math. Soc. (JEMS) \textbf{24} (2022),
  no.~2, 669--733. \MR{4382481}

\bibitem[Mok98]{MR1658011}
A.~Mokrane, \emph{Quelques remarques sur l'ordinarit\'e}, J. Number Theory
  \textbf{73} (1998), no.~2, 162--181. \MR{1658011}

\bibitem[Nag54]{MR0063354}
Masayoshi Nagata, \emph{Some remarks on local rings. {II}}, Mem. Coll. Sci.
  Univ. Kyoto Ser. A. Math. \textbf{28} (1954), 109--120. \MR{63354}

\bibitem[Nek06]{MR2333680}
Jan Nekov\'{a}\v{r}, \emph{Selmer complexes}, Ast\'{e}risque (2006), no.~310,
  viii+559. \MR{2333680}

\bibitem[Nys96]{MR1411348}
Louise Nyssen, \emph{Pseudo-repr\'{e}sentations}, Math. Ann. \textbf{306}
  (1996), no.~2, 257--283. \MR{1411348}

\bibitem[Oht00]{MR1800769}
Masami Ohta, \emph{Ordinary {$p$}-adic \'{e}tale cohomology groups attached to
  towers of elliptic modular curves. {II}}, Math. Ann. \textbf{318} (2000),
  no.~3, 557--583.

\bibitem[Oht03]{MR1980312}
\bysame, \emph{Congruence modules related to {E}isenstein series}, Ann. Sci.
  \'Ecole Norm. Sup. (4) \textbf{36} (2003), no.~2, 225--269.

\bibitem[Oht05]{MR2164625}
\bysame, \emph{Companion forms and the structure of {$p$}-adic {H}ecke
  algebras}, J. Reine Angew. Math. \textbf{585} (2005), 141--172.

\bibitem[Pal18]{MR3919711}
Bharathwaj Palvannan, \emph{On {S}elmer groups and factoring {$p$}-adic
  {$L$}-functions}, Int. Math. Res. Not. IMRN (2018), no.~24, 7483--7554.

\bibitem[Pil12]{MR3059119}
Vincent Pilloni, \emph{Sur la th\'{e}orie de {H}ida pour le groupe {${\rm
  GSp}_{2g}$}}, Bull. Soc. Math. France \textbf{140} (2012), no.~3, 335--400.

\bibitem[Pil20]{MR4105535}
V.~Pilloni, \emph{Higher coherent cohomology and {$p$}-adic modular forms of
  singular weights}, Duke Math. J. \textbf{169} (2020), no.~9, 1647--1807.

\bibitem[Pra09]{MR2501296}
Kartik Prasanna, \emph{Arithmetic properties of the
  {S}himura-{S}hintani-{W}aldspurger correspondence}, Invent. Math.
  \textbf{176} (2009), no.~3, 521--600, With an appendix by Brian Conrad.
  \MR{2501296}

\bibitem[Ram00]{RD00}
Dinakar Ramakrishnan, \emph{Modularity of the {R}ankin-{S}elberg {$L$}-series,
  and multiplicity one for {${\rm SL}(2)$}}, Ann. of Math. (2) \textbf{152}
  (2000), no.~1, 45--111. \MR{1792292}

\bibitem[Rob01]{MR1871665}
Brooks Roberts, \emph{Global {$L$}-packets for {${\rm GSp}(2)$} and theta
  lifts}, Doc. Math. \textbf{6} (2001), 247--314.

\bibitem[Rou96]{MR1378546}
Rapha\"{e}l Rouquier, \emph{Caract\'{e}risation des caract\`eres et
  pseudo-caract\`eres}, J. Algebra \textbf{180} (1996), no.~2, 571--586.
  \MR{1378546}

\bibitem[RS07]{MR2344630}
Brooks Roberts and Ralf Schmidt, \emph{Local newforms for {GS}p(4)}, Lecture
  Notes in Mathematics, vol. 1918, Springer, Berlin, 2007.

\bibitem[Rub91]{MR1079839}
Karl Rubin, \emph{The ``main conjectures'' of {I}wasawa theory for imaginary
  quadratic fields}, Invent. Math. \textbf{103} (1991), no.~1, 25--68.
  \MR{1079839}

\bibitem[SS13]{MR3092267}
Abhishek Saha and Ralf Schmidt, \emph{Yoshida lifts and simultaneous
  non-vanishing of dihedral twists of modular {$L$}-functions}, J. Lond. Math.
  Soc. (2) \textbf{88} (2013), no.~1, 251--270.

\bibitem[{Sta}18]{stacks-project}
The {Stacks Project Authors}, \emph{\textit{Stacks Project}},
  \url{https://stacks.math.columbia.edu}, 2018.

\bibitem[SW01]{MR1928993}
C.~M. Skinner and Andrew~J. Wiles, \emph{Nearly ordinary deformations of
  irreducible residual representations}, Ann. Fac. Sci. Toulouse Math. (6)
  \textbf{10} (2001), no.~1, 185--215. \MR{1928993}

\bibitem[Tay91]{MR1115109}
Richard Taylor, \emph{Galois representations associated to {S}iegel modular
  forms of low weight}, Duke Math. J. \textbf{63} (1991), no.~2, 281--332.
  \MR{1115109}

\bibitem[Tay93]{MR1240640}
\bysame, \emph{On the {$l$}-adic cohomology of {S}iegel threefolds}, Invent.
  Math. \textbf{114} (1993), no.~2, 289--310.

\bibitem[TU99]{MR1693583}
J.~Tilouine and E.~Urban, \emph{Several-variable {$p$}-adic families of
  {S}iegel-{H}ilbert cusp eigensystems and their {G}alois representations},
  Ann. Sci. \'{E}cole Norm. Sup. (4) \textbf{32} (1999), no.~4, 499--574.

\bibitem[TV20]{MR4385094}
Oliver Thomas and Otmar Venjakob, \emph{On spectral sequences for {I}wasawa
  adjoints \`a{} la {J}annsen for families}, Development of {I}wasawa
  theory---the centennial of {K}. {I}wasawa's birth, Adv. Stud. Pure Math.,
  vol.~86, Math. Soc. Japan, Tokyo, [2020] \copyright 2020, pp.~655--687.
  \MR{4385094}

\bibitem[Urb01]{MR1813234}
Eric Urban, \emph{Selmer groups and the {E}isenstein-{K}lingen ideal}, Duke
  Math. J. \textbf{106} (2001), no.~3, 485--525.

\bibitem[Urb05]{MR2234861}
\bysame, \emph{Sur les repr\'esentations {$p$}-adiques associ\'ees aux
  repr\'esentations cuspidales de {${\rm GSp}_{4/{\Bbb Q}}$}}, no. 302, 2005,
  Formes automorphes. II. Le cas du groupe $\rm GSp(4)$, pp.~151--176.
  \MR{2234861}

\bibitem[Wak15]{MR3317761}
Preston Wake, \emph{Eisenstein {H}ecke algebras and conjectures in {I}wasawa
  theory}, Algebra Number Theory \textbf{9} (2015), no.~1, 53--75.

\bibitem[Wei05]{MR2234860}
Rainer Weissauer, \emph{Four dimensional {G}alois representations}, no. 302,
  2005, Formes automorphes. II. Le cas du groupe $\rm GSp(4)$, pp.~67--150.
  \MR{2234860}

\bibitem[Wei09]{MR2498783}
\bysame, \emph{Endoscopy for {${\rm GSp}(4)$} and the cohomology of {S}iegel
  modular threefolds}, Lecture Notes in Mathematics, vol. 1968,
  Springer-Verlag, Berlin, 2009.

\bibitem[Wil88]{MR0969243}
A.~Wiles, \emph{On ordinary {$\lambda$}-adic representations associated to
  modular forms}, Invent. Math. \textbf{94} (1988), no.~3, 529--573.
  \MR{969243}

\bibitem[Yos80]{MR586427}
Hiroyuki Yoshida, \emph{Siegel's modular forms and the arithmetic of quadratic
  forms}, Invent. Math. \textbf{60} (1980), no.~3, 193--248.

\bibitem[Yos84]{MR758701}
\bysame, \emph{On {S}iegel modular forms obtained from theta series}, J. Reine
  Angew. Math. \textbf{352} (1984), 184--219.

\end{thebibliography}

\end{document}